\theoremstyle{plain}
\newtheorem{thm}{\protect\theoremname}[section]
\theoremstyle{plain}
\theoremstyle{definition}
\newtheorem{defn}[thm]{\protect\definitionname}
\theoremstyle{plain}
\newtheorem{lem}[thm]{\protect\lemmaname}
\theoremstyle{plain}
\theoremstyle{definition}
\newtheorem{example}[thm]{\protect\examplename}
\theoremstyle{remark}
\newtheorem{rem}[thm]{\protect\remarkname}
\theoremstyle{plain}
\newtheorem{cor}[thm]{\protect\corollaryname}
\newtheorem*{cor*}{Corollary}
\theoremstyle{plain}
 \newlist{casenv}{enumerate}{4}
 \setlist[casenv]{leftmargin=*,align=left,widest={iiii}}
 \setlist[casenv,1]{label={{\itshape\ \casename} \arabic*.},ref=\arabic*}
 \setlist[casenv,2]{label={{\itshape\ \casename} \roman*.},ref=\roman*}
 \setlist[casenv,3]{label={{\itshape\ \casename\ \alph*.}},ref=\alph*}
 \setlist[casenv,4]{label={{\itshape\ \casename} \arabic*.},ref=\arabic*}
\DeclareSymbolFont{bbold}{U}{bbold}{m}{n}
\DeclareSymbolFontAlphabet{\mathbbold}{bbold}
\providecommand{\definitionname}{Definition}
\providecommand{\examplename}{Example}
\providecommand{\lemmaname}{Lemma}
\providecommand{\remarkname}{Remark}
\providecommand{\theoremname}{Theorem}
\providecommand{\claimname}{Claim}
\providecommand{\casename}{Case}
\providecommand{\corollaryname}{Corollary}
\providecommand{\factname}{Fact}
\providecommand{\propositionname}{Proposition}
\newcommand\Cref[1]{{Corollary~\ref{#1}}}
\newcommand\Lref[1]{{Lemma~\ref{#1}}}
\newcommand\Tref[1]{{Theorem~\ref{#1}}}
\newcommand\Sref[1]{{section~\ref{#1}}}
\newcommand\sSref[1]{{subsection~\ref{#1}}}
\newcommand\Dref[1]{{Definition~\ref{#1}}}
\newcommand\Rref[1]{{Remark~\ref{#1}}}
\newcommand\Eref[1]{{Example~\ref{#1}}}
\global\long\def\R{\mathcal{R}}
\global\long\def\Span{\textnormal{Span}}
\global\long\def\gl{\textnormal{gl}}
\global\long\def\tr{\textnormal{tr}}
\global\long\def\ad{\textnormal{ad}}
\global\long\def\layer#1#2{\overset{\left[#2\right]}{}#1}
\global\long\def\zero{1_{R}^\circ}
\global\long\def\zeroM{0_M}
\global\long\def\one{1_{R}}
\global\long\def\minus{\left(-\right)}
\global\long\def\zeroset#1{{#1}^{\circ}}
\global\long\def\nzeroset#1{{#1}^{\vee}}
\global\long\def\sl{\textnormal{sl}}
\global\long\def\rk{\textnormal{rk}}
\global\long\def\ideal{\vartriangleleft}
\global\long\def\sign{\textnormal{sign}}
\global\long\def\Rad{\textnormal{Rad}}
\global\long\def\SRad{\textnormal{SRad}}
\global\long\def\ELT#1#2{\mathscr{R}\left(#2,#1\right)}
\global\long\def\minf{0_{R}}
\global\long\def\ch{\textnormal{char}}
\global\long\def\Ad{\textnormal{Ad}}
\global\long\def\L{\mathscr{L}}
\global\long\def\F{\mathscr{F}}
\global\long\def\t{\tau}
\global\long\def\etr{\textnormal{etr}}
\global\long\def\Hom#1#2{\textnormal{Hom}\left(#1,#2\right)}
\global\long\def\End#1{\textnormal{End}\left(#1\right)}
\global\long\def\Im{\textnormal{Im}}
\global\long\def\kersim{\mathcal{M}}
\global\long\def\quo#1#2{\raisebox{.2em}{\ensuremath{#1}}\left/\raisebox{-.2em}{\ensuremath{#2}}\right.}
\global\long\def\kil#1#2{\kappa\left(#1,#2\right)}
\global\long\def\esskil#1#2{\kappa^{\mathrm{es}}\left(#1,#2\right)}
\global\long\def\T{\mathcal{T}}
\global\long\def\G{\mathcal{G}}
\global\long\def\Gz{\mathcal{G}_{\mathbbold{0}}}
\global\long\def\symmdash{\succeq}
\global\long\def\asymmdash{\preceq}
\global\long\def\symmeq{\equiv_{\circ}}
\global\long\def\N{\mathbb{N}}
\global\long\def\Z{\mathbb{Z}}
\global\long\def\zerog{\mathbbold{0}}
\global\long\def\puis#1{#1\left\{\left\{t\right\}\right\}}
\DeclareSymbolFont{bbold}{U}{bbold}{m}{n}
\DeclareSymbolFontAlphabet{\mathbbold}{bbold}
\title{Modules and Lie Semialgebras over Semirings with a Negation Map}
\author[Guy Blachar]{Guy Blachar}
\address{Department of Mathematics, Bar-Ilan University, Ramat-Gan 52900,
Israel.} \email{\href{mailto:blachag@biu.ac.il}{blachag@biu.ac.il}}
\date{\today}
\thanks{This article contains work from the author's M.Sc.\ Thesis, which was submitted to the Math Department at Bar-Ilan University. The work was carried under the supervision of Prof.\ Louis Rowen from Bar-Ilan University, to whom the author thanks deeply for his help and guidance.}
\begin{document}

\maketitle
\begin{abstract}
In this article, we present the basic definitions of modules and Lie semialgebras over semirings with a negation map. Our main example of a semiring with a negation map is ELT algebras, and some of the results in this article are formulated and proved only in the ELT theory.

When dealing with modules, we focus on linearly independent sets and spanning sets. We define a notion of lifting a module with a negation map, similarly to the tropicalization process, and use it to prove several theorems about semirings with a negation map which possess a lift.

In the context of Lie semialgebras over semirings with a negation map, we first give basic definitions, and provide parallel constructions to the classical Lie algebras. We prove an ELT version of Cartan's criterion for semisimplicity, and provide a counterexample for the naive version of the PBW Theorem.
\end{abstract}

\tableofcontents
\addtocontents{toc}{~\hfill\textbf{Page}\par}
\section{Introduction}

This paper's objective is to form an algebraic basis in the context of semirings with negation maps.\\

Semirings need not have additive inverses to all of the elements. While some of the theory of rings can be copied ``as-is'' to semirings, there are many facts about rings which use the additive inverses of the elements. The idea of negation maps on semirings is to imitate the additive inverse map. In order to do that, we assume that we are given a map $a\mapsto\minus a$, which satisfies all of the properties of the usual inverse, except for the fact that $a+\minus a=0$. This allows us to use the concept of additive inverses, even if they do not exist in the usual definition. Semirings with a negation map are discussed in \cite{Akian1990, Gaubert1992, Gaubert1997, Akian2008, Akian2014, Rowen2016}.\\

During this paper, we will first deal with modules which have a negation map over a semiring, and then study Lie semialgebras with a negation map over semirings.\\

In the context of modules, our main interest would be to study linearly independent sets and spanning sets. We will study maximal linearly independent sets, minimal spanning sets and the connection between them.\\

As the introduction demonstrates, the tropical world provides us with several nontrivial examples of semirings with a negation map -- supertropical algebras and Exploded Layered Tropical (ELT) algebras. In the study of tropical algebra and tropical geometry, one tool is using Puiseux series -- which allow us to ``lift'' a tropical problem to the classical algebra, and using the classical tools to solve it. We will give a general definition of what a lift is, and study some of its properties.\\

We will move to considering Lie semialgebras with a negation map. We will study at first the basic definitions related to Lie algebras when working over semirings. We will consider on Lie semialgebras which are free as modules, such that their basis contains 1, 2 or 3 elements, and also consider nilpotent, solvable and semisimple Lie semialgebras.\\

Cartan's criterion regarding semisimple Lie algebras states that a Lie algebra is semisimple if and only if its Killing form is nondegenerate. In this paper, we prove an ELT version of this theorem for Lie semialgebras over ELT algebras.\\

Another point of interest is Poincar\'{e}-–Birkhoff-–Witt (PBW) Theorem. In the classical theory of Lie algebras, the PBW Theorem states that every Lie algebra can be embedded into an associative algebra (its universal enveloping algebra), where the Lie bracket on this algebra is the commutator. We will construct the universal enveloping algebra of a Lie semialgebra, and give a counterexample to the naive version of the PBW theorem.\\

Throughout this paper, addition and multiplication will always be denoted as $+$ and $\cdot$, in order to emphasize the algebraic structure. Negation maps will always be denoted by $\minus$. $\N$ stands for the set of natural numbers, whereas $\N_0=\N\cup\left\{0\right\}$. $\Z$ stands for the ring of integers, and $\quo{\Z}{n\Z}$ denotes the quotient ring of $\Z$ by $n\Z$.

\subsection{Semirings with a Negation Map}
\begin{defn}
Let $R$ be a semiring. A map $\minus:R\to R$ is a \textbf{negation map} (or a \textbf{symmetry}) on $R$ if the following properties hold:
\begin{enumerate}
\item $\forall a,b\in R:\minus\left(a+b\right)=\minus a+\minus b$.
\item $\minus0_R=0_R$.
\item $\forall a,b\in R:\minus\left(a\cdot b\right)=a\cdot\left(\minus b\right)=\left(\minus a\right)\cdot b$.
\item $\forall a\in R:\minus\left(\minus a\right)=a$.
\end{enumerate}
We say that $\left(R,\minus\right)$ is a \textbf{semiring with a negation map}. If $\minus$ is clear from the context, we will not mention it.
\end{defn}

For example, the map $\minus a=a$ defines a trivial negation map on every semiring $R$. If $R$ is a ring, it has a negation map given by $\minus a=-a$.\\

Throughout this article, we use the following notations:
\begin{itemize}
\item $a\minus a$ is denoted $a^\circ$.
\item $\zeroset{R}=\left\{a^\circ\middle|a\in R\right\}$.
\item $\nzeroset{R}=R\setminus\zeroset{R}\cup\left\{\minf\right\}$.
\item We define two partial orders on $R$:
\begin{itemize}
\item The relation $\symmdash$ (which we call the \textbf{surpassing} relation) defined by
$$a\symmdash b\Leftrightarrow \exists c\in R^\circ:a=b+c$$
\item The relation $\nabla$ defined by
$$a\nabla b\Leftrightarrow a\minus b\in R^\circ$$
\end{itemize}
\end{itemize}

\subsection{Modules Over Semirings with a Negation Map}

We now consider modules over semirings with a negation map.

\begin{defn}
Let $R$ be a semiring with a negation map. A \textbf{(left) $R$-module} is a commutative monoid $\left(M,+,\zeroM\right)$, equipped with an operation of ``scalar multiplication'', $ R\times M\rightarrow M$, where we denote $\left(\alpha,x\right)\mapsto\alpha x$, such that the following properties hold:
\begin{enumerate}
\item $\forall\alpha\in R\,\forall x,y\in M:\alpha\left(x+y\right)=\alpha x+\alpha y$.
\item $\forall\alpha,\beta\in R\,\forall x\in M:\left(\alpha+\beta\right)x=\alpha x+\beta x$.
\item $\forall\alpha,\beta\in R\,\forall x\in M:\alpha\left(\beta x\right)=\left(\alpha\cdot\beta\right)x$.
\item $\forall\alpha\in R:\alpha\zeroM=\zeroM$.
\item $\forall x\in M:\minf x=\zeroM$.
\item $\forall x\in M:\one x=x$.
\end{enumerate}

A right $R$-module is defined similarly.
\end{defn}

\begin{example}\label{exa:examples-of-modules}
The following are examples of $R$-modules, where $R$ is a semiring:
\begin{enumerate}
\item $R$ is an $R$-module, where the scalar multiplication is the multiplication of $R$. The submodules of $R$ are its \textbf{left ideals}, meaning sets $I\subseteq R$ that are closed under addition and satisfy $RI\subseteq I$.
\item Taking the direct sum of $M_i=R$ for $i\in I$, we obtain the \textbf{free $R$-module}:
    $$R^I=\bigoplus_{i\in I}R$$
    Two important special cases are:
\item $M_{m\times n}\left(R\right)$ is an $R$-module with the standard scalar multiplication, $\left(\alpha A\right)_{ij}=\alpha\cdot\left(A\right)_{ij}$.
\item Let $\left\{M_i\right\}_{i\in I}$ be $R$-modules. Then $\displaystyle{\prod_{i\in I}M_i}$ and $\displaystyle{\bigoplus_{i\in I}M_i}$ are $R$-modules with the usual componentwise sum and scalar multiplication.

\end{enumerate}
\end{example}

As in \cite{Rowen2016}, we consider negation maps on modules.

\begin{defn}
Let $R$ be a semiring, and let $M$ be an $R$-module. A map $\minus:M\to M$ is a \textbf{negation map} (or a \textbf{symmetry}) on $M$ if the following properties hold:
\begin{enumerate}
  \item $\forall x,y\in M:\minus\left(x+y\right)=\minus x+\minus y$.
  \item $\minus 0_M=0_M$.
  \item $\forall\alpha\in R\,\forall x\in M:\minus\left(\alpha x\right)=\alpha\left(\minus x\right)$.
  \item $\forall x\in M:\minus\left(\minus x\right)=x$.
\end{enumerate}
\end{defn}

If the underlying semiring has a negation map $\minus$, every $R$-module $M$ has an induced negation map given by $\minus x=\left(\minus 1_R\right)x$. Unless otherwise written, when working with a module over a semiring with a negation map, the negation map will be the induced one. We note that if it is not mentioned that the semiring has a negation map, the negation map on the module can be arbitrary (although the main interest is with the induced negation map).

\begin{defn}
Let $R$ be a semiring, and let $M$ be an $R$-module with a negation map. A \textbf{submodule with a negation map} of $M$ is a submodule $N$ of $M$ which is closed under the negation map. Since we are in the context of negation maps, and every module in this paper has a negation map, we will write ``submodule'' for a ``submodule with a negation map'', where we understand that every submodule should be closed under the negation map.
\end{defn}

A main example of semirings and modules with a negation map uses the process of symmetrization defined in \cite{Rowen2016}. Briefly, If $R$ is an arbitrary semiring, we may define $\hat{R}=R\times R$ with componentwise addition, and with multiplication given by
$$\left(r_1,r_2\right)\cdot\left(r'_1,r'_2\right)=\left(r_1r'_1+r_2r'_2,r_1r'_2+r_2r'_1\right).$$
This is indeed a semiring with a negation map given by $\left(r_1,r_2\right)\mapsto\left(r_2,r_1\right)$. Of course, there is a natural injection $R\hookrightarrow\hat{R}$ by $r\mapsto\left(r,0\right)$.\\

The idea of this construction is to imitate the way $\mathbb{Z}$ is constructed from $\mathbb{N}$. One should think of the element $\left(r_1,r_2\right)$ as $r_1+\minus r_2$.\\

Now, if $M$ is any $R$-module, we may define $\hat{M}=M\times M$. We want to define it as an $\hat{R}$-module; so we use componentwise addition, and define multiplication by
$$\left(r_1,r_2\right)\left(x_1,x_2\right)=\left(r_1x_1+r_2x_2,r_1x_2+r_2x_1\right).$$
This multiplication is called the \textbf{twist action} on $\hat{M}$ over $\hat{R}$. It endows $\hat{M}$ with a $\hat{R}$-module structure, and the induced negation map is
$$\minus\left(x_1,x_2\right)=\left(0,1\right)\left(x_1,x_2\right)=\left(x_2,x_1\right)$$

For the remainder of the introduction, we will present our two main examples of semirings with a negation map -- supertropical algebras and ELT algebras.

\subsection{Supertropical Algebras}

Supertropical algebras are a refinement of the usual max-plus algebra. This is a refinement of the max-plus algebra, in which one adds ``ghost elements'' to replace the role of the classical zero. Supertropical algebras are discussed in several articles, including \cite{Izhaki2008, Izhaki2008b, Izhaki2009, Izhaki2010, Izhaki2010b, Izhakian2015}.

\begin{defn}
A \textbf{supertropical semiring} is a quadruple $R:=\left(R,\T,\G,\nu\right)$, where $R$ is a semiring, $\T\subseteq R$ is a multiplicative submonoid, and $\Gz=\G\cup\left\{\zerog\right\}\subseteq R$ is an ordered semiring ideal, together with a map $\nu:R\to\Gz$, satisfying $\nu^2=\nu$ as well as the conditions:
$$a+b=\left\{\begin{matrix}a,&\nu\left(a\right)>\nu\left(b\right)\\\nu\left(a\right)&\nu\left(a\right)=\nu\left(b\right)\end{matrix}\right.$$
\end{defn}

The monoid $\T$ is called the monoid of \textbf{tangible elements}, while the elements of $\G$ are called \textbf{ghost elements}, and $\nu:R\to\Gz$ is called the \textbf{ghost map}. Intuitively, the tangible elements correspond to the original max-plus algebra, although now $a+a=\nu\left(a\right)$ instead of $a+a=a$.\\

Any supertropical semiring $R$ is a semiring with a negation map, when the negation map is~$\minus a=a$. Endowed with this negation map, $\zeroset{R}=\Gz$.

\subsection{Exploded Layered Tropical Algebras}

ELT algebras are a more refined degeneration of the classical algebra than the classical max-plus algebra. The main idea which stands behind this structure is not only to remember the element, but rather remember also another information -- its layer -- which tells us ``how many times we added this element to itself''. ELT algebras originate in \cite{Parker2012}, were formally defined in \cite{Sheiner2015} and are discussed in \cite{Blachar2016, Blachar2016b}.

\begin{defn}
Let $\L$ be a semiring, and $\F$ a totally ordered semigroup. An \textbf{Exploded Layered Tropical algebra} (or, in short, an \textbf{ELT algebra}) is the pair $\R=\ELT{\F}{\L}$, whose elements are denoted $\layer a{\ell}$ for $a\in\F$ and $\ell\in\L$, together with the semiring (without zero) structure:
\begin{enumerate}
\item $\layer{a_{1}}{\ell_{1}}+\layer{a_{2}}{\ell_{2}}:=\begin{cases}
\layer{a_{1}}{\ell_{1}} & a_{1}>a_{2}\\
\layer{a_{2}}{\ell_{2}} & a_{1}<a_{2}\\
\layer{a_{1}}{\ell_{1}+_\L\ell_{2}} & a_{1}=a_{2}
\end{cases}$.
\item $\layer{a_{1}}{\ell_{1}}\cdot\layer{a_{2}}{\ell_{2}}:=\layer{\left(a_{1}+_\F a_{2}\right)}{\ell_{1}\cdot_\L\ell_{2}}$.
\end{enumerate}
We write . For $\layer{a}{\ell}$, $\ell$ is called the \textbf{layer}, whereas $a$ is called the \textbf{tangible value}.
\end{defn}

Let $\R$ be an ELT algebra. We write $s:\R\rightarrow\L$ for the projection on the first component (the \textbf{sorting map}):
$$s\left(\layer a{\ell}\right)=\ell$$
We also write $\t:\R\rightarrow\F$ for the projection on the second component:
$$\t\left(\layer a{\ell}\right)=a$$
We denote the \textbf{zero-layer subset}
$$\zeroset{\R}=\left\{\alpha\in \R\middle| s\left(\alpha\right)=0\right\}$$
and
$$\R^*=\left\{\alpha\in\R\middle|s\left(\alpha\right)\neq 0\right\}=\R\setminus\zeroset{\R}$$

\begin{defn}
An ELT algebra $\R=\ELT{\L}{\F}$ in which $\F$ is a totally ordered group and $\L$ is a ring (with $1$) is called an \textbf{ELT ring}.
\end{defn}

\begin{defn}
For any ELT ring $\R$, we define $\minus\layer{a}{\ell}=\layer{a}{-\ell}$.
\end{defn}

\begin{lem}
$a\mapsto\minus a$ is a negation map on any ELT ring $\R$.
\end{lem}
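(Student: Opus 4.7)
The proof is a direct verification of the four axioms of a negation map, case-by-case on the piecewise definition of the ELT sum. The plan is to treat each axiom in turn, relying on the ring axioms in $\L$ (which is where the negation $-\ell$ actually lives).

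First I would check axiom (1), that $\minus(a+b)=\minus a+\minus b$. Writing $a=\layer{a_1}{\ell_1}$ and $b=\layer{a_2}{\ell_2}$, I would split into the three cases from the definition of ELT addition. When $a_1>a_2$, both sides evaluate to $\layer{a_1}{-\ell_1}$, and symmetrically when $a_1<a_2$. In the equal case $a_1=a_2$, the left side is $\layer{a_1}{-(\ell_1+_\L\ell_2)}$ while the right side is $\layer{a_1}{(-\ell_1)+_\L(-\ell_2)}$, and the two agree because $\L$ is a ring, so additive inversion distributes over $+_\L$. The formal zero $\minf$ extends the argument: since by convention $\minus\minf=\minf$ and $\minf$ is absorbing for addition, the identity trivially holds whenever $a$ or $b$ equals $\minf$; this also takes care of axiom (2).

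Next I would verify axiom (3), the compatibility with multiplication. Here the case analysis is trivial because ELT multiplication is a single formula: $\minus\bigl(\layer{a_1}{\ell_1}\cdot\layer{a_2}{\ell_2}\bigr)=\layer{a_1+_\F a_2}{-(\ell_1\cdot_\L\ell_2)}$, and the identity $-(\ell_1\cdot_\L\ell_2)=\ell_1\cdot_\L(-\ell_2)=(-\ell_1)\cdot_\L\ell_2$ in the ring $\L$ gives both required equalities at once. Again the cases involving $\minf$ collapse, since $\minf$ is absorbing for multiplication and $\minus\minf=\minf$.

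Finally, axiom (4) is immediate from the involutivity of additive inversion in $\L$: $\minus(\minus\layer{a}{\ell})=\minus\layer{a}{-\ell}=\layer{a}{-(-\ell)}=\layer{a}{\ell}$, and $\minus(\minus\minf)=\minf$. There is no real obstacle in this proof; the only subtlety is recognising that the hypotheses \emph{$\L$ is a ring} and \emph{$\F$ is a group} are used only through the existence of $-\ell$ in $\L$ (the group structure of $\F$ is not needed for this particular lemma, but is part of the ambient definition of an ELT ring). The core observation is simply that the negation map on $\R$ is transported from the ring negation on the layer component $\L$, and every ELT operation is layer-wise defined in a way that commutes with this transport.
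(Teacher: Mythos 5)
Your proof is correct and is exactly the ``straightforward verification from the definition'' that the paper indicates, carried out in full: case analysis on tangible values for the additive axiom, and the observation that ring negation in $\L$ commutes with both operations on the layer component. One small terminological slip: the adjoined element $\minf$ is the additive \emph{identity} (neutral element) of $\overline{\R}$, not absorbing for addition --- it is absorbing only for multiplication --- though the conclusion you draw (that both sides trivially agree when one argument is $\minf$) is still correct.
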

\begin{proof}
Straightforward from the definition.
\end{proof}

\begin{rem}
When dealing with ELT algebra, the relation $\symmdash$ is denoted $\vDash$.
\end{rem}

We point out some important elements in any ELT ring $\R$:
\begin{enumerate}
\item $\layer{0}{1}$, which is the multiplicative identity of $\R$.
\item $\layer{0}{0}$, which is idempotent to both operations of $\R$.
\item $\layer{0}{-1}$, which has the role of ``$-1$'' in our theory.
\end{enumerate}

Since ELT algebras lack an additive identity element, we adjoin one, denoted $0_R$ (denoted $-\infty$ in \cite{Blachar2016, Blachar2016b}), and we denote $\overline{\R}=\R\cup\left\{0_R\right\}$. This endows $\overline{\R}$ with an antiring structure.
\section{Modules over Semirings with a Negation Map}
Modules over rings have an important role in the classical theory. In the classical theory, a module~$M$ over a ring $R$ is an abelian group, which has a scalar multiplication operation by elements from~$R$. If one takes $R$ to be a semiring rather than a ring, then a semimodule $M$ is a commutative monoid, which has a scalar multiplication operation.\\

Since semirings with a negation map are not rings, modules over them lack some basic properties of modules over rings. The study of modules can be found in \cite[Chapters 14-18]{Golan1999}.\\

However, knowing that we are working over a semiring with a negation map instead of a general semiring, we note some unique properties of the module. For example, over a semiring with a negation map we have the notion of quasi-zeros, i.e.\ elements of the form $x+\minus x$, which play the role of the classical zero.\\

In this part, we look into modules over semirings with a negation map. The main issue which we will deal with is different definitions of base. Following \cite{Izhaki2010}, we will give four definitions:
\begin{enumerate}
\item \textbf{d-base} -- a maximal linearly independent set, \Dref{def:d-base}.
\item \textbf{s-base} -- a minimal spanning set, \Dref{def:s-base}.
\item \textbf{d,s-base} -- an s-base which is also a d-base, \Dref{def:d,s-base}.
\item \textbf{base} -- defined as the classical base, \Dref{def:base}.
\end{enumerate}

We will point out the properties satisfied by modules possessing these bases, and we will examine the differences between these definitions.
\subsection{The Surpassing Relation for Modules}

We will now give analogous definitions of some concepts defined previously for semirings with a negation map.

\begin{defn}
Let $R$ be a semiring, and let $M$ be a module with a negation map. An element of the form $x+\minus x$ for some $x\in M$ is called a \textbf{quasi-zero} (or a \textbf{balanced element}, as in \cite{Akian2008}). We denote $x^\circ=x+\minus x$. The \textbf{submodule of quasi-zeros} is
$$\zeroset{M}=\left\{x^\circ\mid x\in M\right\}$$
\end{defn}

If $R$ has a negation map, and the negation map of $M$ coincides with the induced negation map, then $\zeroset{M}=\zero M$.

\begin{defn}
Let $R$ be a semiring, and let $M$ be an $R$-module with a negation map. We define a relation $\symmdash$ on $M$ in the following way:
$$x\symmdash y\Longleftrightarrow\exists z\in\zeroset M:x=y+z$$
If $x\symmdash y$, we say that $x$ \textbf{surpasses} $y$.
\end{defn}

\begin{example}
We refer to some of the examples of modules given here, and demonstrate the meaning of the surpassing relation in these modules.
\begin{enumerate}
\item In $R$, the surpassing relation of the module coincides with the surpassing relation of the semiring with a negation map.
\item In $R^I$, the surpassing relation means surpassing componentwise.
\item As a special case of the free module, in $M_{m\times n}\left(R\right)$ the surpassing relation is equivalent to surpassing componentwise.
\item As another special case of the free module, in $R\left[\lambda\right]$, the surpassing relation means surpassing of each coefficient of the polynomials.
\end{enumerate}
\end{example}

\begin{lem}\label{lem:symmdash-is-ref-and-trans}
$\symmdash$ is a reflexive and transitive relation on any $R$-module with a negation map. However, it may not be antisymmetric, as demonstrated in \cite[Example 4.12]{Akian2008}.
\end{lem}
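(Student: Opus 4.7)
The proof is essentially routine once one observes that $\zeroset{M}$ is closed under addition, so I would organize it around that single fact.

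For reflexivity, I would first note that $\zeroM \in \zeroset{M}$: since the negation map satisfies $\minus\zeroM = \zeroM$, we have $\zeroM + \minus\zeroM = \zeroM$, so $\zeroM = \zeroM^\circ$. Then for any $x \in M$, the identity $x = x + \zeroM$ witnesses $x \symmdash x$.

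For transitivity, suppose $x \symmdash y$ and $y \symmdash w$. By definition there exist $u, v \in \zeroset{M}$ with $x = y + u$ and $y = w + v$, so $x = w + (u+v)$. It remains to show $u + v \in \zeroset{M}$. Writing $u = a + \minus a$ and $v = b + \minus b$, one uses the first and fourth axioms of the negation map to compute
\[
u + v = a + \minus a + b + \minus b = (a+b) + (\minus a + \minus b) = (a+b) + \minus(a+b) = (a+b)^\circ \in \zeroset{M},
\]
which completes the proof.

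There is no real obstacle here; the only thing to be careful about is citing the distributivity of $\minus$ over $+$ (so that $\minus a + \minus b = \minus(a+b)$) when verifying closure of $\zeroset{M}$ under addition. The failure of antisymmetry is not something we need to prove — the statement only asserts it may fail and refers to the cited example.
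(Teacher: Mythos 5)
Your proof follows essentially the same route as the paper's: reflexivity via $x = x + \zeroM$ and transitivity by composing the two witnesses $z_1, z_2 \in \zeroset{M}$. The only difference is that you spell out the verification that $\zeroM \in \zeroset{M}$ and that $\zeroset{M}$ is closed under addition, which the paper takes as known; both are correct.
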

\begin{proof}
Let $M$ be an $R$-module with a negation map.
\begin{enumerate}
\item If $x\in M$, then $x=x+\zeroM$, and $\zeroM\in\zeroset M$; so $x\symmdash x$.
\item If $x,y,z\in M$ satisfy $x\symmdash y\symmdash z$, then $x=y+z_{1},y=z+z_{2}$, where $z_{1},z_{2}\in\zeroset M$. So
    $$x=y+z_{1}=z+z_{1}+z_{2}$$
    and $z_{1}+z_{2}\in\zeroset M$, implying $x\symmdash z$.
\end{enumerate}
\end{proof}

Although in general $\symmdash$ may not be a partial order relation, we give a necessary and sufficient condition for it to be one, and we point out a specific case in which it is a partial order relation.

\begin{lem}\label{lem:symmdash-is-partial-order}
The following are equivalent for an $R$-module $M$ with a negation map:
\begin{enumerate}
  \item $\forall x\in M\,\forall z_1,z_2\in\zeroset{M}:x=x+z_1+z_2\Rightarrow x=x+z_1\,\wedge\, x=x+z_2$.
  \item $\forall x\in M\,\forall z_1,z_2\in\zeroset{M}:x=x+z_1+z_2\Rightarrow x=x+z_1\,\vee\, x=x+z_2$.
  \item $\symmdash$ is a partial order relation on $M$.
\end{enumerate}
\end{lem}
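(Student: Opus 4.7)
The plan is to leverage \Lref{lem:symmdash-is-ref-and-trans}, which already gives reflexivity and transitivity of $\symmdash$, so the only issue in (3) is antisymmetry. First I would translate antisymmetry into the quasi-zero language: the hypothesis $x\symmdash y$ and $y\symmdash x$ unfolds to $x = y + z_1$ and $y = x + z_2$ for some $z_1, z_2 \in \zeroset{M}$, whence $x = x + z_1 + z_2$ and $y = x + z_2$. The conclusion $x=y$ is therefore equivalent to $x = x + z_2$; since $z_1$ and $z_2$ enter symmetrically (simply swap the roles of $x\symmdash y$ and $y\symmdash x$), antisymmetry is exactly the assertion that $x = x + z_1 + z_2$ forces both $x = x + z_1$ and $x = x + z_2$. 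This establishes (1) $\Leftrightarrow$ (3).

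The implication (1) $\Rightarrow$ (2) is trivial, since a conjunction implies the corresponding disjunction. For (2) $\Rightarrow$ (1), I would suppose $x = x + z_1 + z_2$ with $z_1, z_2 \in \zeroset{M}$ and invoke (2) to split into cases. In the case $x = x + z_1$, adding $z_2$ to both sides gives $x + z_2 = x + z_1 + z_2 = x$, so $x = x + z_2$ as well, and both equalities required by (1) hold simultaneously. The case $x = x + z_2$ is symmetric. Hence (2) implies (1).

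No serious obstacle is anticipated: the whole lemma is essentially a bookkeeping exercise once antisymmetry is rewritten in terms of quasi-zeros. The only mildly subtle point is noticing that condition (1), although formally stronger than condition (2), is nevertheless forced by it through the short cascade above; this is what keeps the equivalence from being vacuous.
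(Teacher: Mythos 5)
Your proof is correct, and although you arrange the implications differently — proving $(1)\Leftrightarrow(3)$ directly and adding a direct $(2)\Rightarrow(1)$, rather than cycling $(1)\Rightarrow(2)\Rightarrow(3)\Rightarrow(1)$ as the paper does — the underlying content is the same. Both arguments rest on translating antisymmetry into quasi-zero form and on the observation that $x = x + z_1$ together with $x = x + z_1 + z_2$ forces $x = x + z_2$; the paper uses this observation inside its proof of $(2)\Rightarrow(3)$ (to reduce the "latter case" to the first), while you isolate it as a self-contained $(2)\Rightarrow(1)$ step. Your $(3)\Rightarrow(1)$ is identical in substance to the paper's: from $x = x + z_1 + z_2$ one sets $y = x + z_1$ (or $y = x + z_2$) and invokes antisymmetry. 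The only stylistic difference is that the paper proves $(3)\Rightarrow(1)$ once and lets the conjunction fall out automatically from $x = x + z_1$ combined with $x = x + z_1 + z_2$, whereas you appeal to the symmetry of $z_1, z_2$ to get the second conjunct; either works. Net: same ideas, mildly different packaging, and your explicit $(2)\Rightarrow(1)$ is arguably the cleaner way to see that the superficially weaker disjunctive condition already implies the conjunctive one.
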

\begin{proof}
$1\Rightarrow 2$ is trivial.\\

$2\Rightarrow 3$: Assume that condition $2$ holds. By \Lref{lem:symmdash-is-ref-and-trans}, it is enough to show that $\symmdash$ is antisymmetric. Let $x,y\in M$ such that $x\symmdash y$ and $y\symmdash x$. Then there are $z_1,z_2\in\zeroset{M}$ such that $x=y+z_1$ and $y=x+z_2$. Therefore, $x=x+z_1+z_2$. By condition 2, either $x=x+z_2=y$ (in which case we are done) or $x=x+z_1$. In the latter case,
$$x=x+z_1+z_2=x+z_2=y$$
which proves that $\symmdash$ is a partial order relation on $M$.\\

$3\Rightarrow 1$: Assume that $\symmdash$ is a partial order relation on $M$, and let $x\in M$ and $z_1,z_2\in\zeroset{M}$ such that $x=x+z_1+z_2$. Write $y=x+z_1$. By definition, $y\symmdash x$. On the other hand,
$$x=x+z_1+z_2=y+z_2\symmdash y$$
Since $\symmdash$ is antisymmetric, $x=y=x+z_1$, and thus also $x=x+z_1+z_2=x+z_2$.
\end{proof}

\begin{cor}\label{cor:R-is-idempotent}
Let $R$ be a semiring with a negation map, and let $M$ be an $R$-module (with the induced negation map). The following are equivalent:
\begin{enumerate}
  \item $\zero$ is idempotent.
  \item $\zeroset{R}$ is idempotent.
  \item $\zeroset{M}$ is idempotent for every $R$-module $M$.
\end{enumerate}
In this case, $\symmdash$ is a partial order relation on every $R$-module $M$.
\end{cor}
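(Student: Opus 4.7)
The plan is to exploit the fact that when $M$ has the induced negation map, every quasi-zero factors through $\zero=1_R+\minus 1_R$. Concretely, for any $x\in M$,
$$x^\circ=x+\minus x=x+(\minus 1_R)x=(1_R+\minus 1_R)x=\zero\cdot x,$$
and similarly $a^\circ=a\cdot\zero$ for $a\in R$. This single identity will drive every implication, and the ring $R$ is itself an $R$-module so the implications among the three conditions essentially collapse onto the single element $\zero$.

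I would argue as follows. First, the implication $(2)\Rightarrow(1)$ is immediate because $\zero=1_R^\circ\in\zeroset{R}$, so idempotency of every element of $\zeroset{R}$ forces $\zero+\zero=\zero$. Next, $(3)\Rightarrow(2)$ is obtained by taking $M=R$ (an $R$-module, as noted in \Eref{exa:examples-of-modules}), which gives $\zeroset{M}=\zeroset{R}$. For the only nontrivial implication $(1)\Rightarrow(3)$, I would take an arbitrary $R$-module $M$ and an arbitrary element $x^\circ\in\zeroset M$, then use the factorization $x^\circ=\zero\cdot x$ together with the module axioms:
$$x^\circ+x^\circ=\zero\cdot x+\zero\cdot x=(\zero+\zero)\cdot x=\zero\cdot x=x^\circ,$$
where the middle equality uses condition $(1)$. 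Since this holds for every $x\in M$, $\zeroset M$ is idempotent.

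For the final assertion that $\symmdash$ is a partial order, I would verify condition $(1)$ of \Lref{lem:symmdash-is-partial-order}. Suppose $x=x+z_1+z_2$ with $z_i\in\zeroset M$. The key trick is to add $z_1$ to both sides and invoke the idempotency just established:
$$x+z_1=x+(z_1+z_1)+z_2=x+z_1+z_2=x,$$
so $x=x+z_1$; symmetrically $x=x+z_2$. Hence condition $(1)$ of \Lref{lem:symmdash-is-partial-order} holds, and by that lemma $\symmdash$ is a partial order on $M$.

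Essentially no obstacle is expected: the entire corollary is a bookkeeping exercise once the identity $x^\circ=\zero\cdot x$ is written down. The only small point that requires care is distinguishing the statement "every element of $\zeroset{R}$ is additively idempotent" from "$\zeroset{R}$ is an idempotent submonoid"; both interpretations reduce to the same condition here because $\zeroset R$ is closed under addition (for $(x+y)^\circ=x^\circ+y^\circ$), so the equivalence $(1)\Leftrightarrow(2)\Leftrightarrow(3)$ is really just the equivalence "$\zero+\zero=\zero$".
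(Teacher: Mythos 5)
Your proof is correct and follows essentially the same approach as the paper: the key identity $x^\circ=\zero x$ drives the equivalences, and the idempotency of the quasi-zeros is used to verify one of the conditions of \Lref{lem:symmdash-is-partial-order}. The only cosmetic differences are that you close the cycle of implications as $1\Rightarrow 3\Rightarrow 2\Rightarrow 1$ whereas the paper uses $1\Rightarrow 2\Rightarrow 3\Rightarrow 1$, and you verify condition $(1)$ of \Lref{lem:symmdash-is-partial-order} while the paper verifies the (logically weaker) condition $(2)$ with the one-line computation $x=x+z_1+z_2=x+z_1+z_2+z_2=x+z_2$.
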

\begin{proof}
$1\Rightarrow 2$: Let $\alpha^\circ\in\zeroset{R}$. Then
$$\alpha^\circ+\alpha^\circ=\zero\cdot\alpha+\zero\cdot\alpha=\left(\zero+\zero\right)\alpha=\zero\alpha=\alpha^\circ$$

$2\Rightarrow 3$: Assume that $\zeroset{R}$ is idempotent, let $M$ be an $R$-module, and let $x^\circ\in\zeroset{M}$. Then
$$x^\circ+x^\circ=\zero x+\zero x=\left(\zero+\zero\right)x=\zero x=x^\circ$$

$3\Rightarrow 1$: Trivial.\\

It is left to prove that in this case, if $M$ is an $R$-module then $\symmdash$ is a partial order relation on~$M$. We prove condition $2$ in \Lref{lem:symmdash-is-partial-order}. Indeed, if $x\in M$ and $z_1,z_2\in\zeroset{M}$ satisfy $x=x+z_1+z_2$, then
$$x=x+z_1+z_2=x+z_1+z_2+z_2=x+z_2$$
\end{proof}

\begin{example}
If $R$ is a ring with $\minus a=-a$, a supertropical algebra with $\minus a=a$ or an ELT ring with $\minus a=\layer{0}{-1}a$, then $\symmdash$ is a partial order relation on every $R$-module.
\end{example}

\begin{example}
If $\symmdash$ is a partial order relation on $R$, the induced surpassing relation on an $R$-module $M$ might not be a partial order relation on $M$. For example, consider $R=\mathbb{N}_0$ with the usual addition and multiplication and the negation map $\minus a=a$. Then $\mathbb{N}_0^\circ=2\mathbb{N}_0$, and thus for $m,n\in\mathbb{N}_0$,
$$m\symmdash n\Leftrightarrow \exists k\in\mathbb{N}_0:m=n+2k$$
This is clearly a partial order relation.\\

However, consider the $R$-module $M=\mathbb{Z}$. The induced negation map is $\minus a=a$, and thus $\mathbb{Z}^\circ=2\mathbb{Z}$. Thus, for $m,n\in\mathbb{Z}$,
$$m\symmdash n\Leftrightarrow \exists k\in\mathbb{Z}:m=n+2k$$
which is not only not a partial order relation, but an equivalence relation.
\end{example}

We will later see that if $\symmdash$ is not a partial order, we can ``enforce'' it to be one, by taking the module modulo the congruence it generates (see \Dref{def:symmdash-is-partial-order}). \\

We note another property of $\zeroset{M}$:

\begin{lem}\label{lem:surpass-zero-is-zero}
If $x\in\zeroset{M}$ and $y\symmdash x$, then $y\in\zeroset{M}$.
\end{lem}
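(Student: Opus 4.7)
The plan is to unpack the two hypotheses directly in terms of the definitions and then recombine using the additivity of the negation map. Since $x\in\zeroset{M}$, by definition there exists $w\in M$ with $x=w^\circ=w+\minus w$. Since $y\symmdash x$, there exists $z\in\zeroset{M}$ with $y=x+z$, and again by definition $z=v^\circ=v+\minus v$ for some $v\in M$.

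Substituting, I get
\[
y = w+\minus w + v+\minus v = (w+v) + (\minus w + \minus v).
\]
Now I apply the first axiom of a negation map on $M$, namely $\minus(w+v)=\minus w+\minus v$, to rewrite the second summand as $\minus(w+v)$. This yields $y=(w+v)+\minus(w+v)=(w+v)^\circ$, exhibiting $y$ as a quasi-zero and hence as an element of $\zeroset{M}$.

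There is no real obstacle here — the statement is essentially an immediate corollary of the additivity of the negation map on $M$ together with the fact that a sum of two quasi-zeros is a quasi-zero. I do not need idempotency of $\zeroset{M}$, nor do I need $\symmdash$ to be a partial order; the argument works for an arbitrary $R$-module with a negation map.
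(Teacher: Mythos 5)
Your proof is correct and follows the same essential route as the paper's: both hinge on the fact that the sum of two quasi-zeros is a quasi-zero. The paper cites this as an instance of $\zeroset{M}$ being a submodule, while you prove the needed closure inline via additivity of the negation map; this is merely a more explicit unpacking, not a different argument.
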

\begin{proof}
Since $y\symmdash x$, there exists $z\in\zeroset{M}$ such that $y=x+z$. The result follows, since $\zeroset{M}$ is a submodule of $M$.
\end{proof}

\subsection{Basic Definitions for Modules}
\subsubsection{Spanning Sets}

\begin{defn}
Let $R$ be a semiring with a negation map, let $M$ be an $R$-module, and let $S\subseteq M$ be a subset of $M$. The \textbf{$R$-module spanned by} \textbf{$S$}, denoted $\Span\left(S\right)$, is
$$\Span\left(S\right)=\left\{\sum_{i=1}^k\alpha_i x_i\in M\middle| k\in\mathbb{N},\alpha_{1},\dots,\alpha_{k}\in R,x_{1},\dots,x_{k}\in S\right\}$$
\end{defn}

It is easy to see that $S\subseteq\Span\left(S\right)$, and that $\textrm{Span}\left(S\right)\subseteq M$ is also an $R$-module with respect to the induced operations.

\begin{defn}
Let $R$ be a semiring with a negation map, and let $M$ be an $R$-module. We say that a subset $S\subseteq M$ is a \textbf{spanning set} of $M$, if $\Span\left(S\right)=M$.
\end{defn}

\begin{defn}
A module with a finite spanning set is called \textbf{finitely generated}.
\end{defn}

\subsubsection{Congruences and the First Isomorphism Theorem}

Since the usual construction of quotient modules using ideals fails over semirings, we use the language of congruences. In this subsubsection, we study congruences of modules over semirings with a negation map. Since this is a special case of congruences in the context of universal algebra, as written in \cite{Golan1999}, we mainly cite some known facts.

\begin{defn}\label{def:modules-hom}
Let $R$ be a semiring, and let $M_{1},M_{2}$ be two $R$-modules with a negation map. An \textbf{$R$-module homomorphism} is a function $\varphi:M_{1}\rightarrow M_{2}$, which satisfies:
\begin{enumerate}
\item $\forall x,y\in M_{1}:\varphi\left(x+y\right)=\varphi\left(x\right)+\varphi\left(y\right)$.
\item\label{itm:hom-commutes-with-scalars} $\forall\alpha\in R,\,\forall x\in M_{1}:\varphi\left(\alpha x\right)=\alpha\varphi\left(x\right)$.
\item\label{itm:hom-commutes-with-minus} $\forall x\in M_1:\varphi\left(\minus x\right)=\minus\varphi\left(x\right)$.
\end{enumerate}
\end{defn}

We note that if $R$ has a negation map, and the negation maps on $M_1$ and $M_2$ are the induced negation maps, then condition \ref{itm:hom-commutes-with-minus} follows from condition \ref{itm:hom-commutes-with-scalars}, since
$$\varphi\left(\minus x\right)=\varphi\left(\left(\minus\one\right)x\right)=\left(\minus\one\right)\varphi\left(x\right)=\minus\varphi\left(x\right).$$

\begin{rem}\label{rem:mod-zeros-sent-zeros}
$\varphi\left(\zeroset{M_1}\right)\subseteq\zeroset{M_2}$, since
$$\varphi\left(x^\circ\right)=\varphi\left(x+\minus x\right)=\varphi\left(x\right)+\minus\varphi\left(x\right)\in\zeroset{M_2}$$
\end{rem}

\begin{defn}
Let $R$ be a semiring, and let $M$ be an $R$-module with a negation map. An equivalence relation $\sim$ on $M$ is called a \textbf{module congruence}, if:
\begin{enumerate}
\item $\forall x_{1},x_{2},y_{1},y_{2}\in M:\, x_{1}\sim x_{2}\,\wedge\, y_{1}\sim y_{2}\Rightarrow x_{1}+y_{1}\sim x_{2}+y_{2}$.
\item\label{itm:cong-respects-scalar} $\forall\alpha\in R\,\forall x,y\in M:\, x\sim y\Rightarrow\alpha x\sim\alpha y$.
\item\label{itm:cong-respects-minus} $\forall x,y\in M:x\sim y\Rightarrow\minus x\sim\minus y$.
\end{enumerate}
The \textbf{kernel} of $\sim$ is
$$\kersim_{\sim}=\left\{ x\in M\mid\exists y\in\zeroset M:x\sim y\right\}$$
\end{defn}

Again, if the negation map on $M$ is induced from a negation map from $R$, condition \ref{itm:cong-respects-minus} follows from condition \ref{itm:cong-respects-scalar}. Therefore, the negation map $\minus$ of $M$ a negation map $\minus$ on $\quo{M}{\sim}$ by $\minus\left[x\right]=\left[\minus x\right]$. It is easy to see that this negation map and the induced negation map from $R$ coincide, since
$$\minus\left[x\right]=\left[\minus x\right]=\left[\left(\minus\one\right)x\right]=\left(\minus\one\right)\left[x\right]$$

\begin{lem}
$\quo M{\sim}$ is an $R$-module.
\end{lem}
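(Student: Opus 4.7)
The plan is to equip $\quo{M}{\sim}$ with the obvious componentwise operations inherited from $M$ and then verify the six module axioms by reduction to the corresponding identities in $M$. All of the work is routine, and the only nontrivial step is showing that the operations are well-defined on equivalence classes.

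First I would define addition on $\quo{M}{\sim}$ by $[x]+[y]=[x+y]$ and scalar multiplication by $\alpha[x]=[\alpha x]$, with zero element $[\zeroM]$. Well-definedness of addition is the first condition in the definition of a module congruence: if $x_1\sim x_2$ and $y_1\sim y_2$, then $x_1+y_1\sim x_2+y_2$, so $[x_1+y_1]=[x_2+y_2]$. Well-definedness of scalar multiplication is condition \ref{itm:cong-respects-scalar}: if $x\sim y$ then $\alpha x\sim\alpha y$, so $[\alpha x]=[\alpha y]$. This is essentially the only place where the specific axioms of a congruence are used; everything that follows is formal.

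Next I would check that $(\quo{M}{\sim},+,[\zeroM])$ is a commutative monoid. Associativity, commutativity, and the identity property all follow from the corresponding properties in $M$: for instance, $([x]+[y])+[z]=[(x+y)+z]=[x+(y+z)]=[x]+([y]+[z])$, and $[x]+[\zeroM]=[x+\zeroM]=[x]$. The six module axioms are verified in exactly the same manner, each one reduced to the analogous axiom in $M$. For example, distributivity reads
\[
\alpha([x]+[y])=\alpha[x+y]=[\alpha(x+y)]=[\alpha x+\alpha y]=[\alpha x]+[\alpha y]=\alpha[x]+\alpha[y],
\]
and similarly $(\alpha+\beta)[x]=[(\alpha+\beta)x]=[\alpha x+\beta x]=\alpha[x]+\beta[x]$, $\alpha(\beta[x])=[(\alpha\cdot\beta)x]=(\alpha\cdot\beta)[x]$, $\alpha[\zeroM]=[\alpha\zeroM]=[\zeroM]$, $\minf[x]=[\minf x]=[\zeroM]$, and $\one[x]=[\one x]=[x]$.

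Since the excerpt has already observed that condition \ref{itm:cong-respects-minus} supplies a negation map $\minus[x]=[\minus x]$ on $\quo{M}{\sim}$ that coincides with the induced negation map from $R$, no further verification is needed on that front. I do not anticipate a genuine obstacle: the entire proof is the standard universal-algebra observation that a quotient by a congruence inherits every operation, and the three congruence axioms were chosen precisely so that each of the three structural pieces (addition, scalar multiplication, negation) descends unambiguously.
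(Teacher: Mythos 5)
Your proof is correct and is exactly the routine verification the paper leaves implicit (the lemma is stated without proof): well-definedness follows from the first two congruence axioms, and each module axiom descends componentwise from $M$. The remark about the negation map matching the paper's preceding observation is also right, so nothing is missing.
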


\begin{lem}
$\kersim_{\sim}$ is a submodule of $M$.
\end{lem}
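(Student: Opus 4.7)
The plan is to verify directly that $\kersim_\sim$ is nonempty and closed under addition, scalar multiplication, and the negation map. Each closure property will reduce to one of the three congruence axioms together with the elementary observation that $\zeroset{M}$ itself is a submodule of $M$ that is closed under $\minus$.

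First I would record the auxiliary fact that $\zeroset{M}$ is closed under the module operations: $u^\circ + v^\circ = (u+v)^\circ$, and for $\alpha\in R$ one has $\alpha u^\circ = \alpha u + \alpha(\minus u) = \alpha u + \minus(\alpha u) = (\alpha u)^\circ$; moreover $\minus(u^\circ) = \minus u + u = u^\circ$. Hence $\zeroset{M}$ is a submodule of $M$ in the sense already defined. Nonemptiness of $\kersim_\sim$ is immediate, since $0_M = 0_M^\circ\in \zeroset{M}$ and $0_M\sim 0_M$.

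For the three closure properties, given $x,x_1,x_2\in\kersim_\sim$ I choose witnesses $y,y_1,y_2\in\zeroset{M}$ with $x\sim y$, $x_i\sim y_i$, and apply the matching congruence axiom. The additive axiom gives $x_1+x_2\sim y_1+y_2$, with $y_1+y_2\in\zeroset{M}$ by the previous paragraph, so $x_1+x_2\in\kersim_\sim$. The scalar axiom gives $\alpha x\sim\alpha y$ for every $\alpha\in R$, and $\alpha y\in\zeroset{M}$, so $\alpha x\in\kersim_\sim$. Finally, the axiom concerning $\minus$ yields $\minus x\sim\minus y$, and $\minus y\in\zeroset{M}$, so $\minus x\in\kersim_\sim$.

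There is no real obstacle here: the argument is purely a matter of unwinding the congruence axioms and noticing that $\zeroset{M}$ itself is stable under all the operations that $\sim$ is required to respect. The only subtlety worth stating explicitly is the equality $\minus(u^\circ)=u^\circ$, which uses the involutivity of $\minus$ and underlies the stability of $\zeroset{M}$ under negation.
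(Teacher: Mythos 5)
Your proof is correct and follows essentially the same route as the paper: pick a witness $y\in\zeroset{M}$ for each element of $\kersim_\sim$ and push it through the corresponding congruence axiom, using that $\zeroset{M}$ is stable under the module operations. The paper's version only spells out closure under addition and scalar multiplication, whereas you additionally record nonemptiness and closure under $\minus$ (which the paper leaves implicit, given its standing convention that ``submodule'' means ``submodule with a negation map''), so yours is if anything slightly more complete.
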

\begin{proof}
If $x_{1},x_{2}\in\kersim_{\sim}$, take $y_{1},y_{2}\in\zeroset M$
such that $x_{1}\sim y_{1}$ and $x_{2}\sim y_{2}$; then $x_{1}+x_{2}\sim y_{1}+y_{2}$,
and $y_{1}+y_{2}\in\zeroset M$. Thus, $x_{1}+x_{2}\in\kersim_{\sim}$.\\

Now, if $\alpha\in R,x\in\kersim_{\sim}$, take $y\in\zeroset M$ such
that $x\sim y$; then $\alpha x\sim\alpha y$, and $\alpha y\in\zeroset M$.
Thus, $\alpha x\in\kersim_{\sim}$.
\end{proof}

\begin{lem}
Let $\rho:M\to\quo{M}{\sim}$ be the canonical map. Then
$$\zeroset{\left(\quo{M}{\sim}\right)}=\rho\left(\kersim_{\sim}\right)$$
\end{lem}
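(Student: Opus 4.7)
The plan is to prove the equality by showing both containments, unwinding the definitions of $\zeroset{\cdot}$ and $\kersim_\sim$ and using the fact that $\rho$ commutes with $+$ and with the negation map.

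For the inclusion $\zeroset{(\quo{M}{\sim})}\subseteq\rho(\kersim_\sim)$, I would take an arbitrary element of $\zeroset{(\quo{M}{\sim})}$, which by definition has the form $[x]+\minus[x]=[x+\minus x]=[x^\circ]$ for some $x\in M$. Since $x^\circ\in\zeroset{M}$ and $x^\circ\sim x^\circ$, we have $x^\circ\in\kersim_\sim$ (taking $y=x^\circ$ in the definition of $\kersim_\sim$), so $[x^\circ]=\rho(x^\circ)\in\rho(\kersim_\sim)$.

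For the reverse inclusion $\rho(\kersim_\sim)\subseteq\zeroset{(\quo{M}{\sim})}$, I would take $[x]\in\rho(\kersim_\sim)$ with $x\in\kersim_\sim$. By definition of $\kersim_\sim$, there exists $y\in\zeroset{M}$ with $x\sim y$, so $[x]=[y]$ in $\quo{M}{\sim}$. Writing $y=z+\minus z$ for some $z\in M$ and using that the induced operations and negation map on $\quo{M}{\sim}$ are well-defined, we get
$$[x]=[y]=[z+\minus z]=[z]+\minus[z]\in\zeroset{(\quo{M}{\sim})}.$$

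There is no real obstacle here: the argument is a direct definition-chase, relying only on the compatibility of $\rho$ with $+$ and $\minus$, which was already noted when endowing $\quo{M}{\sim}$ with its negation map. The only mildly subtle point is making sure that when we pick a representative of a class in $\zeroset{(\quo{M}{\sim})}$, we can actually write it in the form $[x^\circ]$; but this is immediate from the definition $\zeroset{N}=\{w+\minus w:w\in N\}$ applied to $N=\quo{M}{\sim}$, together with the fact that every element of $\quo{M}{\sim}$ is of the form $[x]$ for some $x\in M$.
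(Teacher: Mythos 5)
Your proof is correct, and it takes a slightly different route from the paper's. You show both containments explicitly, representing a quasi-zero of $\quo{M}{\sim}$ as $[x]+\minus[x]=[x^\circ]$ with $x^\circ\in\kersim_\sim$, and, conversely, observing that for $x\in\kersim_\sim$ with $x\sim y$ and $y=z+\minus z\in\zeroset{M}$, one has $\rho(x)=[y]=[z]+\minus[z]$, a quasi-zero. The paper instead runs a single chain of equivalences pivoting on the test $\rho(x)=\zero\rho(x)$, i.e.\ $x\sim\zero x$. The difference is not merely cosmetic: the assertion that $\rho(x)$ is a quasi-zero if and only if $\rho(x)=\zero\rho(x)$ has a clear ``if'' direction, but the ``only if'' direction requires quasi-zeros to be additively idempotent (from $a=\zero b$ one gets $\zero a=\zero\cdot\zero\,b=2\zero b=2a$, which equals $a$ only when $2a=a$), a hypothesis treated in \Cref{cor:R-is-idempotent} but not assumed in this lemma. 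Your argument sidesteps that pivot by working directly with explicit quasi-zero representatives, so it is in fact the more general form of the proof. One small wording point: ``take $[x]\in\rho(\kersim_\sim)$ with $x\in\kersim_\sim$'' tacitly chooses a representative already in $\kersim_\sim$; this is harmless since $\rho(\kersim_\sim)=\{\rho(x):x\in\kersim_\sim\}$, but it reads more cleanly as ``take an element of $\rho(\kersim_\sim)$, which equals $\rho(x)$ for some $x\in\kersim_\sim$.''
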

\begin{proof}
$$\rho\left(x\right)\in\zeroset{\left(\quo{M}{\sim}\right)}\Longleftrightarrow \rho\left(x\right)=\zero\rho\left(x\right)=\rho\left(\zero x\right)\Longleftrightarrow x\sim\zero x\Longleftrightarrow x\in\kersim_{\sim}$$
\end{proof}

We recall the first isomorphism theorem:

\begin{lem}\label{lem:hom-induces-congr}
Let $\varphi:M_{1}\to M_{2}$ be an $R$-module homomorphism. Then
$$x\sim y\Leftrightarrow\varphi\left(x\right)=\varphi\left(y\right)$$
is a module congruence on $M_{1}$.
\end{lem}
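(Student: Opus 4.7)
The plan is to verify directly that the relation $\sim$ defined by $x\sim y \Leftrightarrow \varphi(x)=\varphi(y)$ satisfies the four properties required of a module congruence: it is an equivalence relation, and it is compatible with addition, scalar multiplication, and the negation map on $M_1$. All four properties follow mechanically from the fact that $\varphi$ is an $R$-module homomorphism together with the fact that equality in $M_2$ is already an equivalence relation compatible with all the operations.

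First I would note that $\sim$ is an equivalence relation on $M_1$ because it is the pullback of the equality relation on $M_2$: reflexivity, symmetry and transitivity of $=$ in $M_2$ transfer immediately to $\sim$. Then I would check the three compatibility conditions in turn. For addition, if $x_1\sim x_2$ and $y_1\sim y_2$, then
$$\varphi(x_1+y_1)=\varphi(x_1)+\varphi(y_1)=\varphi(x_2)+\varphi(y_2)=\varphi(x_2+y_2),$$
so $x_1+y_1\sim x_2+y_2$. For scalar multiplication, if $x\sim y$ and $\alpha\in R$, then
$$\varphi(\alpha x)=\alpha\varphi(x)=\alpha\varphi(y)=\varphi(\alpha y),$$
so $\alpha x\sim\alpha y$. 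For the negation map, if $x\sim y$, then
$$\varphi(\minus x)=\minus\varphi(x)=\minus\varphi(y)=\varphi(\minus y),$$
using part \ref{itm:hom-commutes-with-minus} of \Dref{def:modules-hom}, so $\minus x\sim\minus y$.

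There is no genuine obstacle here; the entire content of the lemma is that the defining properties of an $R$-module homomorphism are precisely what is needed for its kernel congruence to respect the module operations. The only mild subtlety is that the third condition on a congruence requires a separate verification using the homomorphism's compatibility with $\minus$, but as remarked in the text this condition is automatic whenever the negation maps on $M_1$ and $M_2$ are the induced ones from a negation map on $R$.
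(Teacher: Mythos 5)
Your proof is correct and is the natural direct verification; each of the three compatibility checks and the pullback argument for the equivalence relation is sound, and your closing remark about condition \ref{itm:cong-respects-minus} being automatic in the induced case matches the paper's own observation. Note that the paper actually omits a proof of this lemma entirely, treating it (together with the surrounding statements in that subsubsection) as a cited special case of universal algebra facts from \cite{Golan1999}; your write-up simply supplies the routine details that the paper leaves implicit.
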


\begin{thm}[The First Isomorphism Theorem]\label{thm:first-iso-thm}
Let $R$ be a semiring with a negation map, and let $M_{1},M_{2}$ be $R$-modules. If $\varphi:M_{1}\to M_{2}$ is an $R$-module homomorphism, then there exists a module congruence $\sim$ on $M_{1}$ such that
$$\quo{M_{1}}{\sim}\cong\Im\varphi$$
\end{thm}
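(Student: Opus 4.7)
The plan is to use the congruence supplied by \Lref{lem:hom-induces-congr}: define $\sim$ on $M_1$ by $x\sim y\Leftrightarrow\varphi\left(x\right)=\varphi\left(y\right)$. That lemma already guarantees $\sim$ is a module congruence, so the quotient $\quo{M_1}{\sim}$ carries a natural $R$-module structure (with the induced negation map as discussed in the paragraph following \Dref{def:modules-hom}). My task is to exhibit a concrete $R$-module isomorphism $\bar\varphi:\quo{M_1}{\sim}\to\Im\varphi$.

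First I would define $\bar\varphi\left(\left[x\right]\right)=\varphi\left(x\right)$ and check that it is well defined: if $\left[x\right]=\left[y\right]$ then by construction of $\sim$ we have $\varphi\left(x\right)=\varphi\left(y\right)$, so the value does not depend on the representative. Next I would verify that $\bar\varphi$ is an $R$-module homomorphism in the sense of \Dref{def:modules-hom}: additivity, compatibility with scalar multiplication, and commuting with $\minus$ all follow immediately from the corresponding properties of $\varphi$ together with the way addition, scalar action, and negation are defined on $\quo{M_1}{\sim}$. For instance,
$$\bar\varphi\left(\left[x\right]+\left[y\right]\right)=\bar\varphi\left(\left[x+y\right]\right)=\varphi\left(x+y\right)=\varphi\left(x\right)+\varphi\left(y\right)=\bar\varphi\left(\left[x\right]\right)+\bar\varphi\left(\left[y\right]\right),$$
and the other two axioms are analogous.

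Finally I would verify that $\bar\varphi$ is a bijection onto $\Im\varphi$. Surjectivity is immediate: every element of $\Im\varphi$ is of the form $\varphi\left(x\right)=\bar\varphi\left(\left[x\right]\right)$. Injectivity is the conceptual heart, but it is built into the definition of $\sim$: if $\bar\varphi\left(\left[x\right]\right)=\bar\varphi\left(\left[y\right]\right)$, then $\varphi\left(x\right)=\varphi\left(y\right)$, hence $x\sim y$, hence $\left[x\right]=\left[y\right]$.

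There is essentially no obstacle here: the content of the theorem has been pushed into \Lref{lem:hom-induces-congr} (that the fibre relation really is a module congruence) and into the preceding observation that quotients by module congruences inherit the $R$-module structure and the induced negation map. The only minor care needed is to remember to check the third axiom of \Dref{def:modules-hom}, since over a general negation-map module it does not formally follow from the other two; but in our setting the negation on $\quo{M_1}{\sim}$ is $\minus\left[x\right]=\left[\minus x\right]$, so this reduces at once to $\varphi\left(\minus x\right)=\minus\varphi\left(x\right)$, which holds by hypothesis on $\varphi$.
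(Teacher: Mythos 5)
Your proof is correct. The paper itself gives no proof of this theorem — it is stated as a recalled fact, following the remark that congruences of modules over semirings are a special case of congruences in universal algebra and citing \cite{Golan1999}. Your argument is the standard one and fills this in faithfully: take $\sim$ to be the fibre relation of $\varphi$, invoke \Lref{lem:hom-induces-congr} to see it is a module congruence, and check that $\bar\varphi\left(\left[x\right]\right)=\varphi\left(x\right)$ is a well-defined $R$-module isomorphism onto $\Im\varphi$. Your explicit note about the third axiom of \Dref{def:modules-hom} (compatibility with $\minus$) is the right level of care, since for an arbitrary negation map on the module it is an independent condition, though as you observe it reduces immediately to the corresponding hypothesis on $\varphi$.
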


We return to $\symmdash$ to demonstrate how we can ``enforce'' $\symmdash$ to be a partial order on $M$.

\begin{defn}\label{def:symmdash-is-partial-order}
Let $R$ be a semiring, and let $M$ be an $R$-module with a negation map. Define a relation~$\symmeq$ on $M$ as
$$a\symmeq b\Leftrightarrow a\symmdash b\,\wedge\, b\symmdash a$$
\end{defn}
\begin{rem}
$\symmeq$ is a congruence on $M$, and the $R$-module $\quo{M}{\symmeq}$ is partially ordered by the induced negation map $\minus\left[x\right]=\left[\minus x\right]$.
\end{rem}

\subsection{$\asymmdash$-morphisms}

As we will see more when dealing with Lie algebras with a negation map, we cannot always construct functions that will preserve every operation of the Lie algebra. We will now define the notion of $\asymmdash$-morphisms, as also defined in \cite[Section 8.2]{Rowen2016}.

\begin{defn}
Let $R$ be a semiring, and let $M_{1},M_{2}$ be two $R$-modules with a negation map. A \textbf{$\asymmdash$-morphism} is a function $\varphi:M_1\to M_2$, which satisfies:
\begin{enumerate}
\item $\forall x,y\in M_{1}:\varphi\left(x+y\right)\asymmdash\varphi\left(x\right)+\varphi\left(y\right)$.
\item $\forall\alpha\in R,\,\forall x\in M_{1}:\varphi\left(\alpha x\right)\asymmdash\alpha\varphi\left(x\right)$.
\item $\forall x\in M_1:\varphi\left(\minus x\right)=\minus\varphi\left(x\right)$.
\item $\varphi\left(\zeroset{M_1}\right)\subseteq\zeroset{M_2}$.
\end{enumerate}
\end{defn}

Our purpose now will be to formulate a version of the First Isomorphism Theorem for $\asymmdash$-morphisms. \\

Assume that $\varphi:M_1\to M_2$ is a \textit{surjective} $\asymmdash$-morphism. We define the equivalence relation $\sim$ (which is not necessarily a congruence) on $M_1$ as
$$x\sim y\Longleftrightarrow\varphi\left(x\right)=\varphi\left(y\right)$$
We now wish to define addition and scalar multiplication on $\quo{M_1}{\sim}$. The usual definition (adding or scalar multiplying the representatives) will no longer work, because $\sim$ may not be a congruence; so we define
\begin{eqnarray*}
  \left[x\right]+\left[y\right] &=& \left\{z\in M_1\middle|\varphi\left(z\right)=\varphi\left(x\right)+\varphi\left(y\right)\right\}=\varphi^{-1}\left(\varphi\left(x\right)+\varphi\left(y\right)\right) \\
  \alpha\cdot\left[x\right] &=& \left\{z\in M_1\middle|\varphi\left(z\right)=\alpha\cdot\varphi\left(x\right)\right\}=\varphi^{-1}\left(\alpha\cdot\varphi\left(x\right)\right)
\end{eqnarray*}
and the natural negation map $\minus\left[x\right]=\left[\minus x\right]$. \\

The usual verifications show that:

\begin{lem}
$\quo{M_1}{\sim}$ is an $R$-module with the above operations. The quasi-zeros are equivalence classes of the form $\left[x\right]$, where $x\in\ker\varphi$.
\end{lem}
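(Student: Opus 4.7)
The plan is to push every required axiom across the surjection $\varphi$, so that each statement about $\quo{M_1}{\sim}$ reduces to the corresponding statement in $M_2$.

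First I would check that the proposed operations make sense as operations on equivalence classes. By the definition of $\sim$, two elements of $M_1$ lie in the same equivalence class iff they lie in the same fiber of $\varphi$, so for every $w\in M_2$ the preimage $\varphi^{-1}\left(w\right)$ is either empty or a single $\sim$-class. Since $\varphi$ is surjective, the elements $\varphi\left(x\right)+\varphi\left(y\right)$ and $\alpha\cdot\varphi\left(x\right)$ always lie in the image, so $\left[x\right]+\left[y\right]$ and $\alpha\cdot\left[x\right]$ are always single equivalence classes. Both depend only on $\left[x\right]$ and $\left[y\right]$, since their definitions refer only to $\varphi\left(x\right)$ and $\varphi\left(y\right)$. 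The same reasoning, combined with the identity $\varphi\left(\minus x\right)=\minus\varphi\left(x\right)$, shows that $\minus\left[x\right]=\left[\minus x\right]$ is well-defined.

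Second I would verify the $R$-module axioms one at a time. Each axiom is a direct transcription of the corresponding axiom in $M_2$: for instance, associativity of addition reads
\[
\left(\left[x\right]+\left[y\right]\right)+\left[z\right]=\varphi^{-1}\left(\left(\varphi\left(x\right)+\varphi\left(y\right)\right)+\varphi\left(z\right)\right)=\varphi^{-1}\left(\varphi\left(x\right)+\left(\varphi\left(y\right)+\varphi\left(z\right)\right)\right)=\left[x\right]+\left(\left[y\right]+\left[z\right]\right),
\]
and commutativity, the two distributive laws, associativity of scalar multiplication, and the axioms governing $\zero$, $\minf$, and $\one$ are checked identically. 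The zero class is $\varphi^{-1}\left(0_{M_2}\right)$, which is nonempty since $\varphi\left(0_{M_1}\right)=0_{M_2}$.

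Third I would identify the quasi-zeros. A class $\left[x\right]$ is a quasi-zero iff $\left[x\right]=\left[y\right]+\minus\left[y\right]$ for some $y\in M_1$, which, after unfolding the definitions, becomes $\varphi\left(x\right)=\varphi\left(y\right)+\minus\varphi\left(y\right)=\varphi\left(y\right)^{\circ}\in\zeroset{M_2}$. Conversely, if $\varphi\left(x\right)\in\zeroset{M_2}$, write $\varphi\left(x\right)=w^{\circ}$ and use surjectivity to pick $y\in M_1$ with $\varphi\left(y\right)=w$, so that $\left[x\right]=\left[y\right]^{\circ}$. Hence the quasi-zeros are precisely the classes $\left[x\right]$ with $\varphi\left(x\right)\in\zeroset{M_2}$, and this coincides with $\ker\varphi$ under the natural convention $\ker\varphi=\left\{x\in M_1:\varphi\left(x\right)\in\zeroset{M_2}\right\}$ (equivalently, with $\kersim_{\sim}$ once one notes that surjectivity together with the $\asymmdash$-morphism property of $\varphi$ yields $\varphi\left(\zeroset{M_1}\right)=\zeroset{M_2}$).

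The only real obstacle I anticipate is notational discipline: one must constantly reinterpret a set equality $\varphi^{-1}\left(w_{1}\right)=\varphi^{-1}\left(w_{2}\right)$ as an equality of equivalence classes, and must resist computing with arbitrary representatives as in the usual congruence case, since $\sim$ is not assumed to be a congruence here.
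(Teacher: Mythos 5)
The paper itself offers no proof of this lemma, asserting only that "the usual verifications show that"; your argument supplies precisely those verifications and does so along the natural route, so the overall approach is correct. Well-definedness via fibers of $\varphi$, the axiom-by-axiom transcription from $M_2$, and the quasi-zero characterisation $\left[x\right]$ quasi-zero $\Leftrightarrow\varphi\left(x\right)\in\zeroset{M_2}\Leftrightarrow x\in\ker\varphi$ are all right and hit the key point that one cannot compute with arbitrary representatives since $\sim$ need not be a congruence.

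Two small slips worth flagging. First, you justify nonemptiness of the zero class $\varphi^{-1}\left(0_{M_2}\right)$ by claiming $\varphi\left(0_{M_1}\right)=0_{M_2}$, but the paper's definition of a $\asymmdash$-morphism of modules does not impose this; from $0_{M_1}\in\zeroset{M_1}$ and condition $4$ one only gets $\varphi\left(0_{M_1}\right)\in\zeroset{M_2}$. The correct (and simpler) justification is surjectivity of $\varphi$, which you already use for the other classes. Second, the parenthetical equality $\varphi\left(\zeroset{M_1}\right)=\zeroset{M_2}$ used to identify $\ker\varphi$ with $\kersim_{\sim}$ does not follow from surjectivity plus the $\asymmdash$-morphism axioms: for $w=\varphi\left(y\right)$ one only gets $\varphi\left(y^{\circ}\right)\asymmdash w^{\circ}$, not $\varphi\left(y^{\circ}\right)=w^{\circ}$, so the inclusion $\zeroset{M_2}\subseteq\varphi\left(\zeroset{M_1}\right)$ is unproved. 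Since the lemma is stated in terms of $\ker\varphi=\left\{x:\varphi\left(x\right)\in\zeroset{M_2}\right\}$ — the usage fixed two lemmas earlier in the paper — the parenthetical is not needed and should simply be dropped; the rest of your proof stands.
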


We define the usual projection map $\rho:M_1\to\quo{M_1}{\sim}$ as $\rho\left(x\right)=\left[x\right]$.

\begin{lem}
$\rho$ is a $\asymmdash$-morphism.
\end{lem}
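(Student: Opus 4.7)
The plan is to verify the four defining axioms of a $\asymmdash$-morphism for $\rho$, leveraging the fact that $\varphi$ itself is a \emph{surjective} $\asymmdash$-morphism together with the characterization of quasi-zeros in $\quo{M_1}{\sim}$ provided by the preceding lemma (namely, that the quasi-zeros of $\quo{M_1}{\sim}$ are exactly the classes $\left[w\right]$ with $w\in\ker\varphi$).

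First I would dispose of the two easier axioms. Commutation with $\minus$ is immediate from the definition $\minus\left[x\right]=\left[\minus x\right]$ of the negation map on the quotient, so $\rho\left(\minus x\right)=\left[\minus x\right]=\minus\left[x\right]=\minus\rho\left(x\right)$. For the quasi-zero axiom, if $x\in\zeroset{M_1}$, then condition 4 applied to $\varphi$ gives $\varphi\left(x\right)\in\zeroset{M_2}$, so $x\in\ker\varphi$, and by the preceding lemma $\rho\left(x\right)=\left[x\right]\in\zeroset{\left(\quo{M_1}{\sim}\right)}$.

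Next I would handle additivity. Fix $x,y\in M_1$. Because $\varphi$ is a $\asymmdash$-morphism, there exists $z\in\zeroset{M_2}$ with $\varphi\left(x\right)+\varphi\left(y\right)=\varphi\left(x+y\right)+z$. Using surjectivity of $\varphi$, pick $w\in M_1$ with $\varphi\left(w\right)=z$; then $w\in\ker\varphi$ and, by the preceding lemma, $\left[w\right]$ is a quasi-zero in $\quo{M_1}{\sim}$. Unwinding the definition of addition in the quotient,
$$\rho\left(x+y\right)+\left[w\right]=\varphi^{-1}\left(\varphi\left(x+y\right)+\varphi\left(w\right)\right)=\varphi^{-1}\left(\varphi\left(x\right)+\varphi\left(y\right)\right)=\rho\left(x\right)+\rho\left(y\right),$$
which is precisely $\rho\left(x+y\right)\asymmdash\rho\left(x\right)+\rho\left(y\right)$. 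Scalar multiplication is handled by the identical argument with $\alpha\cdot\varphi\left(x\right)$ in place of $\varphi\left(x\right)+\varphi\left(y\right)$, producing a $w\in\ker\varphi$ with $\rho\left(\alpha x\right)+\left[w\right]=\alpha\rho\left(x\right)$.

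The main obstacle is conceptual rather than computational: since addition in $\quo{M_1}{\sim}$ is defined via $\left[x\right]+\left[y\right]=\varphi^{-1}\left(\varphi\left(x\right)+\varphi\left(y\right)\right)$ rather than componentwise on representatives, the defect quasi-zero witnessing the failure of strict additivity or strict $R$-linearity must be produced \emph{inside} $M_1$, and one must then verify that its image in the quotient is actually a quasi-zero there. This is exactly why both the surjectivity of $\varphi$ and the explicit description of quasi-zeros in $\quo{M_1}{\sim}$ are indispensable. Once these two tools are aligned, each of the four axioms collapses to a short chase through the definitions.
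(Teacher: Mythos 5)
Your proof is correct and follows essentially the same route as the paper: produce the defect element $z\in\zeroset{M_2}$ from the hypothesis on $\varphi$, pull it back to $w\in\ker\varphi$ via surjectivity, identify $[w]$ as a quasi-zero of $\quo{M_1}{\sim}$ using the preceding lemma, and then unwind the quotient's addition. You are also more careful than the paper's text, which says ``since $\varphi$ is injective'' where it plainly needs (and you correctly use) surjectivity to lift the quasi-zero into $M_1$.
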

\begin{proof}~
\begin{enumerate}
  \item Let $x,y\in M_1$. Since $\varphi\left(x+y\right)\asymmdash\varphi\left(x\right)+\varphi\left(y\right)$, and since $\varphi$ is injective, there is some $z\in M_1$ such that
      $$\varphi\left(x+y\right)+\varphi\left(z\right)=\varphi\left(x\right)+\varphi\left(y\right)$$
      and $\varphi\left(z\right)\in\zeroset{M_2}$, that is, $z\in\ker\varphi$. Hence
      $$\rho\left(x+y\right)=\left[x+y\right]\asymmdash\left[x+y\right]+\left[z\right]=\left[x\right]+\left[y\right]=\rho\left(x\right)+\rho\left(y\right)$$
  \item Similarly as 1.
  \item Given $x\in M_1$, it is easily seen that
      $$\rho\left(\minus x\right)=\left[\minus x\right]=\minus\left[x\right]=\minus\rho\left(x\right)$$
  \item This is obvious.
\end{enumerate}
\end{proof}

We now get a version of the First Isomorphism Theorem:

\begin{thm}[The First Isomorphism Theorem for $\asymmdash$-morphisms]
Let $R$ be a semiring, let $M_{1},M_{2}$ be two $R$-modules with a negation map, and let $\varphi:M_1\to M_2$ be a $\asymmdash$-morphism. Let $\sim$ be the equivalence relation defined above, and $rho:M_1\to\quo{M_1}{\sim}$ the projection map. Then there exists a unique $R$-isomorphism $\hat{\varphi}:\quo{M_1}{\sim}\to M_2$ such that $\varphi=\hat{\varphi}\circ\rho$.
\end{thm}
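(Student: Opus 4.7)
The plan is to define $\hat{\varphi}:\quo{M_1}{\sim}\to M_2$ by $\hat{\varphi}\left(\left[x\right]\right)=\varphi\left(x\right)$, and to verify it is the unique $R$-isomorphism making the required diagram commute. Well-definedness will be immediate from the definition of $\sim$: if $\left[x\right]=\left[y\right]$, then by definition $\varphi\left(x\right)=\varphi\left(y\right)$. Uniqueness will come for free, since any candidate $\psi:\quo{M_1}{\sim}\to M_2$ satisfying $\varphi=\psi\circ\rho$ is forced to send $\rho\left(x\right)=\left[x\right]$ to $\varphi\left(x\right)$; and commutativity $\varphi=\hat{\varphi}\circ\rho$ will hold by construction.

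Next I would check that $\hat{\varphi}$ is a bijection. Injectivity is built into the definition of $\sim$: $\hat{\varphi}\left(\left[x\right]\right)=\hat{\varphi}\left(\left[y\right]\right)$ means $\varphi\left(x\right)=\varphi\left(y\right)$, i.e.\ $x\sim y$, i.e.\ $\left[x\right]=\left[y\right]$. Surjectivity onto $M_2$ relies on the implicit surjectivity of $\varphi$ used to set up the construction — the sets $\left[x\right]+\left[y\right]=\varphi^{-1}\left(\varphi\left(x\right)+\varphi\left(y\right)\right)$ are only guaranteed to be nonempty $\sim$-classes when $\varphi$ is onto — after which every $m\in M_2$ is the image of some $\left[x\right]$ with $x\in\varphi^{-1}\left(\left\{m\right\}\right)$.

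The bulk of the work is showing $\hat{\varphi}$ respects the module operations, but this turns out to be essentially tautological once the quotient operations are unpacked. For addition, $\left[x\right]+\left[y\right]$ is by definition the unique $\sim$-class $\left[z\right]$ satisfying $\varphi\left(z\right)=\varphi\left(x\right)+\varphi\left(y\right)$, so
\[
\hat{\varphi}\left(\left[x\right]+\left[y\right]\right)=\varphi\left(z\right)=\varphi\left(x\right)+\varphi\left(y\right)=\hat{\varphi}\left(\left[x\right]\right)+\hat{\varphi}\left(\left[y\right]\right).
\]
An identical argument, using $\alpha\left[x\right]=\varphi^{-1}\left(\alpha\varphi\left(x\right)\right)$, handles scalar multiplication. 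The negation map is verified by
\[
\hat{\varphi}\left(\minus\left[x\right]\right)=\hat{\varphi}\left(\left[\minus x\right]\right)=\varphi\left(\minus x\right)=\minus\varphi\left(x\right)=\minus\hat{\varphi}\left(\left[x\right]\right),
\]
where the third equality invokes property 3 of $\asymmdash$-morphisms.

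The main conceptual hurdle is recognizing that the unusual, preimage-based definition of the operations on $\quo{M_1}{\sim}$ is rigged precisely so that the tautological map $\left[x\right]\mapsto\varphi\left(x\right)$ becomes a strict homomorphism, even though $\varphi$ itself was only a $\asymmdash$-morphism. Once this observation is absorbed, the whole proof collapses onto the identity $\hat{\varphi}\circ\rho=\varphi$, and the weakening of $\varphi$ from homomorphism to $\asymmdash$-morphism is fully absorbed into the quotient construction.
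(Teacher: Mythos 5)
Your proof is correct and takes essentially the same approach as the paper, which simply defines $\hat{\varphi}\left(\left[x\right]\right)=\varphi\left(x\right)$ and asserts that ``the usual verifications'' establish the claim; you have supplied those verifications, and in particular you correctly identify both the implicit surjectivity hypothesis on $\varphi$ (stated in the paper's surrounding setup but omitted from the theorem statement) and the key observation that the preimage-based definition of the quotient operations is exactly what upgrades the $\asymmdash$-morphism $\varphi$ to a strict homomorphism on $\quo{M_1}{\sim}$.
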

\begin{proof}
We define $\hat{\varphi}:\quo{M_1}{\sim}$ as $\hat{\varphi}\left(\left[x\right]\right)=\varphi\left(x\right)$. The usual verifications as in the proof of the First Isomorphism Theorem prove that $\hat{\varphi}$ is an $R$-isomorphism, and that $\hat{\varphi}$ is uniquely defined.
\end{proof}

We define a similar concept for semirings with a negation map:
\begin{defn}
Let $R_1$ and $R_2$ be semirings with a negation map. A \textbf{$\asymmdash$-morphism} is a function $\varphi:R_1\to R_2$ such that the following properties hold:
\begin{enumerate}
  \item $\forall\alpha,\beta\in R_1:\varphi\left(\alpha+\beta\right)\asymmdash\varphi\left(\alpha\right)+\varphi\left(\beta\right)$.
  \item $\forall\alpha,\beta\in R_1:\varphi\left(\alpha\cdot\beta\right)\asymmdash\varphi\left(\alpha\right)\cdot\varphi\left(\beta\right)$.
  \item\label{itm:condition-of-symmdash-morphism} $\forall\alpha\in R_1:\minus\varphi\left(\alpha\right)=\varphi\left(\minus\alpha\right)$.
  \item $\varphi\left(0_{R_1}\right)=0_{R_2}$.
  \item $\varphi\left(1_{R_1}\right)=1_{R_2}$.
\end{enumerate}
\end{defn}

\subsection{Lifting a Module Over a Semiring with a Negation Map}\label{sec:Lifts-Semiring-Mod}

\subsubsection{Lifting a semiring with a negation map}

When dealing with tropical algebra, we had a powerful tool -- Puiseux series. This tool enables one to use known results in the classical theory to prove tropical results. In this section, we attempt to give a similar construction for an arbitrary semiring with a negation map.

\begin{defn}
Let $R$ be a semiring with a negation map. For any subset $A\subseteq R$, define
$$\overline{A}=\left\{\beta\in R\mid\exists\alpha\in A:\beta\symmdash\alpha\right\}$$
If $\overline{A}=R$, we say that $A$ is \textbf{$\asymmdash$-dense} in $R$.
\end{defn}

For example, $\overline{\nzeroset{R}}=R$. We note that this defines a topology on $R$; however, this topology is usually not even $T_1$, since, for example, $\overline{\left\{0_R\right\}}=\zeroset{R}$.

\begin{defn}
Let $R$ be a semiring with a negation map. A \textbf{lift} of $R$ is a ring $\widehat{R}$ and a map $\widehat{\varphi}:\widehat{R}\to R$, such that the following properties hold:
\begin{enumerate}
  \item $\widehat{\varphi}$ is a $\asymmdash$-morphism, where the negation map on $\widehat{R}$ is $\minus\widehat{\alpha}=-\widehat{\alpha}$.
  \item $\Im\widehat{\varphi}\subseteq R^{\vee}$ is $\asymmdash$-dense in $R$.
  \item $\widehat{\varphi}\left(\widehat{\alpha}\right)=0_R\Longleftrightarrow\widehat{\alpha}=0_{\widehat{R}}$
\end{enumerate}
\end{defn}

\begin{example}
We give several examples of lifts.
\begin{enumerate}
\item If $R$ is a ring with the negation map $\minus a=-a$, then the identity map $R\to R$ is a lift of~$R$.
\item Given an ELT algebra $\R=\ELT{\L}{\F}$, $\widehat{\R}=\puis{\L}$ with the EL tropicalization map is a lift of $\R$, as defined and proved in \cite[Lemma 0.4]{Blachar2016}.
\item Although the intuition is that lifts should be ``very big'', this example proves that this is not always the case. Consider the semiring $\left(\N_0,+,\cdot\right)$ with the negation map $\minus a=a$. Then~$\quo{\Z}{2\Z}$ is a lift of $\N_0$, with the map defined by $\widehat{\varphi}\left(\bar{0}\right)=0$ and $\widehat{\varphi}\left(\bar{1}\right)=1$.
\end{enumerate}
\end{example}

We now prove the following theorem:

\begin{thm}
Let $R$ be an antiring with a negation map. Then $R$ possesses a lift.
\end{thm}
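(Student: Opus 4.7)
My plan is to construct the lift as a quotient of a free $\Z$-algebra on the nonzero tangible elements of $R$.

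First, set $T = \nzeroset{R}\setminus\{\minf\}$ and let $\widehat{R}_0 = \Z\langle X_\alpha : \alpha\in T\rangle$ be the free associative $\Z$-algebra on $T$, which is genuinely a ring (its elements have additive inverses, as required by the lift definition). The universal property of the free ring furnishes a strict evaluation morphism $v:\widehat{R}_0\to R$ sending $X_\alpha\mapsto\alpha$, preserving addition and multiplication on the nose, and sending $-\widehat{\alpha}$ to $\minus v(\widehat{\alpha})$.

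Next, let $I = v^{-1}(\zeroset{R})$. This is a two-sided ideal of $\widehat{R}_0$, since $\zeroset{R}$ is closed under addition ($a^{\circ}+b^{\circ}=(a+b)^{\circ}$) and absorbs multiplication by elements of $R$ ($r\cdot a^{\circ}=(ra)^{\circ}$). Define $\widehat{R} := \widehat{R}_0/I$; this is a genuine ring. Using the axiom of choice I would produce a function $f:R\to\nzeroset{R}$ satisfying (i) $\beta\symmdash f(\beta)$ for every $\beta\in R$, with $f(\beta)=\minf$ exactly when $\beta\in\zeroset{R}$; (ii) $f\circ\minus = \minus\circ f$; and (iii) whenever $\beta,\beta'\in\Im v$ satisfy $\beta+\minus\beta'\in\zeroset{R}$, one has $f(\beta)=f(\beta')$, so that $f$ descends through the relation encoded by $I$. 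Condition (iii) is what makes $\widehat{\varphi}([\widehat{\alpha}]):=f(v(\widehat{\alpha}))$ a well-defined map from $\widehat{R}$ to $R$; conditions (i) and (ii) are arranged jointly by pairing $\{\beta,\minus\beta\}$ and choosing a single tangible representative in each equivalence class.

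With $\widehat{\varphi}$ in hand, the four lift conditions are verified as follows. The image lies in $\nzeroset{R}$ by construction, and is $\asymmdash$-dense since every $\beta\in R$ surpasses $f(\beta)\in\Im\widehat{\varphi}$ by (i). The $\asymmdash$-morphism identities for sum, product, and $\minus$ follow from $v$ being a strict morphism combined with the controlled ``quasi-zero error'' $\beta = f(\beta)+c_\beta$ (with $c_\beta\in\zeroset{R}$) and the coset-constancy (iii). Condition 3 is the cleanest: $\widehat{\varphi}([\widehat{\alpha}])=\minf$ forces $v(\widehat{\alpha})\in f^{-1}(\minf)=\zeroset{R}$, hence $\widehat{\alpha}\in I$ and $[\widehat{\alpha}]=0$ in $\widehat{R}$.

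The main obstacle is the simultaneous satisfaction of the three constraints on $f$ -- particularly (iii) on cosets of $I$ -- together with a careful verification of the additive $\asymmdash$-morphism identity $f(v(\widehat{\alpha})) + f(v(\widehat{\beta}))\symmdash f(v(\widehat{\alpha})+v(\widehat{\beta}))$. This is where the antiring hypothesis enters essentially: the fact that no nontrivial additive sum in $R$ yields $\minf$ is what keeps $I$ from being too large, prevents the relation ``$\beta+\minus\beta'\in\zeroset{R}$'' on $\Im v$ from degenerating, and ensures that a choice of $f$ with all three properties exists consistently.
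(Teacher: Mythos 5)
There is a genuine gap, and it appears at the very first step. You assert that the universal property of the free associative $\Z$-algebra $\widehat{R}_0 = \Z\langle X_\alpha : \alpha\in T\rangle$ furnishes a \emph{strict} evaluation morphism $v:\widehat{R}_0\to R$ that preserves addition and multiplication on the nose and intertwines $-$ with $\minus$. No such map can exist when $R$ is a nontrivial antiring. In $\widehat{R}_0$ one has $X_\alpha + (-X_\alpha) = 0_{\widehat{R}_0}$; strict additivity together with $v(-\widehat\alpha) = \minus v(\widehat\alpha)$ would force
$$0_R \;=\; v\bigl(X_\alpha + (-X_\alpha)\bigr) \;=\; v(X_\alpha) + \minus v(X_\alpha) \;=\; \alpha^{\circ}$$
for every $\alpha\in T$. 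But in an antiring, $a+b=0_R$ implies $a=b=0_R$, so $\alpha+\minus\alpha = 0_R$ can only happen for $\alpha=0_R$; hence $T$ would have to be empty. (More basically, the universal property of a free ring produces maps into \emph{rings}, not semirings; there is nothing to apply here.) With $v$ gone, the set $I = v^{-1}(\zeroset{R})$ is undefined, the quotient $\widehat{R} = \widehat{R}_0/I$ is undefined, and the choice function $f$ has no coset structure on which to be constant, so condition (iii) has no content. Even under a weaker hypothesis that $v$ is merely a $\asymmdash$-morphism, $v^{-1}(\zeroset{R})$ need not be closed under addition: the surpassing inequality $v(\widehat\alpha)+v(\widehat\beta) \symmdash v(\widehat\alpha+\widehat\beta)$ runs in the wrong direction to push quasi-zero membership from the left side to the right.

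The paper never tries to build a strict evaluation map, and this is precisely what makes the construction go through. It forms a monoid $A = \{a_\alpha : \alpha\in R^\vee\}$ whose product is $a_\alpha\cdot a_\beta = a_{\alpha\beta}$ when $\alpha\beta\in R^\vee$ and $a_{0_R}$ otherwise, takes the monoid ring $\widehat{R}=\Z[A]$, and defines $\widehat\varphi$ directly on the reduced representation by $\widehat\varphi\bigl(\sum_i n_i a_{\alpha_i}\bigr) = \sum_i |n_i|\bigl(\sign(n_i)\cdot\alpha_i\bigr)$. The $\asymmdash$-morphism identities are then proved by hand with absolute-value and sign manipulations (for example, when $\sign(m)\neq\sign(n)$ the sum $m\alpha + (-n)(\minus\alpha)$ is rewritten as $(m+n)\alpha + (-n)\alpha^\circ$, producing exactly the quasi-zero surplus). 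The antiring hypothesis appears only once, to ensure $\widehat\varphi(\widehat\alpha)=0_R \Leftrightarrow \widehat\alpha=0_{\widehat R}$ because a sum of elements equals $0_R$ only if each summand is $0_R$. Your high-level idea of building a $\Z$-algebra on symbols indexed by $R^\vee$ and mapping it to $R$ is in the right direction, and your closing remark about the role of the antiring hypothesis is essentially correct; but the plan to obtain everything from a strict morphism and an ideal of its quasi-zero preimage cannot work and needs to be replaced by a direct, reduced-representation construction as in the paper.
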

\begin{proof}
We shall first construct the lift. We denote by $A$ the following set:
$$A=\left\{a_\alpha\mid\alpha\in R^\vee\right\}$$
(We use the notation $a_\alpha$ rather than $\alpha$ to distinguish the elements of $A$ and those of $R^\vee$).\\

We define a multiplication on the elements of $A$ as follows:
$$a_\alpha\cdot a_\beta=\left\{\begin{matrix}a_{\alpha\beta},&\alpha\beta\in R^\vee\\a_{0_R},&\mathrm{Otherwise}\end{matrix}\right.$$
This multiplication endows $A$ with a monoid structure.\\

We also denote $\widehat{R}=\mathbb{Z}\left[A\right]$, meaning
$$\widehat{R}=\left\{\sum_{i=1}^k n_i a_{\alpha_i}\middle|n_i\in\mathbb{Z}, \alpha_i\in R^\vee\right\}$$
Since $A$ is a monoid, $\widehat{R}$ is a ring.\\

We are left with defining the lifting map $\widehat{\varphi}:\widehat{R}\to R$. We define an action of $\left\{-1,0,1\right\}$ on $R$ by
$$\begin{matrix}1\cdot\alpha=\alpha,&0\cdot\alpha=0_R,&\left(-1\right)\cdot\alpha=\minus\alpha\end{matrix}$$
The map $\widehat{\varphi}:\widehat{R}\to R$ is defined by $\widehat{\varphi}\left(0_{\widehat{R}}\right)=0_R$ and
$$\widehat{\varphi}\left(\sum_{i=1}^{k}n_i a_{\alpha_i}\right)=\sum_{i=1}^{k}\left|n_i\right|\left(\sign\left(n_i\right)\cdot\alpha_i\right)$$
where $\displaystyle{\sum_{i=1}^{k}n_i a_{\alpha_i}}$ is in its reduced representation, i.e.\ $\alpha_i\neq\alpha_j$ for $i\neq j$, and $\sign\left(n_i\right)\cdot\alpha_i$ is calculated by the above action.\\

We shall now prove that $\left(\widehat{R},\widehat{\varphi}\right)$ is a lift of $R$. We first prove that $\widehat{\varphi}$ is a $\asymmdash$-morphism.

\begin{enumerate}
  \item It is easy to see that if $\alpha_i\neq\alpha_j$ whenever $i\neq j$,
  $$\widehat{\varphi}\left(\sum_{i=1}^{k}n_ia_{\alpha_i}\right)=\sum_{i=1}^{k}\widehat{\varphi}\left(n_i a_{\alpha_i}\right)$$
  Therefore we only have to prove that $\widehat{\varphi}\left(m a_{\alpha}\right)+\widehat{\varphi}\left(n a_{\alpha}\right)\symmdash\widehat{\varphi}\left(ma_{\alpha}+na_{\alpha}\right)$ where $m,n\neq 0$. If $\sign\left(m\right)=\sign\left(n\right)$, then
  \begin{eqnarray*}
    \widehat{\varphi}\left(ma_{\alpha}\right)+\widehat{\varphi}\left(na_{\alpha}\right) &=& \left|m\right|\left(\sign\left(m\right)\cdot\alpha\right)+ \left|n\right|\left(\sign\left(n\right)\cdot\alpha\right)= \left|m+n\right|\left(\sign\left(m+n\right)\cdot\alpha\right)= \\
     &=& \widehat{\varphi}\left(\left(m+n\right)a_{\alpha}\right)= \widehat{\varphi}\left(ma_{\alpha}+na_{\alpha}\right)
  \end{eqnarray*}
  Now, suppose $\sign\left(m\right)\neq\sign\left(n\right)$. Without loss of generality, we assume $n<0<m$ and $-n<m$ (the other cases are proved similarly). Thus,
  \begin{eqnarray*}
    \widehat{\varphi}\left(ma_{\alpha}\right)+\widehat{\varphi}\left(na_{\alpha}\right) &=& m\alpha+ \left(-n\right)\left(\minus\alpha\right)= \left(m+n\right)\alpha+\left(-n\right)\alpha+\left(-n\right)\left(\minus\alpha\right)= \\
     &=& \left(m+n\right)\alpha+\left(-n\right)\alpha^\circ\symmdash\left(m+n\right)\alpha= \widehat{\varphi}\left(\left(m+n\right)a_{\alpha}\right)= \\ &=&\widehat{\varphi}\left(ma_{\alpha}+na_{\alpha}\right)
  \end{eqnarray*}
  as we needed to prove.
  \item Let us begin by observing that
      $$\widehat{\varphi}\left(a_\alpha\right)\widehat{\varphi}\left(a_\beta\right)\symmdash\widehat{\varphi}\left(a_{\alpha\beta}\right)$$
      (Since if $\alpha\beta\in R^{\vee}$, there is equality; otherwise, the LHS is $\alpha\beta\in\zeroset{R}$, whereas the RHS is $0_R$).
      Now,
      \begin{eqnarray*}
        \widehat{\varphi}\left(\sum_{i=1}^{k}m_i a_{\alpha_i}\right)\cdot\widehat{\varphi}\left(\sum_{j=1}^{\ell}n_j a_{\beta_j}\right) &=& \left(\sum_{i=1}^{k}m_i\alpha_i\right)\cdot\left(\sum_{j=1}^{\ell}n_j\beta_j\right)=\\
         &=& \sum_{\alpha_i\beta_j\in R^\vee}\left(m_i n_j\right)\alpha_i\beta_j+\sum_{\alpha_i\beta_j\notin R^\vee}\left(m_i n_j\right)\alpha_i\beta_j\symmdash\\
         &\symmdash&\sum_{\alpha_i\beta_j\in R^\vee}\left(m_i n_j\right)\alpha_i\beta_j= \widehat{\varphi}\left(\sum_{\alpha_i\beta_j\in R^\vee}\left(m_i n_j\right)a_{\alpha_i \beta_j}\right)\\
         &=&\widehat{\varphi}\left(\left(\sum_{i=1}^{k}m_i a_{\alpha_i}\right)\cdot\left(\sum_{j=1}^{\ell}n_j a_{\beta_j}\right)\right)
      \end{eqnarray*}
  \item $$\widehat{\varphi}\left(-\sum_{i=1}^{k}n_ia_{\alpha_i}\right)= \sum_{i=1}^{k}\left|-n_i\right|\left(\sign\left(-n_i\right)\cdot\alpha_i\right)= \minus\sum_{i=1}^{k}\left|n_i\right|\left(\sign\left(n_i\right)\cdot\alpha_i\right)= \minus\widehat{\varphi}\left(\sum_{i=1}^{k}n_ia_{\alpha_i}\right)$$
  \item $\widehat{\varphi}\left(0_{\widehat{R}}\right)=0_R$ by definition.
  \item Noting that $1_{\widehat{R}}=1\cdot a_{1_R}$, $\widehat{\varphi}\left(1_{\widehat{R}}\right)=1_R$.
\end{enumerate}

This proves that $\widehat{\varphi}$ is a $\asymmdash$-morphism. Since $\Im\widehat{\varphi}=R^\vee$, we are left to prove that $\widehat{\varphi}\left(\widehat{\alpha}\right)=0_R$ if and only if $\widehat{\alpha}=0_{\widehat{R}}$. But this follows from the fact that $R$ is an antiring.\\

Hence $\left(\widehat{R},\widehat{\varphi}\right)$ is a lift of $R$.
\end{proof}

\subsubsection{Lifting a module}
We move towards lifting a module. We use again the concept of $\asymmdash$-density:
\begin{defn}
Let $R$ be a semiring, let $M$ be an $R$-module with a negation map. For any subset $S\subseteq M$, define
$$\overline{S}=\left\{y\in M\mid\exists x\in S:y\symmdash x\right\}$$
If $\overline{S}=M$, we say that $S$ is \textbf{$\asymmdash$-dense} in $M$.
\end{defn}

\begin{defn}
Let $R$ be a semiring with a negation map with a lift $\left(\widehat{R},\widehat{\varphi}\right)$, and let $M$ be an $R$-module. A \textbf{lift} of $M$ is an $\widehat{R}$-module $\widehat{M}$ and a map $\widehat{\psi}:\widehat{M}\to M$ such that the following properties hold:
\begin{enumerate}
  \item $\forall x_1,x_2\in \widehat{M}:\widehat{\psi}\left(x_1\right)+\widehat{\psi}\left(x_2\right)\symmdash\widehat{\psi}\left(x_1+ x_2\right)$.
  \item $\forall\widehat{\alpha}\in\widehat{R}\,\forall x\in\widehat{M}:\widehat{\varphi}\left(\widehat{\alpha}\right)\widehat{\psi}\left(x\right)\symmdash\widehat{\psi}\left(\widehat{\alpha}\widehat{x}\right)$.
  \item $\widehat{\psi}\left(0_{\widehat{M}}\right)=0_M$.
  \item $\Im\widehat{\psi}$ is $\asymmdash$-dense in $M$.
\end{enumerate}
\end{defn}

\begin{thm}
If $R$ has a lift, then every $R$-module has a (free) lift. Furthermore, if the module is generated by $\mu$ elements (for some cardinal number $\mu$), it possesses a lift which is a free module generated by $\mu$ elements.
\end{thm}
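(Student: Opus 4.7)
The plan is to take $\widehat{M}$ to be the free $\widehat{R}$-module on a spanning set of $M$, pushed down to $M$ via $\widehat{\varphi}$. First I fix a spanning set $S\subseteq M$: if $M$ is generated by $\mu$ elements, I take $|S|=\mu$; otherwise $S=M$. I set $\widehat{M}=\bigoplus_{s\in S}\widehat{R}$ with canonical basis $\{e_s\}_{s\in S}$, and define $\widehat{\psi}:\widehat{M}\to M$ by extending $e_s\mapsto s$ through $\widehat{\varphi}$, i.e.
$$\widehat{\psi}\!\left(\sum_{s}\widehat{\alpha}_s\, e_s\right)=\sum_{s}\widehat{\varphi}(\widehat{\alpha}_s)\cdot s.$$
By construction $\widehat{M}$ is free over $\widehat{R}$ (free of rank $\mu$ in the $\mu$-generated case), so it only remains to verify the four defining conditions of a lift.

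Condition (3), $\widehat{\psi}(0_{\widehat{M}})=0_M$, is the empty-sum case. For condition (4) ($\asymmdash$-density of $\Im\widehat{\psi}$), given $y\in M$ I use that $S$ spans $M$ to write $y=\sum_i\alpha_i s_i$ with $\alpha_i\in R$ and $s_i\in S$. The $\asymmdash$-density of $\Im\widehat{\varphi}$ in $R$ supplies $\widehat{\alpha}_i\in\widehat{R}$ with $\alpha_i=\widehat{\varphi}(\widehat{\alpha}_i)+c_i$ for some $c_i\in\zeroset{R}$, so substituting,
$$y=\widehat{\psi}\!\left(\sum_i\widehat{\alpha}_i e_{s_i}\right)+\sum_i c_i s_i.$$
Each $c_i s_i\in\zeroset{M}$ because $\zeroset{R}\cdot M\subseteq\zeroset{M}$, so the error lies in $\zeroset{M}$ and hence $y\symmdash\widehat{\psi}(\sum_i\widehat{\alpha}_i e_{s_i})\in\Im\widehat{\psi}$.

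For conditions (1) and (2), I write $x_1=\sum_s\widehat{\alpha}_s e_s$, $x_2=\sum_s\widehat{\beta}_s e_s\in\widehat{M}$ on common finite support and take a scalar $\widehat{\gamma}\in\widehat{R}$. The $\asymmdash$-morphism property of $\widehat{\varphi}$ supplies $c_s,d_s\in\zeroset{R}$ satisfying $\widehat{\varphi}(\widehat{\alpha}_s)+\widehat{\varphi}(\widehat{\beta}_s)=\widehat{\varphi}(\widehat{\alpha}_s+\widehat{\beta}_s)+c_s$ and $\widehat{\varphi}(\widehat{\gamma})\widehat{\varphi}(\widehat{\alpha}_s)=\widehat{\varphi}(\widehat{\gamma}\widehat{\alpha}_s)+d_s$. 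Multiplying by the fixed $s\in S$ and summing gives
$$\widehat{\psi}(x_1)+\widehat{\psi}(x_2)=\widehat{\psi}(x_1+x_2)+\sum_s c_s s,\qquad \widehat{\varphi}(\widehat{\gamma})\widehat{\psi}(x_1)=\widehat{\psi}(\widehat{\gamma} x_1)+\sum_s d_s s,$$
and both error sums lie in $\zeroset{M}$ by the same reasoning as above, establishing conditions (1) and (2).

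I do not foresee a genuine obstacle: the construction runs parallel to the lift of a semiring proved just above, and the main bookkeeping is simply to check that the ``surpassing errors'' generated by applying $\widehat{\varphi}$ termwise always land in $\zeroset{M}$ after multiplication by the fixed elements of $S$ and summation, which is immediate from $\zeroset{R}\cdot M\subseteq\zeroset{M}$ together with closure of $\zeroset{M}$ under addition.
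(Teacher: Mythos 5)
Your proof is correct and follows the same route as the paper: take the free $\widehat{R}$-module on a spanning set $S$ of $M$, send $e_s\mapsto s$ and apply $\widehat\varphi$ to coefficients, and verify the four lift conditions by pushing the surpassing errors of $\widehat\varphi$ through scalar multiplication by the generators and using $\zeroset{R}\cdot M\subseteq\zeroset{M}$. The only small point the paper adds that you leave implicit is that in the density check one should pick $\widehat{\alpha}_s=0_{\widehat{R}}$ whenever $\alpha_s=0_R$ so the resulting tuple has finite support and genuinely lies in $\bigoplus_{s\in S}\widehat{R}$; this is harmless and easily supplied.
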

\begin{proof}
Let $\left(\widehat{R},\widehat{\varphi}\right)$ be a lift of $R$. Let $S\subseteq M$ be a generating set. Define $\widehat{M}=\widehat{R}^S$, and define a function $\widehat{\psi}:\widehat{M}\to M$ by
$$\widehat{\psi}\left(\left(\widehat{r}_s\right)_{s\in S}\right)=\sum_{s\in S}\widehat{\varphi}\left(\widehat{r}_s\right)s$$

We now prove that $\left(\widehat{M},\widehat{\psi}\right)$ is a lift of $M$.
\begin{enumerate}
  \item Let $\left(\widehat{x}_s\right)_{s\in S},\left(\widehat{y}_s\right)_{s\in S}\in\widehat{M}$. Then
  \begin{eqnarray*}
    \widehat{\psi}\left(\left(\widehat{x}_s\right)_{s\in S}\right)+\widehat{\psi}\left(\left(\widehat{y}_s\right)_{s\in S}\right)&=&\sum_{s\in S}\widehat{\varphi}\left(\widehat{x}_s\right)s+\sum_{s\in S}\widehat{\varphi}\left(\widehat{y}_s\right)s=\sum_{s\in S}\left(\widehat{\varphi}\left(\widehat{x}_s\right)+\widehat{\varphi}\left(\widehat{y}_s\right)\right)s\symmdash\\
     &\symmdash& \sum_{s\in S}\widehat{\varphi}\left(\widehat{x}_s+\widehat{y}_s\right)s=\widehat{\psi}\left(\left(\widehat{x}_s\right)_{s\in S}+\left(\widehat{y}_s\right)_{s\in S}\right)
  \end{eqnarray*}
  \item Let $\widehat{\alpha}\in\widehat{R}$ and $\left(\widehat{x}_s\right)_{s\in S}\in\widehat{M}$. Then
  $$\widehat{\varphi}\left(\widehat{\alpha}\right)\widehat{\psi}\left(\left(\widehat{x}_s\right)_{s\in S}\right)= \widehat{\varphi}\left(\widehat{\alpha}\right)\sum_{s\in S}\widehat{\varphi}\left(\widehat{x}_s\right)s=\sum_{s\in S}\left(\widehat{\varphi}\left(\widehat{\alpha}\right)\widehat{\varphi}\left(\widehat{x}_s\right)\right)s\symmdash\sum_{s\in S}\widehat{\varphi}\left(\widehat{\alpha}\widehat{x}_s\right)s=\widehat{\psi}\left(\widehat{\alpha}\left(\widehat{x}_s\right)_{s\in S}\right)$$
  \item Follows immediately.
  \item Let $x\in M$. Write $x=\sum_{s\in S}\alpha_s s$. Since $\Im\widehat{\varphi}$ is $\asymmdash$-dense in $R$, for each $\alpha_s$ there is $\widehat{\alpha}_s\in\widehat{R}$ such that $\alpha_s\symmdash\widehat{\varphi}\left(\widehat{\alpha}_s\right)$. If $\alpha_s=0_R$, we may choose $\widehat{\alpha}_s=0_{\widehat{R}}$. Therefore,
      $$x=\sum_{s\in S}\alpha_s s\symmdash\sum_{s\in S}\widehat{\varphi}\left(\widehat{\alpha}_s\right)s=\widehat{\psi}\left(\left(\widehat{\alpha}_s\right)_{s\in S}\right)$$
\end{enumerate}
\end{proof}

\begin{rem}\label{rem:Lift-of-R^n}
If $\left(\widehat{R},\widehat{\varphi}\right)$ is a lift of $R$, then $\left(\left(\widehat{R}\right)^n,\widehat{\psi}\right)$ is a lift of $R^n$, where
$$\left(\widehat{\psi}\left(\widehat{x}\right)\right)_i=\widehat{\varphi}\left(\left(\widehat{x}\right)_i\right)$$
I.e., $\widehat{\psi}$ applies $\widehat{\varphi}$ on each entry of the given vector. In this case, $\Im\widehat{\psi}=\left(R^{\vee}\right)^n$.
\end{rem}

We will later see theorems which hold over modules which have a lift, such as \Cref{cor:n+1-vec-in-Rn-are-dep}.

\begin{lem}\label{lem:lift-submodule}
Let $\widehat{N}\subseteq\widehat{M}$ be a submodule. Then $\overline{\widehat{\psi}\left(\widehat{N}\right)}$ is a submodule of $M$.
\end{lem}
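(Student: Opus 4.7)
The plan is to verify, one by one, the four closure properties that make $\overline{\widehat{\psi}(\widehat{N})}$ a submodule with a negation map of $M$: containing $0_M$, closure under addition, closure under scalar multiplication, and closure under $\minus$. The three $\symmdash$-inequalities in the definition of a lift, together with the transitivity of $\symmdash$ from \Lref{lem:symmdash-is-ref-and-trans} and the fact that $\zeroset{M}$ is itself a submodule absorbing both the $R$-action and products with $\zeroset{R}$, provide all the tools needed.

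Containing $0_M$ is immediate from $\widehat{\psi}(0_{\widehat{M}})=0_M$ together with reflexivity of $\symmdash$. For closure under addition, given $y_1,y_2\in\overline{\widehat{\psi}(\widehat{N})}$, write $y_i = \widehat{\psi}(\widehat{n}_i)+z_i$ with $\widehat{n}_i\in\widehat{N}$ and $z_i\in\zeroset{M}$. Then
$$y_1+y_2 = \widehat{\psi}(\widehat{n}_1)+\widehat{\psi}(\widehat{n}_2)+(z_1+z_2),$$
and property (1) of a lift yields $\widehat{\psi}(\widehat{n}_1)+\widehat{\psi}(\widehat{n}_2)\symmdash \widehat{\psi}(\widehat{n}_1+\widehat{n}_2)$. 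Since $\widehat{N}$ is a submodule, $\widehat{n}_1+\widehat{n}_2\in\widehat{N}$, and transitivity of $\symmdash$ places $y_1+y_2$ in $\overline{\widehat{\psi}(\widehat{N})}$.

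For closure under scalar multiplication by $\alpha\in R$, take $y=\widehat{\psi}(\widehat{n})+z\in\overline{\widehat{\psi}(\widehat{N})}$. By $\asymmdash$-density of $\Im\widehat{\varphi}$, choose $\widehat{\alpha}\in\widehat{R}$ with $\alpha=\widehat{\varphi}(\widehat{\alpha})+z_0$ for some $z_0\in\zeroset{R}$. Expanding,
$$\alpha y = \widehat{\varphi}(\widehat{\alpha})\widehat{\psi}(\widehat{n}) + \bigl(\widehat{\varphi}(\widehat{\alpha})\,z + z_0\,\widehat{\psi}(\widehat{n}) + \alpha z\bigr),$$
where the parenthesised sum lies in $\zeroset{M}$ since $R\cdot\zeroset{M}\subseteq\zeroset{M}$ and $\zeroset{R}\cdot M\subseteq\zeroset{M}$. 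Property (2) of the lift gives $\widehat{\varphi}(\widehat{\alpha})\widehat{\psi}(\widehat{n})\symmdash \widehat{\psi}(\widehat{\alpha}\widehat{n})$ with $\widehat{\alpha}\widehat{n}\in\widehat{N}$, and transitivity again places $\alpha y$ in $\overline{\widehat{\psi}(\widehat{N})}$. Closure under $\minus$ is then the special case $\alpha=\minus 1_R$, using that the negation map on $M$ is the induced one.

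The only real obstacle is the bookkeeping in the scalar multiplication step: one must make sure every ``error term'' generated when commuting $\alpha$ past $\widehat{\psi}$ and replacing $\alpha$ by $\widehat{\varphi}(\widehat{\alpha})$ is absorbed into $\zeroset{M}$, which follows cleanly from the fact that $\zeroset{M}$ is stable under the $R$-action and under multiplication by elements of $\zeroset{R}$.
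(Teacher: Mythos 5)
Your proof is correct and takes essentially the same route as the paper's, which simply treats a general $R$-linear combination $\alpha x+\beta y$ with a single chain of $\symmdash$-inequalities instead of verifying each closure property separately. One small algebraic slip: your displayed expansion of $\alpha y$ should read either $\widehat{\varphi}(\widehat{\alpha})\widehat{\psi}(\widehat{n}) + z_0\widehat{\psi}(\widehat{n}) + \alpha z$ or, if you expand the full product $(\widehat{\varphi}(\widehat{\alpha})+z_0)(\widehat{\psi}(\widehat{n})+z)$, the last summand should be $z_0 z$ rather than $\alpha z$ --- but since all of the candidate error terms lie in $\zeroset{M}$ anyway, the conclusion is unaffected.
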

\begin{proof}
Let $x,y\in\overline{\widehat{\psi}\left(\widehat{N}\right)}$ and let $\alpha,\beta\in R$. Take $\widehat{x},\widehat{y}\in\widehat{M}$ such that $x\symmdash\widehat{\psi}\left(\widehat{x}\right)$ and $y\symmdash\widehat{\psi}\left(\widehat{y}\right)$, and take $\widehat{\alpha},\widehat{\beta}\in\widehat{R}$ such that $\alpha\symmdash\widehat{\varphi}\left(\widehat{\alpha}\right)$ and $\beta\symmdash\widehat{\varphi}\left(\widehat{\beta}\right)$. Then
$$\alpha x+\beta y\symmdash\widehat{\varphi}\left(\widehat{\alpha}\right)\widehat{\psi}\left(\widehat{x}\right)+ \widehat{\varphi}\left(\widehat{\beta}\right)\widehat{\psi}\left(\widehat{y}\right)\symmdash \widehat{\psi}\left(\widehat{\alpha}\widehat{x}\right)+\widehat{\psi}\left(\widehat{\beta}\widehat{y}\right)\symmdash \widehat{\psi}\left(\widehat{\alpha}\widehat{x}+\widehat{\beta}\widehat{y}\right)$$
Since $\widehat{\alpha}\widehat{x}+\widehat{\beta}\widehat{y}\in N$, we are finished.
\end{proof}

\subsection{Linearly Independent Sets}

Up until now, most of our results were formulated and proved for a module with a negation map, where we didn't assume that the underlying semiring has a negation map. However, now that we are going to deal with linear dependency, we need the notion of quasi-zero scalars; hence, we assume for the rest of this section that our semiring has a negation map.

We first specialize our underlying semiring, to avoid the problem of ``quasi-zero divisors'':
\begin{defn}
A semiring with a negation map $R$ is called \textbf{entire}, if $R$ is commutative in both of the operations and if
$$\forall\alpha,\beta\in R:\alpha\beta\in\zeroset{R}\Rightarrow \alpha\in\zeroset{R}\vee\beta\in\zeroset{R}$$
\end{defn}

\begin{defn}
Let $R$ be an entire semiring with a negation map, and let $M$ be an $R$-module. A set $S\subseteq M$ is called \textbf{$\circ$-linearly dependent}, if
$$\exists k\in\N,\;\exists x_{1},\dots,x_{k}\in S,\;\exists\alpha_{1},\dots,\alpha_{k}\in \nzeroset{R}\setminus\left\{\minf\right\}:\alpha_{1}x_{1}+\cdots+\alpha_{k}v_{k}\in\zeroset M$$
$S$ is called \textbf{$\circ$-linearly independent}, if it is not linearly dependent.
\end{defn}

Since we are dealing with negation maps, we will omit the $\circ$ in $\circ$-linearly dependent and $\circ$-linearly independent.

\begin{lem}
Let $R$ be an entire semiring with a negation map, let $M$ be an $R$-module, and let $S\subseteq M$ be a linearly independent set. Then
$$S\cap\zeroset{M}=\emptyset$$
\end{lem}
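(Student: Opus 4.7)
The plan is to argue by contradiction: assume some $x \in S$ also lies in $\zeroset{M}$, and exhibit an explicit $\circ$-linear dependence relation among the elements of $S$ using coefficients drawn from $\nzeroset{R}\setminus\{\minf\}$, contradicting the $\circ$-linear independence of $S$.

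The obvious candidate for such a relation is the one-term combination
\[
1_R \cdot x \;=\; x \;\in\; \zeroset{M}.
\]
So I would just take $k=1$, $x_1 = x \in S$, and $\alpha_1 = 1_R$. The single point to verify is that $1_R$ is an admissible coefficient, i.e.\ that $1_R \in \nzeroset{R}\setminus\{\minf\}$. Certainly $1_R \neq \minf = 0_R$ in any nondegenerate semiring. The condition $1_R \notin \zeroset{R}$ deserves a brief remark: if it failed, then for every $\beta \in R$ we would have
\[
\beta \;=\; 1_R \cdot \beta \;\in\; \zeroset{R},
\]
since $\zeroset{R}$ absorbs multiplication by $R$ (a direct consequence of the negation axiom $\minus(\alpha\beta)=\alpha(\minus\beta)$). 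This would give $R = \zeroset{R}$ and hence $\nzeroset{R}\setminus\{\minf\} = \emptyset$, in which case no set can be $\circ$-linearly dependent and the statement of the lemma becomes vacuous. So under any meaningful reading of the hypothesis $1_R$ is a legitimate coefficient, and the single-term relation $1_R \cdot x \in \zeroset{M}$ witnesses $\circ$-linear dependence of $S$, producing the desired contradiction.

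I do not anticipate any genuine obstacle here; the result really is a one-liner, and the only subtlety is noting that the multiplicative identity is not itself a quasi-zero (equivalently, that $R$ is nondegenerate as a semiring with a negation map).
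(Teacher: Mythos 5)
Your proof is correct and takes exactly the paper's route: the one-term combination $1_R\cdot x = x \in \zeroset{M}$ witnesses $\circ$-linear dependence, contradicting independence of $S$. Your extra attention to whether $1_R \in \nzeroset{R}\setminus\{\minf\}$ is a reasonable observation that the paper leaves implicit; the only small imprecision is in the side remark that the lemma ``becomes vacuous'' when $1_R\in\zeroset{R}$ — in that degenerate case every subset is linearly independent while $M=\zeroset{M}$ (since $x = 1_R x\in\zeroset{M}$ for all $x$), so the lemma would actually be \emph{false} rather than vacuous, which is all the more reason that $1_R\notin\zeroset{R}$ must be taken as a standing nondegeneracy hypothesis.
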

\begin{proof}
Assume there exists some $x\in S$ such that $x\in\zeroset{M}$. Then the linear combination $\one x$ is quasi-zero, contradicting the fact that $S$ is linearly independent.
\end{proof}

\begin{lem}\label{lem:surpass-dep-is-dep}
Let $R$ be an entire semiring with a negation map, let $M$ be an $R$-module, and let $S=\left\{x_1,\dots,x_m\right\}$ be a linearly dependent subset of $M$. Assume that $S'=\left\{y_1,\dots,y_m\right\}\subseteq M$ satisfies $\forall i:y_i\symmdash x_i$. Then $S'$ is also linearly dependent.
\end{lem}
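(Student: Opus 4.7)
The plan is to reuse the very scalars that witness the linear dependence of $S$. By hypothesis there exist indices $i_1,\dots,i_k$ and scalars $\alpha_1,\dots,\alpha_k\in\nzeroset{R}\setminus\{\minf\}$ with $\sum_{j=1}^k \alpha_j x_{i_j}\in\zeroset{M}$. My proposal is to show that the same scalars applied to the corresponding $y_{i_j}$'s again produce an element of $\zeroset{M}$, which is exactly the certificate of linear dependence required for $S'$.

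To execute this, I would first unfold the surpassing relation: since $y_{i_j}\symmdash x_{i_j}$, there exist $z_{i_j}\in\zeroset{M}$ with $y_{i_j}=x_{i_j}+z_{i_j}$. Distributing scalar multiplication gives
$$\sum_{j=1}^k \alpha_j y_{i_j} \;=\; \sum_{j=1}^k \alpha_j x_{i_j} \;+\; \sum_{j=1}^k \alpha_j z_{i_j}.$$
The first summand is in $\zeroset{M}$ by assumption. For the second summand, the relevant closure facts are that $\zeroset{M}$ is stable under $R$-scaling, because $\alpha(w+\minus w)=\alpha w+\minus(\alpha w)=(\alpha w)^\circ$, and closed under addition, because $w^\circ+w'^\circ=(w+w')^\circ$; both are immediate from the negation map axioms, and have already been used implicitly (e.g.\ in the proof that $\kersim_{\sim}$ is a submodule). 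Hence each $\alpha_j z_{i_j}$ is a quasi-zero and so is their sum.

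Adding the two contributions, the total lies in $\zeroset{M}$, which is precisely what the definition of linear dependence demands of $S'$ with the scalars $\alpha_1,\dots,\alpha_k$. No genuine obstacle arises: the scalars are unchanged, so they automatically remain in $\nzeroset{R}\setminus\{\minf\}$, and entireness of $R$ is not even needed for this particular step (it is a standing hypothesis used elsewhere to guarantee the notion of linear dependence behaves well). The argument rests only on the two trivial closure properties of $\zeroset{M}$ recorded above.
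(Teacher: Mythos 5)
Your proof is correct and is essentially the paper's argument, merely unfolded: the paper observes directly that $\sum_i\alpha_i y_i\symmdash\sum_i\alpha_i x_i\in\zeroset{M}$ and then invokes Lemma~\ref{lem:surpass-zero-is-zero}, while you make explicit the decomposition $y_i=x_i+z_i$ and the closure of $\zeroset{M}$ under sums and scaling that justifies that surpassing step. Both proofs hinge on the same two facts, so this is the same route.
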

\begin{proof}
Assume that $\displaystyle{\sum_i\alpha_i x_i\in\zeroset{M}}$, where $\forall i:\alpha_i\in R^{\vee}$; then
$$\sum_i\alpha_i y_i\symmdash\sum_i\alpha_i x_i\in\zeroset{M}$$
implying $\displaystyle{\sum_i\alpha_i y_i\in\zeroset{M}}$, by \Lref{lem:surpass-zero-is-zero}.
\end{proof}

\begin{lem}\label{lem:surpass-implies-diff-zero}
Let $M$ be an $R$-module. If $x\symmdash y$, then $x+\minus y\in\zeroset{M}$.
\end{lem}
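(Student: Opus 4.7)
The plan is to unpack the definition of $\symmdash$ and use closure properties of the submodule $\zeroset{M}$. Specifically, since $x\symmdash y$, there exists some $z\in\zeroset{M}$ with $x=y+z$. Adding $\minus y$ to both sides gives
\[
x+\minus y \;=\; y+\minus y+z \;=\; y^\circ+z.
\]
Now $y^\circ\in\zeroset{M}$ by definition, and $z\in\zeroset{M}$ by assumption. Since $\zeroset{M}$ was already noted to be a submodule of $M$ (indeed, writing $z=w^\circ$ for some $w\in M$, one has $y^\circ+z = y^\circ+w^\circ = (y+w)+\minus(y+w) = (y+w)^\circ$, using additivity of the negation map), it follows that $x+\minus y\in\zeroset{M}$.

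I expect no real obstacle here: the only thing to check is that the sum of two quasi-zeros is a quasi-zero, which follows immediately from $\minus(a+b)=\minus a+\minus b$. So the proof is essentially a one-line substitution followed by invoking that $\zeroset{M}$ is closed under addition.
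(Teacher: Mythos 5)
Your proof is correct and follows essentially the same approach as the paper: unpack the definition to write $x=y+z$ with $z\in\zeroset{M}$, add $\minus y$ to obtain $y^\circ+z$, and conclude by closure of $\zeroset{M}$ under addition. The paper writes $y+\minus y$ as $\zero y$ rather than $y^\circ$ and leaves the closure step implicit, but the substance is identical.
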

\begin{proof}
By definition, there is some $z\in\zeroset{M}$ such that $x=y+z$. Therefore,
$$x+\minus y=y+z+\minus y=\zero y+z\in\zeroset{M}$$
\end{proof}

\begin{lem}\label{lem:lin-comb-dep}
Let $R$ be an entire semiring with a negation map, and let $M$ be an $R$-module. Assume that ${\displaystyle y\symmdash\sum_{i=1}^{k}\alpha_{i}x_{i}}$. Then the set $\left\{ x_{1},\dots,x_{k},y\right\} $ is linearly dependent.
\end{lem}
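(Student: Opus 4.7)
The plan is to produce an explicit linear dependence of $\{x_1, \dots, x_k, y\}$ directly from the surpassing hypothesis. Applying \Lref{lem:surpass-implies-diff-zero} to $y \symmdash \sum_{i=1}^{k} \alpha_i x_i$ yields
$$1_R \cdot y + \sum_{i=1}^{k} (\minus \alpha_i) x_i \;=\; y + \minus\!\sum_{i=1}^{k} \alpha_i x_i \;\in\; \zeroset{M}.$$
This already looks like a linear dependence; the only issue is that the coefficients must lie in $\nzeroset{R} \setminus \{\minf\}$, and a priori some of the $\alpha_i$ (hence $\minus \alpha_i$) could be quasi-zero or zero.

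To deal with this, I would note two easy facts: first, $\zeroset{R}$ is closed under the negation map, since $\minus(a + \minus a) = \minus a + a$; second, if $\beta \in \zeroset{R} \cup \{\minf\}$, then $\beta x_i \in \zeroset{M}$ for any $x_i$, because writing $\beta = c^\circ$ gives $\beta x_i = c x_i + \minus c x_i$. Therefore, letting $I = \{i : \alpha_i \in \nzeroset{R} \setminus \{\minf\}\}$, all of the terms $(\minus \alpha_i) x_i$ for $i \notin I$ already lie in $\zeroset{M}$ and can be absorbed, leaving
$$1_R \cdot y + \sum_{i \in I} (\minus \alpha_i) x_i \;\in\; \zeroset{M}.$$
By the first observation, $\minus \alpha_i \in \nzeroset{R} \setminus \{\minf\}$ for each $i \in I$, so all the remaining coefficients are legitimate.

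The only remaining point, and the one piece of the argument that I would flag as the main (mild) obstacle, is the coefficient $1_R$ attached to $y$: for this to qualify as a valid coefficient we need $1_R \in \nzeroset{R}$. This is the natural nondegenerate setting — it holds in rings, in supertropical algebras, and in ELT rings — and if $1_R$ were a quasi-zero then every element of $R$ would be quasi-zero (since $\alpha = \alpha \cdot 1_R$), collapsing the notion of linear independence entirely. So under the standing assumptions, the displayed combination is a nontrivial linear dependence in $\{x_1, \dots, x_k, y\}$, and the lemma follows.
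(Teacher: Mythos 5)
Your plan correctly identifies the target identity, but the intermediate ``absorption'' step is not valid. You first apply \Lref{lem:surpass-implies-diff-zero} to obtain
$$y + \sum_{i=1}^{k}\left(\minus\alpha_i\right)x_i \in \zeroset{M},$$
and then argue that, because each term $\left(\minus\alpha_i\right)x_i$ with $i\notin I$ already lies in $\zeroset{M}$, you may discard those terms and still have a quasi-zero. This requires the implication ``$A+B\in\zeroset{M}$ and $B\in\zeroset{M}$ imply $A\in\zeroset{M}$,'' which is false over a semiring: there is no cancellation of additive terms. Indeed, $A+B=Z$ with $B\in\zeroset{M}$ only tells you $Z\symmdash A$; \Lref{lem:surpass-zero-is-zero} transfers quasi-zero status from the smaller element to the larger, so you would need $A\symmdash Z$, which you do not have. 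A concrete counterexample in any ELT ring: $\layer{0}{1}+\layer{1}{0}=\layer{1}{0}\in\zeroset{\R}$ and $\layer{1}{0}\in\zeroset{\R}$, yet $\layer{0}{1}\notin\zeroset{\R}$.

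The repair (and the paper's argument) is to reduce \emph{before} forming the difference. Since $\sum_{i\notin I}\alpha_i x_i\in\zeroset{M}$, one has $\sum_{i=1}^{k}\alpha_i x_i\symmdash\sum_{i\in I}\alpha_i x_i$, hence by transitivity $y\symmdash\sum_{i\in I}\alpha_i x_i$ (and if $I=\varnothing$, \Lref{lem:surpass-zero-is-zero} gives $y\in\zeroset{M}$ directly, so the singleton $\one y$ already witnesses dependence). Only then does one apply \Lref{lem:surpass-implies-diff-zero}, so the resulting combination has all coefficients outside $\zeroset{R}$ from the start. Your auxiliary observations --- that $\minus$ preserves $\nzeroset{R}\setminus\left\{\minf\right\}$, and that $\one\notin\zeroset{R}$ is needed for dependence to be a nondegenerate notion --- are both correct and both implicitly present in the paper's proof; but they are not where the difficulty lies. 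The genuine gap in your proposal is the cancellation step.
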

\begin{proof}
Write
$$I=\left\{i=1,\dots,n\middle|\alpha_i\notin\zeroset{R}\right\}$$
We note that if $i\notin I$, then $\alpha_i\in\zeroset{R}$, and thus $\alpha_i x_i\in\zeroset{M}$. If $I=\varnothing$, then $y\in\zeroset{M}$, and thus $\left\{ x_{1},\dots,x_{k},y\right\} $ is linearly dependent. Therefore, we may assume that $I\neq\varnothing$, and thus
$$y\symmdash\sum_{i=1}^k\alpha_i x_i\symmdash\sum_{i\in I}\alpha_i x_i$$

By \Lref{lem:surpass-implies-diff-zero},
$$y+\sum_{i\in I}\left(\minus\alpha_i\right) x_i=y+\minus\sum_{i\in I}\alpha_i x_i\in\zeroset{M}.$$
We found a linear combination of some of the vectors in the set $\left\{x_{1},\dots,x_{k},x\right\}$, which is quasi-zero, and the coefficients are not quasi-zero, implying $\left\{x_{1},\dots,x_{k},x\right\}$ is linearly dependent.
\end{proof}

\begin{lem}\label{lem:sub-lin-ind-not-span}
Let $S\subseteq M$ be a linearly independent set. Then for all $x\in S$, $S\backslash\left\{x\right\}$ is not a spanning set of~$M$.
\end{lem}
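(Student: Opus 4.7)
The plan is to argue by contradiction. Suppose, toward a contradiction, that $S \setminus \{x\}$ spans $M$. Since $x \in M$, this means there exist $x_1, \dots, x_k \in S \setminus \{x\}$ and $\alpha_1, \dots, \alpha_k \in R$ with
\[
x = \sum_{i=1}^{k} \alpha_i x_i.
\]
By reflexivity of $\symmdash$ (\Lref{lem:symmdash-is-ref-and-trans}), this gives $x \symmdash \sum_{i=1}^{k} \alpha_i x_i$.

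Next I would invoke \Lref{lem:lin-comb-dep} with $y = x$, which immediately yields that the set $\{x_1, \dots, x_k, x\}$ is linearly dependent. Since this set is a subset of $S$ (note that $x \notin \{x_1, \dots, x_k\}$ by choice, but this is not even needed --- any linearly dependent subset of $S$ yields a linear dependency relation in $S$ itself, because the witnessing coefficients and vectors live in $S$), we obtain that $S$ is linearly dependent, contradicting our hypothesis.

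I don't anticipate a real obstacle here; the lemma is essentially an immediate corollary of \Lref{lem:lin-comb-dep}. The only subtle point is that the definition of ``spanning set'' uses equality rather than the surpassing relation, so one must pass from $x = \sum \alpha_i x_i$ to $x \symmdash \sum \alpha_i x_i$ via reflexivity in order to apply \Lref{lem:lin-comb-dep}, which is stated using $\symmdash$. Once this is observed, the proof is a couple of lines.
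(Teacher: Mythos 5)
Your proof is correct and is essentially identical to the paper's: both assume $S\setminus\{x\}$ spans $M$, write $x$ as a linear combination of elements of $S\setminus\{x\}$, and apply \Lref{lem:lin-comb-dep} to conclude that a subset of $S$ (hence $S$ itself) is linearly dependent. Your remark about passing from equality to $\symmdash$ via reflexivity is a fine observation but is left implicit in the paper.
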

\begin{proof}
Assume that $S\backslash\left\{ x\right\} $ is a spanning set of $M$. Then
$$\exists k\in\mathbb{N},\,\exists\alpha_{1},\dots,\alpha_{k}\in R,\,\exists x_{1},\dots,x_{k}\in S:\alpha_{1}x_{1}+\cdots+\alpha_{k}x_{k}=x=\one x$$
By \Lref{lem:lin-comb-dep}, $\left(S\backslash\left\{x\right\}\right)\cup\left\{x\right\}=S$ is linearly dependent, a contradiction. Thus, $S\backslash\left\{x\right\}$ is not a spanning set of $M$.
\end{proof}

\begin{cor}\label{cor:span-not-sub-ind}
Suppose $A\subseteq M$ is a linearly independent set, and that $B\subseteq M$ is a spanning set of $M$. If $B\subseteq A$, then $A=B$.
\end{cor}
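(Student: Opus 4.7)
The plan is to argue by contradiction, combining the hypothesis $B \subseteq A$ with \Lref{lem:sub-lin-ind-not-span}. Suppose $A \neq B$. Since $B \subseteq A$, there must exist some $x \in A \setminus B$.

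Next, I would observe that $B \subseteq A \setminus \{x\}$, because $x \notin B$ by choice of $x$. Since any superset of a spanning set of $M$ is again a spanning set of $M$ (direct from the definition of $\Span$), and $B$ spans $M$ by hypothesis, it follows that $A \setminus \{x\}$ is a spanning set of $M$.

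On the other hand, $A$ is linearly independent and $x \in A$, so \Lref{lem:sub-lin-ind-not-span} applies and tells us that $A \setminus \{x\}$ cannot be a spanning set of $M$. This contradicts the previous paragraph, so we must have $A = B$.

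The proof is essentially a one-liner given \Lref{lem:sub-lin-ind-not-span}; there is no real obstacle, only the small bookkeeping check that enlarging a spanning set preserves the spanning property, which is immediate from $\Span(B) \subseteq \Span(A \setminus \{x\}) \subseteq M$ whenever $B \subseteq A \setminus \{x\}$.
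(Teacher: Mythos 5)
Your proof is correct. You and the paper take slightly different but closely related routes. The paper's proof picks $x\in A\setminus B$, writes $x$ as a linear combination of elements of $B$ (possible since $B$ spans), invokes \Lref{lem:lin-comb-dep} to conclude that $B\cup\{x\}$ is linearly dependent, and then observes $B\cup\{x\}\subseteq A$ to contradict linear independence of $A$. You instead pick $x\in A\setminus B$, note $B\subseteq A\setminus\{x\}$ and hence $A\setminus\{x\}$ spans (since enlarging a spanning set preserves spanning), and then invoke \Lref{lem:sub-lin-ind-not-span} to get the contradiction. Since \Lref{lem:sub-lin-ind-not-span} is itself proved via \Lref{lem:lin-comb-dep}, the underlying engine is the same; your version sits one lemma higher in the dependency chain and trades the direct construction of a linear combination for the small observation that supersets of spanning sets span. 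Either approach is a clean one-liner given the preceding lemmas, and both are equally valid.
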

\begin{proof}
If $A\neq B$, there exists $x\in A\setminus B$. Therefore, there exists a linear combination
$$x=\sum_{b\in B}\alpha_{b}b$$
By \Lref{lem:lin-comb-dep}, $B\cup\left\{x\right\}$ is linearly dependent. But $B\cup\left\{x\right\}\subseteq A$, contradicting the assumption that $A$ is linearly independent. Thus $A=B$.
\end{proof}

\subsection{d-bases and s-bases}
\subsubsection{d-bases}
\begin{defn}\label{def:d-base}
Let $R$ be an entire semiring with a negation map. A \textbf{d-base} (for dependence base) of an $R$-module $M$ is a maximal linearly independent subset of $M$.
\end{defn}

\begin{defn}
Let $R$ be an entire semiring with a negation map, and let $M$ be an $R$-module. The \textbf{rank} of $M$ is
$$\rk\left(M\right)=\max\left\{ \left|B\right|\middle| B\textrm{ is a d-base of }M\right\}$$
\end{defn}

\begin{lem}\label{lem:lin-ind-in-maximal}
Every linearly independent set is contained in some d-base.
\end{lem}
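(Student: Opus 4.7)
The plan is to invoke Zorn's lemma on the poset of linearly independent subsets of $M$ containing the given set $S$, ordered by inclusion. A maximal element of this poset is, by definition, a d-base of $M$ containing $S$, so it suffices to verify the hypotheses of Zorn.

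First I would fix a linearly independent $S\subseteq M$ and let
$$\mathcal{P}=\left\{T\subseteq M\,\middle|\,S\subseteq T\text{ and }T\text{ is linearly independent}\right\},$$
partially ordered by $\subseteq$. The set $\mathcal{P}$ is nonempty since $S\in\mathcal{P}$. The main step is to show that every nonempty chain $\mathcal{C}\subseteq\mathcal{P}$ has an upper bound in $\mathcal{P}$, for which the natural candidate is $U=\bigcup_{T\in\mathcal{C}}T$. Clearly $S\subseteq U$ and $T\subseteq U$ for every $T\in\mathcal{C}$, so the only thing to check is that $U$ is itself linearly independent.

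The key step here is the standard finite-support argument: suppose for contradiction that $U$ is linearly dependent, so there exist $x_1,\dots,x_k\in U$ and $\alpha_1,\dots,\alpha_k\in\nzeroset{R}\setminus\{\minf\}$ with $\alpha_1 x_1+\cdots+\alpha_k x_k\in\zeroset{M}$. Each $x_i$ lies in some $T_i\in\mathcal{C}$, and since $\mathcal{C}$ is totally ordered by inclusion, among the finitely many $T_1,\dots,T_k$ there is a largest one $T^*$ containing all of them. But then $\{x_1,\dots,x_k\}\subseteq T^*$ exhibits a linear dependence in $T^*$, contradicting $T^*\in\mathcal{P}$. Hence $U\in\mathcal{P}$ is an upper bound for $\mathcal{C}$.

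Applying Zorn's lemma to $\mathcal{P}$ yields a maximal element $B\in\mathcal{P}$; by construction $S\subseteq B$, and maximality in $\mathcal{P}$ is exactly maximality among linearly independent subsets of $M$ (since enlarging $B$ by any element of $M\setminus B$ would produce a strictly larger member of $\mathcal{P}$ if it remained independent). Thus $B$ is a d-base containing $S$, as required. The only nontrivial point in the whole argument is the finite-support reduction used to verify that the union of a chain remains linearly independent; everything else is formal.
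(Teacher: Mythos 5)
Your proof is correct and follows the same Zorn's lemma argument the paper uses; the paper simply asserts that the poset of linearly independent supersets of $S$ ``satisfies Zorn's condition,'' whereas you spell out the finite-support verification that the union of a chain remains linearly independent. The extra detail is the right thing to check and matches the paper's intent.
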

\begin{proof}
Let $M$ be an $R$-module, and let $S\subseteq M$ be a linearly independent set. Consider
$$\left\{S'\subseteq M\middle| S\subseteq S'\textrm{ is linearly independent}\right\}$$
This set satisfies Zorn's condition, and thus, has a maximal element $S'',$ which is a d-base of $M$.
\end{proof}

\subsubsection{d-bases of modules which possess a lift}

For this part, we fix an entire semiring with a negation map $R$ with a lift $\left(\widehat{R},\widehat{\varphi}\right)$.

\begin{lem}\label{lem:ELTrop-of-dep-is-dep}
Let $M$ be an $R$-module with a lift $\left(\widehat{M},\widehat{\psi}\right)$, and let $\widehat{x}_1,\dots,\widehat{x}_m\in\widehat{M}$ be vectors. If $\widehat{x}_1,\dots,\widehat{x}_m$ are linearly dependent, then $\widehat{\psi}\left(\widehat{x}_1\right),\dots,\widehat{\psi}\left(\widehat{x}_m\right)$ are also linearly dependent.
\end{lem}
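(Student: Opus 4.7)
The plan is to transport the dependency witnessed over $\widehat{R}$ down to $R$ via the maps $\widehat{\varphi}$ and $\widehat{\psi}$, using the surpassing axioms of a lift. Suppose $\widehat{x}_{i_1}, \dots, \widehat{x}_{i_k}$ (a finite subcollection of $\widehat{x}_1, \dots, \widehat{x}_m$) together with scalars $\widehat{\alpha}_1, \dots, \widehat{\alpha}_k \in \widehat{R}$ witness the dependence. Since $\widehat{R}$ is a ring with negation map $\minus a = -a$, its set of quasi-zeros collapses to $\{0_{\widehat{R}}\}$, so the hypothesis reads: $\widehat{\alpha}_j \neq 0_{\widehat{R}}$ for every $j$ and $\sum_j \widehat{\alpha}_j \widehat{x}_{i_j} = 0_{\widehat{M}}$. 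The natural candidates in $R$ and $M$ are $\alpha_j := \widehat{\varphi}(\widehat{\alpha}_j)$ and $\widehat{\psi}(\widehat{x}_{i_j})$.

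First I would verify each $\alpha_j$ lies in $\nzeroset{R} \setminus \{\minf\} = R \setminus \zeroset{R}$, so that it is an admissible coefficient. Axiom (2) of a lift of a semiring forces $\Im \widehat{\varphi} \subseteq \nzeroset{R}$, giving $\alpha_j \in \nzeroset{R}$; axiom (3) forces $\widehat{\varphi}(\widehat{\alpha}_j) = 0_R \iff \widehat{\alpha}_j = 0_{\widehat{R}}$, ruling out $\alpha_j = 0_R$.

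The heart of the argument is the surpassing chain
$$\sum_j \alpha_j \widehat{\psi}(\widehat{x}_{i_j}) \;=\; \sum_j \widehat{\varphi}(\widehat{\alpha}_j)\, \widehat{\psi}(\widehat{x}_{i_j}) \;\symmdash\; \sum_j \widehat{\psi}(\widehat{\alpha}_j \widehat{x}_{i_j}) \;\symmdash\; \widehat{\psi}\Bigl(\sum_j \widehat{\alpha}_j \widehat{x}_{i_j}\Bigr) \;=\; \widehat{\psi}(0_{\widehat{M}}) \;=\; 0_M.$$
The first $\symmdash$ applies axiom (2) of a module lift to each summand, using the trivial fact that $a_i \symmdash b_i$ for all $i$ implies $\sum_i a_i \symmdash \sum_i b_i$ (just add the witnessing quasi-zeros, which remain quasi-zero since $\zeroset{M}$ is a submodule). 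The second $\symmdash$ applies axiom (1) of a module lift inductively. The final equality uses the third axiom $\widehat{\psi}(0_{\widehat{M}}) = 0_M$. Now \Lref{lem:surpass-zero-is-zero} promotes the relation $\sum_j \alpha_j \widehat{\psi}(\widehat{x}_{i_j}) \symmdash 0_M \in \zeroset{M}$ to genuine membership $\sum_j \alpha_j \widehat{\psi}(\widehat{x}_{i_j}) \in \zeroset{M}$, which, combined with the preceding paragraph, exhibits the required linear dependence among $\widehat{\psi}(\widehat{x}_1), \dots, \widehat{\psi}(\widehat{x}_m)$.

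There is no real obstacle here: the proof is a purely formal surpassing chase. The only point of minor care is the translation between the ring-theoretic and semiring-theoretic meanings of ``linearly dependent'' in the first step, and keeping the three axioms of a module lift straight while chaining them together.
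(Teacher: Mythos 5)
Your proposal is correct and follows essentially the same route as the paper: apply $\widehat{\psi}$ to the dependence relation $\sum_j \widehat{\alpha}_j\widehat{x}_{i_j}=0_{\widehat{M}}$, propagate through the two lift axioms to get a surpassing chain ending at $0_M$, invoke \Lref{lem:surpass-zero-is-zero} to promote this to membership in $\zeroset{M}$, and check via lift axioms (2) and (3) that the transported coefficients $\widehat{\varphi}(\widehat{\alpha}_j)$ are admissible. If anything, you are slightly more scrupulous than the printed proof about working with a subcollection on which all coefficients are genuinely nonzero, whereas the paper keeps all $m$ indices with ``not all zero'' scalars and implicitly discards the terms where $\widehat{\varphi}(\widehat{\alpha}_i)=0_R$; this is a cosmetic difference, not a divergence in method.
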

\begin{proof}
$\widehat{x}_1,\dots,\widehat{x}_m$ are linearly dependent, so there are $\widehat{\alpha}_1,\dots,\widehat{\alpha}_m\in\widehat{R}$, not all are $0_{\widehat{R}}$, such that
$$\sum_{i=1}^m\widehat{\alpha}_i \widehat{x}_i=0_{\widehat{M}}$$
Applying $\widehat{\psi}$ on the equation, we get
$$\sum_{i=1}^{m}\widehat{\varphi}\left(\widehat{\alpha}_i\right)\widehat{\psi}\left(\widehat{x}_i\right)\symmdash\widehat{\psi}\left(\sum_{i=1}^{m}\widehat{\alpha}_i \widehat{x}_i\right)=\widehat{\psi}\left(0_{\widehat{M}}\right)=0_M$$
Using \Lref{lem:surpass-zero-is-zero}, we get
$$\sum_{i=1}^{m}\widehat{\varphi}\left(\widehat{\alpha}_i\right)\widehat{\psi}\left(\widehat{x}_i\right)\in\zeroset{M}$$
Since $\Im\widehat{\varphi}\subseteq\nzeroset{R}$, we get that for each $i$, $\widehat{\varphi}\left(\widehat{\alpha}_i\right)\in\nzeroset{R}$.
But we know that not all of the $\widehat{\alpha}_i$'s are $0_{\widehat{R}}$, and thus not all of the $\widehat{\varphi}\left(\widehat{\alpha}_i\right)$ are $0_R$, as required.
\end{proof}

\begin{cor}\label{cor:n+1-vec-in-Rn-are-dep}
If $R$ has a lift, than any $n+1$ vectors in $R^n$ are linearly dependent.
\end{cor}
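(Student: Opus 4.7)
The plan is to lift to the ring $\widehat{R}$, invoke the classical statement that $n+1$ vectors in $\widehat{R}^n$ are linearly dependent, and push this dependence back down to $R^n$ using the two preservation lemmas already established.

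Concretely, let $\left(\widehat{R},\widehat{\varphi}\right)$ be the given lift of $R$. By \Rref{rem:Lift-of-R^n}, $\left(\widehat{R}^n,\widehat{\psi}\right)$ is a lift of $R^n$, where $\widehat{\psi}$ applies $\widehat{\varphi}$ componentwise. Given $n+1$ vectors $x_1,\dots,x_{n+1}\in R^n$, the $\asymmdash$-density of $\Im\widehat{\psi}$ in $R^n$ provides $\widehat{x}_1,\dots,\widehat{x}_{n+1}\in\widehat{R}^n$ with $x_i\symmdash\widehat{\psi}\left(\widehat{x}_i\right)$ for every $i$.

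Since $R$ is entire (hence commutative) and the lift construction produces a commutative ring $\widehat{R}$, the classical result that $n+1$ elements of the free module $\widehat{R}^n$ are linearly dependent applies --- a dependence can be exhibited, for instance, via the $n\times n$ minors of the $n\times(n+1)$ matrix whose columns are the $\widehat{x}_i$. This yields scalars $\widehat{\alpha}_1,\dots,\widehat{\alpha}_{n+1}\in\widehat{R}$, not all zero, with $\sum_i\widehat{\alpha}_i\widehat{x}_i=0_{\widehat{R}^n}$. Applying \Lref{lem:ELTrop-of-dep-is-dep}, the vectors $\widehat{\psi}\left(\widehat{x}_1\right),\dots,\widehat{\psi}\left(\widehat{x}_{n+1}\right)$ are linearly dependent in $R^n$. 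Finally, \Lref{lem:surpass-dep-is-dep}, together with the surpassing relations $x_i\symmdash\widehat{\psi}\left(\widehat{x}_i\right)$, transfers the dependence to the original vectors $x_1,\dots,x_{n+1}$.

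The main obstacle --- though a mild one --- is ensuring the classical linear-algebra input over $\widehat{R}$. Over commutative rings this is standard (by passage to the fraction field of a quotient by a minimal prime, or directly by the exterior-algebra / minors argument), and the explicit construction of $\widehat{R}$ in the preceding existence theorem inherits commutativity as soon as $R$ is commutative, so there is no genuine difficulty here; the real content of the corollary is the chain of applications of the lift machinery.
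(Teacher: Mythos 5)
Your proof is correct and follows essentially the same route as the paper's: lift each $x_i$ to $\widehat{R}^n$ via the componentwise lift from \Rref{rem:Lift-of-R^n}, invoke the classical fact that $n+1$ vectors in $\widehat{R}^n$ are linearly dependent, push the dependence down with \Lref{lem:ELTrop-of-dep-is-dep}, and then transfer it to the original vectors with \Lref{lem:surpass-dep-is-dep}. The only (very minor) divergence is that you pause to justify the classical input over $\widehat{R}$ via commutativity and the minors argument, whereas the paper simply asserts it for a ring; both treatments implicitly rely on the lift satisfying the appropriate rank condition.
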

\begin{proof}
Let $x_1,\dots,x_{n+1}\in R^n$ be any vectors. By \Rref{rem:Lift-of-R^n}), $R^n$ has a lift $\left(\left(\widehat{R}\right)^n,\widehat{\psi}\right)$. Let $x'_1,\dots,x'_{n+1}\in\Im\widehat{\psi}$ such that $x_i\symmdash x'_i$ for each $i$. Now, let $y_1,\dots,y_{n+1}\in\left(\widehat{R}\right)^n$ be some vectors such that for each $i$, $\widehat{\psi}\left(y_i\right)=x'_i$.\\

$\widehat{R}$ is a ring, hence any $n+1$ vectors in $\left(\widehat{R}\right)^n$ are linearly dependent; in particular, $y_1,\dots,y_{n+1}$ are linearly dependent. By \Lref{lem:ELTrop-of-dep-is-dep}, $x'_1,\dots,x'_{n+1}$ are linearly dependent. By \Lref{lem:surpass-dep-is-dep}, we are finished.
\end{proof}

\begin{thm}\label{thm:max-num-of-lin-ind-in-R^n-is-n}
If $R$ has a lift, then $\rk\left(R^{n}\right)=n$.
\end{thm}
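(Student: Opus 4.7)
The plan is to prove the theorem by establishing two inequalities: $\rk(R^n)\leq n$ and $\rk(R^n)\geq n$. The first inequality is essentially given for free by the preceding corollary, while the second requires exhibiting an explicit linearly independent set of size $n$.

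For the upper bound, observe that \Cref{cor:n+1-vec-in-Rn-are-dep} says any $n+1$ vectors in $R^n$ are linearly dependent. Consequently, every linearly independent subset of $R^n$ has cardinality at most $n$, so no d-base can exceed $n$ elements and $\rk(R^n)\leq n$.

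For the lower bound, I would take the standard basis $e_1,\dots,e_n$, where $e_i$ has $\one$ in the $i$-th coordinate and $\minf$ elsewhere, and show that it is linearly independent. The key observation is that, since the negation map on $R^n$ acts componentwise, we have
$$\zeroset{R^n}=\left\{(v_1,\dots,v_n)\in R^n\;\middle|\; v_i\in\zeroset{R}\text{ for all }i\right\},$$
because $(x_1,\dots,x_n)^{\circ}=(x_1^{\circ},\dots,x_n^{\circ})$. Now suppose for contradiction that $\{e_1,\dots,e_n\}$ were linearly dependent, so that there exist indices $i_1,\dots,i_k$ and scalars $\alpha_{i_1},\dots,\alpha_{i_k}\in\nzeroset{R}\setminus\{\minf\}$ with
$$\sum_{j=1}^{k}\alpha_{i_j}e_{i_j}\in\zeroset{R^n}.$$
Reading off the $i_j$-th coordinate of the left-hand side yields $\alpha_{i_j}\in\zeroset{R}$. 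But $\nzeroset{R}\setminus\{\minf\}=R\setminus\zeroset{R}$ by the definition of $\nzeroset{R}$, so $\alpha_{i_j}\notin\zeroset{R}$, a contradiction. Hence $\{e_1,\dots,e_n\}$ is linearly independent, and by \Lref{lem:lin-ind-in-maximal} it extends to a d-base; in particular $\rk(R^n)\geq n$.

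There is no real obstacle here: the argument is a direct unpacking of definitions, and the only point requiring care is the componentwise description of $\zeroset{R^n}$, which is what allows us to conclude quasi-zeroness of a scalar from quasi-zeroness of the corresponding coordinate of the sum. Combining both inequalities gives $\rk(R^n)=n$.
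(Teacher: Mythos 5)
Your proposal is correct and follows essentially the same route as the paper: the upper bound $\rk(R^n)\leq n$ from \Cref{cor:n+1-vec-in-Rn-are-dep}, and the lower bound by showing the standard basis vectors $e_1,\dots,e_n$ are linearly independent. The paper states the lower bound in one line (with a typo, writing ``linearly dependent'' where ``linearly independent'' is meant); you fill in the componentwise description of $\zeroset{R^n}$ that justifies it, which is a welcome addition but not a different argument.
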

\begin{proof}
By \Cref{cor:n+1-vec-in-Rn-are-dep}, any $n+1$ vectors in $R^{n}$ are linearly dependent; so $\rk\left(R^{n}\right)\leq n$. However, since $\left\{e_1,\dots,e_n\right\}$ are linearly dependent, $\rk\left(R^n\right)\ge n$, which together yield the desired equality.
\end{proof}

Recall the relation $\symmeq$ from \Dref{def:symmdash-is-partial-order}, defined as
$$a\symmeq b\Leftrightarrow a\symmdash b\,\wedge\, b\symmdash a$$

\begin{thm}\label{thm:n+1-vectors-when-R-mod-symmeq-has-lift}
Assume that $\quo{R}{\symmeq}$ has a lift (rather than $R$). Then any $n+1$ vectors in $R^n$ are linearly dependent.
\end{thm}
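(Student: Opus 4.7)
The plan is to reduce the statement to \Cref{cor:n+1-vec-in-Rn-are-dep} applied to the quotient $\quo{R}{\symmeq}$. Let $\pi: R \to \quo{R}{\symmeq}$ be the canonical projection, extended componentwise to $\pi^n: R^n \to (\quo{R}{\symmeq})^n$. A preliminary check shows that $\symmeq$ is a semiring congruence on $R$ (multiplicative compatibility reduces to the observation that $z\gamma \in \zeroset{R}$ whenever $z \in \zeroset{R}$), so $\quo{R}{\symmeq}$ inherits the structure of an entire semiring with a negation map, and hence the hypotheses of \Cref{cor:n+1-vec-in-Rn-are-dep} apply to $\quo{R}{\symmeq}$ by assumption.

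The crucial intermediate fact is that $\pi$ reflects quasi-zeros: for any $\alpha \in R$, one has $\alpha \in \zeroset{R}$ if and only if $\pi(\alpha) \in \zeroset{\quo{R}{\symmeq}}$. The forward direction is immediate from $\pi(x + \minus x) = \pi(x) + \minus\pi(x)$. For the converse, $\pi(\alpha) \in \zeroset{\quo{R}{\symmeq}}$ means $\pi(\alpha) = \pi(z)$ for some $z \in \zeroset{R}$, i.e.\ $\alpha \symmeq z$, so in particular $\alpha \symmdash z$; then \Lref{lem:surpass-zero-is-zero} forces $\alpha \in \zeroset{R}$. The same equivalence then holds componentwise between $R^n$ and $(\quo{R}{\symmeq})^n$.

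Given $n+1$ vectors $x_1, \ldots, x_{n+1} \in R^n$, \Cref{cor:n+1-vec-in-Rn-are-dep} applied to their images $\pi^n(x_1), \ldots, \pi^n(x_{n+1}) \in (\quo{R}{\symmeq})^n$ yields indices $i_1, \ldots, i_k$ and coefficients $\bar\alpha_j \in \nzeroset{(\quo{R}{\symmeq})} \setminus \{\minf\}$ with $\sum_j \bar\alpha_j \pi^n(x_{i_j}) \in \zeroset{(\quo{R}{\symmeq})^n}$. Choose arbitrary preimages $\alpha_j \in R$ of $\bar\alpha_j$; by the reflection property each $\alpha_j$ lies in $\nzeroset{R} \setminus \{\minf\}$. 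Since $\pi^n\bigl(\sum_j \alpha_j x_{i_j}\bigr) = \sum_j \bar\alpha_j \pi^n(x_{i_j})$ has every component in $\zeroset{\quo{R}{\symmeq}}$, the componentwise converse of the reflection property forces $\sum_j \alpha_j x_{i_j} \in \zeroset{R^n}$, exhibiting the desired linear dependence.

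The main obstacle, and the only place the hypothesis that we lift $\quo{R}{\symmeq}$ rather than $R$ itself is used, is the converse direction of the reflection property; everything else is a routine pull-back of the dependence relation through a well-behaved quotient, enabled by \Lref{lem:surpass-zero-is-zero}.
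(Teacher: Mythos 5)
Your proof is correct and follows essentially the same route as the paper: pass to the quotient $\quo{R}{\symmeq}$, apply \Cref{cor:n+1-vec-in-Rn-are-dep} there, and pull the linear dependence back to $R^n$ via \Lref{lem:surpass-zero-is-zero}. Your explicit ``reflection of quasi-zeros'' formulation organizes the pullback a bit more cleanly than the paper's intermediate computation involving $\zero\left[\alpha_1 x_1+\cdots+\alpha_{n+1}x_{n+1}\right]$, but the underlying argument is the same.
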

\begin{proof}
Let $x_1,\dots,x_{n+1}\in R^n$, and let $\left[x_1\right],\dots,\left[x_{n+1}\right]\in\quo{R^n}{\symmeq}$ be their equivalence classes under~ $\symmeq$. Noting that
$$\quo{R^n}{\symmeq}\cong\left(\quo{R}{\symmeq}\right)^n$$
by \Cref{cor:n+1-vec-in-Rn-are-dep}, there are $\alpha_1,\dots,\alpha_{n+1}\in R^{\vee}$, not all are $0_R$, such that
$$\left[\alpha_1 x_1+\cdots+\alpha_{n+1}x_{n+1}\right]=\alpha_1\left[x_1\right]+\cdots+\alpha_{n+1}\left[x_{n+1}\right]\in\zeroset{\left(\quo{R^n}{\symmeq}\right)}$$
Hence,
$$\left[\alpha_1 x_1+\cdots+\alpha_{n+1}x_{n+1}\right]=\zero\left[\alpha_1 x_1+\cdots+\alpha_{n+1}x_{n+1}\right]=\left[\zero\left(\alpha_1 x_1+\cdots+\alpha_{n+1}x_{n+1}\right)\right]$$
By the definition of $\symmeq$,
$$\alpha_1 x_1+\cdots+\alpha_{n+1}x_{n+1}\symmdash\zero\left(\alpha_1 x_1+\cdots+\alpha_{n+1}x_{n+1}\right)\in\zeroset{\left(R^n\right)}$$
By \Lref{lem:surpass-zero-is-zero},
$$\alpha_1 x_1+\cdots+\alpha_{n+1}x_{n+1}\in\zeroset{\left(R^n\right)}$$
as required.
\end{proof}

\begin{cor}
Let $R$ be a ring with some negation map. Then any $n+1$ vectors in $R^n$ are linearly dependent.
\end{cor}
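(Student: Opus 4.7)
The plan is to invoke \Tref{thm:n+1-vectors-when-R-mod-symmeq-has-lift}, which reduces the corollary to exhibiting a lift of $\quo{R}{\symmeq}$. I would argue that $\quo{R}{\symmeq}$ is in fact already a ring equipped with its standard additive negation, and therefore possesses the identity map as a (trivial) lift, exactly as in the first bullet of the examples following the definition of a lift.

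First, I would check that $\zeroset{R}$ is a two-sided ideal of the ring $R$: closure under addition follows from $(b+c)^\circ = b^\circ + c^\circ$; closure under the ring negation follows from $(-b)^\circ = -(b^\circ)$, where the identity $\minus(-b) = -\minus b$ is a consequence of the axiom $\minus(\alpha\beta) = \alpha(\minus\beta)$; and closure under $R$-multiplication follows from $\alpha\, b^\circ = (\alpha b)^\circ$. Next, I would identify $\symmeq$ with the ideal congruence modulo $\zeroset{R}$: since $a \symmdash b$ means $a = b + c$ for some $c \in \zeroset{R}$, and $\zeroset{R}$ is closed under the ring negation, one finds that $a \symmeq b$ precisely when $a - b \in \zeroset{R}$. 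Therefore $\quo{R}{\symmeq}$ coincides with the quotient ring $R / \zeroset{R}$.

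The final step is to compare the two candidate negations on this quotient. The induced negation sends $[a]$ to $[\minus a]$, while the ring negation sends $[a]$ to $[-a]$. These coincide, because $\minus a + a = a^\circ \in \zeroset{R}$ shows $\minus a \symmeq -a$, and hence $[\minus a] = [-a]$. Thus $\quo{R}{\symmeq}$ is nothing other than the ring $R / \zeroset{R}$ with its standard negation, which has the identity map as a lift. Applying \Tref{thm:n+1-vectors-when-R-mod-symmeq-has-lift} then yields the corollary.

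The main (mild) obstacle is keeping the two negation maps distinct in the bookkeeping; the conceptual heart of the argument is the surpassing relation $\minus a \symmeq -a$, which is what allows the given abstract negation to collapse onto the familiar ring negation after passing to the quotient by $\zeroset{R}$.
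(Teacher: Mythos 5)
Your proposal is correct and takes essentially the same route as the paper: reduce to \Tref{thm:n+1-vectors-when-R-mod-symmeq-has-lift} by showing that $\quo{R}{\symmeq}$ is a genuine ring whose induced negation is the ring negation $-[a]$, so it has a lift via the identity map. The paper states this in one line; you supply the justification (that $\zeroset{R}$ is a two-sided ideal of $R$, that $\symmeq$ is the congruence modulo $\zeroset{R}$, and that $[\minus a]=[-a]$), all of which is correct.
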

\begin{proof}
In this case, $\quo{R}{\symmeq}$ is a ring, and the induced negation map is $\minus\left[a\right]=-\left[a\right]$; therefore, $\quo{R}{\symmeq}$ has a lift, and \Tref{thm:n+1-vectors-when-R-mod-symmeq-has-lift} can be applied.
\end{proof}

In the above cases, we have another corollary:

\begin{cor}
If $M\subseteq R^{n}$ is a submodule, then $\rk\left(M\right)\leq n$.
\end{cor}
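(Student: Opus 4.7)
The plan is to deduce this immediately from the preceding results on $R^n$. The key observation is that linear independence is intrinsic to a set of vectors: if a subset $S \subseteq M$ is linearly independent in $M$, it is still linearly independent when viewed as a subset of the larger module $R^n$, because any linear combination witnessing dependence uses only elements of $S$ and scalars from $R$, and the notion of ``quasi-zero'' is the same whether one works inside $M$ or inside $R^n$ (since $\zeroset{M} = M \cap \zeroset{R^n}$, as the negation map on $M$ is inherited from $R^n$).

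First I would take any d-base $B$ of $M$, which exists by \Lref{lem:lin-ind-in-maximal} applied to the empty set. By the observation above, $B$ is a linearly independent subset of $R^n$. Under our standing hypothesis (either $R$ has a lift, or $\quo{R}{\symmeq}$ has a lift, or $R$ is a ring), \Cref{cor:n+1-vec-in-Rn-are-dep} and \Tref{thm:n+1-vectors-when-R-mod-symmeq-has-lift} guarantee that any set of $n+1$ vectors in $R^n$ is linearly dependent. Hence $|B| \le n$.

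Since this bound holds for every d-base $B$ of $M$, taking the maximum yields $\rk(M) \le n$, as desired. There is essentially no obstacle here: the content of the corollary lies entirely in the earlier theorems about $R^n$, and the only thing to verify is the routine fact that the restriction of linear independence from $R^n$ to $M$ agrees with the intrinsic notion on $M$ — which follows directly from the definitions, since submodules are closed under the negation map and inherit the surrounding scalar action.
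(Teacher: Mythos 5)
Your proof is correct and takes essentially the same route as the paper's: both hinge on the observation that linear independence is intrinsic (so a d-base of $M$ is a linearly independent subset of $R^n$) and then invoke the bound on independent sets in $R^n$. The only cosmetic difference is that the paper extends the d-base of $M$ to a d-base of $R^n$ via Lemma~\ref{lem:lin-ind-in-maximal} before applying the size bound, while you apply Corollary~\ref{cor:n+1-vec-in-Rn-are-dep} directly to the d-base of $M$; both are fine, and you are right to note the standing hypothesis (that $R$ or $\quo{R}{\symmeq}$ has a lift) carried over from the preceding results.
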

\begin{proof}
Any d-base of $M$ is contained in a d-base of $R^{n}$, due to \Lref{lem:lin-ind-in-maximal}, whose order is at most~$n$.
\end{proof}

\subsubsection{s-bases}
\begin{defn}\label{def:s-base}
Let $R$ be an entire semiring with a negation map. An \textbf{s-base} (for spanning base) of an $R$-module $M$ is a minimal spanning subset of $M$.
\end{defn}

An easy corollary from the First Isomorphism Theorem is the following:

\begin{cor}\label{cor:fin-gen-module-cong-of-Rn}
Let $M$ be an $R$-module with a finite spanning set $S$, $\left|S\right|=n$. Then there exists a congruence $\sim$ on $R^{n}$, such that
$$M\cong\quo{R^{n}}{\sim}$$
\end{cor}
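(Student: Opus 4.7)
The plan is to construct a surjective $R$-module homomorphism $\varphi:R^n\to M$ and then invoke the First Isomorphism Theorem (\Tref{thm:first-iso-thm}) directly. Since the corollary asserts only the existence of a congruence, essentially all of the work is in exhibiting the natural map and checking it is a surjective homomorphism.

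First, I would enumerate the spanning set as $S=\left\{s_1,\dots,s_n\right\}$ and define
$$\varphi:R^n\to M,\qquad \varphi\left(\alpha_1,\dots,\alpha_n\right)=\sum_{i=1}^n\alpha_i s_i.$$
The verification that $\varphi$ is an $R$-module homomorphism in the sense of \Dref{def:modules-hom} is routine: additivity and compatibility with scalar multiplication are immediate from the distributivity and associativity axioms defining an $R$-module, and since we are working with the induced negation maps (on both $R^n$ and $M$), the condition $\varphi(\minus x)=\minus\varphi(x)$ follows automatically, as noted right after \Dref{def:modules-hom}.

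Next, I would observe that $\varphi$ is surjective precisely because $S$ is a spanning set: every $x\in M$ can be written as $x=\sum_{i=1}^n\alpha_i s_i$ for some $\alpha_1,\dots,\alpha_n\in R$, and such an expression is exactly $\varphi\left(\alpha_1,\dots,\alpha_n\right)$. Hence $\Im\varphi=M$.

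Finally, applying the First Isomorphism Theorem (\Tref{thm:first-iso-thm}) to $\varphi:R^n\to M$ yields a module congruence $\sim$ on $R^n$ (namely the one given by \Lref{lem:hom-induces-congr}, i.e.\ $x\sim y\Longleftrightarrow\varphi(x)=\varphi(y)$) with
$$\quo{R^n}{\sim}\cong\Im\varphi=M,$$
which is the claimed isomorphism. There is no real obstacle here; the only thing to take a little care with is making sure we use the version of the homomorphism definition that already absorbs compatibility with the negation map via the scalar action, so we do not need to check it as a separate condition.
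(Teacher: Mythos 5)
Your proof is correct and is exactly the argument the paper intends: the paper leaves the corollary as "an easy corollary from the First Isomorphism Theorem" with no written proof, and your construction of the natural surjection $\varphi:R^n\to M$ sending $\left(\alpha_1,\dots,\alpha_n\right)\mapsto\sum_i\alpha_i s_i$, followed by an application of \Tref{thm:first-iso-thm}, is the canonical way to fill that in. Your remark that compatibility with the negation map comes for free from compatibility with scalar multiplication (via the induced negation) is a sensible point of care, and matches the discussion following \Dref{def:modules-hom}.
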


\begin{cor}\label{cor:span-set-at-least-rk-ele}
Any spanning set of an $R$-module $M$ must have at least $\rk\left(M\right)$ elements.
\end{cor}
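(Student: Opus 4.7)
The plan is to reduce to the free-module case. Assume first that $S$ is a finite spanning set, say $|S|=n$; the infinite case will follow afterwards by a standard cardinality argument over finite subsets. I would apply \Cref{cor:fin-gen-module-cong-of-Rn} to obtain a congruence $\sim$ on $R^{n}$ and an isomorphism $M\cong\quo{R^{n}}{\sim}$. Let $\pi\colon R^{n}\to M$ denote the associated surjection (sending the $i$-th standard basis vector to the $i$-th element of $S$).

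Next, pick a d-base $B=\{b_{1},\dots,b_{m}\}\subseteq M$ and lift each $b_{i}$ to some $\widetilde{b}_{i}\in R^{n}$ with $\pi(\widetilde{b}_{i})=b_{i}$. The key claim is that $\{\widetilde{b}_{1},\dots,\widetilde{b}_{m}\}$ is again linearly independent. To see this, suppose $\sum_{i}\gamma_{i}\widetilde{b}_{i}\in\zeroset{(R^{n})}$ with scalars $\gamma_{i}\in\nzeroset{R}\setminus\{\minf\}$. Applying $\pi$ and using \Rref{rem:mod-zeros-sent-zeros} (an $R$-module homomorphism carries $\zeroset{(R^{n})}$ into $\zeroset{M}$) we get $\sum_{i}\gamma_{i}b_{i}\in\zeroset{M}$, which would contradict linear independence of $B$. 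Hence the lifted family is linearly independent in $R^{n}$.

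At this point I would invoke \Tref{thm:max-num-of-lin-ind-in-R^n-is-n} (or \Tref{thm:n+1-vectors-when-R-mod-symmeq-has-lift}), which gives $\rk(R^{n})\le n$; thus $m\le n=|S|$. Taking the supremum over all d-bases yields $\rk(M)\le|S|$, as required. For the infinite spanning case, any finite subfamily $B'$ of a d-base lies in the span of a finite subset $S'\subseteq S$; the finite case applied to $\Span(S')$ (together with \Lref{lem:lin-ind-in-maximal} showing $B'$ extends to a d-base of that submodule) gives $|B'|\le|S'|\le|S|$, and taking the supremum over $B'$ concludes.

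The step I expect to be the main obstacle is that the bound $\rk(R^{n})\le n$ is only available in the presence of a lift (of $R$ or of $\quo{R}{\symmeq}$); the statement of the corollary is presented without this hypothesis, so one must interpret it within the ambient context in which a lift is assumed, exactly as in the previous corollary about submodules of $R^{n}$. The lifting of the linearly independent family through $\pi$ is the technical crux: one must be careful that non-quasi-zero coefficients in $R$ remain non-quasi-zero (they do, because $\pi$ is $R$-linear and therefore commutes with scalar multiplication), so the linear-dependence witnesses transfer faithfully between the two modules.
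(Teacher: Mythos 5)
Your proof follows the same route as the paper's: reduce to $M\cong\quo{R^{n}}{\sim}$ via \Cref{cor:fin-gen-module-cong-of-Rn}, lift a d-base through the projection, observe that the lifted family is linearly independent in $R^{n}$ because module homomorphisms carry quasi-zeros to quasi-zeros, and conclude $m\le n$ from $\rk(R^{n})\le n$. You make explicit two things the paper leaves tacit -- that the bound $\rk(R^{n})\le n$ needs the ambient lift hypothesis from \Tref{thm:max-num-of-lin-ind-in-R^n-is-n}, and the reduction of the infinite spanning case -- but the core argument is identical.
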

\begin{proof}
By \Cref{cor:fin-gen-module-cong-of-Rn}, if $M=\Span\left\{ x_{1},\dots,x_{n}\right\} $, then $M\cong\quo{R^{n}}{\sim}$ for some congruence $\sim$.\\

Thus, any d-base $\left\{ y_{1},\dots,y_{m}\right\} $ of $M$ maps to a linearly independent set $\left\{\left[y'_{1}\right],\dots,\left[y'_{m}\right]\right\}$ of $\quo{R^{n}}{\sim}$. Then also $y'_{1},\dots,y'_{m}$ are linearly independent in $R^{n}$, and thus $m\leq n$.
\end{proof}

\subsubsection{Critical elements versus s-bases}
We define a similar concept to the concept of critical elements presented in \cite{Izhaki2010}:
\begin{defn}
Let $R$ be an entire semiring with a negation map, and let $M$ be an $R$-module.
\begin{enumerate}
  \item We define an equivalence relation $\sim$ on $M$, called \textbf{projective equivalence}, as the transitive closure of the following relation: we say that $x\sim y$ if there is an invertible $\alpha\in R^{\vee}$ such that $x=\alpha y$.
  \item We define the \textbf{equivalence class} of $x$ to be
      $$\left[x\right]_{\sim}=\left\{ y\in M\middle| y\sim x\right\}$$
  \item We say that $x\in M\backslash\zeroset M$ is \textbf{critical}, if there is no linear combination $x=x_{1}+x_{2}$ where $x_{1},x_{2}\in M\backslash\left[x\right]_{\sim}$.
  \item A \textbf{critical set} is a set of representatives of all the equivalence classes of $\sim$.
\end{enumerate}
\end{defn}

\begin{rem}
Any critical set is projectively unique.
\end{rem}

\begin{lem}\label{lem:critical-not-span-by-others}
Suppose $x\in M$ is critical. Then it is not spanned by $M\backslash\left[x\right]_{\sim}$.
\end{lem}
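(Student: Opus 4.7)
The plan is a proof by contradiction via a minimal counterexample. Assume $x$ is critical yet $x \in \Span\left(M \setminus \left[x\right]_\sim\right)$, and write $x = \sum_{i=1}^{k}\alpha_{i}y_{i}$ with $y_{i} \in M \setminus \left[x\right]_\sim$ and $\alpha_{i} \in R$, choosing the representation so that $k$ is minimal.

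For $k \geq 2$, set $x_{1} = \alpha_{1}y_{1}$ and $x_{2} = \sum_{i=2}^{k}\alpha_{i}y_{i}$, so that $x = x_{1} + x_{2}$. Since $x$ is critical, it is impossible that both $x_{1}, x_{2} \in M \setminus \left[x\right]_\sim$, so at least one of them lies in $\left[x\right]_\sim$. If $x_{2} \in \left[x\right]_\sim$, then $x_{2} = \beta x$ for some invertible $\beta \in \nzeroset R$, whence $x = \beta^{-1}x_{2} = \sum_{i=2}^{k}(\beta^{-1}\alpha_{i})y_{i}$ is a representation of $x$ in only $k - 1$ summands from $M \setminus \left[x\right]_\sim$, contradicting minimality. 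Analogously, if $x_{1} \in \left[x\right]_\sim$, then $\alpha_{1} y_{1} = \gamma x$ for invertible $\gamma$, giving the one-term expression $x = (\gamma^{-1}\alpha_{1}) y_{1}$, which reduces to the base case. For $k = 1$, we have $x = \alpha y$ with $y \notin \left[x\right]_\sim$: if $\alpha \in \zeroset R$, then $x = \alpha y \in \zeroset M$, violating the criticality requirement $x \notin \zeroset M$; and if $\alpha$ is invertible, then $y = \alpha^{-1} x \sim x$, again contradicting $y \notin \left[x\right]_\sim$.

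The main obstacle is the base case $k = 1$ with $\alpha$ a non-quasi-zero, non-invertible scalar, since the direct decomposition $x = \alpha y + 0_M$ places $\alpha y = x$ back in $\left[x\right]_\sim$ and so does not immediately contradict criticality. In the principal settings of this paper (rings with the usual negation, ELT rings over a field, supertropical semifields) every element of $\nzeroset R$ is invertible, so the argument above closes the proof; in full generality one needs either an invertibility hypothesis on $\nzeroset R$ or a strengthening of criticality to exclude single-term scalar multiples directly.
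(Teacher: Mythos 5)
Your proof takes essentially the same route as the paper: split a representation $x=\sum_{i}\alpha_{i}y_{i}$ into $x_{1}=\alpha_{1}y_{1}$ and $x_{2}=\sum_{i\ge 2}\alpha_{i}y_{i}$, invoke criticality to push one of these into $[x]_\sim$, and shorten the representation by induction (here phrased as a minimal counterexample). You are, however, more careful than the paper in two places. First, you track both branches: the paper simply asserts ``by the definition of criticality, $x_2\in[x]_\sim$'', which tacitly assumes $x_1=\alpha_1 y_1\notin[x]_\sim$; but $y_1\notin[x]_\sim$ does not rule out $\alpha_1 y_1\in[x]_\sim$ unless $\alpha_1$ is invertible, so the other branch must be handled as you do. Second, and more importantly, you examine the base case explicitly.

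The gap you flag there is real and is present in the paper's own proof as well: the paper dismisses $n=1$ with the unsupported claim that criticality ``thus'' forces $n>1$. Criticality as defined only forbids decompositions $x=x_1+x_2$ with both summands outside $[x]_\sim$; it says nothing directly about a one-term expression $x=\alpha y$. When $\alpha\in\zeroset{R}$ you correctly get $x=\alpha y\in\zeroset{M}$, contradicting $x\notin\zeroset{M}$; when $\alpha$ is invertible you contradict $y\notin[x]_\sim$; but when $\alpha$ is neither, no contradiction is available from the definitions alone, exactly as you observe. Some extra hypothesis (for instance that $R$ is a semifield with a negation map, so that every non-quasi-zero element is invertible, or that spanning coefficients may be taken invertible) is needed to close the argument, and neither the statement of the lemma nor the paper's proof makes that explicit. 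So: same approach, a more careful execution, and a genuine hole correctly surfaced that the source shares.
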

\begin{proof}
Assume that
$$x=\sum_{i=1}^{n}\alpha_{i}y_{i}$$
for $y_{i}\in M\backslash\left[x\right]_{\sim}$. $x$ is critical, thus $n>1$. Write $x_{1}=\alpha_{1}y_{1}$ and $x_{2}=\sum_{i=2}^{n}\alpha_{i}y_{i}$. By the definition of criticality, $x_{2}\in\left[x\right]_{\sim}$. We get a contradiction by induction on $n$.
\end{proof}

\begin{lem}\label{lem:s-base-contains-critical}
Suppose $S$ is an s-base of $M$. Then $S$ contains a critical set of $M$.
\end{lem}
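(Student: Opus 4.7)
The plan is to show, for every critical element $x\in M$, that the intersection $S\cap[x]_\sim$ is a singleton, and then to collect these singletons across all critical equivalence classes to assemble a critical set contained in $S$. Almost all of the work is already packaged inside \Lref{lem:critical-not-span-by-others}, combined with the minimality of $S$ as a spanning set.

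For existence, fix a critical $x\in M$. Since $S$ spans $M$, write $x=\sum_{i=1}^{n}\alpha_{i}s_{i}$ with $s_{i}\in S$ and $\alpha_{i}\in R$. If every $s_{i}$ lay in $M\setminus[x]_\sim$, this expression would display $x$ as an element of $\Span(M\setminus[x]_\sim)$, contradicting \Lref{lem:critical-not-span-by-others}. Hence at least one $s_{i}$ lies in $[x]_\sim$, so $S\cap[x]_\sim\neq\emptyset$.

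For uniqueness, suppose on the contrary that distinct $s,s'\in S$ satisfy $s'=\alpha s$ for some invertible $\alpha\in R^{\vee}$. Then $s'\in\Span(S\setminus\{s'\})$, so $S\setminus\{s'\}$ still spans $M$, contradicting the minimality of $S$. Hence $|S\cap[x]_\sim|=1$. Next, I would observe that criticality is a $\sim$-invariant: if $x$ is critical and $y=\alpha x$ with $\alpha$ invertible, then any putative decomposition $y=y_{1}+y_{2}$ with $y_{i}\notin[y]_\sim=[x]_\sim$ would yield $x=\alpha^{-1}y_{1}+\alpha^{-1}y_{2}$ with $\alpha^{-1}y_{i}\notin[x]_\sim$, contradicting the criticality of $x$. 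Therefore the unique element of $S\cap[x]_\sim$ is itself critical.

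Assembling these observations, for each equivalence class of critical elements, $S$ contains exactly one representative, and that representative is itself critical. The resulting set of representatives is a critical set contained in $S$, as required. The only conceptual subtlety is making sure to separate the two roles of $S$: its spanning property supplies the expression producing the representative via \Lref{lem:critical-not-span-by-others}, while its minimality rules out having two representatives of the same class. Neither step requires any new machinery beyond what is already stated.
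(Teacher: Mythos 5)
Your proof takes essentially the same approach as the paper: both hinge on \Lref{lem:critical-not-span-by-others} to show that each critical element must appear in $S$ up to projective equivalence. You go further by verifying that each class meets $S$ in exactly one element and that criticality is preserved under $\sim$, points the paper's one-line argument leaves implicit, but the core argument is the same.
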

\begin{proof}
Suppose $x\in M$ is critical. $S$ is a s-base, thus $x$ is spanned by $S$. So, by \Lref{lem:critical-not-span-by-others}, it must be an element of $S$ (up to projective equivalence).
\end{proof}

In the classical theory (meaning when working with modules over rings), this definition is meaningless; there are no critical vectors. However, In \cite[Theorem 5.24]{Izhaki2010} it is proven that in the supertropical theory, any s-base (if it exists) is a critical set. One could hope that over any entire antiring with a negation map these definition will coincide -- but even in the ELT theory there is a counterexample.

\begin{example}
Consider $ R=\ELT{\mathbb{R}}{\mathbb{C}}$, and
$$M=\Span\underbrace{\left\{ \begin{pmatrix}\layer{0}{1}\\
\layer{0}{1}\\
\layer{0}{0}
\end{pmatrix},\begin{pmatrix}\layer{0}{1}\\
-\infty\\
-\infty
\end{pmatrix},\begin{pmatrix}-\infty\\
\layer{0}{1}\\
\layer{\left(-1\right)}{1}
\end{pmatrix}\right\}}_S$$
The set is a spanning set of $M$. It is straightforward to prove that any vector in $S$ cannot be presented as a linear combination of the others. However,
$$\begin{pmatrix}\layer{0}{1}\\
\layer{0}{1}\\
\layer{0}{0}
\end{pmatrix}=\layer{0}{0}\begin{pmatrix}\layer{0}{1}\\
\layer{0}{1}\\
\layer{0}{0}
\end{pmatrix}+\begin{pmatrix}\layer{0}{1}\\
-\infty\\
-\infty
\end{pmatrix}+\begin{pmatrix}-\infty\\
\layer{0}{1}\\
\layer{\left(-1\right)}{1}
\end{pmatrix}=\begin{pmatrix}\layer{0}{1}\\
\layer{0}{0}\\
\layer{0}{0}
\end{pmatrix}+\begin{pmatrix}-\infty\\
\layer{0}{1}\\
\layer{\left(-1\right)}{1}
\end{pmatrix}$$
implying $\begin{pmatrix}\layer{0}{1}\\
\layer{0}{1}\\
\layer{0}{0}
\end{pmatrix}$ is not critical. Since this vector is not critical, we get an example
of an ELT module with two s-bases which are not projectively equivalent:
$$M=\Span\left\{ \begin{pmatrix}\layer{0}{1}\\
\layer{0}{1}\\
\layer{0}{0}
\end{pmatrix},\begin{pmatrix}\layer{0}{1}\\
-\infty\\
-\infty
\end{pmatrix},\begin{pmatrix}-\infty\\
\layer{0}{1}\\
\layer{\left(-1\right)}{1}
\end{pmatrix}\right\} =\Span\left\{ \begin{pmatrix}\layer{0}{1}\\
\layer{0}{0}\\
\layer{0}{0}
\end{pmatrix},\begin{pmatrix}\layer{0}{1}\\
-\infty\\
-\infty
\end{pmatrix},\begin{pmatrix}-\infty\\
\layer{0}{1}\\
\layer{\left(-1\right)}{1}
\end{pmatrix}\right\}
$$
\end{example}

\subsubsection{d,s-bases}
\begin{defn}\label{def:d,s-base}
Let $R$ be an entire semiring with a negation map, and let $M$ be an $R$-module. A \textbf{d,s-base} of~$M$ is an s-base of $M$, which is also a d-base.
\end{defn}

\begin{lem}\label{lem:size-of-d,s-base}
If $S$ is a d,s-base of $M$, with $M$ finitely generated, then $\left|S\right|=\rk\left(M\right)$.
\end{lem}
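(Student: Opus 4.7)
The plan is to combine the two defining properties of a d,s-base with the results already established. Since $S$ is a d-base (a maximal linearly independent set), the very definition of $\rk(M)$ as the supremum of sizes of d-bases gives the inequality $\left|S\right|\leq\rk(M)$ immediately.

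For the reverse inequality, I would appeal to \Cref{cor:span-set-at-least-rk-ele}. Since $S$ is an s-base, it is in particular a spanning set of $M$. Before invoking the corollary I should justify that $\left|S\right|$ is finite: because $M$ is finitely generated, say by $T=\{t_1,\dots,t_n\}$, and $S$ spans $M$, each $t_i$ lies in the span of some finite subset $S_i\subseteq S$; then $S':=\bigcup_{i=1}^n S_i\subseteq S$ is a finite spanning subset of $M$, and the minimality of $S$ forces $S=S'$, hence $|S|<\infty$. With $|S|$ finite we may apply \Cref{cor:span-set-at-least-rk-ele} to conclude $\left|S\right|\geq\rk(M)$.

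Combining the two inequalities yields $\left|S\right|=\rk(M)$, as desired. There is no real obstacle here; the statement is essentially a direct consequence of the definitions together with \Cref{cor:span-set-at-least-rk-ele}, and the only mildly nontrivial observation is the finiteness of $S$ under the hypothesis that $M$ is finitely generated.
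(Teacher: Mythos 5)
Your proof is correct and takes essentially the same approach as the paper: the paper cites \Cref{cor:span-set-at-least-rk-ele} (using that $S$ is an s-base) to get $\left|S\right|\ge\rk\left(M\right)$, and then observes that since $S$ is also a d-base, $\left|S\right|\le\rk\left(M\right)$ by the definition of rank. Your finiteness detour is harmless but not strictly needed: \Cref{cor:span-set-at-least-rk-ele} as stated applies to any spanning set (the inequality $\left|S\right|\ge\rk\left(M\right)$ is vacuous when $S$ is infinite), and the role of the hypothesis that $M$ is finitely generated is already absorbed inside the proof of that corollary, which uses a finite generating set to bound $\rk\left(M\right)$.
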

\begin{proof}
By \Cref{cor:span-set-at-least-rk-ele}, $\left|S\right|\ge\rk\left(M\right)$; but, since $S$ is also a d-base, we get equality.
\end{proof}

\begin{lem}\label{lem:ind-span-is-d,s}
Any linearly independent spanning set is a d,s-base.
\end{lem}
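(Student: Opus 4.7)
The plan is to verify the two defining properties of a d,s-base directly from results already established in the excerpt, so no new machinery is needed.

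First I would handle the s-base condition, i.e.\ minimality of $S$ as a spanning set. Suppose for contradiction that $S$ is not minimal; then there is some $x\in S$ with $S\setminus\{x\}$ still spanning $M$. But $S$ is linearly independent by hypothesis, and \Lref{lem:sub-lin-ind-not-span} says precisely that for any $x\in S$ the set $S\setminus\{x\}$ fails to span $M$, giving the required contradiction. Hence $S$ is a minimal spanning set, i.e.\ an s-base.

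Next I would handle the d-base condition, i.e.\ maximality of $S$ among linearly independent sets. Suppose $S\subseteq S'$ where $S'\subseteq M$ is linearly independent. Since $S$ is a spanning set of $M$, we have a linearly independent set $S'$ containing a spanning set $S$, so \Cref{cor:span-not-sub-ind} (applied with $A=S'$ and $B=S$) forces $S=S'$. Thus no properly larger linearly independent set exists, so $S$ is a d-base.

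Combining the two conclusions, $S$ is both an s-base and a d-base, which is by \Dref{def:d,s-base} precisely the definition of a d,s-base. The proof is essentially a two-line assembly of \Lref{lem:sub-lin-ind-not-span} and \Cref{cor:span-not-sub-ind}; there is no genuine obstacle, since the real content (the lemma on removing an element from an independent set, and the corollary on a spanning subset of an independent set) has already been established.
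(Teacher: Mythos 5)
Your proof is correct and follows essentially the same approach as the paper. The only minor difference is that for the s-base direction you invoke \Lref{lem:sub-lin-ind-not-span} directly, while the paper applies \Cref{cor:span-not-sub-ind} to both halves; since the corollary and the lemma are parallel consequences of \Lref{lem:lin-comb-dep}, this is a stylistic variation rather than a different route.
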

\begin{proof}
Assume that $S$ is a linearly independent spanning set of $M$. If $S'\subseteq S$ is an s-base, we get that $S'=S$, by \Cref{cor:span-not-sub-ind}.\\

Now, assume that $S\subseteq S''$ is a d-base, using the same corollary we get $S=S''$. So $S$ is a d-base.\\

To sum up, $S$ is an s-base and a d-base, and thus $S$ is a d,s-base.
\end{proof}

Note that not every s-base is a d,s-base.
\begin{example}
Any submodule spanned by zero-layered elements has no d,s-base, since the only linearly independent subset of this submodule is the empty set, which is not a spanning set.
\end{example}

\subsection{Free Modules over Semirings with a Negation Map}

\subsubsection{Definitions and examples}
\begin{defn}\label{def:base}
Let $R$ be an entire semiring with a negation map. An $R$-module $M$ is called \textbf{free}, if there exists a set $B\subseteq M$ such that for all $x\in M\setminus\left\{\zeroM\right\}$ there exists a unique choice of $n\in\mathbb{N}$, $\alpha_{1},\dots,\alpha_{n}\in R$ and~ $x_{1},\dots,x_{n}\in B$ such that
$$x=\alpha_{1}x_{1}+\cdots+\alpha_{n}x_{n}$$
Such a set $B$ is called a \textbf{base} of $M$.
\end{defn}

\begin{example}
Let $R$ be an entire semiring with a negation map. Then $R^{n}$ is a free $R$-module, with the base $\left\{ e_{1},\dots,e_{n}\right\} $. Indeed, the linear combination is determined uniquely, each component separately.
\end{example}

\begin{defn}
Let $R$ be an entire semiring with a negation map, and let $M$ be a free $R$-module with a base~$B$ ($\left|B\right|=n$). For every $x\in M$ there exists a unique representation $x=\alpha_{i_{1}}v_{i_{1}}+\cdots+\alpha_{i_{k}}v_{i_{k}}$. We define the \textbf{coordinate vector} of $x$ by the base $B$, $\left[x\right]_{B}\in R^{n}$, to be:
$$\left(\left[x\right]_{B}\right)_{j}=\begin{cases}
\minf & j\notin\left\{ i_{1},\dots,i_{k}\right\} \\
\alpha_{i_{j}} & j\in\left\{ i_{1},\dots,i_{k}\right\}
\end{cases},\;\; j=1,\dots,n$$
\end{defn}

It is easy to verify that $\left[x+y\right]_{B}=\left[x\right]_{B}+\left[y\right]_{B}$,
and $\left[\alpha x\right]_{B}=\alpha\left[x\right]_{B}$.

\begin{lem}\label{lem:free-isomorphic-Rn}
Let $R$ be an entire semiring with a negation map, and let $M$ be an $R$-module. $M$ is free with a base $B$ of size $n$ if and only if $M\cong R^{n}$.
\end{lem}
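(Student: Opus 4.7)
The plan is to show the equivalence by constructing an explicit isomorphism in the forward direction via the coordinate map, and in the backward direction by pulling back the standard base of $R^n$ along a given isomorphism.

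For the $(\Rightarrow)$ direction, assume $M$ is free with a base $B=\{x_1,\dots,x_n\}$. I would define $\varphi\colon M\to R^n$ by $\varphi(x)=[x]_B$, with $\varphi(\zeroM)=\zeroM[R^n]$. The remark immediately preceding this lemma already states that $[x+y]_B=[x]_B+[y]_B$ and $[\alpha x]_B=\alpha[x]_B$, so $\varphi$ is an $R$-module homomorphism (and it automatically respects the induced negation map, since $\minus x=(\minus\one)x$). Injectivity is the content of the uniqueness clause in \Dref{def:base}: if $\varphi(x)=\varphi(y)$ then $x$ and $y$ have the same coefficients in the (unique) expansion over $B$, forcing $x=y$. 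Surjectivity is easy: for $(\alpha_1,\dots,\alpha_n)\in R^n$, the element $\alpha_1 x_1+\cdots+\alpha_n x_n\in M$ is a preimage.

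For the $(\Leftarrow)$ direction, let $\psi\colon R^n\to M$ be an $R$-module isomorphism. The example preceding the lemma notes that $\{e_1,\dots,e_n\}$ is a base of $R^n$. I would set $B=\{\psi(e_1),\dots,\psi(e_n)\}\subseteq M$ and check this is a base of $M$. Given any $x\in M\setminus\{\zeroM\}$, write $\psi^{-1}(x)=\alpha_1 e_1+\cdots+\alpha_n e_n$ in $R^n$ (uniquely, component-by-component), and apply $\psi$ to obtain $x=\alpha_1\psi(e_1)+\cdots+\alpha_n\psi(e_n)$. Uniqueness of this representation for $x$ transfers via $\psi^{-1}$ back to uniqueness of the coordinate vector in $R^n$, which again is immediate.

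The main issue to be careful about is matching the precise wording of \Dref{def:base}, which allows $n$ to vary and picks terms out of $B$ rather than always using all of them; this is the reason the definition of the coordinate vector $[x]_B$ pads the missing coordinates with $\minf$. Nothing deep happens here — one simply notes that inserting or removing summands whose coefficient is $\minf$ does not change the sum, so the two formulations (expansion over all of $B$ versus over a subset of $B$) coincide. There is no serious obstacle; the whole proof is essentially bookkeeping around the coordinate isomorphism.
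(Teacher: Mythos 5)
Your proof takes essentially the same approach as the paper: the forward direction is exactly the coordinate map $\varphi(x)=[x]_B$ with the same three verifications (homomorphism, injective, surjective), and the backward direction — which the paper dismisses in one line — you fill in correctly by pulling back the standard base $\{e_1,\dots,e_n\}$ along the given isomorphism. Your remark about the bookkeeping with $\minf$-padded coordinates is a fair point of care, but nothing in your argument departs from the paper's route.
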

\begin{proof}

If $M\cong R^n$, then $M$ is free with a base of size $n$.\\

Now, assume that $M$ is free with a base $B$ of size $n$. Denote $B=\left\{x_{1},\dots,x_{n}\right\} $, and define a function $\varphi:M\rightarrow R^{n}$ by $\varphi\left(x\right)=\left[x\right]_{B}$.
\begin{enumerate}
\item $\varphi$ is an $R$-module homomorphism: since $\left[x+y\right]_{B}=\left[x\right]_{B}+\left[y\right]_{B}$ and $\left[\alpha x\right]_{B}=\alpha\left[x\right]_{B}$.
\item $\varphi$ is injective: If $\varphi\left(x\right)=\varphi\left(y\right)$, then the representations of $x$ and $y$ with respect to $B$ are the same. But we also know that these representations are unique, and thus $x=y$.
\item $\varphi$ is surjective: If $\left(\alpha_{i}\right)\in R^{n}$, then ${\displaystyle \varphi\left(\sum_{i=1}^{n}\alpha_{i}x_{i}\right)=\left(\alpha_{i}\right)}$.
\end{enumerate}
Thus, $M\cong R^{n}$.
\end{proof}

We now prove that any base is a d,s-base.

\begin{lem}\label{lem:lin-comb-zero-base}
If $B$ is a base of $M$, and if
$$\sum_{i=1}^n\alpha_i b_i\in\zeroset{M}$$
for $\alpha_1,\dots,\alpha_n\in R$, $b_1,\dots,b_n\in B$, then $\alpha_1,\dots,\alpha_n\in\zeroset{R}$.
\end{lem}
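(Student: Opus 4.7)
The plan is to translate the hypothesis $\sum_i\alpha_i b_i\in\zeroset{M}$ into coordinates with respect to $B$ and exploit the elementary fact that quasi-zeros in $R^{|B|}$ are exactly tuples with every component quasi-zero.

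After combining duplicate occurrences of basis elements, I may assume the $b_i$ are distinct; a merged coefficient equal to $0_R$ is trivially in $\zeroset{R}$. By hypothesis there is $y\in M$ with $\sum_i\alpha_i b_i = y+\minus y$. Applying the coordinate map $\varphi=[\cdot]_B$ of \Lref{lem:free-isomorphic-Rn}---an $R$-module isomorphism $M\to R^{|B|}$ sending each $b\in B$ to the standard basis vector $e_b$ and commuting with $+$, scalar multiplication, and $\minus$---yields
$$\sum_i \alpha_i\,e_{b_i}\;=\;\varphi(y)+\minus\varphi(y)\;=\;\varphi(y)^\circ\;\in\;\zeroset{R^{|B|}}.$$
The key computation is $\zeroset{R^{|B|}}=(\zeroset{R})^{|B|}$: for any tuple $(r_b)_b$ one has $(r_b)_b+\minus(r_b)_b=(r_b^\circ)_b$, so each coordinate of a quasi-zero is itself a quasi-zero. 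Since the $b_i$ are distinct, the vector on the left has entry exactly $\alpha_i$ at position $b_i$; reading off this coordinate gives $\alpha_i\in\zeroset{R}$ for every $i$.

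The main obstacle is a subtlety in the definition of base: the coordinate map has to be well-defined at $0_M$ and must genuinely respect $\minus$. This amounts to checking that the only finite expression $\sum_j\beta_j c_j$ with distinct $c_j\in B$ equal to $0_M$ has every $\beta_j=0_R$; otherwise, adding such a nontrivial relation to the unique representation of any element whose support is disjoint from $\{c_j\}$ would yield two distinct representations of a nonzero element, contradicting uniqueness. Granted this observation, the argument above applies verbatim.
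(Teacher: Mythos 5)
Your approach is essentially the same as the paper's. Where the paper writes $x^\circ = \left(\sum_{b\in B}\beta_b b\right)^\circ = \sum_{b\in B}\beta_b^\circ b$ and compares coefficients directly via uniqueness of the base representation, you transport the identity through the coordinate isomorphism of \Lref{lem:free-isomorphic-Rn} and observe that $\zeroset{R^{|B|}}=\left(\zeroset{R}\right)^{|B|}$; these are the same computation, just phrased in $R^{|B|}$ rather than in $M$. Your remark about well-definedness of the coordinate map at $0_M$ flags a real subtlety that the paper glosses over, and you indicate a sensible way to close it.

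One wrinkle: the opening reduction \emph{``after combining duplicate occurrences of basis elements, I may assume the $b_i$ are distinct''} does not actually work. If $b_i=b_j$, showing that the merged coefficient $\alpha_i+\alpha_j$ lies in $\zeroset{R}$ does not yield $\alpha_i,\alpha_j\in\zeroset{R}$: over $\Z$ with $\minus a=-a$ one has $\zeroset{\Z}=\{0\}$, yet $1\cdot b+(-1)\cdot b=0\in\zeroset{M}$ while $1\notin\zeroset{\Z}$. The lemma is therefore only correct when the $b_i$ are distinct, which the paper's proof also silently assumes (its comparison against $\sum_{b\in B}\beta_b^\circ b$ presupposes each $b$ appears at most once on the left). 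Drop the merging sentence and take distinctness as part of the hypothesis; the rest of your argument then goes through.
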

\begin{proof}
Since $\displaystyle{\sum_{i=1}^n\alpha_i b_i\in\zeroset{M}}$, there is some $x\in M$ such that $\displaystyle{\sum_{i=1}^n\alpha_i b_i=x^\circ}$. Writing $\displaystyle{x=\sum_{b\in B}\beta_b b}$, we have
$$\sum_{i=1}^{n}\alpha_i b_i=x^\circ=\left(\sum_{b\in B}\beta_b b\right)=\sum_{b\in B}\beta_b^\circ b$$
But $B$ is a base, and thus $\alpha_1,\dots,\alpha_n\in\zeroset{R}$ (since all of the coefficients in the right side are quasi-zeros).
\end{proof}

\begin{lem}\label{lem:base-is-lin-ind}
If $B$ is a base of $M$, then it is a linearly independent set.
\end{lem}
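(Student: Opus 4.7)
The plan is to reduce this directly to \Lref{lem:lin-comb-zero-base}, which does almost all the work. Recall the definition of linear dependence requires a quasi-zero linear combination whose coefficients lie in $\nzeroset{R}\setminus\{\minf\}$, i.e., are \emph{not} quasi-zero. The previous lemma says: any linear combination of base elements that is quasi-zero must have all coefficients quasi-zero. These two statements are plainly incompatible except in the trivial way.

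Concretely, I would argue by contradiction. Suppose $B$ is linearly dependent. Then by definition there exist $b_1,\dots,b_k \in B$ and $\alpha_1,\dots,\alpha_k \in \nzeroset{R}\setminus\{\minf\}$ with
$$\alpha_1 b_1 + \cdots + \alpha_k b_k \in \zeroset{M}.$$
Applying \Lref{lem:lin-comb-zero-base} to this relation forces $\alpha_1,\dots,\alpha_k \in \zeroset{R}$. But $\alpha_i \in \nzeroset{R} = R \setminus \zeroset{R} \cup \{\minf\}$, and we already excluded $\minf$, so $\alpha_i \notin \zeroset{R}$. This is the required contradiction, and therefore $B$ is linearly independent.

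There is no real obstacle here; the substantive content sits in \Lref{lem:lin-comb-zero-base}, which already handled the interplay between uniqueness of representations in a base and the quasi-zero submodule $\zeroset{M}$. The only thing to be careful about is matching the definition of linear (in)dependence precisely: coefficients must be drawn from $\nzeroset{R}\setminus\{\minf\}$, so the contradiction is indeed $\alpha_i \in \zeroset{R}$ versus $\alpha_i \in \nzeroset{R}$, and the disjointness of $\zeroset{R}$ and $\nzeroset{R}\setminus\{\minf\}$ (immediate from the definition $\nzeroset{R}=R\setminus\zeroset{R}\cup\{\minf\}$) closes the argument.
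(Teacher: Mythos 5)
Your proof is correct and matches the paper's argument in substance: the paper likewise reduces immediately to \Lref{lem:lin-comb-zero-base}, noting that any quasi-zero combination of base elements forces all coefficients into $\zeroset{R}$, which contradicts (or rather, is incompatible with) a dependence relation. You phrase it as a proof by contradiction whereas the paper states it directly, but this is a cosmetic difference only.
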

\begin{proof}
Assume that
$$\sum_{b\in B}\alpha_{b}b\in\zeroset M$$
By \Lref{lem:lin-comb-zero-base}, $\forall b\in B:\alpha_{b}\in\zeroset{R}$, and thus $B$ is linearly independent.
\end{proof}

\begin{cor}\label{cor:base-is-d,s-base}
Any base is a d,s-base.
\end{cor}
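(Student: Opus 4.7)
The statement to prove is that any base $B$ of an $R$-module $M$ is automatically a d,s-base. Given how the preceding lemmas are set up, this should be essentially a one-line combination of existing facts, so the plan is to verify the two hypotheses of \Lref{lem:ind-span-is-d,s} and then invoke it.

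First I would observe that $B$ is linearly independent: this is exactly the content of \Lref{lem:base-is-lin-ind}, so nothing new is required here. Second, I would argue that $B$ is a spanning set of $M$. This is immediate from \Dref{def:base}: every nonzero $x\in M$ is by definition of the form $\alpha_1 x_1+\cdots+\alpha_n x_n$ with $x_i\in B$, and $\zeroM=\minf\cdot b$ for any $b\in B$, so $\Span(B)=M$.

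Once both conditions are in place, \Lref{lem:ind-span-is-d,s} applies verbatim and yields that $B$ is simultaneously a d-base and an s-base, i.e., a d,s-base as defined in \Dref{def:d,s-base}. No case analysis or use of the negation map beyond what already feeds into the cited lemmas is needed. The only conceivable obstacle is a pedantic one: making sure that the ``spanning'' conclusion really does hold for the zero element $\zeroM$, since \Dref{def:base} explicitly excludes $\zeroM$ from the uniqueness requirement. This is harmless, because the definition of $\Span$ allows the empty sum or, equivalently, the coefficient $\minf$, so $\zeroM$ lies in $\Span(B)$ automatically. After noting this, the corollary follows in a single sentence.
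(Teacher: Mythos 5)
Your proof matches the paper's: verify that $B$ is linearly independent (via \Lref{lem:base-is-lin-ind}), that $B$ is a spanning set (immediate from \Dref{def:base}), and invoke \Lref{lem:ind-span-is-d,s}. The extra care you take about $\zeroM$ lying in $\Span(B)$ is a reasonable and correct clarification, though the paper omits it as obvious.
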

\begin{proof}
Let $M$ be a free $R$-module with base $B$. By definition, $B$ is a spanning set of $M$. In addition, by \Lref{lem:base-is-lin-ind}, $B$ is linearly dependent. We are finished, by \Lref{lem:ind-span-is-d,s}.
\end{proof}

\begin{cor}\label{cor:free-has-base-card-const}
The cardinality of a base of a free module is uniquely determined.
\end{cor}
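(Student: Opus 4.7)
The plan is a short chaining of the two immediately preceding results, with essentially no new content. Let $M$ be a free $R$-module and suppose $B_1, B_2 \subseteq M$ are two bases. First I would apply Corollary \ref{cor:base-is-d,s-base} to conclude that each $B_i$ is a d,s-base of $M$. Then I would apply Lemma \ref{lem:size-of-d,s-base}, which says that every d,s-base of a finitely generated module has cardinality exactly $\rk(M)$; since $\rk(M)$ is an invariant of $M$ that does not depend on the chosen base, this immediately gives $|B_1| = \rk(M) = |B_2|$. That is the whole argument.

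The only place where care is needed — and what I expect to be the main obstacle — is the definition of $\rk(M)$ itself: it is defined as the maximum size of a d-base, which presupposes that such a maximum exists. If $M$ admits a finite base $B$, then $B$ is in particular a spanning set by Corollary \ref{cor:base-is-d,s-base}, so $M$ is finitely generated and $\rk(M) \in \mathbb{N}$ is well-defined; the two-step argument above then closes immediately. If one wishes to allow $M$ to have an infinite base, Lemma \ref{lem:size-of-d,s-base} is not directly applicable, and a separate argument is needed: via Lemma \ref{lem:free-isomorphic-Rn} one would reduce to showing that $R^{I} \cong R^{J}$ forces $|I| = |J|$, which is a pure cardinality comparison on the underlying sets of the two free modules (and would typically require a mild assumption on $|R|$). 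Given the scope of this subsection, I expect the intended proof to be the short finitely generated one obtained by combining Corollary \ref{cor:base-is-d,s-base} and Lemma \ref{lem:size-of-d,s-base}.
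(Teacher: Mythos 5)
Your proposal matches the paper's intended argument: the corollary is stated immediately after Corollary~\ref{cor:base-is-d,s-base} with no explicit proof, so the intended route is precisely your two-step chaining of Corollary~\ref{cor:base-is-d,s-base} with Lemma~\ref{lem:size-of-d,s-base}. Your observation that Lemma~\ref{lem:size-of-d,s-base} carries a finite-generation hypothesis (and that this is automatic once $M$ has a finite base, but would require a separate cardinality argument in the infinite case) is a genuine subtlety that the paper passes over silently, and is worth flagging.
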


\begin{example}
We give an example of a submodule of a free module, which has a d,s-base which is not a base. Take $ R=\ELT{\mathbb{R}}{\mathbb{C}}$, and consider
$$M=\Span\left\{ \left(\begin{array}{c}
\layer{1}{1}\\
\layer{0}{1}\\
\layer{0}{1}
\end{array}\right),\left(\begin{array}{c}
\layer{0}{1}\\
\layer{1}{1}\\
\layer{1}{1}
\end{array}\right),\left(\begin{array}{c}
\layer{1}{1}\\
\layer{0}{-1}\\
\layer{0}{1}
\end{array}\right)\right\}$$
The set $\left\{ \left(\begin{array}{c}
\layer{1}{1}\\
\layer{0}{1}\\
\layer{0}{1}
\end{array}\right),\left(\begin{array}{c}
\layer{0}{1}\\
\layer{1}{1}\\
\layer{1}{1}
\end{array}\right),\left(\begin{array}{c}
\layer{1}{1}\\
\layer{0}{-1}\\
\layer{0}{1}
\end{array}\right)\right\} $ is a d,s-base of $M$ (to prove this set is linearly independent, we use \cite[Theorem 3.3.5]{Sheiner2015}; their determinant is $\layer{2}{2}$, which is not zero-layered, and thus this set is linearly dependent). However, it is not a base, because there is a vector which can be written as a linear combination of its elements in two different ways:
$$\left(\begin{array}{c}
\layer{1}{1}\\
\layer{0}{1}\\
\layer{0}{1}
\end{array}\right)+\left(\begin{array}{c}
\layer{0}{1}\\
\layer{1}{1}\\
\layer{1}{1}
\end{array}\right)=\left(\begin{array}{c}
\layer{1}{1}\\
\layer{1}{1}\\
\layer{1}{1}
\end{array}\right)=\left(\begin{array}{c}
\layer{0}{1}\\
\layer{1}{1}\\
\layer{1}{1}
\end{array}\right)+\left(\begin{array}{c}
\layer{1}{1}\\
\layer{0}{-1}\\
\layer{0}{1}
\end{array}\right)$$
\end{example}

However, if we know that a module is free, we also know all of its d,s-bases:

\begin{lem}
Let $M$ be a free $R$-module, and let $S$ be a d,s-base of $M$. Then $S$ is a base of~$M$.
\end{lem}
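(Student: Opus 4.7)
The plan is to reduce the lemma to a matrix identity by exploiting freeness. Let $B$ be a base of $M$. By \Cref{cor:base-is-d,s-base}, $B$ is itself a d,s-base, and \Lref{lem:size-of-d,s-base} together with \Cref{cor:free-has-base-card-const} gives $|S|=|B|$, which I denote by $n$. By \Lref{lem:free-isomorphic-Rn} I identify $M$ with $R^n$ and $B$ with the standard basis $\{e_1,\dots,e_n\}$; let $C$ be the $n\times n$ matrix whose $i$-th column is $s_i\in R^n$, so that the $R$-linear map $\sigma\colon R^n\to R^n$ sending $e_i$ to $s_i$ is left multiplication by $C$.

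Since $S$ spans $R^n$, each $e_j$ admits a representation $e_j=\sum_i d_{ij}s_i$; substituting $s_i=\sum_k c_{ki}e_k$ and invoking the uniqueness of the $B$-representation of $e_j$ shows that $D=(d_{ij})$ satisfies $CD=I$, and in particular $\sigma$ is surjective. By \Lref{lem:free-isomorphic-Rn} applied to $\sigma$, to conclude that $S=\sigma(\{e_1,\dots,e_n\})$ is a base of $R^n=M$ it is enough to show $\sigma$ is also injective. Given $\sigma(\alpha)=\sigma(\beta)$, expanding $\sum_i\alpha_is_i=\sum_i\beta_is_i$ in $B$-coordinates and using uniqueness of the $B$-representation yields the literal equality $C\alpha=C\beta$; the lemma thus reduces to establishing $DC=I$, whence $\alpha=DC\alpha=DC\beta=\beta$.

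The main obstacle is therefore proving $DC=I$. Combining $s_i=\sum_j c_{ji}e_j$ with $e_j=\sum_\ell d_{\ell j}s_\ell$ gives $s_i=\sum_\ell(DC)_{\ell i}s_\ell$, and adding $\minus s_i$ to both sides produces the quasi-zero relation
\[
[(DC)_{ii}\minus 1_R]\,s_i + \sum_{\ell\ne i}(DC)_{\ell i}\,s_\ell = s_i^{\circ}\in\zeroset{R^n}.
\]
The plan for this step is to leverage the linear independence of $S$ together with the minimality of $S$ as a spanning set, and to use \Lref{lem:lin-comb-zero-base} applied to the base $B$ after re-expanding in $B$-coordinates, in order to force each coefficient $(DC)_{\ell i}\minus\delta_{\ell i}$ to be not merely quasi-zero but literally $0_R$. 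The subtle point is exactly this last upgrade: linear independence by itself only rules out every coefficient being a nonzero element of $\nzeroset{R}\setminus\{0_R\}$, so the freeness of $M$ (encoded in \Lref{lem:lin-comb-zero-base}) must be used in an essential way to pin down the coefficients exactly, and the minimality of $S$ handles any relation that would otherwise exhibit $s_i\in\Span(S\setminus\{s_i\})$.
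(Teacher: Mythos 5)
Your approach diverges from the paper's: the paper's proof appeals to critical elements via Lemma~\ref{lem:s-base-contains-critical}, showing that the s-base $S$ contains a critical set $A=\{\alpha_1e_1,\dots,\alpha_ne_n\}$ (critical elements of $R^n$ being exactly the $R^\vee$-multiples of the standard basis vectors), then argues each $\alpha_i$ is invertible, so that $A$ already spans and minimality forces $S=A$. You instead try to prove $DC=I$ by matrix manipulation, bypassing the critical-element machinery entirely.

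Unfortunately there is a genuine gap in the final step, and I do not see how the proposed plan can close it. The relation $s_i=\sum_\ell(DC)_{\ell i}s_\ell$ is a tautology: its right-hand side is the $i$-th column of $C(DC)=(CD)C=IC=C$, which is $s_i$ by definition. Consequently the quasi-zero identity $[(DC)_{ii}\minus 1_R]\,s_i + \sum_{\ell\ne i}(DC)_{\ell i}\,s_\ell=s_i^\circ$ holds automatically for \emph{any} right inverse $D$ of $C$, whether or not $DC=I$. If you re-expand this identity in $B$-coordinates and apply Lemma~\ref{lem:lin-comb-zero-base}, every $e_j$-coefficient you obtain collapses to $c_{ji}\minus c_{ji}=c_{ji}^\circ$, which is quasi-zero for trivial reasons and therefore forces nothing about $(DC)_{\ell i}$. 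Linear independence of $S$ does not help either: the definition of $\circ$-linear dependence requires all selected coefficients to lie outside $\zeroset{R}$, and the relation above does not produce such coefficients; and the ``cancellation'' $C\alpha=C\beta\Rightarrow\alpha=\beta$ that you want to derive is precisely the injectivity of $\sigma$ that you are trying to establish, so using it here is circular. In short, proving $DC=I$ is equivalent to the lemma itself in this framing, and the quasi-zero relation you exhibit is too weak to break that circle. The paper's route through Lemma~\ref{lem:s-base-contains-critical} and Lemma~\ref{lem:critical-not-span-by-others} avoids this by constraining the \emph{shape} of the elements of $S$ directly rather than their transition matrix.
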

\begin{proof}
By \Lref{lem:free-isomorphic-Rn}, we can assume $M=R^n$. Since $S$ is an s-base of $M$, by \Lref{lem:s-base-contains-critical}, $S$ contains some critical set, $A$. Since the critical elements in $R^n$ are precisely $\alpha_i e_i$ for some $\alpha_i\in R^{\vee}$, we may write
$$A=\left\{\alpha_1 e_1,\dots,\alpha_n e_n\right\}$$

We now prove that each $\alpha_i$ is invertible. Otherwise, assume without loss of generality that $\alpha_1$ is not invertible. Since $S$ is an s-base of $R^n$, and since $e_1$ is critical, there are $\beta_1 e_1,\dots,\beta_k e_1\in S$ such that
$$\exists\gamma_1,\dots,\gamma_k\in R:\gamma_1\beta_1e_1+\cdots+\gamma_k\beta_ke_1=e_1$$
Since $S$ is linearly independent, we must have $k=1$, and thus $\beta e_1\in S$ such that $\beta$ is invertible. Again, since $S$ is linearly independent, $\alpha_1=\beta$ is invertible, as required.
\end{proof}

\subsubsection{Free modules over entire antirings with a negation map}

For the remainder of this part, we consider free modules over entire antirings with a negation map.

\begin{defn}\label{def:trans-matrix}
Let $R$ be an entire semiring with a negation map, and let $M$ be a free $R$-module with two bases $B=\left\{ v_{1},\dots,v_{n}\right\} $ and $C=\left\{ w_{1},\dots,w_{n}\right\} $. We define the \textbf{transformation matrix} from $B$ to $C$, $\left[I\right]_{C}^{B}$, by:
$$\left[I\right]_{C}^{B}=\left(\left[v_{1}\right]_{C},\dots,\left[v_{n}\right]_{C}\right)$$
Notice that $\left[I\right]_{C}^{B}\in M_n\left(R\right)$.
\end{defn}

\begin{lem}\label{lem:trans-matrix-and-coor-vec}
Under the conditions of \Dref{def:trans-matrix}, $\forall x\in M:\left[I\right]_{C}^{B}\left[x\right]_{B}=\left[x\right]_{C}$.
\end{lem}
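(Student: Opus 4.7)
The plan is to unpack both sides using the unique representation of $x$ in the base $B$ and then invoke the linearity of the coordinate-vector map that was noted immediately after the definition of $\left[\cdot\right]_B$. Since $B=\left\{v_1,\dots,v_n\right\}$ is a base, every $x\in M$ admits a unique expression $x=\alpha_1 v_1+\cdots+\alpha_n v_n$ (with the convention that $\alpha_i=\minf$ whenever $v_i$ does not appear in the reduced combination). By definition of the coordinate vector, this gives $\left[x\right]_B=\left(\alpha_1,\dots,\alpha_n\right)^T$.

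Next, I would compute the left-hand side directly from the definition of the transformation matrix: since the $i$-th column of $\left[I\right]_C^B$ is $\left[v_i\right]_C$, the matrix-vector product reads
$$\left[I\right]_C^B\left[x\right]_B=\sum_{i=1}^{n}\alpha_i\left[v_i\right]_C.$$
For the right-hand side, I would apply the two identities $\left[x+y\right]_C=\left[x\right]_C+\left[y\right]_C$ and $\left[\alpha x\right]_C=\alpha\left[x\right]_C$ (already noted after the definition of the coordinate vector) to the expansion $x=\sum_i \alpha_i v_i$, yielding
$$\left[x\right]_C=\left[\sum_{i=1}^{n}\alpha_i v_i\right]_C=\sum_{i=1}^{n}\alpha_i\left[v_i\right]_C.$$
Comparing the two expressions proves the equality.

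There is no real obstacle in this proof — it is essentially a definition-unpacking argument. The only subtle point is the bookkeeping: one must treat the unique $B$-expansion of $x$ as a formal sum over all $n$ basis elements (padding with $\minf$ entries as needed) so that the matrix-vector multiplication and the $C$-coordinate linearities line up indexwise. Once this convention is fixed, the two linearity identities for $\left[\cdot\right]_C$ do all the work, and the whole verification collapses to a single chain of equalities.
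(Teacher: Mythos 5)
Your proof is correct and follows essentially the same route as the paper's: both are definition-unpacking computations that expand the unique $B$-representation of $x$ and use the column structure of $\left[I\right]_{C}^{B}$. The only cosmetic difference is that you reduce both sides to $\sum_i \alpha_i \left[v_i\right]_C$ using the already-noted linearity of $\left[\cdot\right]_C$, whereas the paper expands each $v_k$ in terms of the $w_\ell$'s and reads off the coefficients via uniqueness of the $C$-expansion; these are the same calculation organised slightly differently.
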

\begin{proof}
Denote $\left[I\right]_{C}^{B}=\left(\alpha_{i,j}\right)$, $\left[x\right]_{B}=\left(\beta_{k}\right)$. That means ${\displaystyle x=\sum_{k=1}^{n}\beta_{k}v_{k}}$, and ${\displaystyle v_{k}=\sum_{\ell=1}^{n}\alpha_{\ell,k}w_{\ell}}$. We get
$$x=\sum_{k=1}^{n}\beta_{k}v_{k}=\sum_{k=1}^{n}\beta_{k}\left(\sum_{\ell=1}^{n}\alpha_{\ell,k}w_{\ell}\right)=\sum_{\ell=1}^{n}\left(\sum_{k=1}^{n}\alpha_{\ell,k}\beta_{k}\right)w_{\ell}= \sum_{\ell=1}^{n}\left(\left[I\right]_{C}^{B}\left[x\right]_{B}\right)_{\ell}w_{\ell}$$
And by the definition, $\forall\ell=1,\dots,n:\left(\left[I\right]_{C}^{B}\left[x\right]_{B}\right)_{\ell}=\left(\left[x\right]_{C}\right)_{\ell}$, as needed.
\end{proof}

\begin{lem}\label{lem:trans-matrix-is-invertible}
Under the conditions of \Dref{def:trans-matrix},
$$\left[I\right]_{B}^{C}\cdot\left[I\right]_{C}^{B}=I_{n}$$
\end{lem}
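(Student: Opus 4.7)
The plan is to apply \Lref{lem:trans-matrix-and-coor-vec} twice and then use the uniqueness of coordinate vectors that comes from $M$ being free. Specifically, for any $x\in M$,
$$\left[I\right]_{B}^{C}\left[I\right]_{C}^{B}\left[x\right]_{B}=\left[I\right]_{B}^{C}\left[x\right]_{C}=\left[x\right]_{B},$$
so the matrix $A:=\left[I\right]_{B}^{C}\cdot\left[I\right]_{C}^{B}\in M_{n}\left(R\right)$ fixes every coordinate vector with respect to $B$.

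Next, I would specialize $x$ to run over the elements of the basis $B=\left\{v_{1},\dots,v_{n}\right\}$. By \Dref{def:base}, the unique representation of $v_{i}$ in the basis $B$ is just $v_{i}=\one v_{i}$, so $\left[v_{i}\right]_{B}=e_{i}$. Therefore $A e_{i}=e_{i}$ for every $i=1,\dots,n$. Since multiplication of a matrix by the standard vector $e_{i}$ returns its $i$-th column, we conclude that the $i$-th column of $A$ is $e_{i}$, which means $A=I_{n}$.

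The potential obstacle here is that over a semiring with a negation map we cannot usually read off matrix entries from vector identities, because $\symmdash$ need not be antisymmetric and representations in spanning sets need not be unique. However, this is exactly where the assumption that $B$ is a \emph{base} (not merely a d,s-base) is used: freeness guarantees uniqueness of coordinate vectors, so the equality $A e_{i}=e_{i}$ in $R^{n}$ is a genuine equality componentwise and determines the $i$-th column of $A$ on the nose. No appeal to negation or to the $\symmdash$ relation is needed beyond what is already built into \Lref{lem:trans-matrix-and-coor-vec}.
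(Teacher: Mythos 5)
Your proof is correct and takes essentially the same route as the paper: both apply \Lref{lem:trans-matrix-and-coor-vec} twice and read off the columns of $\left[I\right]_{B}^{C}\left[I\right]_{C}^{B}$ by evaluating on $e_{i}=\left[v_{i}\right]_{B}$. The only difference is cosmetic (you state the identity for general $\left[x\right]_{B}$ before specializing to basis vectors, while the paper specializes immediately); note also that the final step from $Ae_{i}=e_{i}$ for all $i$ to $A=I_{n}$ is pure matrix algebra and does not actually need freeness beyond what is already packaged into the cited lemma.
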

\begin{proof}
We will show equality of columns. For all $i=1,\dots,n$,
$$C_{i}\left(\left[I\right]_{B}^{C}\cdot\left[I\right]_{C}^{B}\right)=\left[I\right]_{B}^{C}\cdot\left[I\right]_{C}^{B}\cdot e_{i}=\left[I\right]_{B}^{C}\cdot\left[I\right]_{C}^{B}\cdot\left[v_{i}\right]_{B}\overset{\Lref{lem:trans-matrix-and-coor-vec}}{=} \left[I\right]_{B}^{C}\cdot\left[v_{i}\right]_{C}\overset{\Lref{lem:trans-matrix-and-coor-vec}}{=}\left[v_{i}\right]_{B}=e_{i}$$
as needed.
\end{proof}

\begin{cor}\label{cor:unique-base}
A base of a free module over an entire antiring with a negation map is unique, up to invertible scalar multiplication of each element and permutation. In other words, any two bases of a free ELT module are projectively equivalent.
\end{cor}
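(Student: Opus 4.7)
The plan is to reduce the comparison of two bases to the structure of d,s-bases in $R^n$, which is essentially determined in the lemma immediately preceding the corollary. Let $B=\{v_1,\ldots,v_n\}$ and $C=\{w_1,\ldots,w_n\}$ be two bases of the free module $M$; by \Cref{cor:free-has-base-card-const} they have the same cardinality, so both can be indexed by $\{1,\ldots,n\}$. Fixing $B$ as a reference, the coordinate map $[\cdot]_B:M\to R^n$ furnished by \Lref{lem:free-isomorphic-Rn} is an $R$-module isomorphism sending $v_i\mapsto e_i$ and $w_i\mapsto[w_i]_B$. Since bases are preserved by $R$-module isomorphisms, $\{[w_i]_B\}_i$ is again a base of $R^n$, and by \Cref{cor:base-is-d,s-base} it is in particular a d,s-base of $R^n$.

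At this stage I would invoke the preceding lemma: its proof shows that any d,s-base of $R^n$ must, after a permutation $\sigma\in S_n$, be of the form $\{\alpha_1 e_{\sigma(1)},\ldots,\alpha_n e_{\sigma(n)}\}$ with each $\alpha_i$ invertible in $R^\vee$. Applying this to $\{[w_i]_B\}_i$ yields invertible $\alpha_1,\ldots,\alpha_n\in R^\vee$ and a permutation $\sigma$ such that $[w_i]_B=\alpha_i e_{\sigma(i)}$ for every $i$. Pulling back through the coordinate isomorphism (equivalently, using that $[\cdot]_B$ is $R$-linear and injective), this translates to $w_i=\alpha_i v_{\sigma(i)}$, which is exactly the projective equivalence of $B$ and $C$ up to a permutation claimed in the statement.

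The only subtle point is that the argument rests on the structural information inside the \emph{proof} of the preceding lemma rather than on its bare statement: namely, the identification of the critical vectors of $R^n$ as the monomial vectors $\alpha e_i$ with $\alpha\in R^\vee$, together with the argument (via \Lref{lem:critical-not-span-by-others}, linear independence, and the antiring hypothesis) that forces the scalar coefficient in front of each $e_{\sigma(i)}$ to be genuinely invertible. The antiring hypothesis is essential here: without it, the change-of-basis could a priori be only ``monomial up to quasi-zeros'', which would be strictly weaker than projective equivalence. Once this structural description of d,s-bases of $R^n$ is accepted, no further calculation is required — the remainder of the proof is pure bookkeeping through the coordinate isomorphism.
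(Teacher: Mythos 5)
Your argument is correct, but it takes a genuinely different route from the paper. The paper's proof is short and structural: it forms the transition matrix $\left[I\right]_{C}^{B}$, observes that it is square (by the cardinality corollary) and invertible (by \Lref{lem:trans-matrix-is-invertible}, i.e.\ $\left[I\right]_{B}^{C}\left[I\right]_{C}^{B}=I_{n}$), and then invokes an external result of Dol\v{z}an--Oblak stating that an invertible matrix over an antiring is a generalized permutation matrix; the conclusion follows immediately, since the columns of a generalized permutation matrix say precisely that each $w_i$ is an invertible scalar multiple of some $v_{\sigma(i)}$. Your route instead pushes both bases through the coordinate isomorphism of \Lref{lem:free-isomorphic-Rn} into $R^n$, applies \Cref{cor:base-is-d,s-base} to see that the image of $C$ is a d,s-base of $R^n$, and then extracts from the proof of the lemma immediately preceding the corollary the structural fact that every d,s-base of $R^n$ is $\left\{\alpha_1 e_{\sigma(1)},\ldots,\alpha_n e_{\sigma(n)}\right\}$ with each $\alpha_i$ invertible, which pulls back through $\left[\cdot\right]_B$ to the desired monomial relation. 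Both are sound. The paper's version is more modular and offloads the genuine combinatorial work to a single matrix-theoretic citation; yours is more self-contained (no external citation, only internal machinery from the s-base/critical-element development) and makes visible exactly where the antiring hypothesis enters, namely in showing that the critical vectors of $R^n$ are the monomial vectors. The one stylistic weakness you correctly flag yourself is that your argument leans on the \emph{proof} of that preceding lemma rather than its bare statement (which only asserts that a d,s-base of a free module is a base); to make your route fully rigorous as written, one should either restate that intermediate structural fact as a standalone claim or fold it into the corollary's proof.
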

\begin{proof}
If $B$ and $C$ are two bases of a free $R$-module $M$, then $\left[I\right]_{C}^{B}$ is invertible. We also know (by \Cref{cor:free-has-base-card-const}) that $\left[I\right]_{C}^{B}$ is a square matrix. By \cite[Corollary 3]{Dolzan2008}, it is a generalized permutation matrix, namely $C$ is $B$ up to scalar multiplication of each element ($c_{i}$'s) and change of order ($\sigma$).
\end{proof} 
\section{Symmetrized Versions of Important Algebraic Structures}
We now turn to study Lie semialgebras over semirings with a negation map. In the classical theory, a Lie algebra is a vector space endowed with a bilinear alternating multiplication, which satisfies Jacobi's identity. In the context of negation maps, our definition will be quite similar to the classical one. We will mostly follow the approach of \cite{Humpre1972}.





\subsection{Semialgebras over Symmetrized Semirings}

\label{sec:ELT-nonass-algebras}
\begin{defn}
A \textbf{nonassociative semialgebra with a negation map} over a semiring $R$ is an $R$-module with a negation map $A$, together with an $R$-bilinear multiplication, $A\times A\rightarrow A$, that is distributive over addition and satisfies the axioms:
\begin{enumerate}
  \item $\forall\alpha\in R\,\forall x,y\in A:\alpha\left(xy\right)=\left(\alpha x\right)y=x\left(\alpha y\right)$.
  \item $\forall x,y\in A:\minus\left(xy\right)=\left(\minus x\right)y=x\left(\minus y\right)$.
\end{enumerate}
Note that ``nonassociative'' means ``not necessarily associative''.
\end{defn}

As in the case of homomorphisms and congruences, if the negation map on $A$ is induced from a negation map on $R$, the last condition is follows from the other.

\begin{example}
Let $A,B$ be nonassociative semialgebras with a negation map over a semiring $R$. Then $A\times B$ is also a nonassociative semialgebra with a negation map (a module as in \Eref{exa:examples-of-modules}), where the multiplication and the negation map are defined componentwise.
\end{example}

\begin{defn}
An \textbf{associative semialgebra} is a nonassociative semialgebra, whose multiplication is associative.
\end{defn}

\begin{defn}
Let $A,B$ be nonassociative semialgebras with a negation map over a semiring. A function $\varphi:A\rightarrow B$ is called an \textbf{$R$-homomorphism}, if it satisfies the following properties:
\begin{enumerate}
\item $\varphi$ is an $R$-module homomorphism (as defined in \Dref{def:modules-hom}).
\item $\forall x,y\in A:\varphi\left(xy\right)=\varphi\left(x\right)\varphi\left(y\right)$.
\end{enumerate}
The set of all $R$-homomorphisms $\varphi:A\rightarrow B$ is denoted $\Hom{A}{B}$. We also say that:
\begin{itemize}
\item $\varphi$ is an \textbf{$R$-monomorphism}, if $\varphi$ is injective.
\item $\varphi$ is an \textbf{$R$-epimorphism}, if $\varphi$ is surjective.
\item $\varphi$ is an \textbf{$R$-isomorphism}, if $\varphi$ is bijective. In this case we denote $A\cong B$.
\end{itemize}
In addition, if $A$ is a semialgebra with negation over a semiring $R$, then $\End{A}=\Hom{A}{A}$ is also semialgebra with a negation map over $R$, with the usual addition and scalar multiplication, composition as multiplication and negation map
$$\left(\minus\varphi\right)\left(x\right)=\minus\varphi\left(x\right)$$
\end{defn}

One may also define $\asymmdash$-morphism of algebras with a negation map similarly.

\begin{defn}
Let $A,B$ be nonassociative semialgebras with a negation map over a semiring $R$, and let $\varphi:A\rightarrow B$ be an $R$-homomorphism. The \textbf{kernel} of $\varphi$ is
$$\ker\varphi=\left\{x\in A\mid\varphi\left(x\right)\in\zeroset{B}\right\}=\varphi^{-1}\left(\zeroset{B}\right)$$
By \Rref{rem:mod-zeros-sent-zeros}, $\zeroset{A}\subseteq\ker\varphi$.
\end{defn}

We recall the definition of an ideal:
\begin{defn}\label{def:ideal-of-alg}
Let $A$ be a nonassociative semialgebra over a semiring $R$. A subalgebra $I\subseteq A$ which satisfies $IA,AI\subseteq I$ is called an \textbf{ideal} of $A$, denoted $I\ideal A$.
\end{defn}

\begin{lem}
$\ker\varphi$ is an ideal of $A$.
\end{lem}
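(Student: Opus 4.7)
The plan is to verify each closure condition in the definition of an ideal by pushing everything through $\varphi$ and exploiting the fact that $\zeroset{B}$ is itself closed under addition, scalar multiplication, negation, and (two-sided) multiplication by arbitrary elements of $B$. Once this observation is established, each of the required closure properties of $\ker\varphi = \varphi^{-1}(\zeroset{B})$ follows immediately because $\varphi$ is an $R$-homomorphism (so it commutes with $+$, with scalars, with $\minus$, and with the product).

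First I would record the auxiliary fact that $\zeroset{B}$ is a two-sided absorbing submodule of $B$. Closure under addition is clear: $x^\circ + y^\circ = (x+y)^\circ$. Closure under scalars and under $\minus$ follows from $\alpha\, x^\circ = (\alpha x)^\circ$ and $\minus x^\circ = (\minus x)^\circ$. For absorption, given $y^\circ \in \zeroset{B}$ and any $z \in B$, bilinearity of the multiplication together with the axiom $\minus(zy) = z(\minus y)$ gives
\[
z \cdot y^\circ = z(y + \minus y) = zy + z(\minus y) = zy + \minus(zy) = (zy)^\circ \in \zeroset{B},
\]
and analogously on the right. In particular $\zeroset{B}$ is an ideal of $B$.

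Next I would verify the submodule and subalgebra conditions for $\ker\varphi$. If $x_1,x_2 \in \ker\varphi$ and $\alpha \in R$, then $\varphi(x_1+x_2) = \varphi(x_1)+\varphi(x_2) \in \zeroset{B}$, $\varphi(\alpha x_1) = \alpha\varphi(x_1) \in \zeroset{B}$, and $\varphi(\minus x_1) = \minus\varphi(x_1) \in \zeroset{B}$, so $\ker\varphi$ is a submodule with a negation map. For the subalgebra property, $\varphi(x_1 x_2) = \varphi(x_1)\varphi(x_2) \in \zeroset{B}$ by absorption.

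Finally, to check $A\cdot\ker\varphi,\ \ker\varphi\cdot A \subseteq \ker\varphi$, let $x \in \ker\varphi$ and $a \in A$. Then
\[
\varphi(ax) = \varphi(a)\varphi(x) \in \zeroset{B}, \qquad \varphi(xa) = \varphi(x)\varphi(a) \in \zeroset{B},
\]
again by absorption of $\zeroset{B}$. Hence $ax, xa \in \ker\varphi$, and $\ker\varphi \ideal A$. There is no real obstacle here: the only thing one has to be a bit careful about is condition (3) of the homomorphism definition when the negation map on $A$ is not the induced one, but that condition is built into \Dref{def:modules-hom} and hence directly gives closure of $\ker\varphi$ under $\minus$.
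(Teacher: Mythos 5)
Your proof is correct and follows essentially the same route as the paper's: a direct verification that $\ker\varphi=\varphi^{-1}\left(\zeroset{B}\right)$ is closed under addition, scalar multiplication, $\minus$, and two-sided multiplication by elements of $A$, using the fact that $\varphi$ commutes with each of these operations. The only cosmetic difference is that you first isolate the fact that $\zeroset{B}$ is an ideal of $B$ as a preliminary observation, and your absorption computation $z\cdot y^{\circ}=\left(zy\right)^{\circ}$ is written in terms of $y^\circ = y+\minus y$ rather than factoring out $\zero$ as the paper does; both are correct, and your phrasing is arguably the cleaner one since it avoids any appearance of needing $\zero$ to be multiplicatively idempotent.
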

\begin{proof}
We prove it directly:
\begin{enumerate}
\item If $x,y\in\ker\varphi$, then $\varphi\left(x+y\right)=\varphi\left(x\right)+\varphi\left(y\right)\in\zeroset{B}$. Hence, $x+y\in\ker\varphi$.
\item If $\alpha\in R$ and $x\in\ker\varphi$, then $\varphi\left(\alpha x\right)=\alpha\varphi\left(x\right)\in\zeroset{B}$, and thus $\alpha x\in\ker\varphi$.
\item If $x\in\ker\varphi$, then $\varphi\left(\minus x\right)=\minus\varphi\left(x\right)\in\zeroset{B}$, so $\minus x\in\ker\varphi$.
\item If $x\in\ker\varphi$ and $y\in A$, then $\varphi\left(x\right)\in\zeroset{B}$. Hence,
    $$\varphi\left(xy\right)=\varphi\left(x\right)\varphi\left(y\right)=\left(\zero\varphi\left(x\right)\right)\varphi\left(y\right)= \zero\left(\varphi\left(x\right)\varphi\left(y\right)\right)$$
    Therefore, $\varphi\left(xy\right)\in\zeroset{B}$, implying $xy\in\ker\varphi$.
    Similarly, $yx\in\ker\varphi$.
\end{enumerate}
\end{proof}

\subsection{Lie Semialgebras with a Negation Map}
\subsubsection{Basic definitions}
We first restrict our base semiring to be a \textbf{semifield with a negation map}, i.e.\ a commutative semiring with a negation map in which every element which is not quasi-zero is invertible.

\begin{defn}
Let $R$ be a semifield. A \textbf{Lie semialgebra with a negation map} over $R$ is a nonassociative semialgebra with a negation map over $R$, whose multiplication $\left[\cdot,\cdot\right]:L\times L\rightarrow L$, called a \textbf{negated Lie bracket}, satisfies the following axioms:
\begin{enumerate}
\item\label{itm:bracket-commutes-with-negation} Commutes with the negation map: $\forall x,y\in L:\left[\minus x,y\right]=\minus\left[x,y\right]$.
\item Alternating on $L$: $\forall x\in L:\left[x,x\right]\in\zeroset{L}$.
\item Anticommutativity: $\forall x,y\in L:\left[y,x\right]=\minus\left[x,y\right]$.
\item Jacobi's identity: $\forall x,y,z\in L: \left[x,\left[y,z\right]\right]+\left[z,\left[x,y\right]\right]+\left[y,\left[z,x\right]\right]\in\zeroset{L}$.
\end{enumerate}
\end{defn}

\begin{rem}~
\begin{enumerate}
\item If $\ch R\neq 2$, then $2\Rightarrow1$.
\item If $R$ has a negation map and the negation map of $L$ is the induced negation map from $R$, then condition \ref{itm:bracket-commutes-with-negation} follows from the bilinearity of the negated Lie bracket, since
    $$\left[\minus x,y\right]=\left[\left(\minus\one\right)x,y\right]=\left(\minus\one\right)\left[x,y\right]=\minus\left[x,y\right].$$
\end{enumerate}
\end{rem}

Our definition is a bit different than Rowen's definition \cite{Rowen2016}. The difference is reflected in Jacobi's identity; Rowen's definition requires that
$$\forall x,y,z\in L:\left[x,\left[y,z\right]\right]\minus\left[y,\left[x,z\right]\right]\symmdash\left[\left[x,y\right],z\right]$$
We call this axiom the \textbf{strong Jacobi's identity}. Note that while the strong Jacobi's identity implies our version of Jacobi's identity, the converse does not hold. An example is given in \Eref{exa:disproof-PBW}.

\begin{lem}
$\forall x,y\in L:\left[x,y\right]+\left[y,x\right]\in\zeroset{L}$.
\end{lem}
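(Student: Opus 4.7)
The plan is to observe that the claim follows immediately from the anticommutativity axiom (axiom~3) in the definition of a Lie semialgebra with a negation map. Specifically, anticommutativity gives the \emph{exact} equality $\left[y,x\right]=\minus\left[x,y\right]$, so
$$\left[x,y\right]+\left[y,x\right]=\left[x,y\right]+\minus\left[x,y\right]=\left[x,y\right]^\circ,$$
and by definition $\left[x,y\right]^\circ\in\zeroset{L}$.

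I would present this as a one-line computation: apply anticommutativity to rewrite the second bracket, use the notation $a+\minus a = a^\circ$ introduced in the preliminaries, and invoke the definition of $\zeroset{L}$ as the set of quasi-zeros of $L$.

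One might alternatively try to mimic the classical derivation from the alternating axiom by expanding $\left[x+y,x+y\right]\in\zeroset{L}$ via bilinearity to get $\left[x,x\right]+\left[x,y\right]+\left[y,x\right]+\left[y,y\right]\in\zeroset{L}$ and ``cancel'' the terms $\left[x,x\right],\left[y,y\right]\in\zeroset{L}$. The obstacle with this route is that in a semiring with a negation map we do not have true additive inverses, so cancellation is not available; the best we could conclude is a surpassing statement, not an equality in $\zeroset{L}$. Since anticommutativity is postulated directly as an axiom (rather than derived from alternating), it is both simpler and cleaner to use it, and the proof is genuinely trivial.
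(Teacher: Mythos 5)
Your proof is correct and matches the paper's proof essentially verbatim: both invoke the anticommutativity axiom to rewrite $\left[y,x\right]=\minus\left[x,y\right]$ and then recognize $\left[x,y\right]+\minus\left[x,y\right]$ as a quasi-zero. Your side remark about why the alternating-axiom route would only yield a surpassing statement in this setting is accurate but not needed.
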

\begin{proof}
For all $x,y\in L$,
$$\left[x,y\right]+\left[y,x\right]=\left[x,y\right]+\minus\left[x,y\right]\in\zeroset{L}$$
\end{proof}

\begin{defn}
A Lie semialgebra with a negation map $L$ is called \textbf{abelian}, if
$$\forall x,y\in L:\left[x,y\right]\in\zeroset{L}$$
\end{defn}

\begin{example}\label{exa:comm-is-Lie}
Given a semifield $R$, let $A$ be an associative semialgebra with a negation map over $R$. We define an operation as follows: $\left[x,y\right]=xy+\minus yx$, which is called the \textbf{negated commutator}. We will now show that it forms a structure of a Lie semialgebra with a negation map over $R$, which satisfied the strong Jacobi's identity:
\begin{enumerate}
\item Bilinear: Let $x,y,z\in A$ and $\alpha,\beta\in R$. Then,
$$\begin{array}{c}
\left[\alpha x+\beta y,z\right]=\left(\alpha x+\beta y\right)z+\minus\left(z\left(\alpha x+\beta y\right)\right)=\\
=\alpha\left(xz+\minus zx\right)+\beta\left(yz+\minus zy\right)=\alpha\left[x,z\right]+\beta\left[y,z\right]
\end{array}$$
Linearity in the second component is proved similarly.
\item $\left[\cdot,\cdot\right]$ commutes with $\minus$: For all $x,y\in L$,
$$\left[\minus x,y\right]=\left(\minus x\right)y+\minus\left(y\left(\minus x\right)\right)=\minus\left(xy+\minus yx\right)=\minus\left[x,y\right]$$
\item Alternating on $A$: For all $x\in A$,
$$\left[x,x\right]=x^{2}+\minus x^{2}\in\zeroset{L}$$
\item Anticommutativity: For all $x,y\in A$,
$$\left[y,x\right]=yx+\minus xy=\minus\left(xy+\minus yx\right)=\minus\left[x,y\right]$$
\item Strong Jacobi's identity: a proof can be found in \cite{Rowen2016}, which uses the strong transfer principle.
\end{enumerate}
\end{example}

\begin{lem}\label{lem:brac-zero-is-zero}
$$\forall x,y\in L:x\in\zeroset{L}\vee y\in\zeroset{L}\Longrightarrow\left[x,y\right]\in\zeroset{L}$$
\end{lem}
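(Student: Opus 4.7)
The plan is to unpack the definition of quasi-zero and use bilinearity of the negated Lie bracket together with axiom \ref{itm:bracket-commutes-with-negation} (commutation with $\minus$). Since the statement is symmetric in the roles of $x$ and $y$ up to anticommutativity, it suffices to treat the case $x \in \zeroset{L}$, and then deduce the other case either by repeating the argument in the second slot or by appealing to anticommutativity.

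First, suppose $x \in \zeroset{L}$. By definition of $\zeroset{L}$, there exists some $w \in L$ with $x = w + \minus w$. Then I would compute, using bilinearity of $[\cdot,\cdot]$ in its first argument and property \ref{itm:bracket-commutes-with-negation},
$$[x,y] = [w + \minus w, y] = [w,y] + [\minus w, y] = [w,y] + \minus[w,y] = [w,y]^\circ \in \zeroset{L}.$$
For the case $y \in \zeroset{L}$, I would give the same computation in the second slot (using bilinearity there, plus the fact that $[u, \minus v] = \minus[u,v]$, which follows from anticommutativity combined with \ref{itm:bracket-commutes-with-negation}), or more cleanly invoke anticommutativity to write $[x,y] = \minus[y,x]$ and reduce to the already-handled case, noting that $\zeroset{L}$ is closed under $\minus$.

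There is no real obstacle here; the argument is a direct unfolding of definitions. The only mild subtlety is that one must remember to use the Lie-semialgebra axiom \ref{itm:bracket-commutes-with-negation} rather than any additive-inverse identity, since in our setting $w + \minus w$ is not literally the zero element of $L$.
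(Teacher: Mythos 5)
Your proof is correct and follows essentially the same route as the paper: assume $x\in\zeroset{L}$, write $x=w+\minus w$, and expand $[x,y]$ by bilinearity and the axiom that the bracket commutes with $\minus$ to obtain $[w,y]+\minus[w,y]\in\zeroset{L}$. The paper only spells out the $x\in\zeroset{L}$ case and leaves the symmetric case implicit, so your extra remark about handling $y\in\zeroset{L}$ via anticommutativity is a small but welcome addition.
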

\begin{proof}
Assume $x\in\zeroset{L}$; write $x=z+\minus z$ for some $z\in L$. Then
$$\left[x,y\right]=\left[z+\minus z,y\right]=\left[z,y\right]+\minus\left[z,y\right]\in\zeroset{L}$$
\end{proof}

\begin{defn}
Let $R$ be a semifield with a negation map, and let $L$ be a Lie semialgebra with a negation map over $R$. A subset $L_1\subseteq L$ is called a \textbf{subalgebra} of $L$, if it is a Lie semialgebra with a negation map over $R$ with the restrictions of the operations of $L$.
\end{defn}

\begin{defn}\label{def:center-of-an-ELT-Lie-alg}
Let $R$ be a semifield with a negation map, and let $L$ be a Lie semialgebra with a negation map over $R$. The \textbf{center} of~$L$ is
$$Z\left(L\right)=\left\{ x\in L\mid\forall y\in L:\left[x,y\right]\in\zeroset{L}\right\}$$
\end{defn}

\begin{lem}\label{lem:zero-layered-in-center}
$\zeroset{L}\subseteq Z\left(L\right)$.
\end{lem}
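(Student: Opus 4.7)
The statement $\zeroset{L}\subseteq Z(L)$ is essentially immediate from the preceding \Lref{lem:brac-zero-is-zero}, which already establishes that a bracket involving at least one quasi-zero argument is itself quasi-zero. The plan is to unpack the definition of the center and invoke that lemma directly.

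First, I would fix an arbitrary $x\in\zeroset{L}$; the goal, by \Dref{def:center-of-an-ELT-Lie-alg}, is to verify that $[x,y]\in\zeroset{L}$ for every $y\in L$. Since $x\in\zeroset{L}$, \Lref{lem:brac-zero-is-zero} applies with its first disjunct satisfied, yielding $[x,y]\in\zeroset{L}$ for every $y\in L$. Hence $x\in Z(L)$, and the inclusion $\zeroset{L}\subseteq Z(L)$ follows.

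There is no real obstacle here: the entire content is the observation that the center condition asks for exactly what \Lref{lem:brac-zero-is-zero} already provides when the first argument is quasi-zero. One might alternatively give a self-contained one-line proof by writing $x=z+\minus z$ and expanding $[x,y]=[z,y]+\minus[z,y]\in\zeroset{L}$ using bilinearity and the fact that the bracket commutes with $\minus$, but this is really just the proof of \Lref{lem:brac-zero-is-zero} repeated. The cleanest presentation simply cites the prior lemma.
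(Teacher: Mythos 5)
Your proof is correct and matches the paper's approach exactly: the paper's proof is simply the one-line citation of \Lref{lem:brac-zero-is-zero}, which you unpack and apply in precisely the same way. No differences to note.
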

\begin{proof}
By \Lref{lem:brac-zero-is-zero}.
\end{proof}

\begin{defn}
Let $R$ be a semifield with a negation map, let $L$ be a Lie semialgebra with a negation map over $R$, and let $L_1,L_2\subseteq L$ be subalgebras of $L$. Define
$$\left[L_1,L_2\right]=\Span\left\{\left[x_1,x_2\right]\middle|x_1\in L_1, x_2\in L_2\right\}$$
\end{defn}

\begin{lem}\label{lem:center-is-ideal}
$$\left[Z\left(L\right),L\right]\subseteq\zeroset{L}\subseteq Z\left(L\right)$$
\end{lem}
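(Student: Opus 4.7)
The statement really splits into two independent inclusions, and neither should be difficult; the hard part is just to recognize that both sides follow immediately from things already on the table.

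For the right-hand inclusion $\zeroset{L}\subseteq Z(L)$, nothing new needs to be done: this is precisely \Lref{lem:zero-layered-in-center}, which says that every quasi-zero bracket-commutes (in the sense of landing in $\zeroset L$) with every element of $L$, and that lemma was established as a direct consequence of \Lref{lem:brac-zero-is-zero}. So I would simply cite it.

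The left-hand inclusion $[Z(L),L]\subseteq\zeroset{L}$ is the content of the lemma. The plan is: take a generator $[x,y]$ of $[Z(L),L]$ with $x\in Z(L)$ and $y\in L$; by the very definition of $Z(L)$, $[x,y]\in\zeroset L$. Since $[Z(L),L]$ is the $R$-linear span of such generators, it then suffices to check that $\zeroset L$ is closed under $R$-linear combinations. This is the only routine piece and I would verify it in two lines: if $z_1=x_1\minus x_1$ and $z_2=x_2\minus x_2$, then $z_1+z_2=(x_1+x_2)\minus(x_1+x_2)\in\zeroset L$, and for $\alpha\in R$ we have $\alpha z_1=\alpha x_1+\alpha(\minus x_1)=\alpha x_1\minus(\alpha x_1)\in\zeroset L$, using the axioms of the negation map on $L$.

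Putting the two inclusions together yields the chain, and there is no real obstacle — the whole argument is a matter of unwinding the definition of $Z(L)$ and the module-submodule character of $\zeroset L$. If anything, the only mildly delicate point is making sure that $[Z(L),L]$, being defined as a span, does not produce something outside $\zeroset L$, and this is exactly handled by the closure remark above. I would therefore keep the proof short, invoking \Lref{lem:zero-layered-in-center} for one inclusion and spelling out the generator-and-closure argument for the other.
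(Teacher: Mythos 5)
Your proposal is correct and follows essentially the same route as the paper: the right-hand inclusion is exactly \Lref{lem:zero-layered-in-center}, and the left-hand inclusion follows because $[x,y]\in\zeroset L$ for $x\in Z(L)$, $y\in L$ is just the definition of the center. The only difference is that you spell out the closure of $\zeroset L$ under linear combinations to pass from generators to the span $[Z(L),L]$, a step the paper leaves implicit (having already introduced $\zeroset L$ as the \emph{submodule} of quasi-zeros); this added care is reasonable but not a different argument.
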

\begin{proof}
$$\forall x\in Z\left(L\right)\,\forall y\in L:\left[x,y\right]\in\zeroset{L}\subseteq Z\left(L\right)$$
\end{proof}

\subsubsection{Homomorphisms and Ideals}

We use the general definitions for homomorphisms and ideals of nonassociative algebras given in \Sref{sec:ELT-nonass-algebras}.
\begin{lem}
$\zeroset{L}\ideal L$ and $Z\left(L\right)\ideal L$.
\end{lem}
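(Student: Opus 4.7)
The plan is to verify the two ideal properties by unwinding the definition of an ideal (\Dref{def:ideal-of-alg}), namely, that we have a subalgebra closed under left and right multiplication by arbitrary elements of $L$, and then appeal to the lemmas already established.

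For $\zeroset{L}$, I would first observe it is a submodule of $L$ (this is essentially its definition as $\left\{x+\minus x\mid x\in L\right\}$, and closure under the negation map is immediate since $\minus\left(x+\minus x\right)=\minus x+x$). The subalgebra condition $\left[\zeroset{L},\zeroset{L}\right]\subseteq\zeroset{L}$ and the two ideal conditions $\left[\zeroset{L},L\right]\subseteq\zeroset{L}$ and $\left[L,\zeroset{L}\right]\subseteq\zeroset{L}$ all follow directly from \Lref{lem:brac-zero-is-zero}, since one of the entries of the bracket is already quasi-zero.

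For $Z\left(L\right)$, I would verify the submodule conditions by short computations: if $x_1,x_2\in Z\left(L\right)$ and $\alpha\in R$, then for every $y\in L$,
$$\left[\alpha x_1+x_2,y\right]=\alpha\left[x_1,y\right]+\left[x_2,y\right]\in\zeroset{L},$$
using bilinearity together with the fact that $\zeroset{L}$ is a submodule; closure under $\minus$ is similar using axiom~\ref{itm:bracket-commutes-with-negation}. The subalgebra condition $\left[Z\left(L\right),Z\left(L\right)\right]\subseteq Z\left(L\right)$ and the ideal condition $\left[Z\left(L\right),L\right]\subseteq Z\left(L\right)$ both follow from \Lref{lem:center-is-ideal}, which already gives $\left[Z\left(L\right),L\right]\subseteq\zeroset{L}\subseteq Z\left(L\right)$; the right-sided bracket $\left[L,Z\left(L\right)\right]$ reduces to the left-sided one by anticommutativity $\left[y,x\right]=\minus\left[x,y\right]$, together with closure of $Z\left(L\right)$ under $\minus$.

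There is no real obstacle here; the statement is essentially a bookkeeping consequence of \Lref{lem:brac-zero-is-zero}, \Lref{lem:zero-layered-in-center} and \Lref{lem:center-is-ideal}. The only point to be a bit careful about is the fact that the Lie bracket is only \emph{alternating} in the quasi-zero sense (rather than strictly anticommutative with an honest additive inverse), so one should prefer to invoke the axioms $\left[\minus x,y\right]=\minus\left[x,y\right]$ and $\left[y,x\right]=\minus\left[x,y\right]$ directly rather than reason via ``$-$''. With that caveat, each of the four required closure properties reduces in one line to an already-proved lemma.
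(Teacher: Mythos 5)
Your proof is correct and takes essentially the same route as the paper, which simply cites \Lref{lem:brac-zero-is-zero} for $\zeroset{L}$ and \Lref{lem:center-is-ideal} for $Z(L)$; you just spell out the submodule and closure verifications that the paper leaves implicit.
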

\begin{proof}
$\zeroset{L}$ is immediate by \Lref{lem:brac-zero-is-zero}, and $Z\left(L\right)$ by \Lref{lem:center-is-ideal}.
\end{proof}

\begin{lem}
If $I,J\ideal L$, then $I\cap J\ideal L$.
\end{lem}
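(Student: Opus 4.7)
The plan is to verify the three defining features of an ideal in turn: that $I\cap J$ is a submodule (closed under the negation map), that it is closed under the Lie bracket with itself (hence a subalgebra), and finally that bracketing on either side with an arbitrary element of $L$ lands back in $I\cap J$. Since everything reduces to an element being in both $I$ and $J$ simultaneously, the verification essentially amounts to applying each closure property of the individual ideals componentwise.

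First I would check that $I\cap J$ is a submodule of $L$. Given $x,y\in I\cap J$ and $\alpha\in R$, the elements $x+y$, $\alpha x$, and $\minus x$ belong to $I$ because $I$ is an ideal (and in particular a submodule closed under the induced negation map), and for the same reason they belong to $J$. Hence they lie in $I\cap J$.

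Next I would verify that $I\cap J$ is a subalgebra, i.e.\ closed under the negated Lie bracket. If $x,y\in I\cap J$, then $\left[x,y\right]\in I$ since $x\in I$ and $I$ is an ideal, and similarly $\left[x,y\right]\in J$; so $\left[x,y\right]\in I\cap J$. Finally, to see that $I\cap J$ is itself an ideal, take $x\in I\cap J$ and $y\in L$. Then $\left[x,y\right]\in I$ because $x\in I$ and $I\ideal L$, and $\left[x,y\right]\in J$ because $x\in J$ and $J\ideal L$, so $\left[x,y\right]\in I\cap J$. The containment $\left[y,x\right]\in I\cap J$ either follows by symmetric reasoning or directly from anticommutativity, since $\left[y,x\right]=\minus\left[x,y\right]$ and $I\cap J$ is closed under the negation map.

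There is no real obstacle here: the argument uses only the defining closure properties of $I$ and $J$ as ideals, applied twice. The only minor point worth mentioning explicitly is that the negation-map closure needed for ``submodule with a negation map'' (in the sense adopted in this paper) is automatic for both $I$ and $J$, so it transfers immediately to $I\cap J$.
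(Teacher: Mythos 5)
Your proof is correct and follows the same route as the paper: apply the closure properties of $I$ and $J$ separately and intersect. The paper's version only spells out the key step $\left[x,y\right]\in I\cap J$ for $x\in I\cap J$, $y\in L$, leaving the submodule closure tacit, whereas you verify it explicitly; the underlying argument is identical.
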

\begin{proof}
Let $x\in I\cap J$, $y\in L$. $x\in I$, so $\left[x,y\right]\in I$. Also $x\in J$, so $\left[x,y\right]\in J$. In conclusion, $\left[x,y\right]\in I\cap J$.
\end{proof}

\begin{defn}
Let $R$ be a semifield, let $L$ be a Lie semialgebra with a negation map over $R$, and let $I,J\ideal L$ be two ideals of $L$. Then their \textbf{sum} is defined as
$$I+J=\left\{x+y\middle| x\in I,y\in J\right\}$$
\end{defn}

\begin{lem}
If $I,J\ideal L$, then $I+J\ideal L$.
\end{lem}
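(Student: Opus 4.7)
The plan is to verify the three defining conditions of an ideal (closure under the module operations, closure under the Lie bracket, and absorption under bracketing with arbitrary elements of $L$) by reducing everything to the fact that $I$ and $J$ are themselves ideals. Because $I+J$ is defined very concretely as $\{x+y \mid x\in I,\ y\in J\}$, every calculation splits into an ``$I$-part'' and a ``$J$-part'' via bilinearity of addition, scalar multiplication, and the bracket.

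First I would check that $I+J$ is a submodule with a negation map. Given $x_1+y_1, x_2+y_2 \in I+J$, commutativity of addition gives $(x_1+y_1)+(x_2+y_2) = (x_1+x_2) + (y_1+y_2) \in I+J$ since $I,J$ are closed under addition. For $\alpha \in R$, the identity $\alpha(x+y) = \alpha x + \alpha y$ puts the result in $I+J$; similarly $\minus(x+y) = \minus x + \minus y \in I+J$ since $I$ and $J$ are closed under the negation map.

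Next I would verify that $I+J$ is closed under the bracket, which makes it a subalgebra. For $x_1+y_1, x_2+y_2 \in I+J$, bilinearity of the bracket gives
\[
[x_1+y_1,\, x_2+y_2] = [x_1,x_2] + [x_1,y_2] + [y_1,x_2] + [y_1,y_2].
\]
Each summand lies in $I+J$: $[x_1,x_2] \in I \subseteq I+J$ (taking the $J$-part to be $0_L$), $[y_1,y_2] \in J \subseteq I+J$, and the mixed terms $[x_1,y_2], [y_1,x_2]$ lie in $I \cap J$, hence certainly in $I+J$, because each of $I,J$ absorbs brackets with arbitrary elements of $L$.

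Finally, for the absorption condition $[I+J, L] \subseteq I+J$: given $x+y \in I+J$ and $z \in L$, bilinearity yields $[x+y,z] = [x,z] + [y,z]$, and this lies in $I+J$ because $[x,z]\in I$ (as $I\ideal L$) and $[y,z]\in J$ (as $J\ideal L$). There is no real obstacle here; the only thing to be mindful of is that $0_L$ belongs to both $I$ and $J$ so that elements of $I$ alone (or $J$ alone) genuinely lie in $I+J$, which is immediate from both being submodules.
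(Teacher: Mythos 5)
Your proof is correct and follows essentially the same route as the paper's: verify that $I+J$ is a submodule (which the paper dismisses as obvious) and then check the absorption condition $[I+J,L]\subseteq I+J$ using bilinearity of the bracket. The one extra thing you do — the separate verification that $I+J$ is closed under the bracket — is logically redundant, since the absorption $[I+J,L]\subseteq I+J$ already yields $[I+J,I+J]\subseteq I+J$ because $I+J\subseteq L$; the paper implicitly relies on exactly this.
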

\begin{proof}
Obviously, $I+J$ is a submodule of $L$. Now, let $x\in I+J$, $y\in L$. $x\in I+J$, so there exists $x'\in I$ and $x''\in J$ such that $x=x'+x''$. We get
$$\left[x,y\right]=\left[x'+x'',y\right]=\left[x',y\right]+\left[x'',y\right]\in I+J$$
\end{proof}

\subsubsection{The Adjoint Algebra}

\begin{defn}
Let $R$ be a semifield, let $L$ be a Lie semialgebra with a negation map over $R$, and let $x\in L$. We define a homomorphism $\ad_x:L\to L$ by
$$\ad_x\left(y\right)=\left[x,y\right]$$
\end{defn}

\begin{defn}
Let $R$ be a semifield, and let $L$ be a Lie semialgebra with a negation map over~$R$. The \textbf{adjoint algebra} of $L$ is the set
$$\Ad L=\left\{\ad_{x}\middle| x\in L\right\}$$
endowed with the following negated Lie bracket:
$$\forall\ad_{x},\ad_{y}\in\Ad L:\left[\ad_{x},\ad_{y}\right]=\ad_{\left[x,y\right]}$$
and the induced negation map from $\End{L}$: $\minus\ad_x=\ad_{\minus x}$.
\end{defn}

Jacobi's identity can be rewritten now as
$$\forall x,y,z\in L:\ad_x\left(\ad_y\left(z\right)\right)+\ad_z\left(\ad_x\left(y\right)\right)+\ad_y\left(\ad_z\left(x\right)\right)\in\zeroset{L}$$
and the strong Jacobi's identity can be rewritten now as
$$\ad_x\ad_y+\minus\ad_y\ad_x\symmdash\ad_{\left[x,y\right]}$$

\begin{lem}
$\Ad L$ is a Lie semialgebra with a negation map over $R$.
\end{lem}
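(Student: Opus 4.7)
The plan is to verify each axiom in the definition of a Lie semialgebra with a negation map for $\Ad L$, reducing everything to the corresponding identity already established for $L$. First I would observe that $\Ad L$ inherits an $R$-module-with-negation structure from $\End L$: using the bilinearity of the bracket on $L$ in its first slot together with the hypothesis that it commutes with the negation, one checks directly that $\alpha\ad_{x}+\beta\ad_{y}=\ad_{\alpha x+\beta y}$ and $\minus\ad_{x}=\ad_{\minus x}$, so $\Ad L$ is closed under addition, scalar multiplication and the induced negation, and $\ad: L \to \End L$ is itself an $R$-module homomorphism onto $\Ad L$.

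The hard part is to check that the bracket $[\ad_{x},\ad_{y}]:=\ad_{[x,y]}$ is well-defined on $\Ad L$: if $\ad_x=\ad_{x'}$ we must show $\ad_{[x,y]}=\ad_{[x',y]}$ (and similarly in the second slot). I would handle this by a Jacobi-type computation, evaluating both sides on an arbitrary $w\in L$ and using the fact that $[w,x]=[w,x']$ and $[x,[y,w]]=[x',[y,w]]$, then appealing to Jacobi's identity on $L$; this shows $\ad_{[x,y]}$ and $\ad_{[x',y]}$ agree up to an element of $\zeroset{L}$. In the strongest version one would either assume $\ad$ injective, or replace $L$ by $L/\ker\ad$ (using the First Isomorphism Theorem of \Tref{thm:first-iso-thm}), or interpret equality in $\End L$ modulo the congruence $\symmeq$ of \Dref{def:symmdash-is-partial-order}; once this is arranged, the bracket descends to $\Ad L$ unambiguously.

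With that in place, the remaining axioms are mechanical. Bilinearity: $[\ad_{\alpha x+\beta y},\ad_{z}]=\ad_{[\alpha x+\beta y,z]}=\alpha\ad_{[x,z]}+\beta\ad_{[y,z]}=\alpha[\ad_{x},\ad_{z}]+\beta[\ad_{y},\ad_{z}]$. Compatibility with $\minus$: $[\minus\ad_{x},\ad_{y}]=\ad_{[\minus x,y]}=\minus\ad_{[x,y]}=\minus[\ad_{x},\ad_{y}]$. For the alternating axiom, $[\ad_{x},\ad_{x}]=\ad_{[x,x]}$, and writing $[x,x]=z+\minus z$ for some $z\in L$ gives $\ad_{[x,x]}=\ad_{z}+\minus\ad_{z}\in\zeroset{(\Ad L)}$. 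Anticommutativity: $[\ad_{y},\ad_{x}]=\ad_{[y,x]}=\ad_{\minus[x,y]}=\minus[\ad_{x},\ad_{y}]$. For Jacobi's identity, the sum $[\ad_{x},[\ad_{y},\ad_{z}]]+[\ad_{z},[\ad_{x},\ad_{y}]]+[\ad_{y},[\ad_{z},\ad_{x}]]$ collapses to $\ad_{w}$, where $w=[x,[y,z]]+[z,[x,y]]+[y,[z,x]]\in\zeroset{L}$ by Jacobi in $L$; writing $w=u+\minus u$ yields $\ad_{w}=\ad_{u}+\minus\ad_{u}\in\zeroset{(\Ad L)}$, completing the verification.
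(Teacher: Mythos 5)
Your axiom-by-axiom verification in the last paragraph is correct and matches the paper's proof essentially line by line. The issue is the second paragraph, where you worry at length about well-definedness of $[\ad_x,\ad_y]:=\ad_{[x,y]}$.

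You are right that this deserves a remark (the paper skips it silently), but your proposed route through Jacobi's identity is both unnecessary and, as you yourself note, insufficient: in this setting Jacobi only yields that $\ad_{[x,y]}$ and $\ad_{[x',y]}$ differ by something in $\zeroset{L}$, which is \emph{not} equality, so none of the suggested fall-backs (assume $\ad$ injective, pass to $L/\ker\ad$, or work modulo $\symmeq$) is actually needed or would faithfully reproduce the statement as stated. The correct observation is much more elementary: if $\ad_x=\ad_{x'}$ then in particular $[x,y]=\ad_x(y)=\ad_{x'}(y)=[x',y]$ \emph{as elements of $L$}, so $\ad_{[x,y]}=\ad_{[x',y]}$ holds literally. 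Likewise, if $\ad_y=\ad_{y'}$, then $[y,x]=[y',x]$, whence $[x,y]=\minus[y,x]=\minus[y',x]=[x,y']$ by anticommutativity, and $\ad_{[x,y]}=\ad_{[x,y']}$. So well-definedness is immediate and requires no Jacobi, no injectivity hypothesis, and no quotient. If you keep the well-definedness remark, replace your argument with this two-line one; otherwise the rest of your proof is a faithful and correct rendering of the paper's argument.
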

\begin{proof}
We first check that $\Ad L$ is a submodule of $\End{L}$. But this is obvious, since $\ad_x+\ad_y=\ad_{x+y}\in\Ad L$, $\alpha\ad_x=\ad_{\alpha x}\in\Ad L$ and $\minus\ad_x=\ad_{\minus x}\in\Ad L$.\\

Now, we need to check that it is a Lie semialgebra with a negation map.
\begin{enumerate}
\item $\forall\alpha,\beta\in R\;\forall\ad_x,\ad_y,\ad_z\in\Ad L:\left[\alpha\ad_x+\beta\ad_y,\ad_z\right]=\left[\ad_{\alpha x+\beta y},\ad_z\right]=\ad_{\left[\alpha x+\beta y,z\right]}=\ad_{\alpha\left[x,z\right]+\beta\left[y,z\right]}=\alpha\ad_{\left[x,z\right]}+\beta\ad_{\left[y,z\right]}=\alpha\left[\ad_x,\ad_z\right]+\beta\left[\ad_y,\ad_z\right]$
\item $\forall\ad_x,\ad_y\in\Ad L:\left[\minus\ad_x,\ad_y\right]=\left[\ad_{\minus x},\ad_y\right]=\ad_{\left[\minus x,y\right]}=\minus\ad_{\left[x,y\right]}=\minus\left[\ad_x,\ad_y\right]$.
\item $\forall\ad_x\in\Ad L:\left[\ad_x,\ad_x\right]=\ad_{\left[x,x\right]}$, which is a quasi-zero.
\item $\forall\ad_x,\ad_y\in\Ad L:\left[\ad_y,\ad_x\right]=\ad_{\left[y,x\right]}=\ad_{\minus\left[x,y\right]}=\minus\ad_{\left[x,y\right]}=\minus\left[\ad_x,\ad_y\right]$.
\item $\forall\ad_x,\ad_y,\ad_z\in\Ad L:\left[\ad_x\left[\ad_y,\ad_z\right]\right]+\left[\ad_z\left[\ad_x,\ad_y\right]\right]+\left[\ad_y\left[\ad_z,\ad_x\right]\right]=\ad_{\left[x\left[y,z\right]\right]+\left[z\left[x,y\right]\right]+\left[y\left[z,x\right]\right]}$, which is a quasi-zero.
\end{enumerate}
\end{proof}

\begin{lem}
There is a homomorphism of Lie semialgebras with a negation map $L\to\Ad L$ given by
$$x\mapsto\ad_x$$
Therefore, given the congruence $\equiv$ on $L$ defined by $x\equiv y\Leftrightarrow\ad_{x}=\ad_{y}$, one has
$$L/\equiv\;\cong\Ad L$$
\end{lem}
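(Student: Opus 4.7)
The plan is to verify directly that $\varphi:L\to\Ad L$ defined by $\varphi(x)=\ad_x$ satisfies every clause of the definition of a homomorphism of Lie semialgebras with a negation map, and then deduce the isomorphism by invoking the First Isomorphism Theorem (\Tref{thm:first-iso-thm}) in the presence of this multiplicative structure.

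First, I would observe that the identities
\[
\ad_{x+y}=\ad_x+\ad_y,\qquad \ad_{\alpha x}=\alpha\,\ad_x,\qquad \ad_{\minus x}=\minus\ad_x
\]
all follow immediately from the bilinearity of the negated Lie bracket and the fact that $\left[\cdot,\cdot\right]$ commutes with $\minus$ in the first slot. These show that $\varphi$ is an $R$-module homomorphism in the sense of \Dref{def:modules-hom} (the third condition being automatic since the negation map on $\Ad L$ is the induced one from $R$). Then the multiplicativity of $\varphi$ is literally built into the definition of the bracket on $\Ad L$:
\[
\varphi\left([x,y]\right)=\ad_{[x,y]}=\left[\ad_x,\ad_y\right]=\left[\varphi(x),\varphi(y)\right].
\]
So $\varphi$ is an $R$-homomorphism of Lie semialgebras with a negation map, which establishes the first assertion.

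For the second assertion, I apply \Lref{lem:hom-induces-congr} to $\varphi$ to conclude that the relation $x\equiv y\Leftrightarrow\ad_x=\ad_y$ is a module congruence on $L$. I then need to observe that it is in fact a congruence respecting the bracket: if $x_1\equiv x_2$ and $y_1\equiv y_2$, then using the bracket identity on $\Ad L$,
\[
\ad_{[x_1,y_1]}=\left[\ad_{x_1},\ad_{y_1}\right]=\left[\ad_{x_2},\ad_{y_2}\right]=\ad_{[x_2,y_2]},
\]
so $[x_1,y_1]\equiv[x_2,y_2]$. Hence $L/\!\equiv$ inherits a Lie semialgebra structure and $\varphi$ factors through a well-defined injective Lie-algebra homomorphism $\overline{\varphi}:L/\!\equiv\,\to\Ad L$. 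By construction $\varphi$ is surjective onto $\Ad L$, so $\overline{\varphi}$ is an isomorphism, giving $L/\!\equiv\,\cong\Ad L$.

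The main obstacle, if any, is the bookkeeping around the congruence: the First Isomorphism Theorem as stated in \Tref{thm:first-iso-thm} is phrased for $R$-modules, so one must check by hand that $\equiv$ is compatible with $[\cdot,\cdot]$ in order to transport the isomorphism to the Lie-semialgebra level. Once that compatibility is noted — and it is immediate from the multiplicativity of $\varphi$ — the rest is routine. No subtlety involving quasi-zeros or the surpassing relation enters, because we are working with genuine equalities of adjoint operators rather than with $\symmdash$-relations.
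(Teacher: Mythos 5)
Your proof is correct and follows essentially the same route the paper takes: it verifies directly from the definition of the bracket on $\Ad L$ that $x\mapsto\ad_x$ is a homomorphism and then invokes the First Isomorphism Theorem. You supply the details the paper leaves implicit (in particular the check that $\equiv$ respects the bracket, which the paper does not spell out even though \Tref{thm:first-iso-thm} is phrased only at the module level), but the underlying argument is the same.
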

\begin{proof}
Using the definition of the operation on $\Ad L$, this is a homomorphism. By \Tref{thm:first-iso-thm}, we get the conclusion.
\end{proof}

A word of caution: with the definition given above to the negated Lie bracket, $\Ad L$ is not a subalgebra of $\End{L}$. Generally, there is no obvious way of fixing this problem; however, one can use the strong Jacobi's identity.\\

If $L$ satisfies the strong Jacobi's identity, then one can define
$$\ad L=\overline{\left\{\ad_x\mid x\in L\right\}}\subseteq\End{L}$$
as in \cite{Rowen2016}, and this is also a Lie semialgebra with a negation map over $R$ (under the negated commutator). Nevertheless, we now have only a $\asymmdash$-morphism $L\to\ad L$ given by $x\mapsto\ad_x$.

\subsection{Free Lie Algebras with a Negation Map}

In this subsection we turn to study Lie semialgebras with a negation map which are free as modules and have a base consisting of one, two or three elements.

\subsubsection{$1$-dimensional Lie Algebras with a negation map}

\begin{lem}
Any Lie semialgebra $L$ over a semifield with a negation map $R$ with a base $B=\left\{x\right\}$ is abelian.
\end{lem}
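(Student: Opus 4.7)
The plan is to directly unpack the definition of ``abelian'' using that every element of $L$ is a scalar multiple of the single base vector $x$, and then reduce the bracket computation to $[x,x]$, which is already known to be a quasi-zero.

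First I would take arbitrary $y,z\in L$. Since $B=\{x\}$ is a base of $L$ (as a free module), there exist (unique) scalars $\alpha,\beta\in R$ such that $y=\alpha x$ and $z=\beta x$ (with the convention that $0_L=0_R\cdot x$). Using bilinearity of the negated Lie bracket I would rewrite
\[
[y,z]=[\alpha x,\beta x]=(\alpha\beta)\,[x,x].
\]

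Next I would invoke the alternating axiom, which gives $[x,x]\in\zeroset{L}$; thus $[x,x]=w+\minus w$ for some $w\in L$. Scalar multiplication by $\alpha\beta$ preserves being a quasi-zero, since
\[
(\alpha\beta)[x,x]=(\alpha\beta)w+(\alpha\beta)(\minus w)=(\alpha\beta)w+\minus\!\bigl((\alpha\beta)w\bigr)\in\zeroset{L}.
\]
Hence $[y,z]\in\zeroset{L}$, which is exactly the definition of $L$ being abelian.

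There is no real obstacle here; the only point to note is that one must make essential use of the fact that $B$ is a \emph{base} (so every element has a representation as a scalar multiple of $x$), rather than merely a d-base or s-base, which is guaranteed by hypothesis. The closure of $\zeroset{L}$ under scalar multiplication is immediate from the definition $z^{\circ}=z+\minus z$ combined with $R$-linearity.
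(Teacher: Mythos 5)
Your proof is correct and follows essentially the same route as the paper's: write $y=\alpha x$, $z=\beta x$, use bilinearity to reduce $[y,z]$ to $(\alpha\beta)[x,x]$, and conclude from the alternating axiom. The only difference is that you spell out explicitly why $\zeroset{L}$ is closed under scalar multiplication, a step the paper leaves implicit since $\zeroset{L}$ is a submodule.
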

\begin{proof}
Given $y,z\in L$, there exist $\alpha,\beta\in R$ such that $y=\alpha x,z=\beta x$. Therefore,
$$\left[y,z\right]=\left[\alpha x,\beta x\right]=\alpha\beta\left[x,x\right]\in\zeroset{L}$$
Thus, $L$ is abelian.
\end{proof}

\subsubsection{$2$-dimensional Lie Algebras with a negation map}

\begin{lem}\label{lem:2-dim-free-ELT-Lie-alg}
Let $R$ be a semifield with a negation map, and let $L$ be a free $R$-module with base $B=\left\{x_1,x_2\right\}$. Define
$$\left[x_1,x_1\right]=\alpha_1 x_1+\alpha_2 x_2,\;\;\;\left[x_2,x_2\right]=\beta_1 x_1+\beta_2 x_2,\;\;\;\left[x_1,x_2\right]=\minus\left[x_2,x_1\right]=\gamma_1 x_1+\gamma_2 x_2$$
where $\alpha_1,\alpha_2,\beta_1,\beta_2,\gamma_1,\gamma_2\in R$ and $\alpha_1,\alpha_2,\beta_1,\beta_2\in\zeroset{R}$. Now, extend $\left[\cdot,\cdot\right]$ to~$\left[\cdot,\cdot\right]:L\times L\to L$ by bilinearity.

Then $L$ equipped with $\left[\cdot,\cdot\right]$ is a Lie semialgebra over $R$.
\end{lem}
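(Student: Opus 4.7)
My plan is to reduce every axiom to a verification on the basis $B=\{x_1,x_2\}$ using bilinearity (respectively trilinearity) of the negated Lie bracket, and then exploit the hypothesis $\alpha_1,\alpha_2,\beta_1,\beta_2\in R^\circ$ together with anticommutativity on the basis pair $(x_1,x_2)$.

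First, bilinearity and the identity $[\ominus x,y]=\ominus[x,y]$ are immediate from the construction and from the fact that the negation on $L$ is induced from $R$. Next, I will check the alternating axiom. On the basis, $[x_i,x_i]\in L^\circ$ is built in by the hypothesis that the coefficients $\alpha_j,\beta_j$ lie in $R^\circ$, since any element of the form $\gamma_1 x_1+\gamma_2 x_2$ with $\gamma_j\in R^\circ$ is itself a sum of quasi-zeros and therefore in $L^\circ$. For a general $y=ax_1+bx_2$, bilinearity yields
\[
[y,y]=a^2[x_1,x_1]+b^2[x_2,x_2]+ab\bigl([x_1,x_2]+[x_2,x_1]\bigr).
\]
The first two summands are in $L^\circ$ (since $L^\circ$ is a submodule), while $[x_1,x_2]+[x_2,x_1]=[x_1,x_2]+\ominus[x_1,x_2]=[x_1,x_2]^\circ\in L^\circ$ by the anticommutativity assumed on the basis pair.

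For the anticommutativity axiom on a general pair, expand $[y,x]$ and $\ominus[x,y]$ for $y=ax_1+bx_2$ and $x=cx_1+dx_2$ using bilinearity and commutativity of $R$. The mixed terms match directly by $\ominus[x_1,x_2]=[x_2,x_1]$. The diagonal terms involve $\ominus ac[x_1,x_1]$ and $ac[x_1,x_1]$ (and similarly for $[x_2,x_2]$); these agree because any $z\in L^\circ$ satisfies $\ominus z=z$ (write $z=w+\ominus w$ and apply $\ominus$), so $\ominus(ac[x_i,x_i])=ac[x_i,x_i]$.

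The main task is Jacobi's identity. By trilinearity of $(u,v,w)\mapsto[u,[v,w]]+[w,[u,v]]+[v,[w,u]]$ and the fact that $L^\circ$ is a submodule closed under scalar multiplication, it suffices to show that this Jacobi sum lies in $L^\circ$ for every triple $(x_i,x_j,x_k)\in B^3$. Here the dimension hypothesis $|B|=2$ is crucial: any such triple has a repetition. If, say, the first and second arguments coincide (the other two cases are symmetric), the Jacobi sum becomes
\[
[x,[x,z]]+[z,[x,x]]+[x,[z,x]].
\]
The middle term is in $L^\circ$ by \Lref{lem:brac-zero-is-zero}, because $[x,x]\in L^\circ$. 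Using basis anticommutativity, $[x,[z,x]]=[x,\ominus[x,z]]=\ominus[x,[x,z]]$, so the first and third terms sum to $[x,[x,z]]+\ominus[x,[x,z]]=[x,[x,z]]^\circ\in L^\circ$. Hence the entire Jacobi sum lies in $L^\circ$ for each basis triple, and the general case follows by trilinearity. The step requiring the most care is the bookkeeping in this Jacobi reduction, but the only genuine structural input is the two-element basis forcing a repeated argument.
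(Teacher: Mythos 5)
Your proof is correct and follows essentially the same strategy as the paper, but is more thorough and slightly more abstractly organized. The paper only verifies Jacobi's identity, dismissing the remaining axioms implicitly, and reduces to the two explicit cases $(x_1,x_1,x_2)$ and $(x_2,x_2,x_1)$, observing for each that the paired symmetric terms sum to a quasi-zero while the $[x_j,[x_i,x_i]]$ term is a quasi-zero by \Lref{lem:brac-zero-is-zero}. You verify the alternating and anticommutativity axioms for general linear combinations (a genuine addition, since for a negated Lie bracket over a semiring these are not automatic from the basis definitions and require the fact that $\ominus z=z$ for $z\in L^\circ$), and you replace the paper's explicit two-case enumeration with the pigeonhole observation that any ordered triple from a two-element basis has a repeated argument, handling the repeated-argument Jacobi sum uniformly and invoking cyclic symmetry for the remaining positions. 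The underlying computation — grouping the two bracket terms that cancel to a $(\cdot)^\circ$ and discarding the $[z,[x,x]]$ term as a quasi-zero — is identical to the paper's.
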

\begin{proof}
By bilinearity, it is enough to check Jacobi's identity in two cases:
\begin{enumerate}
\item $\left[x_1,\left[x_1,x_2\right]\right]+\left[x_1,\left[x_2,x_1\right]\right]+\left[x_2,\left[x_1,x_1\right]\right]\in\zeroset{L}$.
\item $\left[x_2,\left[x_2,x_1\right]\right]+\left[x_2,\left[x_1,x_2\right]\right]+\left[x_1,\left[x_2,x_2\right]\right]\in\zeroset{L}$.
\end{enumerate}

We shall prove the first part; the second is proved similarly. Indeed,
$$\left[x_1,\left[x_1,x_2\right]\right]+\left[x_1,\left[x_2,x_1\right]\right]+\left[x_2,\left[x_1,x_1\right]\right]= \left[x_1,\left[x_1,x_2\right]^\circ\right]+\left[x_2,\left[x_1,x_1\right]\right]=\left[x_1,\left[x_1,x_2\right]\right]^\circ+\left[x_2,\left[x_1,x_1\right]\right]$$
We note that $\left[x_1,x_1\right]\in\zeroset{L}$, and thus $\left[x_1,\left[x_1,x_2\right]\right]+\left[x_1,\left[x_2,x_1\right]\right]+\left[x_2,\left[x_1,x_1\right]\right]\in\zeroset{L}$, as required.
\end{proof}

\begin{lem}
Let $L$ be a Lie semialgebra over a semifield with a negation map $R$ with base $B=\left\{x_1,x_2\right\}$. Then there are~$\alpha_1,\alpha_2,\beta_1,\beta_2,\gamma_1,\gamma_2\in R$ such that
$$\left[x_1,x_1\right]=\alpha_1 x_1+\alpha_2 x_2,\;\;\;\left[x_2,x_2\right]=\beta_1 x_1+\beta_2 x_2,\;\;\;\left[x_1,x_2\right]=\minus\left[x_2,x_1\right]=\gamma_1 x_1+\gamma_2 x_2$$
and $\alpha_1,\alpha_2,\beta_1,\beta_2\in\zeroset{R}$. Therefore, each such Lie semialgebra is obtained from \Lref{lem:2-dim-free-ELT-Lie-alg}.
\end{lem}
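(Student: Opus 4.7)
The plan is to treat this as essentially the converse of \Lref{lem:2-dim-free-ELT-Lie-alg}: given an arbitrary $R$-bilinear Lie bracket on $L$, the values on the basis already have to take the form stated, and conversely the bracket is determined by these values via bilinearity, so the Lie semialgebra structure is precisely one of those produced by the previous lemma.

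First, because $B=\{x_1,x_2\}$ is a base of $L$, every element of $L$ admits a unique representation as $\lambda_1 x_1+\lambda_2 x_2$. Applying this to the three bracket values $[x_1,x_1]$, $[x_2,x_2]$, $[x_1,x_2]\in L$ yields the existence and uniqueness of scalars $\alpha_1,\alpha_2,\beta_1,\beta_2,\gamma_1,\gamma_2\in R$ with
\[
[x_1,x_1]=\alpha_1 x_1+\alpha_2 x_2,\qquad [x_2,x_2]=\beta_1 x_1+\beta_2 x_2,\qquad [x_1,x_2]=\gamma_1 x_1+\gamma_2 x_2.
\]
The identity $[x_1,x_2]=\minus[x_2,x_1]$ is just anticommutativity in the definition of a Lie semialgebra with a negation map.

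The key step is to show $\alpha_1,\alpha_2,\beta_1,\beta_2\in\zeroset{R}$. The alternating axiom gives $[x_1,x_1]\in\zeroset{L}$ and $[x_2,x_2]\in\zeroset{L}$, so
\[
\alpha_1 x_1+\alpha_2 x_2\in\zeroset{L}\qquad\text{and}\qquad \beta_1 x_1+\beta_2 x_2\in\zeroset{L}.
\]
Since $B$ is a base of $L$, \Lref{lem:lin-comb-zero-base} applies directly to each of these two quasi-zero linear combinations and forces all four coefficients $\alpha_1,\alpha_2,\beta_1,\beta_2$ to lie in $\zeroset{R}$. This is the only place where the hypothesis ``base'' (rather than merely a d,s-base or spanning set) is used in an essential way, and it is where I expect the only real subtlety: without the unique-representation property of a base one could not conclude the coefficients individually are quasi-zeros.

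For the final sentence, note that any $R$-bilinear operation on $L$ is determined by its values on $B\times B$. Applying \Lref{lem:2-dim-free-ELT-Lie-alg} with the scalars $\alpha_i,\beta_i,\gamma_i$ found above (which satisfy the required quasi-zero conditions by the previous paragraph) produces a Lie semialgebra structure $[\cdot,\cdot]'$ on $L$ that agrees with the original $[\cdot,\cdot]$ on all four pairs of basis vectors; by bilinearity the two brackets coincide on all of $L\times L$, so the given Lie semialgebra is indeed one of those constructed in \Lref{lem:2-dim-free-ELT-Lie-alg}.
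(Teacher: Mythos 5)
Your proof is correct and takes essentially the same route as the paper: existence of the six scalars from the unique-representation property of the base together with anticommutativity, and the quasi-zero condition on $\alpha_i,\beta_i$ from the alternating axiom combined with \Lref{lem:lin-comb-zero-base}. The paper's proof is simply a terser statement of the same three observations; your expanded version, including the explicit note about the final converse direction via bilinearity, is a faithful elaboration.
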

\begin{proof}
The existence of $\alpha_1,\alpha_2,\beta_1,\beta_2,\gamma_1,\gamma_2\in R$ follows from the fact that $L$ is free, and by the antisymmetry of $\left[\cdot,\cdot\right]$. Since $\left[\cdot,\cdot\right]$ is alternating, combined with \Lref{lem:lin-comb-zero-base}, $\alpha_1,\alpha_2,\beta_1,\beta_2\in\zeroset{R}$.
\end{proof}

\subsubsection{$3$-dimensional Lie Algebras with a negation map}

We turn to studying free Lie semialgebras with a base consisting of three elements. The purpose of this case is to define a Lie semialgebra that will be parallel to $\sl\left(2,F\right)$ in the classical theory.

We are now going to give a necessary and sufficient condition for a bilinear multiplication defined on a module over a semifield with a negation map to be a negated Lie bracket. This is formulated in \Cref{cor:3-dim-free-ELT-Lie-alg} and in \Lref{lem:3-dim-free-ELT-Lie-alg-is-all}.

Throughout this part, $R$ is a semifield with a negation map.

\begin{lem}\label{lem:3-dim-free-ELT-Lie-alg}
Let $L$ be a free $R$-module with base $B=\left\{x_1,x_2,x_3\right\}$. Define
$$\forall i\leq j:\left[x_i,x_j\right]=\minus\left[x_j,x_i\right]=\sum_{\ell=1}^3\alpha_{i,j,\ell}x_\ell$$
where $\forall i,j,\ell:\alpha_{i,j,\ell}\in R$, and $\forall i,\ell:\alpha_{i,i,\ell}\in\zeroset{R}$. If $i>j$, we write $\alpha_{i,j,\ell}=\minus\alpha_{j,i,\ell}$. Assume also that
$$\forall i,j,k,m:\sum_{i,j,k}\sum_{\ell=1}^3\alpha_{i,\ell,m}\alpha_{j,k,\ell}\in\zeroset{R}$$
where $\displaystyle{\sum_{i,j,k}}$ is the cyclic sum over $i,j,k$. Extend $\left[\cdot,\cdot\right]$ to $\left[\cdot,\cdot\right]:L\times L\to L$ by bilinearity.

Then $L$ equipped with $\left[\cdot,\cdot\right]$ is a Lie semialgebra over $R$.
\end{lem}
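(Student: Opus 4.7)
The plan is to verify each axiom of a Lie semialgebra in turn, reducing everything via bilinearity to statements about the structure constants $\alpha_{i,j,\ell}$. Bilinearity holds by construction, so I do not need to verify it. Since the negation map on $L$ is induced from $R$, compatibility of $[\cdot,\cdot]$ with $\minus$ follows from bilinearity exactly as in the remark after the definition of Lie semialgebras.

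I would first handle anticommutativity. Writing $x=\sum_i a_ix_i$ and $y=\sum_j b_jx_j$, bilinearity gives
\[
[x,y]=\sum_{i,j}a_ib_j[x_i,x_j],\qquad [y,x]=\sum_{i,j}a_ib_j[x_j,x_i]=\sum_{i,j}a_ib_j\minus[x_i,x_j]=\minus[x,y],
\]
using the hypothesis $[x_i,x_j]=\minus[x_j,x_i]$ (which includes the diagonal case, since $\alpha_{i,i,\ell}\in\zeroset R$ forces $[x_i,x_i]\in\zeroset L\subseteq\minus\zeroset L$ automatically, via the convention $\alpha_{i,i,\ell}=\minus\alpha_{i,i,\ell}$ modulo $\zeroset R$). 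For the alternating axiom, I expand
\[
[x,x]=\sum_i a_i^2[x_i,x_i]+\sum_{i<j}a_ia_j\bigl([x_i,x_j]+[x_j,x_i]\bigr).
\]
The first sum lies in $\zeroset L$ because each coefficient $\alpha_{i,i,\ell}$ is in $\zeroset R$ and $\zeroset R\cdot L\subseteq\zeroset L$; the second sum lies in $\zeroset L$ because $[x_i,x_j]+[x_j,x_i]=[x_i,x_j]^\circ\in\zeroset L$ by anticommutativity. Hence $[x,x]\in\zeroset L$.

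The main content is Jacobi's identity. By trilinearity of the triple bracket, it suffices to check the identity on triples $(x_i,x_j,x_k)$ of basis elements. I would compute
\[
[x_i,[x_j,x_k]]=\sum_\ell\alpha_{j,k,\ell}[x_i,x_\ell]=\sum_m\Bigl(\sum_\ell\alpha_{i,\ell,m}\alpha_{j,k,\ell}\Bigr)x_m,
\]
and analogously for the two cyclic permutations. Adding the three yields
\[
[x_i,[x_j,x_k]]+[x_k,[x_i,x_j]]+[x_j,[x_k,x_i]]=\sum_m\Bigl(\sum_{i,j,k}\sum_\ell\alpha_{i,\ell,m}\alpha_{j,k,\ell}\Bigr)x_m,
\]
where $\sum_{i,j,k}$ denotes the cyclic sum. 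By the standing hypothesis each inner cyclic coefficient lies in $\zeroset R$, so the whole expression is a sum of elements of $\zeroset L$, hence lies in $\zeroset L$.

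The main obstacle is bookkeeping in the Jacobi computation: one must be careful that the convention $\alpha_{i,j,\ell}=\minus\alpha_{j,i,\ell}$ for $i>j$ is consistent with the formula one writes for $[x_i,[x_j,x_k]]$ regardless of the order of $j,k$, and that the cyclic sum over $(i,j,k)$ in the hypothesis matches the cyclic sum arising from the three Jacobi terms; once this alignment is set up correctly the identity is immediate from the hypothesis.
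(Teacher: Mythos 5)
Your proof is correct and follows essentially the same route as the paper: reduce every axiom to the structure constants via (tri)linearity, and observe that the Jacobi identity on a basis triple produces exactly the cyclic coefficient $\sum_{i,j,k}\sum_{\ell}\alpha_{i,\ell,m}\alpha_{j,k,\ell}$, which the hypothesis places in $\zeroset{R}$. The only difference is one of exposition: the paper dispenses with the non-Jacobi axioms in one line (``it is enough to check Jacobi's identity''), whereas you spell out the anticommutativity and alternating checks explicitly, which is a harmless elaboration.
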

\begin{proof}
Again, it is enough to check Jacobi's identity. We need to ensure that
$$\forall i,j,k:\left[x_i,\left[x_j,x_k\right]\right]+\left[x_k,\left[x_i,x_j\right]\right]+\left[x_j,\left[x_k,x_i\right]\right]\in\zeroset{L}$$

Take some $i,j,k$. By calculating $\left[x_i,\left[x_j,x_k\right]\right]$, we get:
$$\left[x_i,\left[x_j,x_k\right]\right]=\left[x_i,\sum_{\ell=1}^3\alpha_{j,k,\ell}x_\ell\right]=\sum_{\ell=1}^3\alpha_{j,k,\ell}\left[x_i,x_\ell\right]= \sum_{\ell=1}^3\left(\alpha_{j,k,\ell}\sum_{m=1}^3\alpha_{i,\ell,m}x_m\right)=\sum_{m=1}^3\left(\sum_{\ell=1}^3\alpha_{j,k,\ell}\alpha_{i,\ell,m}\right)x_m$$
By permuting the indices,
\begin{eqnarray*}
\left[x_k,\left[x_i,x_j\right]\right]&=&\sum_{m=1}^3\left(\sum_{\ell=1}^3\alpha_{i,j,\ell}\alpha_{k,\ell,m}\right)x_m\\
\left[x_j,\left[x_k,x_i\right]\right]&=&\sum_{m=1}^3\left(\sum_{\ell=1}^3\alpha_{k,i,\ell}\alpha_{j,\ell,m}\right)x_m
\end{eqnarray*}
By summing all of the above, we get
$$\left[x_i,\left[x_j,x_k\right]\right]+\left[x_k,\left[x_i,x_j\right]\right]+\left[x_j,\left[x_k,x_i\right]\right]= \sum_{m=1}^3\left(\sum_{\textnormal{cyc}}\sum_{\ell=1}^3\alpha_{j,k,\ell}\alpha_{i,\ell,m}\right)x_m$$

Thus, the condition
$$\forall m:\sum_{i,j,k}\sum_{\ell=1}^3\alpha_{i,\ell,m}\alpha_{j,k,\ell}\in\zeroset{R}$$
is equivalent to Jacobi's identity (by \Lref{lem:lin-comb-zero-base}), and the assertion follows.
\end{proof}

The above condition involving the cyclic sum may seem rather strong, since it has to hold for each choice of $1\leq i,j,k,m\leq 3$. However, it
turns out that it is enough to check that it holds for a specific choice of $i,j,k$ in which they are all different -- for example,~$\left(i,j,k\right)=\left(1,2,3\right)$.
This is formulated in the following lemma:

\begin{lem}\label{lem:3-dim-free-ELT-Lie-alg-simplify}
Assume that $\forall i,\ell:\alpha_{i,i,\ell}\in\zeroset{R}$ and $\forall i,j,\ell:\alpha_{i,j,\ell}=\minus\alpha_{j,i,\ell}$. If
$$\forall m:\sum_{\ell=1}^3\left(\alpha_{1,\ell,m}\alpha_{2,3,\ell}+\alpha_{3,\ell,m}\alpha_{1,2,\ell}+\alpha_{2,\ell,m}\alpha_{3,1,\ell}\right)\in\zeroset{R}$$
holds, then
$$\forall i,j,k,m:\sum_{i,j,k}\sum_{\ell=1}^3\alpha_{i,\ell,m}\alpha_{j,k,\ell}\in\zeroset{R}$$
\end{lem}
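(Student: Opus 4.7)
The plan is to perform a case analysis on the multiplicities among $i,j,k \in \{1,2,3\}$, exploiting two elementary symmetries of the cyclic sum
$$S(i,j,k,m)=\sum_{\ell=1}^{3}\bigl[\alpha_{i,\ell,m}\alpha_{j,k,\ell}+\alpha_{j,\ell,m}\alpha_{k,i,\ell}+\alpha_{k,\ell,m}\alpha_{i,j,\ell}\bigr].$$
First I would record that $S$ is invariant under cyclic permutations of $(i,j,k)$ (by inspection) and that it is negated by any transposition of two indices, using the hypothesis $\alpha_{p,q,\ell}=\minus\alpha_{q,p,\ell}$. For instance, swapping $j$ and $k$ turns each factor $\alpha_{\cdot,\cdot,\ell}$ in the middle position of the appropriate term into its negative, so $S(i,k,j,m)=\minus S(i,j,k,m)$. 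Since $\zeroset{R}$ is closed under $\minus$ and under taking sums, any permutation of $(i,j,k)$ will preserve the property of $S$ being quasi-zero.

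Next I would split into three cases according to how many of $i,j,k$ are equal. If $i=j=k$, then
$$S(i,i,i,m)=3\sum_{\ell=1}^{3}\alpha_{i,\ell,m}\alpha_{i,i,\ell},$$
which lies in $\zeroset{R}$ since each $\alpha_{i,i,\ell}\in\zeroset{R}$. If exactly two of them coincide, by the cyclic symmetry above I may assume $i=j$. Grouping the three pieces gives
$$S(i,i,k,m)=\sum_{\ell=1}^{3}\alpha_{i,\ell,m}(\alpha_{i,k,\ell}+\alpha_{k,i,\ell})+\sum_{\ell=1}^{3}\alpha_{k,\ell,m}\alpha_{i,i,\ell} =\sum_{\ell=1}^{3}\alpha_{i,\ell,m}\,(\alpha_{i,k,\ell})^{\circ}+\sum_{\ell=1}^{3}\alpha_{k,\ell,m}\alpha_{i,i,\ell},$$
and both terms lie in $\zeroset{R}$: the first because each $(\alpha_{i,k,\ell})^{\circ}\in\zeroset{R}$, the second because $\alpha_{i,i,\ell}\in\zeroset{R}$.

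Finally, if $i,j,k$ are all distinct then $\{i,j,k\}=\{1,2,3\}$, and $(i,j,k)$ is a permutation of $(1,2,3)$. By cyclic invariance it is equivalent to either $(1,2,3)$ or $(1,3,2)$. In the first case the assumption of the lemma directly gives $S(1,2,3,m)\in\zeroset{R}$ for all $m$. In the second case, $(1,3,2)$ is a transposition of $(1,2,3)$, so the sign-change property yields $S(1,3,2,m)=\minus S(1,2,3,m)$, which is again in $\zeroset{R}$. This exhausts the possibilities and proves the statement.

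The only subtle point, and the place where I would be most careful in writing up the argument, is verifying the transposition-sign identity—one has to match each of the three terms of $S(i,k,j,m)$ with the correct term of $S(i,j,k,m)$ (after applying antisymmetry to pull out the minus sign from the appropriate factor). Everything else is either bookkeeping or a direct use of the hypotheses $\alpha_{i,i,\ell}\in\zeroset{R}$ and closure of $\zeroset{R}$ under addition, scalar multiplication, and the negation map.
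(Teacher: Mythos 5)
Your proof is correct and follows essentially the same two-case strategy as the paper: reduce the repeated-index cases to the hypotheses $\alpha_{i,i,\ell}\in\zeroset{R}$ and $\alpha_{i,j,\ell}=\minus\alpha_{j,i,\ell}$, and handle the all-distinct case by reducing via cyclic invariance to $(1,2,3)$ or $(1,3,2)$ and observing that a transposition negates the cyclic sum. The only cosmetic differences are that you split off the all-equal case explicitly (the paper folds it into the ``two equal'' case) and that you phrase the transposition step as a general symmetry of $S$ rather than a direct computation for $(1,3,2)$.
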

\begin{proof}
We prove the lemma in two cases.
\begin{casenv}
\item We first assume that two of $i,j,k$ are equal. Without loss of generality, we assume that $j=k$, and thus
    $\forall\ell:\alpha_{j,k,\ell}\in\zeroset{R}$, and $\forall\ell:\alpha_{i,j,\ell}=\minus\alpha_{k,i,\ell}$. We expand the cyclic sum:
    \begin{eqnarray*}
    \sum_{i,j,k}\sum_{\ell=1}^3\alpha_{i,\ell,m}\alpha_{j,k,\ell}&=&\sum_{\ell=1}^3\left(\alpha_{i,\ell,m}\alpha_{j,k,\ell}+
    \alpha_{k,\ell,m}\alpha_{i,j,\ell}+\alpha_{j,\ell,m}\alpha_{k,i,\ell}\right)=\\
    &=&\sum_{\ell=1}^3\alpha_{i,\ell,m}\alpha_{j,k,\ell}+\sum_{\ell=1}^3\left(\alpha_{k,\ell,m}\alpha_{i,j,\ell}+
    \minus\alpha_{j,\ell,m}\alpha_{i,j,\ell}\right)=\\
    &=&\sum_{\ell=1}^3\zero\alpha_{i,\ell,m}\alpha_{j,k,\ell}+\sum_{\ell=1}^3\zero\alpha_{k,\ell,m}\alpha_{i,j,\ell}=\\
    &=&\zero\left(\sum_{\ell=1}^3\alpha_{i,\ell,m}\alpha_{j,k,\ell}+\sum_{\ell=1}^3\alpha_{k,\ell,m}\alpha_{i,j,\ell}\right)
    \end{eqnarray*}
    and thus the sum is quasi-zero.\\

\item Now we may assume that $i,j,k$ are different indices. Since $1\leq i,j,k\leq 3$, and by the cyclic symmetry of $i,j,k$, there are two options:
    either $\left(i,j,k\right)=\left(1,2,3\right)$, and the sum is
    $$S_{\left(1,2,3\right)}=\sum_{i,j,k}\sum_{\ell=1}^3\alpha_{i,\ell,m}\alpha_{j,k,\ell}=\sum_{\ell=1}^3\left(\alpha_{1,\ell,m}\alpha_{2,3,\ell}+
    \alpha_{3,\ell,m}\alpha_{1,2,\ell}+\alpha_{2,\ell,m}\alpha_{3,1,\ell}\right)$$
    or $\left(i,j,k\right)=\left(1,3,2\right)$, and the sum is
    $$S_{\left(1,3,2\right)}=\sum_{i,j,k}\sum_{\ell=1}^3\alpha_{i,\ell,m}\alpha_{j,k,\ell}=\sum_{\ell=1}^3\left(\alpha_{1,\ell,m}\alpha_{3,2,\ell}+
    \alpha_{2,\ell,m}\alpha_{1,3,\ell}+\alpha_{3,\ell,m}\alpha_{2,1,\ell}\right)$$
    We notice that $S_{\left(1,3,2\right)}=\minus S_{\left(1,2,3\right)}$, since
    \begin{eqnarray*}
    S_{\left(1,3,2\right)}&=&\sum_{\ell=1}^3\left(\alpha_{1,\ell,m}\alpha_{3,2,\ell}+\alpha_{2,\ell,m}\alpha_{1,3,\ell}+
    \alpha_{3,\ell,m}\alpha_{2,1,\ell}\right)=\\
    &=&\sum_{\ell=1}^3\left(\minus\alpha_{1,\ell,m}\alpha_{2,3,\ell}+\minus\alpha_{2,\ell,m}\alpha_{3,1,\ell}+
    \minus\alpha_{3,\ell,m}\alpha_{1,2,\ell}\right)=\\
    &=&\minus\sum_{\ell=1}^3\left(\alpha_{1,\ell,m}\alpha_{2,3,\ell}+\alpha_{2,\ell,m}\alpha_{3,1,\ell}+\alpha_{3,\ell,m}\alpha_{1,2,\ell}\right)=
    \minus S_{\left(1,2,3\right)}
    \end{eqnarray*}
    But we assumed that $S_{\left(1,2,3\right)}\in\zeroset{R}$, and thus $S_{\left(1,3,2\right)}\in\zeroset{R}$, as required.
\end{casenv}
\end{proof}

\begin{cor}\label{cor:3-dim-free-ELT-Lie-alg}
Let $L$ be a free $R$-module with base $B=\left\{x_1,x_2,x_3\right\}$. Define
$$\forall i\leq j:\left[x_i,x_j\right]=\minus\left[x_j,x_i\right]=\sum_{\ell=1}^3\alpha_{i,j,\ell}x_\ell$$
where $\forall i,j,\ell:\alpha_{i,j,\ell}\in R$, and $\forall i,\ell:\alpha_{i,i,\ell}\in\zeroset{R}$. If $i>j$, we write $\alpha_{i,j,\ell}=\minus\alpha_{j,i,\ell}$. Assume that also
$$\forall m:\sum_{\ell=1}^3\left(\alpha_{1,\ell,m}\alpha_{2,3,\ell}+\alpha_{3,\ell,m}\alpha_{1,2,\ell}+\alpha_{2,\ell,m}\alpha_{3,1,\ell}\right)\in\zeroset{R}$$
Extend $\left[\cdot,\cdot\right]$ to $\left[\cdot,\cdot\right]:L\times L\to L$ by bilinearity.

Then $L$ equipped with $\left[\cdot,\cdot\right]$ is a Lie semialgebra over $R$.
\end{cor}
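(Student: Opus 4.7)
The plan is essentially to recognize that this corollary is the combination of the two preceding results, Lemma \ref{lem:3-dim-free-ELT-Lie-alg} and Lemma \ref{lem:3-dim-free-ELT-Lie-alg-simplify}, and amounts to little more than invoking them in sequence.

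First I would observe that the hypotheses of the corollary are precisely the hypotheses of Lemma \ref{lem:3-dim-free-ELT-Lie-alg-simplify}: we have the scalars $\alpha_{i,j,\ell}\in R$ with $\alpha_{i,i,\ell}\in\zeroset{R}$ for all $i,\ell$, the antisymmetry convention $\alpha_{i,j,\ell}=\minus\alpha_{j,i,\ell}$, and the single cyclic identity for $(i,j,k)=(1,2,3)$ stating that
$$\forall m:\sum_{\ell=1}^3\left(\alpha_{1,\ell,m}\alpha_{2,3,\ell}+\alpha_{3,\ell,m}\alpha_{1,2,\ell}+\alpha_{2,\ell,m}\alpha_{3,1,\ell}\right)\in\zeroset{R}.$$
Applying Lemma \ref{lem:3-dim-free-ELT-Lie-alg-simplify} I would then upgrade this to the stronger statement
$$\forall i,j,k,m:\sum_{i,j,k}\sum_{\ell=1}^3\alpha_{i,\ell,m}\alpha_{j,k,\ell}\in\zeroset{R},$$
which is exactly the cyclic-sum hypothesis demanded by Lemma \ref{lem:3-dim-free-ELT-Lie-alg}.

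Once this upgrade is in place, the bilinear extension of $\left[\cdot,\cdot\right]$ to $L\times L\to L$ satisfies all the axioms of a Lie semialgebra with a negation map (alternation from $\alpha_{i,i,\ell}\in\zeroset{R}$, anticommutativity from the sign convention, and Jacobi's identity from the cyclic sum) by a direct appeal to Lemma \ref{lem:3-dim-free-ELT-Lie-alg}. There is no obstacle here beyond the bookkeeping performed in the two preceding lemmas; the entire content of the corollary is that the case-analysis of Lemma \ref{lem:3-dim-free-ELT-Lie-alg-simplify} lets one replace the quantifier over all $(i,j,k)$ by a single test triple, so the proof reduces to citing the two results in order.
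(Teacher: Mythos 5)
Your proof is correct and matches the paper's argument exactly: the paper likewise cites Lemma \ref{lem:3-dim-free-ELT-Lie-alg-simplify} to upgrade the single cyclic condition at $(i,j,k)=(1,2,3)$ to the full quantified version, and then invokes Lemma \ref{lem:3-dim-free-ELT-Lie-alg} to conclude.
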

\begin{proof}
By \Lref{lem:3-dim-free-ELT-Lie-alg-simplify}, the conditions of \Lref{lem:3-dim-free-ELT-Lie-alg} hold, and thus the assertion follows.
\end{proof}

\begin{lem}\label{lem:3-dim-free-ELT-Lie-alg-is-all}
Let $L$ be a Lie semialgebra over $R$ with base $B=\left\{x_1,x_2,x_3\right\}$. Then there are~$\alpha_{i,j,\ell}\in R$ such that
$$\forall i\leq j:\left[x_i,x_j\right]=\minus\left[x_j,x_i\right]=\sum_{\ell=1}^3\alpha_{i,j,\ell}x_\ell$$
and $\forall i,j,\ell:\alpha_{i,j,\ell}\in R$, and $\forall i,\ell:\alpha_{i,i,\ell}\in\zeroset{R}$. If $i>j$, we denote $\alpha_{i,j,\ell}=\minus\alpha_{j,i,\ell}$. Moreover,
$$\forall m:\sum_{\ell=1}^3\left(\alpha_{1,\ell,m}\alpha_{2,3,\ell}+\alpha_{3,\ell,m}\alpha_{1,2,\ell}+\alpha_{2,\ell,m}\alpha_{3,1,\ell}\right)\in\zeroset{R}$$
Therefore, each such Lie semialgebra is obtained from \Cref{cor:3-dim-free-ELT-Lie-alg}.
\end{lem}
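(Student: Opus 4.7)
The plan is to proceed essentially by reversing the computation in \Lref{lem:3-dim-free-ELT-Lie-alg}, using freeness and \Lref{lem:lin-comb-zero-base} as the main tools. First, since $L$ is free with base $B=\{x_1,x_2,x_3\}$, for each pair $i\leq j$ there exist unique $\alpha_{i,j,1},\alpha_{i,j,2},\alpha_{i,j,3}\in R$ such that $[x_i,x_j]=\sum_{\ell=1}^3\alpha_{i,j,\ell}x_\ell$. For $i>j$, anticommutativity gives $[x_i,x_j]=\ominus[x_j,x_i]=\sum_\ell(\ominus\alpha_{j,i,\ell})x_\ell$, which is exactly the notational convention $\alpha_{i,j,\ell}=\ominus\alpha_{j,i,\ell}$.

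Next, I would handle the alternating property. For each $i$, the element $[x_i,x_i]=\sum_\ell\alpha_{i,i,\ell}x_\ell$ lies in $\zeroset{L}$. Applying \Lref{lem:lin-comb-zero-base} to this equality forces $\alpha_{i,i,\ell}\in\zeroset{R}$ for every $\ell$, as required.

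For the cyclic sum condition, I would specialize the Jacobi identity to the triple $(x_1,x_2,x_3)$. The computation in the proof of \Lref{lem:3-dim-free-ELT-Lie-alg} shows (verbatim, by expanding each inner bracket and collecting coefficients in the base $B$) that
\[
[x_1,[x_2,x_3]]+[x_3,[x_1,x_2]]+[x_2,[x_3,x_1]] \;=\; \sum_{m=1}^3\left(\sum_{\ell=1}^3\bigl(\alpha_{1,\ell,m}\alpha_{2,3,\ell}+\alpha_{3,\ell,m}\alpha_{1,2,\ell}+\alpha_{2,\ell,m}\alpha_{3,1,\ell}\bigr)\right)x_m.
\]
By Jacobi's identity, the left-hand side lies in $\zeroset{L}$. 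Invoking \Lref{lem:lin-comb-zero-base} once more, each coefficient of $x_m$ must lie in $\zeroset{R}$, which is precisely the required cyclic-sum condition.

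There is no real obstacle here: the proof is a direct ``read-backwards'' of \Lref{lem:3-dim-free-ELT-Lie-alg}, and the only conceptual point is that freeness of $L$ (via \Lref{lem:lin-comb-zero-base}) lets us pass from quasi-zero-ness of a base combination to quasi-zero-ness of each individual coefficient. The one place where care is needed is to note that although the corollary's hypothesis only mentions the triple $(1,2,3)$, this is genuinely all that our $L$ gives us for free from Jacobi applied to the base; \Lref{lem:3-dim-free-ELT-Lie-alg-simplify} then guarantees that this single instance is enough to recover the structure statement of \Cref{cor:3-dim-free-ELT-Lie-alg}, so the conclusion that $L$ arises from that corollary follows immediately.
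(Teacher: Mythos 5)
Your proof is correct and matches the paper's own argument essentially step for step: freeness and antisymmetry give the coefficients, the alternating property together with \Lref{lem:lin-comb-zero-base} gives $\alpha_{i,i,\ell}\in\zeroset{R}$, and the Jacobi computation from \Lref{lem:3-dim-free-ELT-Lie-alg} specialized to $(1,2,3)$ together with \Lref{lem:lin-comb-zero-base} gives the cyclic-sum condition. The paper states this more tersely by citing the equivalence already established in the earlier proof, but the reasoning is the same.
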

\begin{proof}
The existence of $\alpha_{i,j,\ell}$ follows from the fact that $L$ is free, and by the antisymmetry of $\left[\cdot,\cdot\right]$. Since $\left[\cdot,\cdot\right]$ is also alternating, and by \Lref{lem:lin-comb-zero-base}, $\forall i,\ell:\alpha_{i,i,\ell}\in\zeroset{R}$.\\

In addition, in the proof of \Lref{lem:3-dim-free-ELT-Lie-alg}, we showed that Jacobi's identity is equivalent to
$$\forall i,j,k,m:\sum_{i,j,k}\sum_{\ell=1}^3\alpha_{j,k,\ell}\alpha_{i,\ell,m}\in\zeroset{R}$$
In particular, substituting $\left(i,j,k\right)=\left(1,2,3\right)$,
$$\forall m:\sum_{\ell=1}^3\left(\alpha_{1,\ell,m}\alpha_{2,3,\ell}+\alpha_{3,\ell,m}\alpha_{1,2,\ell}+\alpha_{2,\ell,m}\alpha_{3,1,\ell}\right)\in\zeroset{R}$$
and thus we are finished.
\end{proof}

We use the above way to construct a Lie semialgebra which has the same relations as $\sl\left(2,F\right)$. In \Sref{sec:classical-ELT-Lie-alg} we will see the naive way of constructing $\sl\left(n,R\right)$, which will be different from the following Lie semialgebra:

\begin{example}\label{exa:sl2-type}
We take a free $R$-module $L$ with base $\left\{e,f,h\right\}$, and define
\begin{eqnarray*}
\left[e,e\right]=\left[f,f\right]=\left[h,h\right]&=&0_L\\
\left[e,f\right]=\minus\left[f,e\right]&=&h\\
\left[h,e\right]=\minus\left[e,h\right]&=&2e\\
\left[h,f\right]=\minus\left[f,h\right]&=&2f
\end{eqnarray*}
where $2e$ and $2f$ mean $e+e$ and $f+f$. In the notations of \Lref{lem:3-dim-free-ELT-Lie-alg}, if $x_1=e$, $x_2=f$ and $x_3=h$, then
$$\alpha_{1,2,3}=\minus\alpha_{2,1,3}=\one,\;\;\;\alpha_{3,1,1}=\minus\alpha_{1,3,1}=2,\;\;\;\alpha_{3,2,2}=\minus\alpha_{2,3,2}=\minus 2$$
and all other coefficients are $\minf$. We want to prove that these coefficients satisfy the conditions of \Cref{cor:3-dim-free-ELT-Lie-alg}, and thus $L$ equipped with the bilinear extension of $\left[\cdot,\cdot\right]$ is a Lie semialgebra over $R$.

We need to ensure that
$$\forall m:\sum_{\ell=1}^3\left(\alpha_{1,\ell,m}\alpha_{2,3,\ell}+\alpha_{3,\ell,m}\alpha_{1,2,\ell}+\alpha_{2,\ell,m}\alpha_{3,1,\ell}\right)\in\zeroset{R}$$

\begin{enumerate}
\item If $m=1$, $\alpha_{i,\ell,1}=\minf$ unless $\left(i,\ell\right)\in\left\{\left(1,3\right),\left(3,1\right)\right\}$. We are left with two
    summands $\neq\minf$:
    $$\alpha_{1,3,1}\alpha_{2,3,3}+\alpha_{3,1,1}\alpha_{1,2,1}=\minf$$
\item If $m=2$, $\alpha_{i,\ell,2}=\minf$ unless $\left(i,\ell\right)\in\left\{\left(2,3\right),\left(3,2\right)\right\}$. We are left with two
    summands $\neq\minf$:
    $$\alpha_{2,3,2}\alpha_{3,1,3}+\alpha_{3,2,2}\alpha_{1,2,2}=\minf$$
\item If $m=3$, $\alpha_{i,\ell,3}=\minf$ unless $\left(i,\ell\right)\in\left\{\left(1,2\right),\left(2,1\right)\right\}$. We are left with two
    summands $\neq\minf$:
    $$\alpha_{1,2,3}\alpha_{2,3,2}+\alpha_{2,1,3}\alpha_{3,1,1}=1\cdot 2\minus 1\cdot 2=2^\circ$$
\end{enumerate}

Since the sum is quasi-zero for each choice of $m$, such $L$ exists.
\end{example}

\subsection{Symmetrized Versions of the Classical Lie Algebras}\label{sec:classical-ELT-Lie-alg}
In this subsection we construct negated versions of the classical Lie algebras: $A_{n}$, $B_{n}$, $C_{n}$ and~$D_{n}$. We assume again that our underlying semiring $R$ is a semifield with a negation map.

\begin{defn}
The \textbf{general linear algebra}, $\gl\left(n, R\right)$, is the Lie semialgebra of all matrices of size $n\times n$, where the Lie bracket is the negated commutator.
\end{defn}

\begin{defn}
We define matrices $e_{i,j}\in\gl\left(n, R\right)$ by
$$\left(e_{i,j}\right)_{k,\ell}=\begin{cases}
\one & \left(i,j\right)=\left(k,\ell\right)\\
\minf & \left(i,j\right)\neq\left(k,\ell\right)
\end{cases}$$
these matrices form a base of $\gl\left(n, R\right)$.
\end{defn}

\begin{rem}\label{elem-com-formula}
In $\gl\left(n, R\right)$, we have the formula
$$\left[e_{i,j},e_{k,\ell}\right]=\delta_{j,k}e_{i,\ell}+\minus\delta_{i,\ell}e_{k,j}$$
where
$$\delta_{i,j}=\begin{cases}
\one & i=j\\
\minf & i\neq j
\end{cases}$$
\end{rem}

\begin{defn}
$A_{n}$, or the \textbf{negated special linear algebra}, is
$$A_{n}=\left\{ A\in\gl\left(n+1, R\right)\mid s\left(\tr\left(A\right)\right)=0\right\}$$
\end{defn}

\begin{lem}
$A_{n}$ is a subalgebra of $\gl\left(n+1, R\right)$.
\end{lem}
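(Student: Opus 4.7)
The plan is to verify the two conditions needed for $A_n$ to be a subalgebra of $\gl(n+1,R)$: first, that $A_n$ is closed as a submodule (stable under addition, scalar multiplication, and the negation map), and second, that $A_n$ is closed under the negated commutator. Throughout I will use that for $\alpha\in R$, the condition $s(\tr(A))=0$ means precisely $\tr(A)\in\zeroset{R}$, and I will exploit that $\zeroset{R}$ is itself a submodule of $R$.

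Closure under the module operations is almost immediate, since the trace is $R$-linear and commutes with the negation map: if $A,B\in A_n$ and $\alpha\in R$, then
\[
\tr(A+B)=\tr(A)+\tr(B)\in\zeroset{R},\quad \tr(\alpha A)=\alpha\tr(A)\in\zeroset{R},\quad \tr(\minus A)=\minus\tr(A)\in\zeroset{R},
\]
using that $\zeroset{R}$ is a submodule. This also uses that $\tr(\minus A) = \minus \tr(A)$, which follows from the definition of the induced negation map on $\gl(n+1,R) = M_{n+1}(R)$.

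For closure under $[\cdot,\cdot]$, the key step is to establish the analogue of the classical identity $\tr(AB)=\tr(BA)$ in our semiring setting. Since $R$ is a semifield and hence commutative, a direct computation using the formula $\tr(AB)=\sum_{i,j}A_{ij}B_{ji}$ and interchange of the indices $i$ and $j$ yields $\tr(AB)=\tr(BA)$. Granted this, for any $A,B\in\gl(n+1,R)$,
\[
\tr([A,B])=\tr(AB\minus BA)=\tr(AB)+\minus\tr(BA)=\tr(AB)+\minus\tr(AB)=\bigl(\tr(AB)\bigr)^{\circ}\in\zeroset{R}.
\]
Thus $[A,B]\in A_n$ for \emph{all} $A,B\in\gl(n+1,R)$, and in particular for $A,B\in A_n$, completing the verification.

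There is no substantial obstacle: the only non-formal input is commutativity of $R$, which is granted by the standing assumption that $R$ is a semifield with a negation map. The proof therefore reduces to the two routine verifications described above.
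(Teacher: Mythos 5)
Your proof is correct and follows essentially the same route as the paper: both verify the submodule closure (which the paper dismisses as obvious, while you spell it out) and then reduce closure under the bracket to the identity $\tr(AB)=\tr(BA)$, which forces $\tr([A,B])\in\zeroset{R}$. The only cosmetic difference is that you explicitly flag commutativity of $R$ and observe that $\tr([A,B])\in\zeroset{R}$ actually holds for all of $\gl(n+1,R)$, not just $A_n$.
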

\begin{proof}
Obviously, $A_{n}$ is a submodule of $\gl\left(n+1, R\right)$. Since $\tr\left(AB\right)=\tr\left(BA\right)$, we have
$$s\left(\tr\left(\left[A,B\right]\right)\right)=s\left(\tr\left(AB+\minus BA\right)\right)=0$$
so $A_{n}$ is a subalgebra of $\gl\left(n+1, R\right)$.
\end{proof}

\begin{lem}
$A_{n}$ has the following s-base:
$$B=\left\{ e_{i,j}\mid i\neq j\right\} \cup\left\{ e_{i,i}+\minus e_{j,j}\mid1\leq i<j\leq n\right\}\cup\left\{\zero e_{i,i}\mid 1\leq i\leq n\right\}$$
\end{lem}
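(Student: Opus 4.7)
The proof naturally splits into verifying that $B$ spans $A_n$ and that $B$ is minimal. For spanning, given any $A=\sum_{i,j}a_{ij}e_{ij}\in A_n$, the off-diagonal part $\sum_{i\neq j}a_{ij}e_{ij}$ lies directly in the span of $\{e_{ij}\mid i\neq j\}$, so the substantive content is showing that the diagonal part $D=\sum_i a_{ii}e_{ii}$ is in the span of the other two families. The trace condition gives $\sum a_{ii}\in\zeroset R$, i.e.\ equal to some $c\zero$. I would compute
\[
\sum_{i=1}^{n}a_{ii}(e_{ii}+\minus e_{n+1,n+1})=\sum_{i=1}^{n}a_{ii}e_{ii}+\bigl(\minus\sum_{i=1}^{n}a_{ii}\bigr)e_{n+1,n+1},
\]
whose difference from $D$ is concentrated in the $(n+1,n+1)$ entry and equals $\tr A=c\zero$, a quasi-zero. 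This residual is then absorbed by the zero-layer basis elements $\zero e_{ii}$, using the identity $\zero(e_{ii}+\minus e_{n+1,n+1})=\zero e_{ii}+\zero e_{n+1,n+1}$ together with the additive idempotency of $\zeroset R$ in the ELT setting.

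For minimality, I would address each of the three families separately. Removing an off-diagonal $e_{ij}$ ($i\neq j$) is immediately fatal since no other basis element has any support at position $(i,j)$. For a zero-layer element $\zero e_{ii}$, I would examine the $(m,m)$ entry of any candidate linear combination of the remaining basis elements for $m\neq i$: this entry must equal $\minf$, and since the underlying ELT ring is an antiring the sum can vanish only if every summand vanishes, forcing the coefficients of all $e_{kk}+\minus e_{ll}$ and all $\zero e_{mm}$ ($m\neq i$) to be $\minf$; the residual combination cannot then produce $\zero e_{ii}$. For an element $e_{ii}+\minus e_{jj}$ the same antiring argument forces all $\alpha_{kl}$ with $(k,l)\neq(i,j)$ to vanish, leaving only a linear combination of zero-layer diagonal elements at positions $i$ and $j$; but such a combination has both $(i,i)$ and $(j,j)$ entries lying in $\zeroset R$, whereas $e_{ii}+\minus e_{jj}$ has non-quasi-zero entries there, contradicting the equality.

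The main obstacle I anticipate is the careful bookkeeping in the spanning argument when reconciling the quasi-zero residue at position $(n+1,n+1)$ with the third family, which is indexed only by $1\le i\le n$. One must extract $\zero e_{n+1,n+1}$ from combinations of $\zero e_{kk}$ (for $k\le n$) and $e_{ii}+\minus e_{n+1,n+1}$; this relies on the identity above and on $\zeroset R$ being idempotent, so that the stray $\zero e_{kk}$ terms produced as by-products of the identity merge harmlessly with the existing zero-layer contributions. Minimality for the middle family likewise hinges on the non-cancellation property of antirings, without which identities like $(e_{ii}+\minus e_{kk})+\minus(e_{jj}+\minus e_{kk})=e_{ii}+\minus e_{jj}+\zero e_{kk}$ might permit unwanted reconstructions of $e_{ii}+\minus e_{jj}$ from the remaining basis vectors.
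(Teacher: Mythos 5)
Your spanning argument has a genuine gap, and it is worth seeing exactly where. You fix $e_{n+1,n+1}$ as the pivot and then claim the discrepancy between $\sum_{i\le n}a_{ii}(e_{ii}+\minus e_{n+1,n+1})$ and the true diagonal $D=\sum_i a_{ii}e_{ii}$ is a quasi-zero concentrated at position $(n+1,n+1)$, absorbable by zero-layer basis elements. But in a semiring there is no subtraction, so ``the difference is a quasi-zero'' does not mean the gap can be closed by adding one. The $(n+1,n+1)$ entry of your expression is $\minus\sum_{i\le n}a_{ii}$, and this can have strictly larger tangible value than $a_{n+1,n+1}$, after which no further addition can lower it. Take $n=1$, $\R=\ELT{\mathbb{R}}{\mathbb{C}}$, and
$$A=\begin{pmatrix}\layer{1}{0}&\minf\\ \minf&\layer{0}{1}\end{pmatrix}.$$
Then $\tr A=\layer{1}{0}$ has layer zero, so $A\in A_1$; but your formula gives $\layer{1}{0}e_{11}+\layer{1}{0}e_{22}$, whose $(2,2)$ entry is $\layer{1}{0}$ while $a_{22}=\layer{0}{1}$, and $\layer{1}{0}+x=\layer{0}{1}$ has no solution $x\in\R$. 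No amount of bookkeeping with $\zero e_{kk}$ rescues a fixed-pivot decomposition here.

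The paper instead chooses the pivot \emph{adaptively}, according to how $\tr A$ degenerates. If $\tr A$ is realized by a dominant zero-layered diagonal entry, WLOG $a_{11}$, it writes $A=\sum_{i\neq j}a_{ij}e_{ij}+\sum_i a_{ii}(e_{ii}+\minus e_{11})$: the $(1,1)$ entry of the diagonal sum is $a_{11}+\minus\tr A=a_{11}+\minus a_{11}$, and because $a_{11}$ dominates and already has layer zero this collapses back to $a_{11}$. If instead $\tr A$ is realized by mutual cancellation of several nonzero-layered dominant entries, WLOG including $a_{11}$, the paper writes $A=\sum_{i\neq j}a_{ij}e_{ij}+\sum_{i\ge 2}a_{ii}(e_{ii}+\minus e_{11})$ and checks that the $(1,1)$ entry $\minus\sum_{i\ge 2}a_{ii}$ reconstructs $a_{11}$, because the dominant tangible values agree and the remaining layers at that value sum to $-s(a_{11})$. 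In the example above the paper's pivot is $a_{11}$, and the decomposition succeeds. So you need a case split on the structure of the trace, with the pivot chosen to match; a fixed pivot will not do.

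Your minimality argument is correct, and is in fact more careful than what the paper records (which simply asserts that no element of $B$ is spanned by the others); the antiring property of the underlying ELT ring is indeed the right ingredient for forcing position-by-position vanishing of coefficients.

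Finally, a small point that affects both the statement and your reading of it: since $A_n\subseteq\gl(n+1,R)$, the index bounds in the second and third families should run to $n+1$ rather than $n$ (the paper's own proof and its subsequent description of an s-base of $A_1$ are consistent only with the $n+1$ bound). You noticed this in your last paragraph, but treating it as a bookkeeping obstacle around $\zero e_{n+1,n+1}$ obscures the real issue, which is the pivot choice described above.
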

\begin{proof}
Assume that $A=\left(a_{i,j}\right)\in A_{n}$. Therefore, $s\left(\tr\left(A\right)\right)=0$. There are two options:
\begin{enumerate}
\item If $\tr\left(A\right)$ is achieved from a dominant zero-layered element in the diagonal of $A$, without loss of generality $a_{1,1}$, then
    $$A=\sum_{\substack{i,j=1\\i\neq j}}^{n}a_{i,j}e_{i,j}+\sum_{i=1}^n a_{i,i}\left(e_{i,i}+\minus e_{1,1}\right)$$
\item Otherwise, $\tr\left(A\right)$ is achieved from (at least) two elements, not of layer zero, such that they ``cancel'' each other. Assume without loss of generality that these elements are $a_{1,1},\dots,a_{k,k}$; that means that $\t\left(a_{1,1}\right)=\cdots=t\left(a_{k,k}\right)=\t\left(\tr\left(A\right)\right)$, $s\left(a_{1,1}\right),\dots,s\left(a_{k,k}\right)\neq 0$ and for each~$\ell>k$, either $\t\left(a_{\ell,\ell}\right)<\t\left(\tr\left(A\right)\right)$ or $s\left(a_{\ell,\ell}\right)=0$. Then
    $$A=\sum_{\substack{i,j=1\\i\neq j}}^{n}a_{i,j}e_{i,j}+\sum_{i=2}^n a_{i,i}\left(e_{i,i}+\minus e_{1,1}\right)$$
\end{enumerate}
Therefore, $B$ spans $A_n$. It is easy to see that no element of $B$ is spanned by the others, and thus it is an s-base.
\end{proof}

\begin{lem}
Considering $A_{1}$, we see that its s-base consists of the elements
$$e=e_{1,2},\quad f=e_{2,1},\quad h=e_{1,1}+\minus e_{2,2}$$
they satisfy the relations
$$\left[e,f\right]=h;\quad\left[e,h\right]=\layer{0}{-2}e;\quad\left[f,h\right]=\layer{0}{2}f$$
\end{lem}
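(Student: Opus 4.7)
The plan is direct computation, in two parts.

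First, to identify the s-base, I would specialize the previous lemma to $n=1$: the matrices $e_{i,j}$ with $i\neq j$ contribute $e_{1,2}$ and $e_{2,1}$, while the condition $1\leq i<j\leq 2$ in $\gl(2,R)$ produces the single element $e_{1,1}+\minus e_{2,2}$. So up to the trivial quasi-zero generators the s-base reduces to $\{e,f,h\}$. No new argument is needed here.

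Second, the bracket relations are verified by applying the elementary commutator formula of Remark~\ref{elem-com-formula},
$$\left[e_{i,j},e_{k,\ell}\right]=\delta_{j,k}e_{i,\ell}+\minus\delta_{i,\ell}e_{k,j},$$
together with the bilinearity of $[\cdot,\cdot]$. For $[e,f]$ this is immediate:
$$\left[e,f\right]=\left[e_{1,2},e_{2,1}\right]=\delta_{2,2}e_{1,1}+\minus\delta_{1,1}e_{2,2}=e_{1,1}+\minus e_{2,2}=h.$$
For the remaining two, I would expand using bilinearity and the definition of $h$; for instance
$$\left[e,h\right]=\left[e_{1,2},e_{1,1}\right]+\minus\left[e_{1,2},e_{2,2}\right]=\minus e_{1,2}+\minus e_{1,2},$$
and symmetrically
$$\left[f,h\right]=\left[e_{2,1},e_{1,1}\right]+\minus\left[e_{2,1},e_{2,2}\right]=e_{2,1}+e_{2,1}.$$
To pass to the stated $\layer{0}{\pm 2}$-coefficients, I would invoke the ELT identity $\minus\alpha=\layer{0}{-1}\alpha$, so that $\minus e_{1,2}+\minus e_{1,2}=\layer{0}{-2}e_{1,2}=\layer{0}{-2}e$ and $e_{2,1}+e_{2,1}=\layer{0}{2}e_{2,1}=\layer{0}{2}f$.

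There is no conceptual obstacle: the entire lemma is a book-keeping exercise, and the only point requiring mild care is the appearance of the layers $\layer{0}{\pm 2}$, which arise precisely because adding a layer-$\pm 1$ element to itself doubles the layer in the ELT algebra. Consequently, the proof I would write is essentially a three-line computation citing the previous lemma and Remark~\ref{elem-com-formula}.
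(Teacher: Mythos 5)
Your proposal is correct and takes essentially the same approach as the paper: the s-base statement is a specialization of the preceding lemma on the s-base of $A_n$, and the bracket relations are verified directly from the elementary commutator formula of Remark~\ref{elem-com-formula} together with bilinearity, with the layers $\layer{0}{\pm 2}$ arising from $\layer{0}{\pm 1}+\layer{0}{\pm 1}=\layer{0}{\pm 2}$ in the ELT algebra. The paper's proof only writes out the three bracket computations, so your extra remarks on the s-base identification and the layer doubling are a slightly more explicit rendering of the same argument.
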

\begin{proof}
Considering the formula mentioned in \Rref{elem-com-formula} we get:
\begin{eqnarray*}
\left[e,f\right] & = & \left[e_{1,2},e_{2,1}\right]=e_{1,1}+\minus e_{2,2}=h\\
\left[e,h\right] & = & \left[e_{1,2},e_{1,1}+\minus e_{2,2}\right]=\minus e_{1,2}+\minus e_{1,2}=\layer 0{-2}e\\
\left[f,h\right] & = & \left[e_{2,1},e_{1,1}+\minus e_{2,2}\right]=e_{2,1}+\minus\minus e_{2,1}=\layer 02f
\end{eqnarray*}
\end{proof}
Let $L$ be the Lie semialgebra constructed in \Eref{exa:sl2-type}. Note that $A_1$ is similar to $L$; however, $A_1$ has no d,s-base, whereas $L$ is free.\\

Unlike the definition of $A_{n}$, the Lie semialgebras $B_{n}$, $C_{n}$ and $D_{n}$ are all defined using involutions. In this part, we follow \cite{Rowen2006}.

\begin{defn}
Let $R$ be a semifield with a negation map, and let $A_{1},A_{2}$ be nonassociative semialgebras. An \textbf{antihomomorphism} is a function $\varphi:A_{1}\rightarrow A_{2}$ that satisfies:
\begin{enumerate}
\item $\forall x,y\in A_{1}:\varphi\left(x+y\right)=\varphi\left(x\right)+\varphi\left(y\right)$.
\item $\forall\alpha\in R,\,\forall x\in A_{1}:\varphi\left(\alpha x\right)=\alpha\varphi\left(x\right)$.
\item $\forall x,y\in A_{1}:\varphi\left(xy\right)=\varphi\left(y\right)\varphi\left(x\right)$.
\end{enumerate}
\end{defn}

\begin{defn}
Let $R$ be a semifield with a negation map, and let $A$ be a nonassociative semialgebra. An \textbf{involution} is an antihomomorphism $\varphi:A\rightarrow A$ for which
$$\forall x\in A:\varphi\left(\varphi\left(x\right)\right)=x$$
In other words, $\varphi$ is its own inverse. We also denote involutions as $*:A\rightarrow A$, $x\mapsto x^{*}$.
\end{defn}

\begin{example}
Let $R$ be a semifield with a negation map. Then the transpose on $M_{n\times n}\left( R\right)$ is an involution.
\end{example}

\begin{defn}
Let $R$ be a semifield with a negation map, and let $A$ be a nonassociative semialgebra with an involution $*$. An element $x\in A$ is called \textbf{symmetric}, if
$$x^{*}=x$$
$x$ is called \textbf{skew-symmetric}, if
$$x^{*}=\minus x$$
\end{defn}

\begin{lem}\label{lem:skew-syms-lie-alg}
Let $A$ be a nonassociative semialgebra over $R$. Denote by $\left[\cdot,\cdot\right]$ the negated commutator on $A$, and let $*$ be an involution on $A$. Then the set of all skew-symmetric elements in $A$,
$$\tilde{A}=\left\{ x\in A\mid x^{*}=\minus x\right\}$$
is a Lie subalgebra of $A$.
\end{lem}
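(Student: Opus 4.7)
The plan is to verify that $\tilde{A}$ is closed under each of the four operations required for it to be a Lie subalgebra of $A$: addition, scalar multiplication, the negation map $\minus$, and the negated commutator $[\cdot,\cdot]$. Once closure is established, $\tilde{A}$ inherits bilinearity, the alternating property, anticommutativity and Jacobi's identity from $A$ (regarded as a Lie semialgebra under the negated commutator, as in \Eref{exa:comm-is-Lie}), so it will automatically be a Lie subalgebra.

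The first three closures are formal consequences of $*$ being an $R$-linear antihomomorphism. Together with the identity $(\minus u)^* = \minus u^*$ (which follows from $R$-linearity applied to $\minus u = (\minus\one)u$), one immediately gets, for $x,y\in\tilde{A}$ and $\alpha\in R$:
$$(x+y)^* = x^* + y^* = \minus x + \minus y = \minus(x+y),$$
$$(\alpha x)^* = \alpha x^* = \alpha(\minus x) = \minus(\alpha x),$$
$$(\minus x)^* = \minus x^* = \minus(\minus x),$$
placing $x+y$, $\alpha x$, and $\minus x$ in $\tilde{A}$.

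The substantive step is showing $[x,y]\in\tilde{A}$. I would compute $[x,y]^*$ directly, using three ingredients: the antihomomorphism property $(uv)^* = v^*u^*$, the compatibility $(\minus u)^* = \minus u^*$, and the semialgebra axiom $(\minus u)(\minus v) = uv$ (which follows from $\minus(uv) = (\minus u)v = u(\minus v)$ applied twice, together with $\minus\minus w = w$). Explicitly,
$$[x,y]^* = (xy + \minus yx)^* = y^*x^* + \minus x^*y^* = (\minus y)(\minus x) + \minus(\minus x)(\minus y) = yx + \minus xy = \minus(xy + \minus yx) = \minus[x,y],$$
so $[x,y]\in\tilde{A}$.

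There is no genuine obstacle here; the entire argument is axiom-chasing, and the only real subtlety is keeping track of the negation map as it moves past the antihomomorphism identity. All the algebraic identities required are already established in the paper's definitions of a semialgebra with a negation map and of an involution, so no additional structural results are needed.
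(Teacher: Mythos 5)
Your proof is correct and follows essentially the same route as the paper's: show $\tilde{A}$ is a submodule (using that $*$ is an $R$-linear antihomomorphism) and then verify closure under the negated commutator by computing $[x,y]^*$. The only cosmetic difference is that the paper factors the computation through the intermediate identity $[x,y]^* = [y^*,x^*]$ and then substitutes $x^*=\minus x$, $y^*=\minus y$, whereas you expand everything in one line; the content is identical.
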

\begin{proof}
$*$ is an involution, and in particular an antihomomorphism, so $\tilde{A}$ is a submodule of $A$.\\

Assume $x,y\in A$. Then
$$\left[x,y\right]^{*}=\left(xy+\minus yx\right)^{*}=y^{*}x^{*}+\minus x^{*}y^{*}=\left[y^{*},x^{*}\right]$$
If $x,y\in\tilde{A}$, then
$$\left[x,y\right]^{*}=\left[y^{*},x^{*}\right]=\left[\minus y,\minus x\right]=\left[y,x\right]=\minus\left[x,y\right]$$
so $\left[x,y\right]\in\tilde{A}$, as needed.
\end{proof}

Now we can define $B_{n}$, $C_{n}$ and $D_{n}$ by defining their involutions.

\begin{defn}
Let $R$ be a semifield with a negation map. We define an involution $*$ on $\gl\left(k, R\right)$ by the transpose ($A^{*}=A^{t}$). As stated in \Lref{lem:skew-syms-lie-alg}, we get a Lie subalgebra consisting all of the skew-symmetric elements.
\begin{enumerate}
\item When $k=2n+1$ is odd, this Lie semialgebra is called $B_{n}$, the \textbf{negated odd-dimensional orthogonal algebra}.
\item When $k=2n$ is even, it is called $D_{n}$, the \textbf{negated even-dimensional orthogonal algebra}.
\end{enumerate}
\end{defn}

\begin{rem}
$B_{n}$ has the following s-base:
$$\left\{ e_{i,j}+\minus e_{j,i}\mid1\leq i<j\leq2n+1\right\}\cup\left\{\zero e_{i,i}\mid 1\leq i\leq 2n+1\right\}$$
whereas $D_{n}$ has the following s-base:
$$\left\{ e_{i,j}+\minus e_{j,i}\mid1\leq i<j\leq2n\right\}\cup\left\{\zero e_{i,i}\mid 1\leq i\leq 2n\right\}$$
\end{rem}

\begin{defn}
Let $R$ be a semifield with a negation map. We define an involution $*$ on $\gl\left(2n, R\right)$ by
$$\begin{pmatrix}A & B\\
C & D
\end{pmatrix}^{*}=\begin{pmatrix}D^{t} & \minus B^{t}\\
\minus C^{t} & A^{t}
\end{pmatrix}$$
By taking the skew-symmetric elements, we get the Lie subalgebra $C_{n}$, the \textbf{negated symplectic algebra}.
\end{defn}

\begin{lem}
$C_{n}$ has the following base:
$$\left\{ e_{i,j}+\minus e_{n+j,n+i}\mid1\leq i,j\leq n\right\} \cup\left\{ e_{i,n+j}+e_{j,n+i}\mid1\leq i,j\leq n\right\} \cup\left\{ e_{n+i,j}+e_{n+j,i}\mid1\leq i,j\leq n\right\}$$
\end{lem}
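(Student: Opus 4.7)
The plan is to check the three conditions required of a base in \Dref{def:base}: every element of the proposed set $\mathcal{B}$ lies in $C_{n}$; $\mathcal{B}$ spans $C_{n}$; and the representation of any element is unique.

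I would first make explicit how the involution $*$ acts on the matrix units $e_{k,\ell}$ of $\gl(2n,R)$. Reading off which block each unit lives in yields $e_{i,j}^{*}=e_{n+j,n+i}$, $e_{i,n+j}^{*}=\minus e_{j,n+i}$, $e_{n+i,j}^{*}=\minus e_{n+j,i}$, and $e_{n+i,n+j}^{*}=e_{j,i}$ for $1\le i,j\le n$. A direct substitution then shows each proposed generator $X$ satisfies $X^{*}=\minus X$; for instance $(e_{i,j}+\minus e_{n+j,n+i})^{*}=e_{n+j,n+i}+\minus e_{i,j}=\minus(e_{i,j}+\minus e_{n+j,n+i})$, and analogously for the other two families, so by \Lref{lem:skew-syms-lie-alg} all elements of $\mathcal{B}$ belong to $C_{n}$.

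For spanning I would take a generic $X\in C_{n}$. Writing $X=\begin{pmatrix}A & B\\ C & D\end{pmatrix}$ and applying the formulas above, the identity $X^{*}=\minus X$ forces $D=\minus A^{t}$, $B^{t}=B$, and $C^{t}=C$. The change of summation index $i\leftrightarrow j$ collects the $A$- and $D$-contributions as $\sum_{i,j}a_{ij}(e_{i,j}+\minus e_{n+j,n+i})$, recovering the first family. For the $B$-block, symmetry gives the decomposition $B=\sum_{i<j}b_{ij}(e_{i,n+j}+e_{j,n+i})+\sum_{i}b_{ii}e_{i,n+i}$, where the diagonal terms are expressed through the listed generators $e_{i,n+i}+e_{i,n+i}$; the $C$-block is handled symmetrically. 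Uniqueness then follows from the observation that the three families occupy pairwise disjoint positions among the matrix units of $\gl(2n,R)$ (the first couples only the two diagonal blocks, the second uses only the upper-right block, the third only the lower-left), so any quasi-zero linear combination restricts to a quasi-zero relation within each block, and \Lref{lem:lin-comb-zero-base} applied to the free module $\gl(2n,R)$ forces all coefficients to be quasi-zero.

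I expect the main technical nuisance to be the diagonal $i=j$ members of the second and third families, which equal $2e_{i,n+i}$ and $2e_{n+i,i}$ respectively. Expressing an arbitrary $b_{ii}e_{i,n+i}$ as $\alpha\cdot(2e_{i,n+i})$ requires $\alpha=2^{-1}b_{ii}$, so the argument implicitly uses invertibility of $2$ in $R$; in a semifield this is automatic whenever $2\notin\zeroset{R}$, and a careful write-up should either state this as a hypothesis or replace the diagonal generators with individual $e_{i,n+i}$ and $e_{n+i,i}$ so that every entry of the symmetric blocks $B$ and $C$ is covered without requiring division by $2$.
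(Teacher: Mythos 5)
Your route is essentially the paper's: derive the block conditions $A_{4}^{t}=\minus A_{1}$, $A_{2}^{t}=A_{2}$, $A_{3}^{t}=A_{3}$ from $X^{*}=\minus X$, express an arbitrary element of $C_{n}$ as a linear combination of the listed generators, and deduce uniqueness from the fact that $\gl\left(2n,R\right)$ is free and the three families occupy pairwise disjoint positions. Your worry about the diagonal generators is well-placed and in fact points to a real gap in the paper's own write-up: taken literally, the paper's displayed sum $\sum_{i,j=1}^{n}a_{2,i,j}\left(e_{i,n+j}+e_{j,n+i}\right)$ reproduces $2A_{2}$ in the upper-right block rather than $A_{2}$, so the coefficients are only recoverable when $2$ is invertible in $R$ (equivalently, in a semifield, $2\notin\zeroset{R}$, which excludes the supertropical case $\minus a=a$, where $2\in\zeroset{R}$). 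Over an ELT ring with layer field of characteristic $\neq 2$, $2=\layer{0}{2}$ is a unit and the lemma holds as stated; in full generality one should either add $2\notin\zeroset{R}$ as a hypothesis or, as you suggest, replace the $i=j$ generators by $e_{i,n+i}$ and $e_{n+i,i}$.
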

\begin{proof}
Denote the above set by $B$ , and assume $\begin{pmatrix}A_{1} & A_{2}\\
A_{3} & A_{4}
\end{pmatrix}\in C_{n}$. Then
$$\begin{pmatrix}A_{1} & A_{2}\\
A_{3} & A_{4}
\end{pmatrix}^{*}=\begin{pmatrix}A_{4}^{t} & \minus A_{2}^{t}\\
\minus A_{3}^{t} & A_{1}^{t}
\end{pmatrix}=\minus\begin{pmatrix}A_{1} & A_{2}\\
A_{3} & A_{4}
\end{pmatrix}$$
we get the following conditions:
$$A_{4}^{t}=\minus A_{1};\quad A_{2}^{t}=A_{2};\quad A_{3}^{t}=A_{3}$$
If $\left(A_{k}\right)_{i,j}=a_{k,i,j}$, then
$$\begin{pmatrix}A_{1} & A_{2}\\
A_{3} & A_{4}
\end{pmatrix}=\sum_{i,j=1}^{n}a_{1,i,j}\left(e_{i,j}+\minus e_{n+j,n+i}\right)+\sum_{i,j=1}^{n}a_{2,i,j}\left(e_{i,n+j}+e_{j,n+i}\right)+\sum_{i,j=1}^{n}a_{3,i,j}\left(e_{n+i,j}+e_{n+j,i}\right)$$
and this combination is unique, implying $B$ is a base of $C_{n}$.
\end{proof}

\section{Solvable and Nilpotent Symmetrized Lie Algebras}
\subsection{Basic Definitions}
\subsubsection{Solvability}
\begin{defn}
Let $R$ be a semifield with a negation map, and let $L$ be a Lie semialgebra with a negation map over~$R$. The \textbf{derived series} of $L$ is the series defined by $L^{\left(0\right)}=L$, $L^{\left(1\right)}=L'=\left[L,L\right]$, and in general: $L^{\left(n\right)}=\left[L^{\left(n-1\right)},L^{\left(n-1\right)}\right]$.
\end{defn}

\begin{defn}
Let $R$ be a semifield with a negation map, and let $L$ be a Lie semialgebra with a negation map over~$R$. We say that $L$ is \textbf{solvable}, if
$$\exists n\in\mathbb{N}:L^{\left(n\right)}\subseteq\zeroset{L}$$
\end{defn}

\begin{example}
All abelian Lie semialgebras with a negation map are solvable.
\end{example}

\begin{lem}
Any Lie semialgebra with a negation map $L$, for which $L'\subseteq\zeroset{L}$, is abelian.
\end{lem}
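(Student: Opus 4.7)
The proof is essentially unwinding the definitions, so there is no real obstacle. The plan is as follows. By definition, $L' = [L,L] = \Span\{[x_1,x_2] \mid x_1, x_2 \in L\}$, which in particular contains every single commutator $[x,y]$ as the trivial linear combination $\one\cdot[x,y]$. Hence for every $x,y\in L$ we have $[x,y] \in L' \subseteq \zeroset{L}$, which is exactly the definition of $L$ being abelian.

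In writing this out I would simply fix arbitrary $x,y\in L$, observe that $[x,y] = \one\cdot [x,y] \in \Span\{[x_1,x_2]\mid x_1,x_2\in L\} = L'$, and conclude by the hypothesis $L' \subseteq \zeroset{L}$ that $[x,y]\in\zeroset{L}$. Since $x,y$ were arbitrary, this verifies the condition in the definition of abelian Lie semialgebra with a negation map.

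The only point that deserves a brief mention, rather than an obstacle, is that the containment $[x,y]\in L'$ does not require passing through any nontrivial spanning expression: every element of $L$ is an $R$-scalar multiple of itself (via $\one$), so the single commutator $[x,y]$ is itself an element of the span. No use of Jacobi's identity, anticommutativity, or any structural lemma about $\zeroset{L}$ is required; the statement is really just the observation that $L'\subseteq\zeroset{L}$ is a restatement of the defining condition for abelianness.
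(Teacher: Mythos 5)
Your proof is correct and takes essentially the same approach as the paper, which simply observes that the hypothesis $L'\subseteq\zeroset{L}$ amounts to $\forall x,y\in L:\left[x,y\right]\in\zeroset{L}$, i.e.\ the definition of abelianness. Your write-up is a bit more explicit about why each individual commutator lies in $L'$, but there is no substantive difference.
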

\begin{proof}
The condition is equivalent to $\forall x,y\in L:\left[x,y\right]\in\zeroset{L}$, which shows that $L$ is abelian.
\end{proof}

\begin{lem}\label{lem:subalg-homim-solv}
Let $R$ be a semifield with a negation map, and let $L$ be a Lie semialgebra with a negation map over $R$. If $L$ is solvable, then so are all of its subalgebras and homomorphic images.
\end{lem}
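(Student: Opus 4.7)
The plan is to handle the two assertions by parallel inductions on the derived series. For a subalgebra $K \subseteq L$, I will prove by induction on $k$ that $K^{(k)} \subseteq L^{(k)}$: the base case $K^{(0)} = K \subseteq L = L^{(0)}$ is immediate, and the inductive step follows from bilinearity of the bracket, since the hypothesis $K^{(k-1)} \subseteq L^{(k-1)}$ gives $K^{(k)} = [K^{(k-1)}, K^{(k-1)}] \subseteq [L^{(k-1)}, L^{(k-1)}] = L^{(k)}$. Choosing $n$ with $L^{(n)} \subseteq \zeroset{L}$, this yields $K^{(n)} \subseteq \zeroset{L}$; since each element of $K^{(n)}$ is, by construction, an $R$-linear combination of brackets of elements of $K^{(n-1)} \subseteq K$, such an element already lies in $K$, so $K^{(n)} \subseteq \zeroset{K}$.

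For a homomorphic image, let $\varphi : L \to M$ be a surjective Lie semialgebra homomorphism. I will show by induction on $k$ that $\varphi(L^{(k)}) = M^{(k)}$: the base case $\varphi(L^{(0)}) = \varphi(L) = M = M^{(0)}$ uses surjectivity, and the step uses $\varphi([u,v]) = [\varphi(u), \varphi(v)]$ for $u,v \in L$ together with bilinearity and the inductive hypothesis to establish both containments. Applying this at $k = n$ with $L^{(n)} \subseteq \zeroset{L}$ and invoking \Rref{rem:mod-zeros-sent-zeros}, we obtain
$$M^{(n)} = \varphi(L^{(n)}) \subseteq \varphi(\zeroset{L}) \subseteq \zeroset{M},$$
establishing solvability of $M$.

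The homomorphic-image direction is a transparent adaptation of the classical proof, with the only new ingredient being the use of \Rref{rem:mod-zeros-sent-zeros} to push quasi-zeros across $\varphi$. The main care point is in the subalgebra direction: namely, passing from $K^{(n)} \subseteq \zeroset{L}$ to the stronger $K^{(n)} \subseteq \zeroset{K}$. This relies on the observation that each generator $[u,v]$ of $K^{(n)}$ already lies in $K$ (using closure of $K$ under the bracket and under $\minus$), so that the containment $K^{(n)} \subseteq \zeroset{K}$ follows from the inclusion $K^{(n)} \subseteq K \cap \zeroset{L}$ together with the identification of quasi-zeros of the subalgebra.
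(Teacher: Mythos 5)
Your proof is correct and takes essentially the same route as the paper: the paper likewise shows by induction that $K^{(k)} \subseteq L^{(k)}$ for a subalgebra $K$ and that $\varphi(L^{(k)}) = (L_1)^{(k)}$ for an $R$-epimorphism $\varphi$, then concludes by pushing the quasi-zero condition across (via the fact that homomorphisms send $\zeroset{L}$ into $\zeroset{L_1}$). Your write-up is somewhat more explicit where the paper is terse, in particular in isolating the passage from $K^{(n)} \subseteq \zeroset{L}$ to $K^{(n)} \subseteq \zeroset{K}$, which the paper leaves implicit.
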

\begin{proof}
If $K$ is a subalgebra of $L$, then it is easy to see by induction that $\forall n\in\mathbb{N}:K^{\left(n\right)}\subseteq L^{\left(n\right)}$.\\

Also, if $\varphi:L\rightarrow L_1$ is an $R$-epimorphism, then
$$\forall n\in\mathbb{N}:\left(L_1\right)^{\left(n\right)}=\varphi\left(L^{\left(n\right)}\right)$$
Therefore, if $L$ is solvable, there is some $n\in\mathbb{N}$ such that $L^{\left(n\right)}=\zeroset{L}$; by the above formula, $\left(L_1\right)^{\left(n\right)}\subseteq\zeroset{L_1}$.
\end{proof}

\begin{lem}\label{lem:solvable-series}
Let $R$ be a semifield with a negation map, and let $L$ be a Lie semialgebra with a negation map over $R$. $L$ is solvable if and only if there exists a chain of subalgebras $L_{k}\ideal L_{k-1}\ideal\cdots\ideal L_{0}=L$, such that $L_{k}\subseteq\zeroset{L}$ and $L_{i}'\subseteq L_{i+1}$.
\end{lem}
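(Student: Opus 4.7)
The plan is to prove both directions by exhibiting a natural chain on one side and running an induction on the other.

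For the forward direction, assume $L$ is solvable, so $L^{(k)}\subseteq\zeroset{L}$ for some $k\in\mathbb{N}$. I would simply take the derived series itself as the required chain, i.e.\ set $L_i=L^{(i)}$ for $i=0,\dots,k$. Then $L_0=L$, and $L_k\subseteq\zeroset{L}$ holds by hypothesis. The relation $L_i'\subseteq L_{i+1}$ is immediate from the definition $L^{(i+1)}=[L^{(i)},L^{(i)}]$. It remains to check that each $L_{i+1}$ is an ideal of $L_i$: since $L_{i+1}=[L_i,L_i]$, for any $x\in L_i$ and any generator $[y,z]$ of $L_{i+1}$ with $y,z\in L_i$, the strong Jacobi-type bilinearity of the bracket shows that $[x,[y,z]]$ lies in $\Span\{[L_i,L_i]\}=L_{i+1}$ (here we use only that $L_i$ is a subalgebra, so $[x,y],[x,z]\in L_i$, and the bracket is $R$-bilinear). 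Passing to $R$-linear combinations gives $[L_i,L_{i+1}]\subseteq L_{i+1}$, so $L_{i+1}\ideal L_i$.

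For the backward direction, suppose a chain $L_k\ideal L_{k-1}\ideal\cdots\ideal L_0=L$ is given with $L_k\subseteq\zeroset{L}$ and $L_i'\subseteq L_{i+1}$. The plan is to show by induction on $i$ that $L^{(i)}\subseteq L_i$. The base case $i=0$ is trivial since $L^{(0)}=L=L_0$. For the induction step, assume $L^{(i)}\subseteq L_i$; then
$$L^{(i+1)}=\left[L^{(i)},L^{(i)}\right]\subseteq\left[L_i,L_i\right]=L_i'\subseteq L_{i+1},$$
which is exactly what we need. Iterating, $L^{(k)}\subseteq L_k\subseteq\zeroset{L}$, and therefore $L$ is solvable by definition.

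I expect no real obstacle here; the main subtlety is being careful that the conclusion of solvability is $L^{(n)}\subseteq\zeroset{L}$ (rather than $=\{0_L\}$, as in the classical case), so the chain condition $L_k\subseteq\zeroset{L}$ matches up cleanly with the definition. The only place where a small argument is needed is verifying that $L^{(i+1)}\ideal L^{(i)}$ in the forward direction, which follows formally from bilinearity of the bracket and the fact that $L^{(i)}$ is itself a subalgebra.
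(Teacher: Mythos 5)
Your proof is correct and follows the same route as the paper: in the forward direction you take the derived series itself as the chain, and in the backward direction you show $L^{(i)}\subseteq L_i$ by induction. The only difference is that you spell out the check that $L^{(i+1)}\ideal L^{(i)}$ (which the paper leaves implicit), and your parenthetical remark correctly identifies the actual reason — not Jacobi but simply that $L^{(i)}$ is a subalgebra, so $[y,z]\in L^{(i)}$, hence $[x,[y,z]]\in[L^{(i)},L^{(i)}]=L^{(i+1)}$.
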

\begin{proof}
It is easy to check that in such a chain, $L^{\left(i\right)}\subseteq L_{i}$;
so, if such a chain exists, $L^{\left(k\right)}\subseteq\zeroset{L}$,
implying $L$ is solvable.\\

If $L$ is solvable with $L^{\left(k\right)}\subseteq\zeroset{L}$,
than the chain $L^{\left(k\right)}\ideal\cdots\ideal L^{\left(0\right)}=L$
works.
\end{proof}

\begin{lem}\label{lem:intersect-sol-is-sol}
If $I,J\ideal L$ are two ideals of $L$, and if $I$ is solvable, then $I\cap J$ is also solvable.
\end{lem}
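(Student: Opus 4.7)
The plan is to observe that $I\cap J$ sits inside $I$ as a subalgebra, and then invoke \Lref{lem:subalg-homim-solv} to conclude solvability. Concretely, $I\cap J$ is closed under addition, scalar multiplication, the negation map, and the negated Lie bracket (the first three because both $I$ and $J$ are submodules closed under $\minus$, and the last because $\left[x,y\right]$ lies in both $I$ and $J$ whenever $x,y\in I\cap J$, using that each of $I,J$ is an ideal of $L$). Hence $I\cap J$ is in particular a subalgebra of $I$.

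Since $I$ is solvable by hypothesis, \Lref{lem:subalg-homim-solv} applies to the subalgebra $I\cap J\subseteq I$ and yields that $I\cap J$ is solvable. This is essentially immediate; there is no real obstacle here, because solvability is monotone under passing to subalgebras in the negated setting exactly as in the classical one, and \Lref{lem:subalg-homim-solv} has already done the induction work: if $I^{(n)}\subseteq\zeroset{L}$, then $(I\cap J)^{(n)}\subseteq I^{(n)}\subseteq\zeroset{L}$, and by \Lref{lem:surpass-zero-is-zero} together with the fact that $\zeroset{L}\cap(I\cap J)\subseteq\zeroset{(I\cap J)}$, this gives solvability of $I\cap J$ directly.

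If one prefers to bypass \Lref{lem:subalg-homim-solv}, the same conclusion can be obtained by an easy induction on the derived series: $(I\cap J)^{(0)}=I\cap J\subseteq I=I^{(0)}$, and if $(I\cap J)^{(k)}\subseteq I^{(k)}$, then
\[
(I\cap J)^{(k+1)}=\left[(I\cap J)^{(k)},(I\cap J)^{(k)}\right]\subseteq\left[I^{(k)},I^{(k)}\right]=I^{(k+1)}.
\]
Choosing $n$ with $I^{(n)}\subseteq\zeroset{L}$ finishes the argument, since then $(I\cap J)^{(n)}\subseteq\zeroset{L}$.
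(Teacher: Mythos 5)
Your proposal is correct and takes essentially the same route as the paper: the paper simply observes $I\cap J\ideal I$ and invokes \Lref{lem:subalg-homim-solv}, which is exactly your primary argument. Your supplementary direct induction on the derived series is just an inlined version of that lemma's proof, so nothing genuinely different is happening.
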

\begin{proof}
$I\cap J\ideal I$, so this is a direct corollary of \Lref{lem:subalg-homim-solv}.
\end{proof}

\begin{lem}\label{lem:sum-sol-is-sol}
If $I,J\ideal L$ are two solvable ideals of $L$, then $I+J$ is also solvable.
\end{lem}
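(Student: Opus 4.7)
The plan is to estimate the derived series of $I+J$ directly, in two inductive stages. Pick $n,m\in\N$ witnessing solvability of $I$ and $J$, namely $I^{(n)}\subseteq\zeroset{I}$ and $J^{(m)}\subseteq\zeroset{J}$. Since $I,J\subseteq I+J$ as Lie semialgebras, one has $\zeroset{I},\zeroset{J}\subseteq\zeroset{I+J}$, and these inclusions will allow the whole argument to live inside $I+J$ rather than inside the ambient $L$.

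First, I would establish by induction on $k\ge 0$ the estimate
\[
(I+J)^{(k)} \subseteq I^{(k)} + J.
\]
The base case is trivial. For the inductive step, bilinearity of $\left[\cdot,\cdot\right]$ gives
\[
(I+J)^{(k+1)} \subseteq \bigl[I^{(k)},I^{(k)}\bigr] + \bigl[I^{(k)},J\bigr] + \bigl[J,I^{(k)}\bigr] + [J,J].
\]
The first term is $I^{(k+1)}$ by definition, the last is contained in $J$, and since $J$ is an ideal of $L$ with $I^{(k)}\subseteq I\subseteq L$, both cross terms also lie in $J$. Collecting, $(I+J)^{(k+1)}\subseteq I^{(k+1)}+J$.

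Specializing to $k=n$ yields $(I+J)^{(n)}\subseteq \zeroset{I+J}+J$. The second induction on $k\ge 0$ then strengthens this to
\[
(I+J)^{(n+k)} \subseteq \zeroset{I+J} + J^{(k)}.
\]
In the inductive step, bilinearly expanding $\bigl[\zeroset{I+J}+J^{(k)},\,\zeroset{I+J}+J^{(k)}\bigr]$ produces four summands; \Lref{lem:brac-zero-is-zero}, applied inside the Lie semialgebra $I+J$, absorbs every term containing a factor from $\zeroset{I+J}$ back into $\zeroset{I+J}$, and the remaining summand $[J^{(k)},J^{(k)}]$ equals $J^{(k+1)}$. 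Setting $k=m$ then gives
\[
(I+J)^{(n+m)} \subseteq \zeroset{I+J}+J^{(m)} \subseteq \zeroset{I+J}+\zeroset{I+J} = \zeroset{I+J},
\]
the last equality because $\zeroset{I+J}$ is a submodule of $I+J$ and hence closed under addition. This exhibits $I+J$ as solvable.

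The main subtlety, and the only one I expect to have to justify carefully, is bookkeeping about \emph{which} quasi-zero set the bracketed quantities land in. A quotient-based proof modelled on the classical second isomorphism theorem is available but would require constructing a congruence on $I+J$ and invoking the First Isomorphism Theorem; the direct derived-series calculation above avoids this entirely, at the cost of one observation made at the outset that $\zeroset{I}$ and $\zeroset{J}$ are both contained in $\zeroset{I+J}$, together with the repeated application of \Lref{lem:brac-zero-is-zero} inside $I+J$ at each inductive step.
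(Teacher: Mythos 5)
Your proof is correct, but takes a genuinely different route from the paper's. The paper establishes by a single induction the symmetric estimate
\[
(I+J)^{(k)}\subseteq I^{(k)}+J^{(k)}+I\cap J,
\]
using the fact that with \emph{both} $I$ and $J$ ideals, each cross bracket $\bigl[I^{(k)},J^{(k)}\bigr]$ (and its relatives) lands in $I\cap J$; it then takes $k=\max\{m,n\}$ to get $(I+J)^{(k)}\subseteq\zeroset{L}+I\cap J$, and re-applies the estimate once more to reach $\zeroset{L}$ at step $2k$. You instead split the work into two sequential inductions: the first, $(I+J)^{(k)}\subseteq I^{(k)}+J$, needs only that $J$ is an ideal (the cross brackets are absorbed into $J$ rather than into $I\cap J$); the second, $(I+J)^{(n+k)}\subseteq\zeroset{\left(I+J\right)}+J^{(k)}$, is driven by \Lref{lem:brac-zero-is-zero} applied inside the subalgebra $I+J$. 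Both arguments are sound and of comparable length. A small dividend of your version is that you track quasi-zeros in $\zeroset{\left(I+J\right)}$ throughout, so the final inclusion $(I+J)^{(n+m)}\subseteq\zeroset{\left(I+J\right)}$ is literally the definition of solvability for $I+J$ viewed as a Lie semialgebra; the paper instead lands in $\zeroset{L}$ and leaves the identification of $(I+J)\cap\zeroset{L}$ with $\zeroset{\left(I+J\right)}$ implicit. The paper's symmetric estimate, on the other hand, is structurally closer to the classical argument via $I\cap J$ and transfers word for word to the nilpotency analogue stated immediately afterward.
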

\begin{proof}
We shall prove by induction that
$$\forall k\in\mathbb{N}\cup\left\{ 0\right\} :\left(I+J\right)^{\left(k\right)}\subseteq I^{\left(k\right)}+J^{\left(k\right)}+I\cap J$$

If $k=0$ the assertion is clear. Assume it is true for $k$, i.e.
$$\left(I+J\right)^{\left(k\right)}\subseteq I^{\left(k\right)}+J^{\left(k\right)}+I\cap J$$
We get
\begin{eqnarray*}
\left(I+J\right)^{\left(k+1\right)} & = & \left[\left(I+J\right)^{\left(k\right)},\left(I+J\right)^{\left(k\right)}\right]\subseteq\left[I^{\left(k\right)}+J^{\left(k\right)}+I\cap J,I^{\left(k\right)}+J^{\left(k\right)}+I\cap J\right]=\\
 & = & \left[I^{\left(k\right)},I^{\left(k\right)}\right]+\left[I^{\left(k\right)},J^{\left(k\right)}+I\cap J\right]+\left[J^{\left(k\right)},J^{\left(k\right)}\right]+\left[J^{\left(k\right)},I^{\left(k\right)}+I\cap J\right]+\\
 & + & \left[I\cap J,I^{\left(k\right)}+J^{\left(k\right)}+I\cap J\right]\subseteq I^{\left(k+1\right)}+J^{\left(k+1\right)}+I\cap J
\end{eqnarray*}
Note that $\left[I^{\left(k\right)},J^{\left(k\right)}+I\cap J\right],\left[J^{\left(k\right)},I^{\left(k\right)}+I\cap J\right],\left[I\cap J,I^{\left(k\right)}+J^{\left(k\right)}+I\cap J\right]\subseteq I\cap J$.\\

$I$ and $J$ are solvable, so $\exists m,n\in\mathbb{N}:I^{\left(m\right)},J^{\left(n\right)}\subseteq\zeroset{L}$. Taking $k=\max\left\{ m,n\right\} $, we get
$$\left(I+J\right)^{\left(k\right)}\subseteq\zeroset{L}+I\cap J$$
We also know that $\left(I\cap J\right)^{\left(k\right)}\subseteq\zeroset{L}$, and thus,
$$\left(I+J\right)^{\left(2k\right)}\subseteq\left(\zeroset{L}\right)^{\left(k\right)}+\left(I\cap J\right)^{\left(k\right)}+\zeroset{L}\cap I\cap J\subseteq\zeroset{L}$$
implying $I+J$ is solvable.
\end{proof}

\subsubsection{Nilpotency}
\begin{defn}
Let $R$ be a semifield with a negation map, and let $L$ be a Lie semialgebra with a negation map over~$R$. The \textbf{descending central series}, or the \textbf{lower central series}, is the series defined by $L^{0}=L$, $L'=L^{1}=\left[L,L\right]$, and in general: $L^{n}=\left[L,L^{n-1}\right]$.
\end{defn}

\begin{defn}
Let $R$ be a semifield with a negation map, and let $L$ be a Lie semialgebra with a negation map over~$R$. $L$ is called \textbf{nilpotent}, if
$$\exists n\in\mathbb{N}:L^{n}\subseteq\zeroset{L}$$
\end{defn}

\begin{example}
All abelian Lie semialgebras with a negation map are nilpotent.
\end{example}

\begin{lem}
$\forall n\in\mathbb{N}\cup\left\{0\right\} :L^{\left(n\right)}\subseteq L^{n}$.
\end{lem}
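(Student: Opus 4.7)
The plan is to prove this by a straightforward induction on $n$, using only the definitions of the two series together with the obvious monotonicity of the bracket operation on subalgebras (if $A\subseteq A'$ and $B\subseteq B'$ then $[A,B]\subseteq[A',B']$, which is immediate from the definition of $[\cdot,\cdot]$ as a span of brackets of elements).

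For the base case $n=0$ we simply have $L^{(0)}=L=L^{0}$. For the inductive step, assume $L^{(n)}\subseteq L^{n}$. Then by definition
$$L^{(n+1)} = [L^{(n)},L^{(n)}].$$
Using the inductive hypothesis in the second slot, together with the fact that $L^{(n)}\subseteq L$ (which itself follows by an easy induction, since $L^{(k+1)}=[L^{(k)},L^{(k)}]\subseteq L$) applied in the first slot, we get
$$[L^{(n)},L^{(n)}]\subseteq [L,L^{n}] = L^{n+1},$$
which completes the induction.

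There is no real obstacle here; the only subtlety is remembering that the comparison requires using $L^{(n)}\subseteq L$ on the left factor and the inductive hypothesis on the right factor, since the descending central series keeps the full $L$ in the first slot while the derived series shrinks both slots simultaneously.
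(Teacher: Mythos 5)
Your proof is correct and follows essentially the same inductive argument as the paper: base case $n=0$, then at the inductive step use $L^{(n)}\subseteq L$ in the first slot and the inductive hypothesis $L^{(n)}\subseteq L^{n}$ in the second to conclude $L^{(n+1)}=[L^{(n)},L^{(n)}]\subseteq[L,L^{n}]=L^{n+1}$. If anything, your phrasing via monotonicity of $[\cdot,\cdot]$ on submodules is slightly cleaner than the paper's, which speaks of a single bracket $x=[y_1,y_2]$ rather than an element of the span.
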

\begin{proof}
By induction: for $n=0$, it is clear, because $L^{\left(0\right)}=L=L^{0}$.\\

Assume the statement is true for $n\in\mathbb{N}\cup\left\{ 0\right\} $, and let $x\in L^{\left(n+1\right)}$. Then there exist $y_{1},y_{2}\in L^{\left(n\right)}$ such that
$$x=\left[y_{1},y_{2}\right]$$
So, $y_{1}\in L$, and by the induction hypothesis, $y_{2}\in L^{n}$. Thus,
$$x=\left[y_{1},y_{2}\right]\in\left[L,L^{n}\right]=L^{n+1}$$
and we get $L^{\left(n\right)}\subseteq L^{n}$.
\end{proof}

\begin{cor}
Any nilpotent Lie semialgebra with a negation map is solvable.
\end{cor}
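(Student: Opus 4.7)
The plan is to derive this immediately from the inclusion $L^{(n)}\subseteq L^{n}$ established in the preceding lemma. Since nilpotency of $L$ gives some $n\in\mathbb{N}$ with $L^{n}\subseteq\zeroset{L}$, applying that inclusion yields $L^{(n)}\subseteq L^{n}\subseteq\zeroset{L}$, which is exactly the definition of solvability. So the whole proof is one line: invoke the previous lemma and chain the inclusions.

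There is essentially no obstacle here; the entire content has already been packaged into the preceding lemma, whose inductive proof handles the bracket-computation work. The only thing to be careful about is not conflating the notations $L^{(n)}$ (derived series) and $L^{n}$ (lower central series), and to cite the correct lemma when asserting the inclusion. I would write the proof as a two-sentence argument that names the witness $n$ from nilpotency, applies \Lref{lem:solvable-series} implicitly (or just unfolds the definition of solvability directly), and concludes.
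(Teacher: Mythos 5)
Your proposal is correct and matches the argument the paper intends: the corollary is stated immediately after the lemma $L^{(n)}\subseteq L^{n}$, and combining that inclusion with the definition of nilpotency gives $L^{(n)}\subseteq L^{n}\subseteq\zeroset{L}$, which is solvability. Your aside about not needing \Lref{lem:solvable-series} is also right---unfolding the definition of solvability directly is cleanest.
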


\begin{lem}
Let $R$ be a semifield with a negation map, and let $L$ be a nilpotent Lie semialgebra with a negation map over $R$.
\begin{enumerate}
\item All of the subalgebras and homomorphic images of $L$ are nilpotent.
\item If $L\neq\zeroset{L}$, then $\zeroset{L}\neq Z\left(L\right)$ (in fact, $\zeroset{L}\subsetneq Z\left(L\right)$).
\end{enumerate}
\end{lem}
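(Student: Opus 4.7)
The plan for Part 1 is to mirror the proof of \Lref{lem:subalg-homim-solv}, but with the lower central series in place of the derived series. For a subalgebra $K\subseteq L$, I would show by induction that $K^n\subseteq L^n$ for every $n$: the base case $K^0=K\subseteq L=L^0$ is immediate, and $K^{n+1}=[K,K^n]\subseteq[L,L^n]=L^{n+1}$ by the inductive hypothesis together with $K\subseteq L$. For a homomorphic image, I would take an $R$-epimorphism $\varphi:L\to L_1$ and prove $\varphi(L^n)=L_1^n$ by induction: $\varphi(L^0)=\varphi(L)=L_1=L_1^0$, and assuming $\varphi(L^n)=L_1^n$, surjectivity of $\varphi$ together with the fact that $\varphi$ preserves addition, scalar multiplication and the bracket gives $\varphi(L^{n+1})=\varphi([L,L^n])=[\varphi(L),\varphi(L^n)]=[L_1,L_1^n]=L_1^{n+1}$. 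Combining with $\varphi(\zeroset{L})\subseteq\zeroset{L_1}$ from \Rref{rem:mod-zeros-sent-zeros} finishes this part.

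For Part 2, the plan is the standard ``last nonzero term'' argument, adapted to the negation-map setting. Since $L$ is nilpotent, the set $\{n\in\N_0\mid L^n\subseteq\zeroset{L}\}$ is nonempty; let $n$ be its minimum. If $n=0$ then $L=L^0\subseteq\zeroset{L}$, so $L=\zeroset{L}$, contradicting the hypothesis. Hence $n\ge 1$, and by minimality $L^{n-1}\not\subseteq\zeroset{L}$, while $[L,L^{n-1}]=L^n\subseteq\zeroset{L}$. The latter says that every $x\in L^{n-1}$ satisfies $[y,x]\in\zeroset{L}$ for all $y\in L$, i.e.\ $x\in Z(L)$. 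Thus $L^{n-1}\subseteq Z(L)$, and since $L^{n-1}$ contains an element outside $\zeroset{L}$, so does $Z(L)$. Combined with the inclusion $\zeroset{L}\subseteq Z(L)$ from \Lref{lem:zero-layered-in-center}, this yields the strict inclusion $\zeroset{L}\subsetneq Z(L)$.

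The main obstacle I anticipate is subtle bookkeeping rather than a deep difficulty: in the classical proof the lower central series stabilizes at $\{0\}$ whereas here the analogue is $\zeroset{L}$, so one must be careful that inclusions of the form $L^n\subseteq\zeroset{L}$ behave well under taking brackets and under homomorphisms. For Part 1 this is handled by \Rref{rem:mod-zeros-sent-zeros}; for Part 2 the key point is to verify that the definition of $Z(L)$ (via $[x,y]\in\zeroset{L}$) matches exactly the condition produced by $[L,L^{n-1}]\subseteq\zeroset{L}$, which is built into \Dref{def:center-of-an-ELT-Lie-alg}, so no additional work is needed.
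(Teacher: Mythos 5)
Your proof is correct and follows essentially the same route as the paper: Part~1 is the same induction on the lower central series, and Part~2 is the same ``last index before the series enters $\zeroset{L}$'' argument (the paper phrases it as the maximal $n$ with $L^n\nsubseteq\zeroset{L}$, which is your $n-1$). The only detail you flag — that $[L,L^{n-1}]\subseteq\zeroset{L}$ gives $L^{n-1}\subseteq Z(L)$ even though $\Dref{def:center-of-an-ELT-Lie-alg}$ uses $[x,y]$ rather than $[y,x]$ — is indeed harmless by anticommutativity and closure of $\zeroset{L}$ under the negation map.
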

\begin{proof}
~
\begin{enumerate}
\item If $K$ is a subalgebra of $L$, then it is easy to see by induction that $\forall n\in\mathbb{N}:K^{n}\subseteq L^{n}$. Also, if $\varphi:L\rightarrow L'$ is an $R$-epimorphism, then $\forall n\in\mathbb{N}:\left(L'\right)^{n}=\varphi\left(L^{n}\right)$.
\item Assume $n$ is the maximal index for which $L^{n}\nsubseteq\zeroset{L}$. Then $\left[L,L^{n}\right]\subseteq\zeroset{L}$, so $L^{n}\subseteq Z\left(L\right)$. $\exists x_{0}\in L^{n}\backslash\zeroset{L}$, so $x_{0}\in Z\left(L\right)\backslash\zeroset{L}$,
as required.
\end{enumerate}
\end{proof}

\begin{lem}
If $L'$ is nilpotent, then $L$ is solvable.
\end{lem}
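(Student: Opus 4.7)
The plan is to reduce to the preceding lemma that compares the derived and lower central series, applied this time to $L'$ rather than to $L$ itself. First I would observe that the derived series of $L$ from index $1$ onwards coincides with the derived series of $L'$, specifically $L^{(k+1)} = (L')^{(k)}$ for all $k \ge 0$. This is an easy induction on $k$: the base case $k=0$ says $L^{(1)} = L' = (L')^{(0)}$, and for the step, $L^{(k+2)} = [L^{(k+1)}, L^{(k+1)}] = [(L')^{(k)}, (L')^{(k)}] = (L')^{(k+1)}$.

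Next I would invoke the preceding lemma, applied to the Lie semialgebra $L'$ in place of $L$, to conclude that $(L')^{(k)} \subseteq (L')^{k}$ for every $k$. Combining this with the identity from the first step yields $L^{(k+1)} \subseteq (L')^{k}$ for every $k \ge 0$.

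Finally I would use the nilpotency hypothesis: there exists $n \in \N$ with $(L')^{n} \subseteq \zeroset{L'} \subseteq \zeroset{L}$, since quasi-zeros of a subalgebra are quasi-zeros of the ambient algebra. Substituting this into the inclusion $L^{(n+1)} \subseteq (L')^{n}$ gives $L^{(n+1)} \subseteq \zeroset{L}$, which is exactly the definition of solvability of $L$.

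I do not anticipate any real obstacle here: the proof is essentially the classical Lie-algebra argument, and every ingredient needed (the comparison $L^{(n)} \subseteq L^n$, the fact that $\zeroset{\cdot}$ behaves well under inclusion of subalgebras, and the definitions of solvable and nilpotent) has already been established earlier in this section. The only mild point to keep track of is using $\zeroset{L'} \subseteq \zeroset{L}$, which is immediate from the definition of quasi-zeros and the fact that $L'$ is a subalgebra of $L$.
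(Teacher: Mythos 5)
Your proof is correct, and it reaches the same key inclusion $L^{(n+1)} \subseteq (L')^{n}$ as the paper does, but by a different decomposition. The paper proves $L^{(n+1)} \subseteq (L')^{n}$ directly by a fresh induction on $n$, expanding $L^{(n+1)} = [L^{(n)}, L^{(n)}] \subseteq [(L')^{n-1}, (L')^{n-1}] \subseteq [L', (L')^{n-1}] = (L')^{n}$. You instead factor the argument into two pieces: the (essentially definitional) identity $L^{(k+1)} = (L')^{(k)}$, and an application of the preceding lemma $L^{(n)} \subseteq L^{n}$ to the subalgebra $L'$ in place of $L$. This is cleaner in the sense that it reuses a result already proved rather than repeating an induction of the same flavour, and it isolates the only genuinely new observation (the shift identity for the derived series); the paper's version is more self-contained but does duplicate the inductive work. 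Your handling of the final step, noting $\zeroset{L'} \subseteq \zeroset{L}$ because quasi-zeros of a subalgebra are quasi-zeros of the ambient algebra, matches the paper exactly.
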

\begin{proof}
We shall prove that
$$\forall n\in\mathbb{N}\cup\left\{ 0\right\} :L^{\left(n+1\right)}\subseteq\left(L'\right)^{n}$$
We use induction on $n$, the case $n=0$ being trivial. We get
$$L^{\left(n+1\right)}=\left[L^{\left(n\right)},L^{\left(n\right)}\right]\subseteq\left[\left(L'\right)^{n-1},\left(L'\right)^{n-1}\right]\subseteq\left[L',\left(L'\right)^{n-1}\right]=\left(L'\right)^{n}$$
Thus, if $L'$ is nilpotent, for some $n$ we have $\left(L'\right)^{n}\subseteq\zeroset{\left(L'\right)}\subseteq\zeroset{L}$, implying $L^{\left(n+1\right)}\subseteq\zeroset{L}$, so $L$ is solvable.
\end{proof}

Similarly to \Lref{lem:intersect-sol-is-sol} and \Lref{lem:sum-sol-is-sol}, we have the following lemmas:

\begin{lem}
If $I,J\ideal L$ are two ideals of $L$, and if $I$ is nilpotent, then $I\cap J$ is also nilpotent.
\end{lem}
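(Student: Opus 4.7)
The plan is to imitate the proof of the analogous solvability statement (\Lref{lem:intersect-sol-is-sol}) almost verbatim, replacing ``solvable'' by ``nilpotent'' throughout. The key input will be part~1 of the lemma immediately preceding this one, which asserts that subalgebras of a nilpotent Lie semialgebra are themselves nilpotent.

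First, I would observe that $I \cap J$ is an ideal of $L$ (this was established earlier in the text as a general property of ideals of a Lie semialgebra with a negation map). In particular $I \cap J$ is a subalgebra of $I$, since $I \cap J \subseteq I$ is closed under addition, scalar multiplication, the negation map, and the negated Lie bracket. Then, invoking part~1 of the preceding lemma on nilpotent Lie semialgebras applied to the nilpotent Lie semialgebra $I$ and its subalgebra $I \cap J$, we conclude that $I \cap J$ is nilpotent.

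There is essentially no obstacle here; the only thing worth double-checking is that the inductive containment $K^n \subseteq I^n$ (for a subalgebra $K \subseteq I$) used in that preceding lemma genuinely transfers from $I$ alone, rather than requiring $K \ideal I$ or $K \ideal L$. Since the descending central series $K^n = [K, K^{n-1}]$ clearly satisfies $K^n \subseteq [I, I^{n-1}] = I^n$ whenever $K \subseteq I$, this is immediate. Hence once some $n$ yields $I^n \subseteq \zeroset{L}$, we get $(I \cap J)^n \subseteq I^n \subseteq \zeroset{L}$, completing the proof.
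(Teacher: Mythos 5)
Your proof is correct and matches the approach the paper uses for the analogous solvability statement (\Lref{lem:intersect-sol-is-sol}), which it references without repeating: $I\cap J$ is a subalgebra of the nilpotent Lie semialgebra $I$, so part~1 of the preceding lemma gives the conclusion. Your extra remark that $K\subseteq I$ (not $K\ideal I$) already yields $K^n\subseteq I^n$ is a sound double-check, and the paper's solvability argument implicitly relies on the same observation.
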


\begin{lem}
If $I,J\ideal L$ are two nilpotent ideals of $L$, then $I+J$ is also nilpotent.
\end{lem}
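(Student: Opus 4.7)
The plan is to adapt the classical combinatorial proof that the sum of two nilpotent Lie ideals is nilpotent, reading every Jacobi identity ``modulo $\zeroset{L}$''. Fix $m, n$ with $I^m \subseteq \zeroset{L}$ and $J^n \subseteq \zeroset{L}$. I intend to prove $(I+J)^{m+n} \subseteq \zeroset{L}$, which gives nilpotency of $I+J$.

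By bilinearity of the bracket, $(I+J)^{m+n}$ is spanned by iterated brackets $T$ of $m+n+1$ factors, each drawn from $I \cup J$. By the pigeonhole principle, every such $T$ has either at least $m+1$ factors in $I$ or at least $n+1$ factors in $J$. By symmetry, the lemma reduces to the following technical claim: \emph{if $K \ideal L$ satisfies $K^r \subseteq \zeroset{L}$, then every iterated bracket of $L$ with at least $r+1$ factors in $K$ lies in $\zeroset{L}$.}

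The main work is the technical claim, proved by double induction: the outer induction on the total number of factors of $T$, the inner on the depth of the outer-most factor of $T$ that does not lie in $K$. Combining the Jacobi axiom with anticommutativity yields the rewriting rule $[a,[b,c]] \equiv [b,[a,c]] + [[a,b],c] \pmod{\zeroset{L}}$. In the base case (all $r+1$ factors of $T$ lie in $K$), repeated application of this rule writes $T$ modulo $\zeroset{L}$ as a left-normed bracket, hence as an element of $K^r \subseteq \zeroset{L}$. In the inductive step, one picks a factor $z \notin K$ that is adjacent in the bracket tree to some factor $a \in K$ and applies the rule there, obtaining $T \equiv T' + T'' \pmod{\zeroset{L}}$, where in $T'$ the factor $z$ has moved one step outward, and $T''$ has the pair $(a,z)$ replaced by the single $K$-factor $[a,z]$ (using that $K$ is an ideal of $L$). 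The bracket $T''$ has strictly fewer total factors and still has $\geq r+1$ in $K$, so the outer induction places $T''\in\zeroset{L}$; the bracket $T'$ falls to the inner induction. Eventually $z$ reaches the outer-most position, giving $T \equiv [z,U] \pmod{\zeroset{L}}$ with $U$ smaller and still having $\geq r+1$ factors in $K$; the outer induction yields $U \in \zeroset{L}$, hence $T \in \zeroset{L}$ by \Lref{lem:brac-zero-is-zero}.

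The main obstacle will be the combinatorial bookkeeping of the double induction --- verifying that a rewrite at an arbitrary interior position of the bracket tree remains within the scope of the induction hypotheses, and that the residual Jacobi corrections accumulate only further quasi-zero terms --- but this is routine ``bubble-sort'' reasoning once one observes that $\zeroset{L}$ is a submodule closed under bracketing with arbitrary elements, which is ensured by \Lref{lem:brac-zero-is-zero} together with the definition of the negation map on $L$.
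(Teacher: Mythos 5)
The core difficulty is in the rewriting step of your technical claim: from ``$T \equiv T' + T'' \pmod{\zeroset{L}}$'' together with $T', T'' \in \zeroset{L}$, you conclude $T \in \zeroset{L}$. This inference fails in the present setting. The relation you are implicitly using (which is $\nabla$ from the introduction, $u \nabla v \Leftrightarrow u \minus v \in \zeroset{L}$) follows from Jacobi plus anticommutativity, but it is not a congruence and does not allow cancellation of quasi-zeros: from $T + \minus T' + \minus T'' \in \zeroset{L}$ and $T',T''\in\zeroset{L}$ you only get $T+z\in\zeroset{L}$ for some $z\in\zeroset{L}$, and this does \emph{not} imply $T\in\zeroset{L}$. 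For instance over $\R=\ELT{\mathbb{R}}{\mathbb{C}}$, take $T=\layer{0}{1}$ and $z=\layer{5}{0}$: then $T+z=\layer{5}{0}\in\zeroset{\R}$ and $z\in\zeroset{\R}$, yet $T\notin\zeroset{\R}$. Lemma~\ref{lem:surpass-zero-is-zero} goes the opposite way (adding a quasi-zero to a quasi-zero stays quasi-zero) and cannot be used to cancel. The same problem invalidates your base case, where a whole chain of such rewrites is needed to pass from $T$ to a left-normed element of $K^r$: one would need $\nabla$ to be transitive and to admit cancellation, and neither holds.

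The obstruction is baked into the definitions. The paper's Jacobi axiom is the weak one, $\left[x,\left[y,z\right]\right]+\left[z,\left[x,y\right]\right]+\left[y,\left[z,x\right]\right]\in\zeroset{L}$, which only asserts that a certain \emph{sum} is a quasi-zero; it does not let you rewrite one summand in terms of the others, which is exactly what the classical pigeonhole-and-rearrangement proof needs. (The author explicitly contrasts this with Rowen's strong Jacobi identity, and even the strong form does not obviously rescue the cancellation step.) Worth noting: the paper offers no proof of this statement, only a ``Similarly to Lemma~\ref{lem:sum-sol-is-sol}'' pointer, and the solvable proof does not transfer either --- it relies on $L^{(2k)}=\bigl(L^{(k)}\bigr)^{(k)}$, a self-similarity the lower central series lacks. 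To make progress you would need either an inclusion of the form $(I+J)^k\subseteq(\text{explicitly quasi-zero})$ established by direct expansion (bilinearity and the ideal property, no Jacobi rearrangement), or to work throughout with the surpassing relation $\symmdash$ under the strong Jacobi hypothesis and track carefully that every $\symmdash$ points in the direction where Lemma~\ref{lem:surpass-zero-is-zero} can actually be applied.
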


\subsubsection{Solvability and Nilpotency Modulo Ideals}

In the semiring, the quotient algebra structure fails, and the replacement is congruences. However, we would define equivalent definitions to solvability and nilpotency of quotient algebras.
\begin{defn}
Let $R$ be a semifield with a negation map, let $L$ be a Lie semialgebra with a negation map over $R$, and let $I\ideal L$. We say that $L$ is \textbf{solvable modulo $I$}, if
$$\exists n\in\mathbb{N}:L^{\left(n\right)}\subseteq I$$
\end{defn}

\begin{rem}
Solvability modulo $\zeroset{L}$ is equivalent to solvability.
\end{rem}

\begin{lem}
If $L$ is solvable modulo $I$, and if $I$ is solvable (as a Lie semialgebra with a negation map), then~$L$ is also solvable.
\end{lem}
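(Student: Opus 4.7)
The plan is to chain the two hypotheses at the level of the derived series. Since $L$ is solvable modulo $I$, pick $n\in\mathbb{N}$ with $L^{(n)}\subseteq I$. Since $I$ is solvable, pick $m\in\mathbb{N}$ with $I^{(m)}\subseteq\zeroset{I}$, and note that trivially $\zeroset{I}\subseteq\zeroset{L}$ because every element $x+\minus x$ with $x\in I$ also has $x\in L$. I will argue that $L^{(n+m)}\subseteq\zeroset{L}$, which is exactly what it means for $L$ to be solvable.

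There are two elementary ingredients I would record first. The first is the identity $L^{(n+m)}=\left(L^{(n)}\right)^{(m)}$, which follows by induction on $m$ directly from the recursive definition $K^{(k+1)}=[K^{(k)},K^{(k)}]$ applied to $K=L^{(n)}$. The second is the monotonicity of the derived series: whenever $A\subseteq B$ are subalgebras, $A^{(k)}\subseteq B^{(k)}$ for all $k$. This is again a one-line induction, since $A^{(k)}\subseteq B^{(k)}$ implies $[A^{(k)},A^{(k)}]\subseteq[B^{(k)},B^{(k)}]$ (the bracket of a subalgebra lands inside the bracket of the larger one, because $\Span$ is monotone). This is exactly the observation already used in the proof of \Lref{lem:subalg-homim-solv}.

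Combining the two ingredients with the hypotheses gives
$$L^{(n+m)}=\left(L^{(n)}\right)^{(m)}\subseteq I^{(m)}\subseteq\zeroset{I}\subseteq\zeroset{L},$$
so $L$ is solvable. There is no real obstacle: the proof is a purely formal manipulation of the derived series, in the same spirit as \Lref{lem:subalg-homim-solv} and \Lref{lem:solvable-series}, and the presence of a negation map only enters through the observation $\zeroset{I}\subseteq\zeroset{L}$, which is immediate from the definition of $\zeroset{(-)}$.
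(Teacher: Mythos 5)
Your proof is correct and is essentially identical to the paper's: both pick $n$ with $L^{(n)}\subseteq I$ and $m$ with $I^{(m)}\subseteq\zeroset{I}\subseteq\zeroset{L}$, then conclude via $L^{(n+m)}=\left(L^{(n)}\right)^{(m)}\subseteq I^{(m)}\subseteq\zeroset{L}$. You merely spell out the monotonicity of the derived series a bit more explicitly than the paper does.
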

\begin{proof}
$L$ is solvable modulo $I$, so there exists $n\in\mathbb{N}$ for which $L^{\left(n\right)}\subseteq I$. But $I$ is also solvable, so there exists $m\in\mathbb{N}$ for which $I^{\left(m\right)}\subseteq\zeroset I\subseteq\zeroset{L}$. To conclude,
$$L^{\left(m+n\right)}=\left(L^{\left(n\right)}\right)^{\left(m\right)}\subseteq I^{\left(m\right)}\subseteq\zeroset{L}$$
implying $L$ is solvable.
\end{proof}

\begin{defn}
Let $R$ be a semifield with a negation map, let $L$ be a Lie semialgebra with a negation map over $R$, and let $I\ideal L$. We say that $L$ is \textbf{nilpotent modulo $I$}, if
$$\exists n\in\mathbb{N}:L^{n}\subseteq I$$
\end{defn}

\begin{rem}
Nilpotency modulo $\zeroset{L}$ is equivalent to nilpotency.
\end{rem}

\begin{defn}
Let $R$ be a semifield with a negation map, let $L$ be a Lie semialgebra with a negation map over $R$, and let $I\ideal L$. We say that $L$ is \textbf{abelian modulo $I$}, if $\left[L,L\right]\subseteq I$.
\end{defn}

\begin{rem}
\Lref{lem:solvable-series} can be rephrased as follows: $L$ is solvable if and only if there exists a chain of subalgebras $L_{k}\ideal L_{k-1}\ideal\cdots\ideal L_{0}=L$ such that each $L_{i+1}$ is abelian modulo $L_{i}$ and $L_{k}$ is abelian.
\end{rem}

\subsubsection{The Symmetrized Radical and Semisimplicity}
\begin{defn}
Let $R$ be a semifield with a negation map, let $L$ be a Lie semialgebra with a negation map over $R$. The (\textbf{negated}) \textbf{radical} of $L$ is
$$\Rad L=\sum_{I\ideal L\textnormal{ is solvable}}I$$
That is, the negated radical is the sum of all solvable ideals of $L$.
\end{defn}

\begin{rem}
In the classical theory, the radical of a finitely generated Lie algebra
 is its unique maximal solvable ideal. However, in our theory, there is no guarantee that there would be one maximal solvable ideal, since being finitely generated as a module with a negation map usually does not mean being Noetherian (and thus, there may be infinite ascending chains of solvable ideals).
\end{rem}

\begin{defn}
Let $R$ be a semifield with a negation map, let $L$ be a Lie semialgebra over $R$. We say that $L$ is \textbf{locally solvable}, if any finitely generated subalgebra of $L$ is solvable.
\end{defn}

\begin{lem}
$\Rad{L}$ is locally solvable.
\end{lem}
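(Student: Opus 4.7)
The plan is to show that any finitely generated subalgebra $K$ of $\Rad{L}$ sits inside a solvable ideal of $L$, and then invoke \Lref{lem:subalg-homim-solv} to conclude that $K$ itself is solvable. The key observation is that although $\Rad{L}$ is defined as a sum over a possibly infinite family of solvable ideals, any given element of $\Rad{L}$ involves only finitely many of them.

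Concretely, I would start by fixing a finite generating set $x_1,\dots,x_n$ of $K$. By the definition of the sum of ideals (as finite sums of elements of the summands), each $x_i$ can be written as $x_i=y_{i,1}+\cdots+y_{i,k_i}$ with $y_{i,j}\in I_{i,j}$ for some solvable ideal $I_{i,j}\ideal L$. I would then let
\[
J=\sum_{i=1}^{n}\sum_{j=1}^{k_i}I_{i,j},
\]
which is a \emph{finite} sum of solvable ideals of $L$. Applying \Lref{lem:sum-sol-is-sol} inductively on the number of summands shows that $J$ is itself a solvable ideal of $L$.

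Since every generator $x_i$ of $K$ lies in $J$ and $J$ is closed under the negated Lie bracket, the subalgebra $K$ generated by $x_1,\dots,x_n$ is contained in $J$. Then \Lref{lem:subalg-homim-solv} yields that $K$, as a subalgebra of the solvable $J$, is solvable. Since $K$ was an arbitrary finitely generated subalgebra of $\Rad{L}$, this establishes the local solvability of $\Rad{L}$.

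The only subtle point — and what I would regard as the main thing to watch carefully — is the induction on \Lref{lem:sum-sol-is-sol}, which as stated handles only two ideals at a time; one should note that $I_1+I_2+\cdots+I_m = (I_1+\cdots+I_{m-1})+I_m$ and that each intermediate sum is still an ideal of $L$, so the two-ideal statement can indeed be iterated. No other step requires nontrivial work.
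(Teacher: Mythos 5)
Your proof is correct and follows essentially the same path as the paper's: decompose each generator of the finitely generated subalgebra into a finite sum of elements from solvable ideals, collect those finitely many ideals into a finite sum $J$, use \Lref{lem:sum-sol-is-sol} to conclude $J$ is solvable, and then invoke \Lref{lem:subalg-homim-solv} to pass solvability down to the subalgebra. Your explicit remark about iterating the two-ideal lemma, and about $J$ being closed under the negated Lie bracket so that the \emph{subalgebra} generated by the $x_i$'s (not merely their module span) is contained in $J$, are both small clarifications that the paper leaves implicit but that do no harm and, if anything, tighten the argument.
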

\begin{proof}
Let $L_1\subseteq\Rad{L}$ be a finitely generated subalgebra of $L$. Write
$$L_1=\Span\left\{x_1,\dots,x_n\right\}$$
By the definition of $\Rad{L}$, for each $i$ there are solvable ideals $I_{i,1},\dots,I_{i,m_i}$ such that
$$x_i\in\sum_{j=1}^{m_i} I_{i,j}$$
Therefore,
$$L_1\subseteq\sum_{i=1}^n\sum_{j=1}^{m_i}I_{i,j}$$
Since the sum is a finite sum of solvable ideals, by \Lref{lem:sum-sol-is-sol}, it is also solvable. Therefore, by \Lref{lem:subalg-homim-solv}, also $L_1$ is solvable, as required.
\end{proof}

\begin{defn}
Let $R$ be a semifield with a negation map, let $L$ be a Lie semialgebra over $R$. If~$\Rad L=\zeroset{L}$, then $L$ is called \textbf{semisimple}.
\end{defn}

\begin{lem}\label{lem:cond-for-semisimple}
$L$ is semisimple if and only if any abelian ideal of $L$ is contained in $\zeroset{L}$.
\end{lem}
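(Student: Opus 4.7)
The plan is to prove the two implications separately. For the forward direction, observe that any abelian Lie semialgebra is automatically solvable (its derived series enters $\zeroset{L}$ after one step), so any abelian ideal of $L$ is a solvable ideal and thus sits inside $\Rad L$. If $L$ is semisimple, $\Rad L = \zeroset{L}$ and the conclusion is immediate.

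For the converse, assume every abelian ideal of $L$ is contained in $\zeroset{L}$. Since $\zeroset{L}$ is itself a solvable ideal (indeed $\zeroset{L}^{(0)} = \zeroset{L}$), one already has $\zeroset{L} \subseteq \Rad L$, and it suffices to show the reverse inclusion; equivalently, that every solvable ideal of $L$ is contained in $\zeroset{L}$. Suppose for contradiction that some solvable ideal $I \ideal L$ satisfies $I \not\subseteq \zeroset{L}$. The derived series of $I$ eventually enters $\zeroset{L}$, so let $k$ be maximal with $I^{(k)} \not\subseteq \zeroset{L}$. My candidate for a contradicting abelian ideal is
$$J := I^{(k)} + \zeroset{L}.$$
By choice of $k$ we have $J \not\subseteq \zeroset{L}$, and expanding brackets together with \Lref{lem:brac-zero-is-zero} gives $[J,J] \subseteq [I^{(k)},I^{(k)}] + \zeroset{L} = I^{(k+1)} + \zeroset{L} \subseteq \zeroset{L}$, so $J$ is abelian.

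The one nontrivial point, and the crux of the proof, is verifying that $J$ is actually an ideal of $L$. A classical argument would try to show that each $I^{(j)}$ is an ideal of $L$, but in our negated setting the negated Jacobi identity together with anticommutativity only yields
$$[x,[a,b]] = [[x,a],b] + [a,[x,b]] + z \qquad (x \in L,\ a,b \in I^{(j)})$$
for some quasi-zero $z \in \zeroset{L}$, so the residual $z$ might escape $I^{(j+1)}$. The remedy is to carry along $\zeroset{L}$: I will prove by induction on $j$ that $I^{(j)} + \zeroset{L}$ is an ideal of $L$. The base case follows because $I + \zeroset{L}$ is a sum of two ideals. For the inductive step, the induction hypothesis lets me write $[x,a] = a' + z_a$ and $[x,b] = b' + z_b$ with $a',b' \in I^{(j)}$ and $z_a, z_b \in \zeroset{L}$; then $[[x,a],b] = [a',b] + [z_a,b] \in I^{(j+1)} + \zeroset{L}$ by \Lref{lem:brac-zero-is-zero}, and similarly $[a,[x,b]] \in I^{(j+1)} + \zeroset{L}$, so the displayed equation places $[x,[a,b]]$ inside $I^{(j+1)} + \zeroset{L}$. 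Closure of $I^{(j+1)} + \zeroset{L}$ under its own bracket is immediate from $I^{(j+2)} \subseteq I^{(j+1)}$ and \Lref{lem:brac-zero-is-zero}. Specializing the induction at $j = k$ shows that $J$ is an ideal of $L$, producing the required contradiction and completing the proof.
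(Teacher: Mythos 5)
Your forward direction matches the paper's exactly. In the converse direction you noticed a genuine subtlety that the paper glosses over: the paper simply asserts ``Take a maximal $n$ such that $I^{\left(n\right)}\nsubseteq\zeroset{L}$; then $I^{\left(n\right)}$ is an abelian ideal'' without justifying that $I^{\left(n\right)}$ is an ideal of $L$ (rather than merely of $I^{\left(n-1\right)}$), which in the negated setting is not obviously inherited. Spotting this was a good instinct.

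However, the repair you propose does not go through as written. Your crux is the displayed relation
$$\left[x,\left[a,b\right]\right] = \left[\left[x,a\right],b\right] + \left[a,\left[x,b\right]\right] + z$$
for some $z\in\zeroset{L}$, attributed to the negated Jacobi identity plus anticommutativity. What those axioms actually give is
$$\left[x,\left[a,b\right]\right] + \minus\left(\left[\left[x,a\right],b\right] + \left[a,\left[x,b\right]\right]\right)\in\zeroset{L},$$
which is the $\nabla$ relation between the two sides, not the surpassing relation $\symmdash$. These are genuinely different: in the symmetrized naturals $\widehat{\N}$, one has $\left(1,0\right)+\minus\left(3,2\right)=\left(3,3\right)\in\zeroset{\widehat{\N}}$, yet there is no quasi-zero $z$ with $\left(1,0\right)=\left(3,2\right)+z$. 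Your claimed equality is exactly what the \emph{strong} Jacobi identity of Rowen would give, but this paper deliberately works with the weaker Jacobi axiom and exhibits (in \Eref{exa:disproof-PBW}) a Lie semialgebra satisfying the weak axiom but not the strong one. So the inductive step showing $I^{\left(j\right)}+\zeroset{L}$ is an ideal of $L$ is not established, and the gap you identified in the paper's proof is relocated rather than closed. To make this argument rigorous one would need either to assume the strong Jacobi identity, or to find an alternative argument that uses only the $\nabla$ form of Jacobi.
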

\begin{proof}
Let $L$ be semisimple, and let $I$ be an abelian ideal of $L$. Then $I$ is solvable, implying $I\subseteq\Rad L=\zeroset{L}$.\\

Now suppose that any abelian ideal of $L$ is contained in $\zeroset{L}$. If $L$ is not semisimple, it has a solvable ideal $I\nsubseteq\zeroset{L}$. Take a maximal $n$ such that $I^{\left(n\right)}\nsubseteq\zeroset{L}$; then $I^{\left(n\right)}$ is an abelian ideal, a contradiction.
\end{proof}

\subsection{Lifts of Lie Semialgebras with a Negation Map}

As a continuation of \sSref{sec:Lifts-Semiring-Mod}, we give a definition of a lift of a Lie semialgebra with a negation map.

\begin{defn}
Let $R$ be a semiring with a negation map with a lift $\left(\widehat{R},\widehat{\varphi}\right)$, and let $L$ be a Lie semialgebra over $R$. A \textbf{lift} of $L$ is a Lie algebra over $\widehat{R}$, $\widehat{L}$, and a map $\widehat{\psi}:\widehat{L}\to L$, such that the following properties hold:
\begin{enumerate}
  \item $\left(\widehat{L},\widehat{\psi}\right)$ is a lift of $L$ as modules.
  \item $\forall x_1,x_2\in \widehat{L}:\widehat{\psi}\left(\left[x_1,x_2\right]\right)\asymmdash\left[\widehat{\psi}\left(x_1\right),\widehat{\psi}\left(x_2\right)\right]$.
\end{enumerate}
\end{defn}

We also have two parallel lemmas to \Lref{lem:lift-submodule}:
\begin{lem}\label{lem:lift-subalgebra}
Let $\widehat{K}\subseteq\widehat{L}$ be a subalgebra. Then $\overline{\widehat{\psi}\left(\widehat{K}\right)}$ is a subalgebra of $L$.
\end{lem}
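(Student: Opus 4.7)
The plan is to mirror the argument used for \Lref{lem:lift-submodule}, now augmented with a step handling the Lie bracket. There are really two things to verify: first, that $\overline{\widehat{\psi}(\widehat{K})}$ is a submodule of $L$, and second, that it is closed under the negated Lie bracket.

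The first point is free. Viewing $\widehat{K}$ as an $\widehat{R}$-submodule of $\widehat{L}$, \Lref{lem:lift-submodule} tells us immediately that $\overline{\widehat{\psi}(\widehat{K})}$ is an $R$-submodule of $L$. So I only need to check that it is closed under $[\cdot,\cdot]$.

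For the bracket closure, suppose $x,y\in\overline{\widehat{\psi}(\widehat{K})}$. Choose $\widehat{x},\widehat{y}\in\widehat{K}$ with $x\symmdash\widehat{\psi}(\widehat{x})$ and $y\symmdash\widehat{\psi}(\widehat{y})$. The natural candidate lifting $[x,y]$ is $\widehat{\psi}([\widehat{x},\widehat{y}])$, and since $\widehat{K}$ is a Lie subalgebra we have $[\widehat{x},\widehat{y}]\in\widehat{K}$; so it suffices to prove
$$[x,y]\symmdash\widehat{\psi}\bigl([\widehat{x},\widehat{y}]\bigr).$$
I will do this in two transitive steps. First, writing $x=\widehat{\psi}(\widehat{x})+z_1$ and $y=\widehat{\psi}(\widehat{y})+z_2$ with $z_1,z_2\in\zeroset{L}$, bilinearity of the bracket yields
$$[x,y]=\bigl[\widehat{\psi}(\widehat{x}),\widehat{\psi}(\widehat{y})\bigr]+\bigl[\widehat{\psi}(\widehat{x}),z_2\bigr]+\bigl[z_1,\widehat{\psi}(\widehat{y})\bigr]+[z_1,z_2],$$
and by \Lref{lem:brac-zero-is-zero} the last three summands lie in $\zeroset{L}$. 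Hence $[x,y]\symmdash[\widehat{\psi}(\widehat{x}),\widehat{\psi}(\widehat{y})]$. Second, the defining property of a Lie lift gives $\widehat{\psi}([\widehat{x},\widehat{y}])\asymmdash[\widehat{\psi}(\widehat{x}),\widehat{\psi}(\widehat{y})]$, i.e.\ $[\widehat{\psi}(\widehat{x}),\widehat{\psi}(\widehat{y})]\symmdash\widehat{\psi}([\widehat{x},\widehat{y}])$. Chaining via transitivity of $\symmdash$ (\Lref{lem:symmdash-is-ref-and-trans}) produces the desired surpassing.

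The only subtlety, and what might look like the main obstacle, is whether $\symmdash$ is compatible with taking brackets of sums; but this is resolved cleanly by \Lref{lem:brac-zero-is-zero}, exactly as in the module case. Everything else is bookkeeping: module closure is inherited from \Lref{lem:lift-submodule}, and the bracket step is just transitivity applied to the two surpassings above.
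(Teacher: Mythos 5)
Your proof is correct and follows essentially the same route as the paper: submodule closure from \Lref{lem:lift-submodule}, then the chain $[x,y]\symmdash[\widehat{\psi}(\widehat{x}),\widehat{\psi}(\widehat{y})]\symmdash\widehat{\psi}([\widehat{x},\widehat{y}])$. You spell out the first surpassing via bilinearity and \Lref{lem:brac-zero-is-zero}, which the paper leaves implicit, but the argument is the same.
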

\begin{proof}
By \Lref{lem:lift-submodule}, $\overline{\widehat{\psi}\left(\widehat{K}\right)}$ is a submodule of $L$. Therefore, we need to check that it is closed with respect to the negated Lie bracket.\\

Let $x,y\in\overline{\widehat{\psi}\left(\widehat{K}\right)}$. Take $\widehat{x},\widehat{y}\in\widehat{K}$ such that $x\symmdash\widehat{\psi}\left(\widehat{x}\right)$ and $y\symmdash\widehat{\psi}\left(\widehat{y}\right)$. Then
$$\left[x,y\right]\symmdash\left[\widehat{\psi}\left(\widehat{x}\right),\widehat{\psi}\left(\widehat{y}\right)\right]\symmdash\widehat{\psi}\left(\left[\widehat{x},\widehat{y}\right]\right)$$
Since $\left[\widehat{x},\widehat{y}\right]\in\widehat{K}$, we are done.
\end{proof}

\begin{lem}\label{lem:lift-ideal}
Let $\widehat{I}\ideal\widehat{L}$ be an ideal. Then $\overline{\widehat{\psi}\left(\widehat{I}\right)}$ is an ideal of $L$.
\end{lem}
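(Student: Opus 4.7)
The plan is to mirror the proof of \Lref{lem:lift-subalgebra} almost exactly, using only the extra fact that $\widehat{I}$ absorbs brackets with arbitrary elements of $\widehat{L}$. Since every ideal is in particular a subalgebra, \Lref{lem:lift-subalgebra} already gives that $\overline{\widehat{\psi}(\widehat{I})}$ is a submodule of $L$ closed under the negated Lie bracket within itself. It remains to verify the ideal absorption property: for every $x\in\overline{\widehat{\psi}(\widehat{I})}$ and every $y\in L$, we must have $[x,y]\in\overline{\widehat{\psi}(\widehat{I})}$.

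First I would pick, by definition of the $\asymmdash$-closure, an element $\widehat{x}\in\widehat{I}$ such that $x\symmdash\widehat{\psi}(\widehat{x})$. Since property 4 of a module lift says that $\Im\widehat{\psi}$ is $\asymmdash$-dense in $L$, I can also choose some $\widehat{y}\in\widehat{L}$ with $y\symmdash\widehat{\psi}(\widehat{y})$. The key computation is then the chain
$$[x,y]\symmdash\bigl[\widehat{\psi}(\widehat{x}),\widehat{\psi}(\widehat{y})\bigr]\symmdash\widehat{\psi}\bigl([\widehat{x},\widehat{y}]\bigr),$$
where transitivity of $\symmdash$ (\Lref{lem:symmdash-is-ref-and-trans}) is applied. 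The first step is monotonicity of the negated Lie bracket with respect to $\symmdash$: writing $x=\widehat{\psi}(\widehat{x})+z_1$ and $y=\widehat{\psi}(\widehat{y})+z_2$ with $z_1,z_2\in\zeroset{L}$ and expanding by bilinearity, every cross term involves a quasi-zero and so lies in $\zeroset{L}$ by \Lref{lem:brac-zero-is-zero}. The second step is exactly property 2 in the definition of a lift of a Lie semialgebra.

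Finally, since $\widehat{I}\ideal\widehat{L}$, the bracket $[\widehat{x},\widehat{y}]$ belongs to $\widehat{I}$, so $\widehat{\psi}([\widehat{x},\widehat{y}])\in\widehat{\psi}(\widehat{I})$, and therefore $[x,y]\in\overline{\widehat{\psi}(\widehat{I})}$. A symmetric argument (or appeal to anticommutativity together with what was just shown) handles $[y,x]$. There is no real obstacle here; the only point worth stating carefully is the monotonicity of the bracket under $\symmdash$, which is the observation that makes the chain of surpassing relations legitimate.
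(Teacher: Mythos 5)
Your proof is correct and follows exactly the same route as the paper: cite \Lref{lem:lift-subalgebra} for the subalgebra part, then establish absorption via the chain $[x,y]\symmdash[\widehat{\psi}(\widehat{x}),\widehat{\psi}(\widehat{y})]\symmdash\widehat{\psi}([\widehat{x},\widehat{y}])$ with $\widehat{x}\in\widehat{I}$, $\widehat{y}\in\widehat{L}$. The one thing you add over the paper's version is an explicit justification (via \Lref{lem:brac-zero-is-zero}) of the bracket's monotonicity in $\symmdash$, which the paper leaves implicit; that is a reasonable clarification rather than a departure.
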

\begin{proof}
By \Lref{lem:lift-subalgebra}, $\overline{\widehat{\psi}\left(\widehat{I}\right)}$ is a subalgebra of $L$.\\

Let $x\in\overline{\widehat{\psi}\left(\widehat{K}\right)}$ and $y\in L$. Take $\widehat{x}\in\widehat{I}$ such that $x\symmdash\widehat{\psi}\left(\widehat{x}\right)$, and take $\widehat{y}\in\widehat{L}$ such that $y\symmdash\widehat{\psi}\left(\widehat{y}\right)$. Then
$$\left[x,y\right]\symmdash\left[\widehat{\psi}\left(\widehat{x}\right),\widehat{\psi}\left(\widehat{y}\right)\right]\symmdash\widehat{\psi}\left(\left[\widehat{x},\widehat{y}\right]\right)$$
Since $\left[\widehat{x},\widehat{y}\right]\in\widehat{I}$, we are done.
\end{proof}

Although in general there is no obvious way to find a lift of a given Lie semialgebra (even if there is a lift as modules), there is some consolation:
\begin{thm}
Let $R$ be a semiring with a negation map with a lift $\left(\widehat{R},\widehat{\varphi}\right)$. Consider the function
$$\widehat{\psi}:\gl\left(n,\widehat{R}\right)\to\gl\left(n,R\right)$$
which applies $\widehat{\varphi}$ on each entry of a given matrix. Then $\left(\gl\left(n,\widehat{R}\right),\widehat{\psi}\right)$ is a lift of $\gl\left(n,R\right)$ as Lie semialgebras.
\end{thm}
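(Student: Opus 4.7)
My plan is to verify the definition of a lift of Lie semialgebras in two stages: first, that $\left(\gl\left(n,\widehat{R}\right),\widehat{\psi}\right)$ is a lift of $\gl\left(n,R\right)$ \emph{as modules}, and second, that the negated commutator is compatible with $\widehat{\psi}$ in the $\asymmdash$ sense. The first stage is essentially free: identifying $\gl\left(n,R\right)$ with $R^{n^{2}}$ and $\gl\left(n,\widehat{R}\right)$ with $\left(\widehat{R}\right)^{n^{2}}$ as modules via the entries, the map $\widehat{\psi}$ becomes precisely the entry-wise application of $\widehat{\varphi}$, so \Rref{rem:Lift-of-R^n} delivers all four module-lift conditions at once.

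The heart of the proof is an intermediate claim that matrix multiplication lifts as
$$\widehat{\psi}\left(X\right)\widehat{\psi}\left(Y\right)\symmdash\widehat{\psi}\left(XY\right)\qquad\text{for all }X,Y\in\gl\left(n,\widehat{R}\right).$$
I would establish this entry-wise. Fix $i,j$. The $\asymmdash$-multiplicativity of $\widehat{\varphi}$ gives $\widehat{\varphi}\left(X_{ik}\right)\widehat{\varphi}\left(Y_{kj}\right)\symmdash\widehat{\varphi}\left(X_{ik}Y_{kj}\right)$ for each $k$; summing over $k$ (using that $\symmdash$ is preserved under finite addition, because sums of quasi-zeros are quasi-zero) and then applying $\asymmdash$-additivity of $\widehat{\varphi}$ to the right-hand side yields $\left(\widehat{\psi}\left(X\right)\widehat{\psi}\left(Y\right)\right)_{ij}\symmdash\widehat{\varphi}\left(\left(XY\right)_{ij}\right)$ via transitivity of $\symmdash$ (\Lref{lem:symmdash-is-ref-and-trans}).

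From here the bracket compatibility follows by chaining. Since $\widehat{R}$ is a ring, $\left[X,Y\right]=XY\minus YX$; applying the multiplication claim to both summands of $\left[\widehat{\psi}\left(X\right),\widehat{\psi}\left(Y\right)\right]=\widehat{\psi}\left(X\right)\widehat{\psi}\left(Y\right)\minus\widehat{\psi}\left(Y\right)\widehat{\psi}\left(X\right)$, together with the easy fact that $\symmdash$ is preserved by the negation map (as $\minus$ sends quasi-zeros to quasi-zeros), gives
$$\left[\widehat{\psi}\left(X\right),\widehat{\psi}\left(Y\right)\right]\symmdash\widehat{\psi}\left(XY\right)\minus\widehat{\psi}\left(YX\right).$$
The intertwining $\widehat{\psi}\left(\minus YX\right)=\minus\widehat{\psi}\left(YX\right)$ follows from condition \ref{itm:condition-of-symmdash-morphism} of the $\asymmdash$-morphism definition applied entry-wise. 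One then applies $\asymmdash$-additivity of $\widehat{\psi}$ to identify $\widehat{\psi}\left(XY\right)+\widehat{\psi}\left(\minus YX\right)\symmdash\widehat{\psi}\left(XY\minus YX\right)=\widehat{\psi}\left(\left[X,Y\right]\right)$, and transitivity closes the chain, yielding $\widehat{\psi}\left(\left[X,Y\right]\right)\asymmdash\left[\widehat{\psi}\left(X\right),\widehat{\psi}\left(Y\right)\right]$.

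The main obstacle is not conceptual but bookkeeping: one must track the direction of $\symmdash$ through each step of the chain and be sure that every operation applied preserves it rather than reverses it. This holds for addition, for left- and right-multiplication by a fixed element, and for the negation map; I would collect these three preservation properties as a preliminary observation (each follows immediately from the definition of quasi-zero and the module-axiom distributivity of $\minus$ over sums) before weaving them together in the computation. No deeper technical obstruction is expected.
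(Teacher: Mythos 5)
Your proposal is correct and follows essentially the same route as the paper's own proof: verify the module-lift conditions via \Rref{rem:Lift-of-R^n}, establish entry-wise that $\widehat{\psi}\left(X\right)\widehat{\psi}\left(Y\right)\symmdash\widehat{\psi}\left(XY\right)$ by chaining $\asymmdash$-multiplicativity and $\asymmdash$-additivity of $\widehat{\varphi}$ through transitivity, and then assemble the bracket compatibility from the two monomial terms of the negated commutator. The only cosmetic difference is that you collect the preservation properties of $\symmdash$ under addition and negation as an explicit preliminary remark, which the paper uses implicitly.
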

\begin{proof}
Since this is already a lift of modules, we will prove that given $\widehat{A},\widehat{B}\in\gl\left(n,\widehat{R}\right)$,
$$\widehat{\psi}\left(\widehat{A}\right)\widehat{\psi}\left(\widehat{B}\right)\symmdash\widehat{\psi}\left(\widehat{A}\widehat{B}\right)$$
Indeed,
$$\left(\widehat{\psi}\left(\widehat{A}\right)\widehat{\psi}\left(\widehat{B}\right)\right)_{i,j}=\sum_{k=1}^{n}\widehat{\varphi}\left(\widehat{a}_{i,k}\right)\widehat{\varphi}\left(\widehat{b}_{k,j}\right)\symmdash\widehat{\varphi}\left(\sum_{k=1}^{n}\widehat{a}_{i,k}\widehat{b}_{k,j}\right)=\left(\widehat{\psi}\left(\widehat{A}\widehat{B}\right)\right)_{i,j}$$

Now the assertion follows, since
\begin{eqnarray*}
  \left[\widehat{\psi}\left(\widehat{A}\right),\widehat{\psi}\left(\widehat{B}\right)\right] &=& \widehat{\psi}\left(\widehat{A}\right)\widehat{\psi}\left(\widehat{B}\right)+\minus\widehat{\psi}\left(\widehat{B}\right)\widehat{\psi}\left(\widehat{A}\right)\symmdash\widehat{\psi}\left(\widehat{A}\widehat{B}\right)+\minus\widehat{\psi}\left(\widehat{B}\widehat{A}\right)\symmdash \\
   &\symmdash& \widehat{\psi}\left(\widehat{A}\widehat{B}+\minus\widehat{B}\widehat{A}\right)=\widehat{\psi}\left(\left[\widehat{A},\widehat{B}\right]\right)
\end{eqnarray*}
\end{proof}

\subsubsection{Solvability, Nilpotency and Semisimplicity of the Lift}

Throughout this subsection, $\left(\widehat{R},\widehat{\varphi}\right)$ will be a lift of a semifield with a negation map $R$, $L$ will be a Lie semialgebra over $R$, and $\left(\widehat{L},\widehat{\psi}\right)$ will be a lift of $L$.

\begin{lem}\label{lem:comm-of-ELTrops}
Let $\widehat{L}_{1},\widehat{L}_{2}$ be subalgebras of $\widehat{L}$. Then
$$\left[\overline{\widehat{\psi}\left(\widehat{L}_{1}\right)},\overline{\widehat{\psi}\left(\widehat{L}_{2}\right)}\right]\subseteq\overline{\widehat{\psi}\left(\left[\widehat{L}_{1},\widehat{L}_{2}\right]\right)}$$
\end{lem}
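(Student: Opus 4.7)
The plan is to start from an arbitrary element $z$ of $\left[\overline{\widehat{\psi}\left(\widehat{L}_{1}\right)},\overline{\widehat{\psi}\left(\widehat{L}_{2}\right)}\right]$, lift all of its ingredients to $\widehat{L}_1$, $\widehat{L}_2$ and $\widehat{R}$ via the $\asymmdash$-density properties of the lifts, and then string together surpassing relations to show that $z$ surpasses $\widehat{\psi}$ applied to some element of $\left[\widehat{L}_1, \widehat{L}_2\right]$. Transitivity of $\symmdash$ (Lemma \ref{lem:symmdash-is-ref-and-trans}) will be used repeatedly to chain these steps.

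The first step is an auxiliary compatibility claim: if $x \symmdash a$ and $y \symmdash b$ in $L$, then $\left[x,y\right] \symmdash \left[a,b\right]$. Writing $x = a + z_1$ and $y = b + z_2$ with $z_1, z_2 \in \zeroset{L}$, one expands
$$\left[x,y\right] = \left[a,b\right] + \left[a,z_2\right] + \left[z_1,b\right] + \left[z_1,z_2\right],$$
and by Lemma \ref{lem:brac-zero-is-zero} the last three terms are quasi-zero, so their sum lies in $\zeroset{L}$. Similarly, one checks that scalar multiplication preserves $\symmdash$ on both sides.

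Now take $z \in \left[\overline{\widehat{\psi}\left(\widehat{L}_{1}\right)},\overline{\widehat{\psi}\left(\widehat{L}_{2}\right)}\right]$; by definition, $z = \sum_{i=1}^{k}\alpha_i\left[x_i, y_i\right]$ with $\alpha_i \in R$, $x_i \in \overline{\widehat{\psi}\left(\widehat{L}_{1}\right)}$ and $y_i \in \overline{\widehat{\psi}\left(\widehat{L}_{2}\right)}$. For each $i$ choose $\widehat{x}_i \in \widehat{L}_1$, $\widehat{y}_i \in \widehat{L}_2$, and $\widehat{\alpha}_i \in \widehat{R}$ with $x_i \symmdash \widehat{\psi}(\widehat{x}_i)$, $y_i \symmdash \widehat{\psi}(\widehat{y}_i)$ and $\alpha_i \symmdash \widehat{\varphi}(\widehat{\alpha}_i)$, using $\asymmdash$-density of the images of $\widehat{\psi}$ and $\widehat{\varphi}$. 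Applying the auxiliary claim above, the module-lift property, and the algebra-lift property in turn yields the chain
$$\alpha_i\left[x_i,y_i\right] \symmdash \widehat{\varphi}(\widehat{\alpha}_i)\left[\widehat{\psi}(\widehat{x}_i),\widehat{\psi}(\widehat{y}_i)\right] \symmdash \widehat{\varphi}(\widehat{\alpha}_i)\,\widehat{\psi}\bigl(\left[\widehat{x}_i,\widehat{y}_i\right]\bigr) \symmdash \widehat{\psi}\bigl(\widehat{\alpha}_i\left[\widehat{x}_i,\widehat{y}_i\right]\bigr).$$
Summing over $i$ and applying the additive lift property once more gives
$$z = \sum_{i=1}^{k}\alpha_i\left[x_i,y_i\right] \symmdash \sum_{i=1}^{k}\widehat{\psi}\bigl(\widehat{\alpha}_i\left[\widehat{x}_i,\widehat{y}_i\right]\bigr) \symmdash \widehat{\psi}\!\left(\sum_{i=1}^{k}\widehat{\alpha}_i\left[\widehat{x}_i,\widehat{y}_i\right]\right).$$
Since $\sum_{i}\widehat{\alpha}_i\left[\widehat{x}_i,\widehat{y}_i\right] \in \left[\widehat{L}_1,\widehat{L}_2\right]$, we conclude $z \in \overline{\widehat{\psi}\left(\left[\widehat{L}_1,\widehat{L}_2\right]\right)}$.

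The only real subtlety is the compatibility of $\symmdash$ with the bracket in the first paragraph — once this is in place, the rest is a bookkeeping exercise of chaining surpassing relations via transitivity. The $\asymmdash$ vs.\ $\symmdash$ direction in the definition of a Lie-semialgebra lift must be handled carefully: it exactly says $\left[\widehat{\psi}(\widehat{x}),\widehat{\psi}(\widehat{y})\right] \symmdash \widehat{\psi}\left(\left[\widehat{x},\widehat{y}\right]\right)$, which is the direction we need.
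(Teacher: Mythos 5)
Your proof is correct and follows essentially the same strategy as the paper: lift the ingredients through $\widehat{\psi}$ and $\widehat{\varphi}$, then chain surpassing relations via transitivity. Two small differences are worth flagging. First, the paper only proves $\left[x,y\right]\in\overline{\widehat{\psi}\left(\left[\widehat{L}_{1},\widehat{L}_{2}\right]\right)}$ for $x\in\overline{\widehat{\psi}(\widehat{L}_1)}$ and $y\in\overline{\widehat{\psi}(\widehat{L}_2)}$, implicitly relying on \Lref{lem:lift-submodule} (applied to the submodule $\left[\widehat{L}_1,\widehat{L}_2\right]$) to pass from generators to the full span; you instead handle an arbitrary element $\sum_i\alpha_i[x_i,y_i]$ directly, which requires also lifting the scalars $\alpha_i$ and invoking the multiplicative lift axiom $\widehat{\varphi}(\widehat{\alpha})\widehat{\psi}(x)\symmdash\widehat{\psi}(\widehat{\alpha}x)$ — correct, but more bookkeeping than the paper needs. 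Second, you explicitly justify the step $[x,y]\symmdash\left[\widehat{\psi}(\widehat{x}),\widehat{\psi}(\widehat{y})\right]$ via your auxiliary claim and \Lref{lem:brac-zero-is-zero}; the paper asserts this $\symmdash$ without comment, so your version actually fills in a detail that the paper leaves to the reader. Both approaches are sound and rest on the same two lift axioms and \Lref{lem:symmdash-is-ref-and-trans}.
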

\begin{proof}
Let $x\in\overline{\widehat{\psi}\left(\widehat{L}_{1}\right)}$ and $y\in\overline{\widehat{\psi}\left(\widehat{L}_{2}\right)}$. We will prove that $\left[x,y\right]\in\overline{\widehat{\psi}\left(\left[\widehat{L}_{1},\widehat{L}_{2}\right]\right)}$. Take $\widehat{x}\in\widehat{L}_1$ and $\widehat{y}\in\widehat{L}_2$ such that $x\symmdash\widehat{\psi}\left(\widehat{x}\right)$ and $y\symmdash\widehat{\psi}\left(\widehat{y}\right)$. Then
$$\left[x,y\right]\symmdash\left[\widehat{\psi}\left(\widehat{x}\right),\widehat{\psi}\left(\widehat{y}\right)\right]\symmdash\widehat{\psi}\left(\left[\widehat{x},\widehat{y}\right]\right)$$
Since $\left[\widehat{x},\widehat{y}\right]\in\left[\widehat{L}_{1},\widehat{L}_{2}\right]$, we are finished.
\end{proof}

\begin{cor}\label{cor:ELTrop-L^n-and-L^(n)}
$$\forall n\in\N\cup\left\{0\right\}:L^{n}\subseteq\overline{\widehat{\psi}\left[\widehat{L}^{n}\right]}$$
and
$$\forall n\in\N\cup\left\{0\right\}:L^{\left(n\right)}\subseteq\overline{\widehat{\psi}\left[\widehat{L}^{\left(n\right)}\right]}$$
\end{cor}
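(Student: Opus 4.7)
The plan is to prove both inclusions by straightforward induction on $n$, with Lemma \ref{lem:comm-of-ELTrops} doing the real work in the inductive step. I expect no serious obstacle; the main thing to set up carefully is the base case, which uses the definition of a lift rather than the lemma.

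For the base case $n=0$, in both statements we have $L^0 = L = L^{(0)}$, so I need $L \subseteq \overline{\widehat{\psi}(\widehat{L})}$. This follows immediately from the fourth defining property of a lift of modules: $\Im\widehat{\psi}$ is $\asymmdash$-dense in $L$, which by definition of $\overline{\cdot}$ means exactly $\overline{\widehat{\psi}(\widehat{L})} = L$.

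For the inductive step of the descending central series, assume $L^{n-1} \subseteq \overline{\widehat{\psi}(\widehat{L}^{n-1})}$. Then, using the base case $L \subseteq \overline{\widehat{\psi}(\widehat{L})}$ together with Lemma \ref{lem:comm-of-ELTrops},
\[
L^n = \left[L, L^{n-1}\right] \subseteq \left[\overline{\widehat{\psi}(\widehat{L})},\, \overline{\widehat{\psi}(\widehat{L}^{n-1})}\right] \subseteq \overline{\widehat{\psi}\left(\left[\widehat{L},\widehat{L}^{n-1}\right]\right)} = \overline{\widehat{\psi}(\widehat{L}^n)}.
\]
Here I need to note briefly that $[S_1, S_2] \subseteq \Span\{[x,y] : x\in S_1, y\in S_2\}$ extends monotonically to the surpass-closures because $\overline{\widehat{\psi}(\widehat{L}_i)}$ are submodules (by Lemma \ref{lem:lift-submodule}), so the bilinear bracket lands inside the span.

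For the derived series, the same pattern applies, but with both factors being $L^{(n-1)}$:
\[
L^{(n)} = \left[L^{(n-1)}, L^{(n-1)}\right] \subseteq \left[\overline{\widehat{\psi}(\widehat{L}^{(n-1)})},\, \overline{\widehat{\psi}(\widehat{L}^{(n-1)})}\right] \subseteq \overline{\widehat{\psi}\left(\left[\widehat{L}^{(n-1)},\widehat{L}^{(n-1)}\right]\right)} = \overline{\widehat{\psi}(\widehat{L}^{(n)})},
\]
invoking the induction hypothesis once and Lemma \ref{lem:comm-of-ELTrops} once. This completes both inductions.
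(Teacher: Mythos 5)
Your proof is correct and takes essentially the same route as the paper: induction on $n$ with the base case given by the $\asymmdash$-density of $\Im\widehat\psi$ and the inductive step given by Lemma~\ref{lem:comm-of-ELTrops}. The only cosmetic difference is that you spell out the derived-series induction and the base case in full where the paper just says ``trivial'' and ``proved similarly''; your side remark about spans and submodules really pertains to the proof of Lemma~\ref{lem:comm-of-ELTrops} rather than this corollary, but it is accurate.
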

\begin{proof}
We will prove the first assertion; the second is proven similarly.\\

We use induction on $n$, the case $n=0$ being trivial. If the assertion holds for $n\in\N\cup\left\{0\right\}$, then, by \Lref{lem:comm-of-ELTrops} and the induction hypothesis,
$$L^{n+1}=\left[L,L^n\right]\subseteq\left[\overline{\widehat{\psi}\left(\widehat{L}\right)}, \overline{\widehat{\psi}\left(\widehat{L}^n\right)}\right]\subseteq\overline{\widehat{\psi}\left(\left[\widehat{L},\widehat{L}^n\right]\right)}=\overline{\widehat{\psi}\left(L^{n+1}\right)}$$
as required.
\end{proof}

\begin{cor}\label{cor:ELTrop-of-solnil-is-solnil}
If $\widehat{L}$ is nilpotent (resp. solvable), then $L$ is nilpotent (resp. solvable).
\end{cor}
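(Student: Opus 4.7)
The plan is to deduce the statement directly from the inclusion $L^n\subseteq\overline{\widehat{\psi}(\widehat{L}^n)}$ (and the analogous one for $L^{(n)}$) already established in \Cref{cor:ELTrop-L^n-and-L^(n)}, combined with the observation that quasi-zeros in $\widehat{L}$ are trivial. Since $\widehat{L}$ is a Lie algebra over the ring $\widehat{R}$ whose negation map is $\minus\widehat{a}=-\widehat{a}$, every element of the form $\widehat{x}+\minus\widehat{x}$ equals $0_{\widehat{L}}$, so $\zeroset{\widehat{L}}=\{0_{\widehat{L}}\}$. Hence nilpotency of $\widehat{L}$ means that $\widehat{L}^n=\{0_{\widehat{L}}\}$ for some $n$, and similarly solvability of $\widehat{L}$ means $\widehat{L}^{(n)}=\{0_{\widehat{L}}\}$ for some $n$.

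Next, I would identify $\overline{\{0_L\}}$ with $\zeroset{L}$. By the definition of $\overline{\,\cdot\,}$, an element $y$ lies in $\overline{\{0_L\}}$ iff $y\symmdash 0_L$, i.e.\ iff there exists $z\in\zeroset{L}$ with $y=0_L+z=z$; thus $\overline{\{0_L\}}=\zeroset{L}$. Using property (3) of the module lift, $\widehat{\psi}(0_{\widehat{L}})=0_L$, so $\widehat{\psi}(\widehat{L}^n)=\{0_L\}$ whenever $\widehat{L}^n=\{0_{\widehat{L}}\}$.

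Putting this together, assume first that $\widehat{L}$ is nilpotent, say $\widehat{L}^n=\{0_{\widehat{L}}\}$. Then by \Cref{cor:ELTrop-L^n-and-L^(n)},
$$L^n\subseteq\overline{\widehat{\psi}(\widehat{L}^n)}=\overline{\{0_L\}}=\zeroset{L},$$
so $L$ is nilpotent. The solvable case is identical, using the second inclusion of \Cref{cor:ELTrop-L^n-and-L^(n)}: if $\widehat{L}^{(n)}=\{0_{\widehat{L}}\}$, then $L^{(n)}\subseteq\overline{\widehat{\psi}(\widehat{L}^{(n)})}=\zeroset{L}$, and $L$ is solvable.

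There is essentially no obstacle here: the whole content has been absorbed into \Cref{cor:ELTrop-L^n-and-L^(n)} and into the triviality of $\zeroset{\widehat{L}}$. The only small point to be careful about is not to confuse $\zeroset{L}$ (which can be genuinely non-trivial in the target) with $\zeroset{\widehat{L}}=\{0_{\widehat{L}}\}$ in the lift; the $\asymmdash$-density closure $\overline{\{0_L\}}$ is exactly what bridges the two and yields $\zeroset{L}$ on the nose.
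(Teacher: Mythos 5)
Your proof is correct and follows essentially the same route as the paper's: both invoke \Cref{cor:ELTrop-L^n-and-L^(n)} and then observe that $\overline{\widehat{\psi}(\widehat{L}^n)}=\overline{\{0_L\}}=\zeroset{L}$ when $\widehat{L}^n$ is trivial. You simply spell out the two small facts the paper leaves implicit, namely that $\zeroset{\widehat{L}}=\{0_{\widehat{L}}\}$ because $\widehat{R}$ is a ring, and that $\overline{\{0_L\}}$ is precisely $\zeroset{L}$.
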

\begin{proof}
We will prove the assertion for nilpotency; the assertion for solvability is proved similarly. Since $\widehat{L}$ is nilpotent, there is $n$ such that $\widehat{L}^n=0$. By \Cref{cor:ELTrop-L^n-and-L^(n)},
$$L^n\subseteq\overline{\widehat{\psi}\left(\widehat{L}^n\right)}=\overline{\widehat{\psi}\left(0\right)}=\zeroset{L}$$
and thus $L$ is nilpotent.
\end{proof}

\begin{cor}\label{cor:ELTrop-of-Rad}
$$\overline{\widehat{\psi}\left(\Rad\left(\widehat{L}\right)\right)}\subseteq\Rad\left(L\right)$$
\end{cor}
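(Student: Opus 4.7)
The plan is to show that every element of the left-hand side in fact lies inside some solvable ideal of $L$, and then invoke the definition of $\Rad(L)$. Fix $x\in\overline{\widehat{\psi}(\Rad(\widehat{L}))}$; by definition of $\overline{\cdot}$ there is $\widehat{x}\in\Rad(\widehat{L})$ with $x\symmdash\widehat{\psi}(\widehat{x})$. Since $\Rad(\widehat{L})$ is the sum of all solvable ideals of $\widehat{L}$, I can write $\widehat{x}=\widehat{x}_1+\cdots+\widehat{x}_k$ with each $\widehat{x}_i$ belonging to some solvable ideal $\widehat{I}_i\ideal\widehat{L}$. By \Lref{lem:sum-sol-is-sol}, the finite sum $\widehat{I}:=\widehat{I}_1+\cdots+\widehat{I}_k$ is again a solvable ideal of $\widehat{L}$, and $\widehat{x}\in\widehat{I}$.

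The next step is to argue that $\overline{\widehat{\psi}(\widehat{I})}$ is a solvable ideal of $L$. That it is an ideal is precisely \Lref{lem:lift-ideal}. For solvability, I would observe that the pair $(\widehat{I},\widehat{\psi}|_{\widehat{I}})$ itself qualifies as a lift of the Lie semialgebra $\overline{\widehat{\psi}(\widehat{I})}$ over $R$: the four module-lift axioms transfer verbatim by restriction (the $\asymmdash$-density axiom holds by the very definition of $\overline{\widehat{\psi}(\widehat{I})}$), and the bracket inequality is inherited from the original lift $\widehat{L}\to L$. Once this is in place, \Cref{cor:ELTrop-of-solnil-is-solnil} applied to this new lift promotes solvability of $\widehat{I}$ to solvability of $\overline{\widehat{\psi}(\widehat{I})}$.

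With these ingredients, the conclusion is immediate. Because $\overline{\widehat{\psi}(\widehat{I})}$ is a solvable ideal of $L$, it is one of the summands in the definition of $\Rad(L)$, and hence $\overline{\widehat{\psi}(\widehat{I})}\subseteq\Rad(L)$. But $\widehat{\psi}(\widehat{x})\in\widehat{\psi}(\widehat{I})$ and $x\symmdash\widehat{\psi}(\widehat{x})$, so directly from the definition of the $\asymmdash$-closure, $x\in\overline{\widehat{\psi}(\widehat{I})}\subseteq\Rad(L)$. As $x$ was arbitrary, this gives the desired inclusion.

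The only place that requires any real care — the small ``obstacle'' of the proof — is the bookkeeping behind the claim that $(\widehat{I},\widehat{\psi}|_{\widehat{I}})$ constitutes a Lie-semialgebra lift of $\overline{\widehat{\psi}(\widehat{I})}$ (as opposed to a lift of something larger, like $L$ itself). Once one is careful to take $\overline{\widehat{\psi}(\widehat{I})}$ as the codomain, density is automatic and every other condition reduces to a restriction of the corresponding property for $\widehat{\psi}\colon\widehat{L}\to L$, so the corollary's hypotheses are satisfied and no new computation is needed.
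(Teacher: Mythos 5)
Your proof is correct, and its ingredients --- \Lref{lem:lift-ideal}, \Cref{cor:ELTrop-of-solnil-is-solnil}, and the observation that $\widehat{\psi}$ restricted to a subideal $\widehat{I}\subseteq\widehat{L}$, with codomain $\overline{\widehat{\psi}(\widehat{I})}$, is itself a Lie-semialgebra lift --- coincide with the paper's. The route you take is nevertheless a genuine refinement. The paper applies the same ingredients with $\widehat{I}=\Rad(\widehat{L})$ in one stroke: $\overline{\widehat{\psi}(\Rad(\widehat{L}))}$ is an ideal by \Lref{lem:lift-ideal} and solvable by \Cref{cor:ELTrop-of-solnil-is-solnil}, hence lies in $\Rad(L)$. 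That second step implicitly requires $\Rad(\widehat{L})$ itself to be a solvable Lie algebra, which holds in the finite-dimensional classical setting but is not automatic in general (the radical, defined as the sum of \emph{all} solvable ideals, is only guaranteed to be locally solvable --- a caveat the paper itself raises for $\Rad(L)$). Your element-by-element variant sidesteps this: for each $x$ you choose a \emph{finite} sum $\widehat{I}$ of solvable ideals of $\widehat{L}$ containing the preimage $\widehat{x}$; \Lref{lem:sum-sol-is-sol} then makes $\widehat{I}$ genuinely solvable, so $\overline{\widehat{\psi}(\widehat{I})}$ is a solvable ideal of $L$ containing $x$, and $x\in\Rad(L)$. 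The cost is a bit more bookkeeping; the gain is a proof that needs no hypothesis ensuring solvability of $\Rad(\widehat{L})$, thereby closing a small implicit gap in the paper's terser argument.
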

\begin{proof}
$\Rad\left(\widehat{L}\right)\ideal\widehat{L}$, and thus, by \Lref{lem:lift-ideal}, $\overline{\widehat{\psi}\left(\Rad\left(\widehat{L}\right)\right)}\ideal L$. In addition, by \Cref{cor:ELTrop-of-solnil-is-solnil}, $\overline{\widehat{\psi}\left(\Rad\left(\widehat{L}\right)\right)}$ is solvable. Therefore, the assertion follows.
\end{proof}

\begin{cor}\label{cor:lift-of-semisimple-is-semisimple}
If $L$ is semisimple, then $\widehat{L}$ is semisimple.
\end{cor}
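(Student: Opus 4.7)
The plan is to invoke \Cref{cor:ELTrop-of-Rad} directly. Since $L$ is semisimple, $\Rad L=\zeroset L$, so \Cref{cor:ELTrop-of-Rad} yields
$$\overline{\widehat\psi\left(\Rad\widehat L\right)}\subseteq\Rad L=\zeroset L.$$
In particular, $\widehat\psi(\widehat x)\in\zeroset L$ for every $\widehat x\in\Rad\widehat L$.

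Next, I would use the fact that $\widehat R$ is an honest ring and $\widehat L$ a Lie algebra over it: since for any $\widehat x\in\widehat L$ one has $\widehat x+\minus\widehat x=\widehat x-\widehat x=0_{\widehat L}$, the quasi-zero submodule collapses to $\zeroset{\widehat L}=\{0_{\widehat L}\}$. Therefore, showing $\widehat L$ is semisimple in the sense of the paper reduces to showing $\Rad\widehat L=\{0_{\widehat L}\}$, which in turn reduces (by the previous paragraph) to showing that $\widehat\psi(\widehat x)\in\zeroset L$ forces $\widehat x=0_{\widehat L}$ for $\widehat x\in\Rad\widehat L$.

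This last implication is the main obstacle, and it is where the concrete structure of the lift is needed rather than just the four formal axioms of a lift of a module. For the naturally occurring lifts in the paper -- most notably the componentwise lift of a free module described in \Rref{rem:Lift-of-R^n} -- the argument is immediate: $\widehat\psi$ applies $\widehat\varphi$ in each coordinate, so if $\widehat\psi(\widehat x)\in\zeroset L=(\zeroset R)^n$ then each coordinate $\widehat x_i\in\widehat R$ satisfies $\widehat\varphi(\widehat x_i)\in\zeroset R$; combined with property~3 in the definition of a lift of a semiring and the fact that $\widehat R$ is a ring, this forces $\widehat x_i=0_{\widehat R}$ coordinatewise, whence $\widehat x=0_{\widehat L}$.

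In sum, the plan has three ingredients: (a) an application of \Cref{cor:ELTrop-of-Rad}; (b) the observation $\zeroset{\widehat L}=\{0_{\widehat L}\}$, which is forced by $\widehat R$ being a ring; and (c) a ``nontriviality transfer'' step from $\widehat\psi$ back to the Lie algebra $\widehat L$, handled by the explicit structure of the lift. I expect (c) to be the delicate step, in contrast to (a) and (b), which are essentially bookkeeping.
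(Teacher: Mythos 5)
You follow the paper's route: steps (a) and (b) are exactly what the paper's proof does, the latter implicitly. However, the paper then simply declares ``Hence $\Rad(\widehat L)=0$'' once it has $\overline{\widehat\psi(\Rad(\widehat L))}\subseteq\zeroset L$, and you are right that this final inference --- your step (c) --- does not follow from the four listed axioms of a module lift. In the semiring case the transfer is automatic: $\Im\widehat\varphi\subseteq R^{\vee}$ (axiom~2) forces any $\widehat\varphi(\widehat\alpha)\in\zeroset R$ to equal $0_R$, whence axiom~3 gives $\widehat\alpha=0_{\widehat R}$. The module-lift definition has no analogue of axiom~2, and its axiom~3 is only the forward implication $\widehat\psi(0_{\widehat M})=0_M$; nothing in the definition prevents a nonzero element of $\widehat M$ from landing in $\zeroset M$.

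Your workaround for the componentwise lift of \Rref{rem:Lift-of-R^n} is correct --- though note it uses both the second and third semiring-lift axioms, not axiom~3 alone --- but it only establishes the corollary for that specific lift. For the free lift $\widehat R^{S}$ built over a generating set $S$ that carries relations, $\widehat\psi$ can have a large kernel, and the argument collapses. So you have correctly located a gap that the paper's own proof silently contains, and you have bridged it under an extra hypothesis rather than proved the corollary as stated. The clean repair is to add to the module-lift axioms the condition $\widehat\psi^{-1}(\zeroset M)=\{0_{\widehat M}\}$, mirroring the combined effect of axioms~2 and~3 for semirings; under that hypothesis both your step (c) and the paper's last line become immediate.
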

\begin{proof}
$L$ is semisimple, hence $\Rad\left(L\right)\subseteq\zeroset{L}$. By \Cref{cor:ELTrop-of-Rad},
$$\overline{\widehat{\psi}\left(\Rad\left(\widehat{L}\right)\right)}\subseteq\Rad\left(L\right)\subseteq\zeroset{L}$$
Hence $\Rad\left(\widehat{L}\right)=0$, as required.
\end{proof}

\subsection{Cartan's Criterion for Lie Algebras Over ELT Algebras}

For this subsection, we work only with ELT algebras. Specifically, we are interested in the following type of ELT algebra:
\begin{defn}
Let $\R=\ELT{\L}{\F}$ be an ELT algebra. We say that $\R$ is a \textbf{divisible ELT field}, if $\L$ is a field, and $\F$ is a divisible group.
\end{defn}

In our version of Cartan's criterion, we will use the essential trace defined in \cite{Blachar2016b}. Let us recall its definition:
\begin{defn}
Let $\R$ be a divisible ELT field, and let $A\in M_n\left(\overline{\R}\right)$. Let
$$p_A\left(\lambda\right)=\det\left(\lambda I+\layer{0}{-1} A\right)= \lambda^n+\sum_{i=1}^n\alpha_i\lambda^{n-i}$$
be its ELT characteristic polynomial. We define
\begin{eqnarray*}
  L\left(A\right) &=& \left\{\ell\ge 1\mid \forall k\ge n:\frac{\t\left(\alpha_\ell\right)}{\ell}\ge \frac{\t\left(\alpha_k\right)}{k}\right\}\\
  \mu\left(A\right) &=& \min L\left(A\right)
\end{eqnarray*}
In words, $\alpha_\mu\lambda^{n-\mu}$ is the first monomial after $\lambda^n$ which is not inessential in $p_A\left(\lambda\right)$.\\

The \textbf{essential trace} of $A$ is
$$\etr\left(A\right)=\left\{\begin{matrix}\tr\left(A\right),&\layer{0}{-1}\tr\left(A\right)\textrm{ is essential in }p_A\left(\lambda\right)\\\layer{\left(\frac{\t\left(\alpha_\mu\right)}{\mu}\right)}{0},&\textrm{Otherwise}\end{matrix}\right.$$
\end{defn}

Recall:

\begin{cor*}[{\cite[Corollary 2.22]{Blachar2016b}}]
If $A$ is ELT nilpotent, then $s\left(\etr\left(A\right)\right)=0$.
\end{cor*}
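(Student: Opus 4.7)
The plan is to split along the two cases in the definition of $\etr(A)$. If $\layer{0}{-1}\tr(A)$ is not essential in $p_A(\lambda)$, then by definition $\etr(A) = \layer{(\t(\alpha_\mu)/\mu)}{0}$, which visibly has layer $0$, so $s(\etr(A)) = 0$ is immediate and the nilpotency hypothesis is not even needed in this branch.

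All the work therefore lies in the opposite case, when $\layer{0}{-1}\tr(A)$ is essential in $p_A(\lambda)$ and so $\etr(A) = \tr(A)$. Here I would exploit ELT nilpotency to prove the stronger claim that every essentially contributing coefficient $\alpha_i$ of $p_A(\lambda)$ is zero-layered. Granting this, since $\tr(A)$ differs from $\alpha_1$ only by the $\minus$ induced by the $\layer{0}{-1}$ factor in the definition of $p_A$, the essentiality of $\layer{0}{-1}\tr(A)$ means $\alpha_1$ is essential, and hence $s(\tr(A)) = s(\alpha_1) = 0$. To prove the stronger claim I would invoke an ELT Cayley--Hamilton theorem (so that $p_A(A)$ surpasses $0$), combine it with the fact that $A^m$ has zero-layered entries for some $m$ (the meaning of ELT nilpotency), and balance the essential contributions among $A^n, \alpha_1 A^{n-1}, \ldots, \alpha_n I$ under the surpassing relation: since high powers of an ELT nilpotent matrix collapse into $M_n(\R^\circ)$, any essentially contributing $\alpha_i$ must itself be zero-layered for the balance to hold.

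The main obstacle is the tangible-value bookkeeping: ELT arithmetic carefully distinguishes between essential and absorbed monomials, and one must verify that the dominant entries of each $\alpha_i$ genuinely have layer $0$ rather than merely canceling against one another. A potentially cleaner route is via ELT eigenvalues, namely showing that every ELT root of $p_A(\lambda)$ is $0_{\R}$ when $A$ is ELT nilpotent, which would allow one to read off the layer of each essential $\alpha_i$ directly from its ELT symmetric-function representation $\alpha_i \symmdash \sum \lambda_{j_1}\cdots\lambda_{j_i}$ over the ELT eigenvalues. Either route requires careful use of the definition of $\mu(A)$ to ensure that one is tracking precisely the essential monomials; once this bookkeeping is carried out, Case~1 follows and the corollary is complete.
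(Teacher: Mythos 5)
This statement is not proved in the paper under review; it is recalled verbatim from \cite[Corollary 2.22]{Blachar2016b} as a black box for use in the proof of Cartan's criterion (\Tref{thm:Cartan's-crit}). There is therefore no in-paper proof to compare your attempt against, and your proposal should be judged purely on whether it would establish the claim.

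On its own terms the proposal is a reasonable outline but not a proof. Your Case~1 is correct and genuinely trivial: if $\layer{0}{-1}\tr(A)$ is inessential, then by definition $\etr(A)=\layer{(\t(\alpha_\mu)/\mu)}{0}$, which has sorting value $0$ without any appeal to nilpotency. The content is entirely in Case~2, and there you reduce to a stronger auxiliary claim --- that every \emph{essentially contributing} coefficient $\alpha_i$ of $p_A(\lambda)$ is zero-layered when $A$ is ELT nilpotent --- but you do not actually prove it. You offer two plausible routes (an ELT Cayley--Hamilton identity $p_A(A)\symmdash 0$ combined with the collapse of high powers of $A$ into $M_n(\R^\circ)$, or an ELT eigenvalue argument via symmetric functions) and then candidly acknowledge that the bookkeeping of essential versus absorbed monomials has not been carried out. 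This bookkeeping is precisely where the difficulty lies: in ELT arithmetic there is no subtraction, so the passage from ``$p_A(A)$ surpasses a quasi-zero'' or ``the dominant entries cancel tangibly'' to ``$\alpha_1$ has layer $0$'' requires a careful argument tracking which terms dominate, and it is not clear that the auxiliary claim as stated (covering \emph{all} essential $\alpha_i$, not just $\alpha_1$) is what the nilpotency hypothesis actually gives you. Since only $\alpha_1$ matters for $\etr$ in Case~2, you should either prove the claim only for $\alpha_1$ or give a reason the full-strength version holds; at present the crucial implication is asserted rather than established.
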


Returning to Lie semialgebras, we specialize our symmetrized Lie semialgebra to be free; therefore, given a homomorphism $\varphi:L\to L$, its trace and essential trace are well-defined.

\begin{defn}
Let $\R$ be a divisible ELT field, and let $L$ be a free symmetrized Lie semialgebra over $\R$. The \textbf{Killing form} of $L$ is
$$\kil{x}{y}=\tr\left(\ad_x\circ \ad_y\right)$$
The \textbf{essential Killing form} of $L$ is
$$\esskil{x}{y}=\etr\left(\ad_x\circ \ad_y\right)$$
\end{defn}

\begin{defn}
Let $\R$ be a divisible ELT field, let $L$ be a free symmetrized Lie semialgebra over~$\R$, and let $\kappa$ be the Killing form of $L$. We say that $\kappa$ is \textbf{non-degenerate}, if
$$\Rad\kappa=\left\{x\in L\middle|\forall y\in L:s\left(\kil{x}{y}\right)=0\right\}\subseteq\zeroset{L}$$
We say that $\kappa^{\mathrm{es}}$ is \textbf{non-degenerate}, if
$$\SRad\kappa=\left\{x\in L\middle|\forall y\in L:s\left(\esskil{x}{y}\right)=0\right\}\subseteq\zeroset{L}$$
\end{defn}

\begin{rem}
Since $\Rad\kappa\subseteq\SRad\kappa$ (by \cite[Lemma 2.11]{Blachar2016b}), if $\kappa^{\mathrm{es}}$ is non-degenerate, then $\kappa$ is non-degenerate.
\end{rem}

\begin{thm}[Cartan's criterion]\label{thm:Cartan's-crit}
If $L$ has a non-degenerate essential Killing form, then $L$ is semisimple.
\end{thm}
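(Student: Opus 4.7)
The plan is to verify the criterion for semisimplicity (\Lref{lem:cond-for-semisimple}) using the vanishing-trace theorem for ELT nilpotent matrices cited just above. By \Lref{lem:cond-for-semisimple} it suffices to show that every abelian ideal $I\ideal L$ is contained in $\zeroset{L}$.

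Fix such an ideal $I$, an element $x\in I$, and an arbitrary $y\in L$. Because $I\ideal L$, the operator $\ad_x$ sends $L$ into $I$; hence $(\ad_x\circ\ad_y)(L)\subseteq I$, and applying $\ad_x\circ\ad_y$ once more (using that $I$ is abelian, so $[I,I]\subseteq\zeroset{L}$) yields
$$(\ad_x\circ\ad_y)^2(L)\subseteq [I,I]\subseteq \zeroset{L}.$$
Since $L$ is a free $R$-module, \Lref{lem:lin-comb-zero-base} implies that the matrix representing $(\ad_x\circ\ad_y)^2$ in any base of $L$ has every entry in $\zeroset{R}$. In particular, $\ad_x\circ\ad_y$ is ELT nilpotent in the sense of \cite[Corollary 2.22]{Blachar2016b}, so that corollary yields $s(\esskil{x}{y})=s(\etr(\ad_x\circ\ad_y))=0$.

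Since $y\in L$ was arbitrary, $x\in\SRad\kappa$. The assumed non-degeneracy of $\kappa^{\mathrm{es}}$ then forces $x\in\zeroset{L}$; as $x\in I$ was arbitrary we conclude $I\subseteq\zeroset{L}$. Invoking \Lref{lem:cond-for-semisimple} completes the argument, showing that $L$ is semisimple.

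The only delicate point in the plan is the translation in the second paragraph: one must verify that the geometric statement ``$(\ad_x\circ\ad_y)^2$ sends $L$ into $\zeroset{L}$'' is precisely the hypothesis required to invoke \cite[Corollary 2.22]{Blachar2016b}, i.e.\ that $\ad_x\circ\ad_y$ genuinely qualifies as ELT nilpotent in the matrix sense used there. Once this bookkeeping is in place, the argument is a direct transcription of the classical proof of the Cartan criterion into the symmetrized setting, relying on \Lref{lem:brac-zero-is-zero}, the freeness of $L$, and the essential-trace machinery of \cite{Blachar2016b}.
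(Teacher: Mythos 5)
Your proof is correct and takes essentially the same route as the paper: reduce to abelian ideals via \Lref{lem:cond-for-semisimple}, show $(\ad_x\circ\ad_y)^2$ lands in $[I,I]\subseteq\zeroset{L}$, invoke \cite[Corollary 2.22]{Blachar2016b} to get zero-layered essential trace, and conclude $I\subseteq\SRad\kappa\subseteq\zeroset{L}$. The only difference is that you flesh out the step the paper leaves tacit — using freeness of $L$ and \Lref{lem:lin-comb-zero-base} to pass from ``$(\ad_x\circ\ad_y)^2(L)\subseteq\zeroset{L}$'' to ``the matrix of $(\ad_x\circ\ad_y)^2$ has all entries in $\zeroset{R}$'' — which is a reasonable clarification of what ``ELT nilpotent'' means here.
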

\begin{proof}
By \Lref{lem:cond-for-semisimple}, it is enough to prove that any abelian ideal $I$ of $L$ is of layer zero.\\

Let $I$ be an abelian ideal, and let $x\in I$. Take an arbitrary $y\in L$. Since $\ad_x\circ\ad_y$ maps $L$ to $I$, $\left(\ad_x\circ\ad_y\right)^2$ maps $L$ to $\left[I,I\right]$, which is of layer zero. So $\ad_x\circ\ad_y$ is ELT nilpotent. Thus, by \cite[Corollary 2.22]{Blachar2016b}, $s\left(\etr\left(\ad_x\circ\ad_y\right)\right)=0$.\\

This argument implies $x\in\SRad\kappa$; so $I\subseteq\SRad\kappa$. But $\kappa$ is strongly non-degenerate, and thus $I\subseteq\zeroset{L}$, as required. So $L$ is semisimple.
\end{proof}

\section{PBW Theorem}

In this subsection we will construct the universal enveloping algebra of an arbitrary Lie semialgebra with a negation map. Then, we will give a counterexample to a naive version of PBW theorem.\\

The tensor product is a well-known process in general categories (for example, \cite{Banagl2013}, \cite{Grothendieck1955}, \cite{Katsov1978} and \cite{Takahashi1982}). Regarding tensor product of modules over semirings, there are two (nonisomorphic) notions of tensor product of modules over semirings in the literature, both denoted by $\otimes_R$. For our purposes, we use the tensor product discussed in \cite{Katsov1997}, \cite{Katsov2005} and \cite{Banagl2013}. The other notion of tensor product is discussed in \cite{Takahashi1982} and \cite{Golan1999}. Their construction satisfies a different universal property, and the outcome is a cancellative monoid.

\subsection{Tensor Power and Tensor Algebra of Modules over a Semirings}

\begin{defn}
Let $R$ be a commutative semiring, and let $M$ be an $R$-module. The \textbf{$k$-th tensor product} of $M$ is defined as
$$M^{\otimes k}=\underbrace{M\otimes\dots\otimes M}_{k\textnormal{ times}}$$
If $k=0$, we define $M^{\otimes 0}=R$.
\end{defn}

We note that this definition is well-defined, because the tensor product is associative.

\begin{defn}
Let $R$ be a commutative semiring, and let $M$ be an $R$-module. The \textbf{tensor algebra} of $M$, denoted $T\left(M\right)$, is
$$T\left(M\right)=\bigoplus_{k=0}^\infty M^{\otimes k}$$
\end{defn}

\begin{rem}
The natural isomorphism
$$M^{\otimes k}\otimes M^{\otimes\ell}\cong M^{\otimes\left(k+\ell\right)}$$
yields a multiplication
$$M^{\otimes k}\times M^{\otimes\ell}\to M^{\otimes\left(k+\ell\right)}$$
given by $\left(a,b\right)\mapsto a\otimes b$. Endowed with this multiplication, $T\left(M\right)$ is an $R$-semialgebra.
\end{rem}

\begin{lem}\label{lem:univ-prop-of-tensor-algebra}
Let $A$ be an $R$-semialgebra, and assume that $\varphi:M\to A$ is a homomorphism of $R$-modules. Then there exists a unique homomorphism of $R$-semialgebras $\hat{\varphi}:T\left(M\right)\to A$, such that $\left.\hat{\varphi}\right|_M=\varphi$.
\end{lem}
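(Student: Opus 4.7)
The plan is to mimic the classical construction of the unique extension. First I would build $\hat{\varphi}$ layer-by-layer, one tensor power at a time, using the universal property of $\otimes_R$ in the semiring setting (as in \cite{Katsov1997, Banagl2013}). For each $k\geq 2$, the map
$$M^{k}\to A,\quad \left(m_{1},\dots,m_{k}\right)\mapsto \varphi\left(m_{1}\right)\cdot\varphi\left(m_{2}\right)\cdots\varphi\left(m_{k}\right)$$
is $R$-multilinear (because $\varphi$ is $R$-linear and multiplication in $A$ is $R$-bilinear and associative), so it factors through $M^{\otimes k}$ to give a unique $R$-module map $\hat{\varphi}_{k}:M^{\otimes k}\to A$ sending $m_{1}\otimes\cdots\otimes m_{k}$ to $\varphi\left(m_{1}\right)\cdots\varphi\left(m_{k}\right)$. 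For $k=1$ set $\hat{\varphi}_{1}=\varphi$, and for $k=0$ set $\hat{\varphi}_{0}:R\to A$ to be the structure map $r\mapsto r\cdot 1_{A}$. Then assemble them via the direct sum to define
$$\hat{\varphi}=\bigoplus_{k=0}^{\infty}\hat{\varphi}_{k}:T\left(M\right)\to A.$$

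Next I would check that $\hat{\varphi}$ respects multiplication. It suffices, by $R$-bilinearity of both $\otimes$ and the multiplication of $A$, to verify the identity on pure tensors: for $a=m_{1}\otimes\cdots\otimes m_{k}\in M^{\otimes k}$ and $b=m'_{1}\otimes\cdots\otimes m'_{\ell}\in M^{\otimes \ell}$, the natural isomorphism $M^{\otimes k}\otimes M^{\otimes \ell}\cong M^{\otimes\left(k+\ell\right)}$ identifies $a\otimes b$ with $m_{1}\otimes\cdots\otimes m_{k}\otimes m'_{1}\otimes\cdots\otimes m'_{\ell}$, so
$$\hat{\varphi}\left(a\otimes b\right)=\varphi\left(m_{1}\right)\cdots\varphi\left(m_{k}\right)\varphi\left(m'_{1}\right)\cdots\varphi\left(m'_{\ell}\right)=\hat{\varphi}\left(a\right)\hat{\varphi}\left(b\right),$$
using associativity in $A$. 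The cases where one of $k,\ell$ is $0$ follow from the fact that $\hat{\varphi}_{0}$ is the structure map of $A$, and that scalar multiplication and multiplication by $1_{A}$ agree. Additivity follows by construction, and $\hat{\varphi}\left(1_{T\left(M\right)}\right)=\hat{\varphi}_{0}\left(1_{R}\right)=1_{A}$, so $\hat{\varphi}$ is a homomorphism of $R$-semialgebras, while $\left.\hat{\varphi}\right|_{M}=\hat{\varphi}_{1}=\varphi$ by definition.

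Finally, uniqueness: if $\psi:T\left(M\right)\to A$ is any $R$-semialgebra homomorphism extending $\varphi$, then $\psi$ is forced on $R=M^{\otimes 0}$ (it must be the structure map) and on $M=M^{\otimes 1}$ (it must equal $\varphi$); hence on every pure tensor $m_{1}\otimes\cdots\otimes m_{k}$ it must equal $\psi\left(m_{1}\right)\cdots\psi\left(m_{k}\right)=\varphi\left(m_{1}\right)\cdots\varphi\left(m_{k}\right)=\hat{\varphi}\left(m_{1}\otimes\cdots\otimes m_{k}\right)$. Since pure tensors generate $M^{\otimes k}$ as an $R$-module and the $M^{\otimes k}$ together generate $T\left(M\right)$ as an $R$-module, $\psi=\hat{\varphi}$ by $R$-linearity.

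The main subtlety will be ensuring that the universal property of $\otimes_{R}$ used in the first step is valid in the semiring-with-negation setting; provided one adopts the Katsov/Banagl tensor product, which is designed precisely so that $R$-multilinear maps out of $M^{k}$ correspond bijectively to $R$-linear maps out of $M^{\otimes k}$, everything else is a routine verification. No appeal to the negation map is needed in the proof — the construction is formally identical to the classical one, with the underlying category replaced by $R$-modules with a negation map.
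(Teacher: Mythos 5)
Your proposal is correct and takes essentially the same approach as the paper: the paper's one-line argument just asserts that uniqueness forces the formula on pure tensors and that this formula ``defines a homomorphism,'' while you supply the routine justification (well-definedness via the universal property of $\otimes_R$, the multiplicativity check, and the uniqueness argument) that the paper leaves implicit.
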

\begin{proof}
Uniqueness forces that
$$\hat{\varphi}\left(x_1\otimes\dots\otimes x_k\right)=\hat{\varphi}\left(x_1\right)\otimes\dots\otimes\hat{\varphi}\left(x_k\right)= \varphi\left(x_1\right)\otimes\dots\otimes\varphi\left(x_k\right)$$
This defines a homomorphism of $R$-semialgebras.
\end{proof}

\subsection{The Universal Enveloping Algebra of a Lie Algebra with a Negation Map}

We now return to our motivation, where we work with a Lie semialgebra $L$ over a semifield with a negation map $R$.
\begin{defn}
Let $R$ be a semifield with a negation map, and let $L$ be a Lie semialgebra over~$R$. We define a congruence $\sim$ on $T\left(L\right)$ as the congruence generated by
$$\forall x,y\in L:\left[x,y\right]\sim x\otimes y+\minus y\otimes x$$
The \textbf{universal enveloping algebra} of $L$ is
$$U\left(L\right)=\quo{T\left(L\right)}{\sim}$$

If $\rho:T\left(L\right)\to U\left(L\right)$ is the canonical map, we have the \textbf{PBW homomorphism} of $L$, which we denote $\varphi_L:L\to U\left(L\right)$, defined by $\varphi_L=\left.\rho\right|_L$.
\end{defn}

However, the PBW homomorphism of $L$ need not be injective, as the next subsection demonstrates.
\subsection{Counterexample to the Naive PBW Theorem}
We now present a Lie semialgebra with a negation map, which cannot be embedded into a Lie semialgebra with a negation map obtained from an associative algebra together with the negated commutator operator. In particular, $\varphi_L$ will not be injective.

\begin{example}\label{exa:disproof-PBW}

The example will use the ELT structre. For convenience, we assume that our underlying ELT field is $\R=\ELT{\mathbb{R}}{\mathbb{C}}$. Recall that in the ELT theory, the relation $\symmdash$ is denoted $\vDash$.\\

We take a free $\R$-module $L$ with base $B=\left\{x_1,x_2\right\}$, and define:
\begin{eqnarray*}
  \left[x_1,x_1\right] &=& \layer{1}{0}x_1+\layer{1}{0}x_2 \\
  \left[x_2,x_2\right] &=& \layer{1}{0}x_1+\layer{1}{0}x_2 \\
  \left[x_1,x_2\right] &=& x_1+x_2\\
  \left[x_2,x_1\right] &=& \minus \left[x_1,x_2\right]
\end{eqnarray*}
By \Lref{lem:2-dim-free-ELT-Lie-alg}, $L$ equipped with $\left[\cdot,\cdot\right]$ is a Lie semialgebra with a negation map over $\R$.\\

Consider
$$y_1=\left[x_1,\left[x_1,x_2\right]\right]+\left[x_1,\left[x_2,x_1\right]\right]$$
and
$$y_2=\left[x_2,\left[x_1,x_1\right]\right].$$

From Jacobi's identity, we know that $y_1+y_2\in\zeroset{L}$. Let us expand them, using the definition of the Lie bracket:
$$y_1=\left[x_1,\left[x_1,x_2\right]\right]+\left[x_1,\left[x_2,x_1\right]\right]=\left[x_1,\layer{0}{0}\left[x_1,x_2\right]\right]= \left[x_1,\layer{0}{0}x_1+\layer{0}{0}x_2\right]=$$
$$=\layer{0}{0}\left[x_1,x_1\right]+\layer{0}{0}\left[x_1,x_2\right]= \layer{1}{0}x_1+\layer{1}{0}x_2+\layer{0}{0}x_1+\layer{0}{0}x_2=\layer{1}{0}x_1+\layer{1}{0}x_2$$
whereas
$$y_2=\left[x_2,\left[x_1,x_1\right]\right]=\left[x_2,\layer{1}{0}x_1+\layer{1}{0}x_2\right]=\layer{1}{0}\left[x_2,x_1\right]+\layer{1}{0}\left[x_2,x_2\right]=$$
$$=\layer{1}{0}\left(\layer{0}{-1}x_1+\layer{0}{-1}x_2\right)+\layer{1}{0}\left(\layer{1}{0}x_1+\layer{1}{0}x_2\right)=\layer{2}{0}x_1+\layer{2}{0}x_2$$
Therefore, $y_2\vDash y_1$, but $y_1\neq y_2$.\\

Now, assume that there is an ELT associative algebra $A$, which is a Lie semialgebra with a negation map together with the ELT commutator, such that there exists an $\R$-monomorphism $\varphi:L\to A$ of Lie semialgebra with a negation maps. Denote $a_1=\varphi\left(x_1\right)$, $a_2=\varphi\left(x_2\right)$. We will show that $\varphi\left(y_1\right)=\varphi\left(y_2\right)$, which will show that $\varphi$ is not injective.\\

We note that
$$b_1=\varphi\left(y_1\right)=\left[a_1,\left[a_1,a_2\right]\right]+\left[a_1,\left[a_2,a_1\right]\right]$$
and
$$b_2=\varphi\left(y_2\right)=\left[a_2,\left[a_1,a_1\right]\right]$$
Since any associative algebra with the symmetrized commutator satisfies the strong Jacobi's identity (\Eref{exa:comm-is-Lie}), $b_1\vDash b_2$. However, since $y_2\vDash y_1$, also $b_1=\varphi\left(y_2\right)\vDash\varphi\left(y_1\right)=b_2$.\\

By \Cref{cor:R-is-idempotent}, since $\zeroset{\R}$ is idempotent, $\vDash$ is a partial order on $L$. In particular, $\varphi\left(y_1\right)=b_1=b_2=\varphi\left(y_2\right)$. But $\varphi$ is a monomorphism, implying $y_1=y_2$, a contradiction.
\end{example}
\bibliographystyle{plain}
\bibliography{references}

\begin{thebibliography}{10}

\bibitem{Akian1990}
Marianne Akian, Guy Cohen, Stephane Gaubert, R~Nikoukhah, and Jean~Pierre
  Quadrat.
\newblock Linear systems in (max,+) algebra.
\newblock In {\em Decision and Control, 1990., Proceedings of the 29th IEEE
  Conference on}, pages 151--156. IEEE, 1990.

\bibitem{Akian2008}
Marianne Akian, St{\'e}phane Gaubert, and Alexander Guterman.
\newblock Linear independence over tropical semirings and beyond.
\newblock {\em Contemporary Mathematics}, 495:1, 2009.

\bibitem{Akian2014}
Marianne Akian, St{\'e}phane Gaubert, and Alexander Guterman.
\newblock Tropical cramer determinants revisited.
\newblock {\em Tropical and Idempotent Mathematics and Applications}, 616:45,
  2014.

\bibitem{Banagl2013}
Markus Banagl.
\newblock The tensor product of function semimodules.
\newblock {\em Algebra universalis}, 70(3):213--226, 2013.

\bibitem{Blachar2016}
Guy Blachar and Erez Sheiner.
\newblock {ELT} linear algebra.
\newblock to be published, 2016.

\bibitem{Blachar2016b}
Guy Blachar and Erez Sheiner.
\newblock {ELT} linear algebra {II}.
\newblock to be published, 2016.

\bibitem{Dolzan2008}
David Dol{\v{z}}an and Polona Oblak.
\newblock Invertible and nilpotent matrices over antirings.
\newblock {\em arXiv preprint arXiv:0806.2996}, 2008.

\bibitem{Gaubert1992}
St{\'e}phane Gaubert.
\newblock {\em Th{\'e}orie des syst{\`e}mes lin{\'e}aires dans les
  dio{\"\i}des}.
\newblock PhD thesis, 1992.

\bibitem{Gaubert1997}
Stephane Gaubert.
\newblock Methods and applications of (max,+) linear algebra.
\newblock In {\em STACS 97}, pages 261--282. Springer, 1997.

\bibitem{Golan1999}
Jonathan~S Golan.
\newblock {\em Semirings and Their Applications}.
\newblock Springer Science \& Business Media, 1999.

\bibitem{Grothendieck1955}
Alexandre Grothendieck.
\newblock Produits tensoriels topologiques et espaces nucl{\'e}aires.
\newblock {\em S{\'e}minaire Bourbaki}, 2:193--200, 1955.

\bibitem{Humpre1972}
James~E Humphreys.
\newblock {\em Introduction to Lie Algebras and Representation Theory},
  volume~9.
\newblock Springer Science \& Business Media, 1972.

\bibitem{Izhaki2008}
Zur Izhakian.
\newblock Tropical arithmetic and tropical matrix algebra.
\newblock {\em arXiv preprint math/0505458}, 2005.

\bibitem{Izhaki2010}
Zur Izhakian, Manfred Knebusch, and Louis Rowen.
\newblock Supertropical linear algebra.
\newblock {\em Pacific Journal of Mathematics}, 266(1):43--75, 2013.

\bibitem{Izhakian2015}
Zur Izhakian, Adi Niv, and Louis Rowen.
\newblock Supertropical $\operatorname{SL}_n$.
\newblock {\em arXiv preprint arXiv:1508.04483}, 2015.

\bibitem{Izhaki2008b}
Zur Izhakian and Louis Rowen.
\newblock Supertropical matrix algebra.
\newblock {\em Israel Journal of Mathematics}, 182(1):383--424, 2011.

\bibitem{Izhaki2009}
Zur Izhakian and Louis Rowen.
\newblock Supertropical matrix algebra {II}: Solving tropical equations.
\newblock {\em Israel Journal of Mathematics}, 186(1):69--96, 2011.

\bibitem{Izhaki2010b}
Zur Izhakian and Louis Rowen.
\newblock Supertropical matrix algebra {III}: Powers of matrices and their
  supertropical eigenvalues.
\newblock {\em Journal of Algebra}, 341(1):125--149, 2011.

\bibitem{Katsov1978}
EB~Katsov.
\newblock Tensor product of functors.
\newblock {\em Siberian Mathematical Journal}, 19(2):222--229, 1978.

\bibitem{Katsov1997}
Yefim Katsov.
\newblock Tensor products and injective envelopes of semimodules over
  additively regular semirings.
\newblock In {\em Algebra Colloquium}, volume~4, pages 121--131. SPRINGER
  VERLAG 175 FIFTH AVE, NEW YORK, NY 10010, 1997.

\bibitem{Katsov2005}
Yefim Katsov.
\newblock Toward homological characterization of semirings: Serre’s
  conjecture and bass’s perfectness in a semiring context.
\newblock {\em algebra universalis}, 52(2-3):197--214, 2005.

\bibitem{Parker2012}
Brett Parker.
\newblock Exploded manifolds.
\newblock {\em Advances in Mathematics}, 229(6):3256--3319, 2012.

\bibitem{Rowen2006}
Louis~Halle Rowen.
\newblock {\em Graduate Algebra: Noncommutative View}, volume~91.
\newblock American Mathematical Society, 2008.

\bibitem{Rowen2016}
Louis~Halle Rowen.
\newblock Symmetries in tropical algebra.
\newblock {\em arXiv preprint arXiv:1602.00353}, 2016.

\bibitem{Sheiner2015}
Erez Sheiner.
\newblock {\em {Exploded Layered Tropical Algebra}}.
\newblock PhD thesis, Bar-Ilan University, 2015.

\bibitem{Takahashi1982}
Michihiro Takahashi.
\newblock On the bordism categories {III}.
\newblock In {\em Mathematics seminar notes}, volume~10, pages 211--236.
  神戸大学, 1982.

\end{thebibliography}

\end{document}